\documentclass[12pt]{amsart}
\usepackage{amssymb,amsmath,amsthm, amscd, enumerate, mathrsfs}
\usepackage{graphicx, hhline}
\usepackage[all]{xy}
\usepackage{braket}
\usepackage[usenames]{color}
\usepackage{hyperref}
\usepackage{fancyhdr}
\usepackage[top=30truemm,bottom=30truemm,left=30truemm,right=30truemm]{geometry}
\usepackage[pagewise]{lineno}

\hypersetup{colorlinks=true}

\title[Complex analytic generalized pairs]{On generalized canonical bundle formula and boundedness of complements in complex analytic setting}
\author{Kenta Hashizume}
\date{2026/09/05}
\keywords{generalized pair, canonical bundle formula, complements}
\subjclass[2020]{Primary: 14D06, Secondary: 14J40, 14C20, 32C15}
\address{Department of 
Mathematics, Faculty of Science, Niigata University, Niigata 950-2181, Japan}
\email{hkenta@math.sc.niigata-u.ac.jp}

\newtheorem{thm}{Theorem}[section]

\newtheorem{lem}[thm]{Lemma}
\newtheorem{cor}[thm]{Corollary}
\newtheorem{prop}[thm]{Proposition}

\theoremstyle{definition}
\newtheorem{defn}[thm]{Definition}
\newtheorem{rem}[thm]{Remark}

\newtheorem*{ack}{Acknowledgments} 
 
\newtheorem*{b-divisor}{b-divisors}

\newtheorem*{quasi-log}{Quasi-log complex analytic space} 
 
\newtheorem*{g-pair}{Generalized pairs} 
\newtheorem*{adj-g-pair}{Divisorial adjunction for generalized pairs} 
\newtheorem*{mmp-g-pair}{MMP for generalized pairs}

\newtheorem{step1}{Step}
\newtheorem{step2}{Step}
\newtheorem{step3}{Step}

\newtheorem*{claim*}{Claim}

\begin{document}

\begin{abstract}
We establish the generalized canonical bundle formula for generalized lc-trivial fibrations with irrational coefficients over non-compact bases in the complex analytic setting, and we show that the discriminant b-divisor and moduli b-divisor are compatible with restriction to arbitrary open subsets. We also discuss the boundedness of complements in this setting. 
\end{abstract}

\maketitle

\tableofcontents

\section{Introduction}\label{sec--intro}

In this paper, we establish the generalized canonical bundle formula for generalized lc-trivial fibrations over non-compact bases and prove the boundedness of complements in the complex analytic setting. 

We work with the notion of generalized pairs, introduced by Birkar--Zhang \cite{bz}. 
Generalized pairs have played a crucial role in Birkar's work on the boundedness of complements \cite{birkar-compl} and the boundedness of Fano varieties of fixed dimension with mild singularities \cite{birkar-bab}. 
These developments have led to significant progress in birational geometry, and the language of generalized pairs is now standard in this area. 
In the complex analytic setting, there are two extensions of this notion depending on the framework adopted for the nef part. 
One is the notion of generalized pairs whose nef part is given by b-nef $\mathbb{R}$-b-divisors. 
Another is the notion of generalized pairs whose nef part is given by nef closed $(1,1)$-currents. 
The former is very natural in the context of projective morphisms between complex analytic spaces. 
The latter, on the other hand, is useful in the context of compact but non-projective K\"{a}hler varieties because of the lack of ample divisors.   

We adopt the former definition in this paper. 
The canonical bundle formula plays a central role in birational geometry, especially in inductive arguments on the dimension of varieties.
The canonical bundle formula for klt-trivial fibrations was studied by Kawamata \cite{kawamata-subadjunction-II} and Ambro \cite{ambro1}, \cite{ambro2}, and
Fujino--Gongyo \cite{fg-lctrivial} established the canonical bundle formula for lc-trivial fibrations. 
Their theorems were extended to the framework of generalized pairs by Filipazzi \cite{filipazzi-gen-can-bundle-formula}. 
For recent developments on the b-semiampleness conjecture, see Bakker--Filipazzi--Mauri--Tsimerman \cite{bfmt}. 
For the canonical bundle formula in the K\"{a}hler context, see Hacon--P\u{a}un \cite{haconpaun}. 

We extend Filipazzi's theorem to generalized lc-trivial fibrations in Fujino's setting of complex analytic varieties \cite{fujino-analytic-bchm}.

\begin{thm}
\label{thm--gen-can-bundle-formula-full-intro}
Let $\pi \colon (X,B+M) \to S$ be a generalized sub-pair with the nef part $\boldsymbol{\rm M}$, and let $f\colon (X,B+M) \to Z$ be a generalized lc-trivial fibration over $S$. 
Let $W \subset S$ be a Stein compact subset. 
Suppose that there exists a Zariski open dense subset $U \subset Z$ such that $B|_{f^{-1}(U)}$ is effective.
Then, after replacing $S$ with a suitable open neighborhood of $W$, we have the following properties.
\begin{enumerate}[(i)]
\item \label{thm--gen-can-bundle-formula-full-intro-(i)}
Let $\boldsymbol{\rm G}$ (resp.~$\boldsymbol{\rm N}$) be the discriminant $\mathbb{R}$-b-divisor (resp.~the moduli $\mathbb{R}$-b-divisor) associated to $f$. 
Then $\boldsymbol{\rm N}$ is a b-nef$/S$ $\mathbb{R}$-b-Cartier $\mathbb{R}$-b-divisor on $Z$. 
In particular, $(Z, \boldsymbol{\rm G}_{Z} +\boldsymbol{\rm N}_{Z}) \to S$ is a generalized sub-pair with the nef part $\boldsymbol{\rm N}$. 
Furthermore, if $(X,B+M) \to S$ is generalized lc (resp.~generalized klt), then $(Z, \boldsymbol{\rm G}_{Z} +\boldsymbol{\rm N}_{Z}) \to S$ is also generalized lc (resp.~generalized klt). 
\item \label{thm--gen-can-bundle-formula-full-intro-(ii)}
For any open subset $\tilde{S} \subset S$ that does not necessarily contain $W$, if we put $\tilde{X} \subset X$ and $\tilde{Z} \subset Z$ as the inverse images of $\tilde{S}$, then the generalized lc-trivial fibration $(\tilde{X}, B|_{\tilde{X}}+M|_{\tilde{X}}) \to \tilde{Z}$ over $\tilde{S}$ satisfies (\ref{thm--gen-can-bundle-formula-full-intro-(i)}). 
\end{enumerate}
In particular, the discriminant $\mathbb{R}$-b-divisor and the moduli $\mathbb{R}$-b-divisor associated to $f$ are compatible with the restriction over any open subset (Lemma \ref{lem--restriction-can-bundle-formula}). 
Moreover, if $\boldsymbol{\rm M}$ is a finite $\mathbb{R}_{>0}$-linear combination of b-nef$/S$ $\mathbb{Q}$-b-Cartier $\mathbb{Q}$-b-divisors, then so is $\boldsymbol{\rm N}$. 
\end{thm}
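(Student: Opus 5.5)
The plan is to reduce to the case where $\boldsymbol{\rm M}$ is a finite $\mathbb{R}_{>0}$-linear combination of b-nef$/S$ $\mathbb{Q}$-b-Cartier $\mathbb{Q}$-b-divisors and $B$ is a $\mathbb{Q}$-divisor. That case I would treat with an analytic version of Filipazzi's argument, which in turn rests on the Fujino--Gongyo canonical bundle formula for lc-trivial fibrations.

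\textbf{Local setup.} First I shrink $S$ around the Stein compact set $W$. Then every object involved ($X$, $Z$, the components of $B$, and a log resolution on which $\boldsymbol{\rm M}$ descends) has only finitely many irreducible components over a neighborhood of $W$. Next I take a log resolution $X'\to X$ on which $\boldsymbol{\rm M}$ descends as $M'$, together with a resolution $Z'\to Z$ such that $X'\to Z'$ is equidimensional over the complement of a snc divisor and the vertical parts are in good position. This uses Fujino's analytic versions of flattening and of the relevant toroidalization/weak semistable reduction constructions, applied over a relatively compact neighborhood of $W$.

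\textbf{Real-to-rational reduction.} I decompose $M'$ as $\sum r_i M'_i$ and write $B'$ as a convex combination. Following the standard rational-polytope argument (as in Fujino--Gongyo and Filipazzi), I write $K_{X'}+B'+M' \sim_{\mathbb{R}} f'^*L$ as $\sum r_j (K_{X'}+B'_j+M'_j)$ with $r_j>0$, $\sum r_j=1$, where each $(X',B'_j+M'_j)\to Z'$ is a $\mathbb{Q}$-generalized lc-trivial fibration. The conditions involved are that $B'_j$ is effective over $U$ and that $\mathbb{Q}$-linear triviality holds. Both are linear conditions defined over $\mathbb{Q}$, so they are preserved near the original data. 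Because the discriminant is computed through lc thresholds over generic points of prime divisors on birational models, it is affine-linear in $B'$ within a suitable chamber. Hence $\boldsymbol{\rm G}=\sum r_j\boldsymbol{\rm G}_j$ and $\boldsymbol{\rm N}=\sum r_j\boldsymbol{\rm N}_j$. This identity is also what yields the last assertion of the theorem.

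\textbf{The rational case.} Adding a general ample-type perturbation of $M'_i$ over $S$ is possible after shrinking, since $\boldsymbol{\rm M}$ is b-nef over $S$ and $W$ is Stein compact. This lets me replace the nef part by a $\mathbb{Q}$-divisor that is semiample over $Z'$ up to an arbitrarily small error, or, following Filipazzi, take a finite cover so that $M'$ becomes an honest boundary component. Either way I reduce to an lc-trivial fibration $(X'',B'')\to Z'$. For that fibration, the analytic Fujino--Gongyo formula, proved via Hodge-theoretic semipositivity of the moduli part in the analytic category, gives b-nefness and b-Cartierness of the moduli part, descending to $Z'$. A limiting argument then recovers $\boldsymbol{\rm N}$ as b-nef$/S$. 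The generalized lc and klt statements follow directly from the definition of $\boldsymbol{\rm G}$ as $1-$lc threshold: the coefficients of $\boldsymbol{\rm G}$ on all divisors over $Z$ are $\le 1$ (resp.\ $<1$).

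\textbf{Restriction to open subsets.} For (ii), the argument is local on $Z$. Coefficients of $\boldsymbol{\rm G}$ are computed at generic points of prime divisors over $\tilde Z$, and $\boldsymbol{\rm N}$ is determined by $\boldsymbol{\rm G}$ and $L$. Therefore the discriminant and moduli b-divisors of the restricted fibration coincide with the restrictions of $\boldsymbol{\rm G}$ and $\boldsymbol{\rm N}$ (this is Lemma \ref{lem--restriction-can-bundle-formula}). Nefness over $\tilde S$ and descent to $Z'|_{\tilde Z}$ are inherited from $Z'$ even when $\tilde S\not\supset W$. The main obstacle I expect is the descent/b-Cartier statement for $\boldsymbol{\rm N}$ in the analytic setting. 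Its proof relies on semistable-reduction-type constructions, which exist only locally over compact subsets, so they must be carried out once over a neighborhood of $W$ and must be compatible with every open $\tilde S$ simultaneously.
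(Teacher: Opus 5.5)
There is a genuine gap: you never handle a nef part that is only nef, not semi-ample, over $Z$, and this is the core of the theorem.

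\textbf{The decomposition and the passage to an lc-trivial fibration.} Your real-to-rational step presupposes that $\boldsymbol{\rm M}$ is a finite $\mathbb{R}_{>0}$-combination of b-nef$/S$ $\mathbb{Q}$-b-Cartier b-divisors. The theorem does not assume this, and it cannot be arranged in general. Writing $M'=\sum r_iM'_i$ with $\mathbb{Q}$-Cartier $M'_i$ is always possible, but nefness of the $M'_i$ over $S$ is not a rational linear condition; you only list effectivity over $U$ and $\mathbb{Q}$-linear triviality. A nef class spanning an irrational boundary ray of a round nef cone (e.g.\ on $E\times E$) is not a positive combination of rational nef classes. This is exactly why the last sentence of the theorem is conditional. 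The paper treats arbitrary $\boldsymbol{\rm M}$ separately (Theorem \ref{thm--gen-can-bundle-formula-full}): it reruns the MMP argument of Proposition \ref{prop--gen-can-bundle-formula-klt-Q-div} until $M\sim_{\mathbb{R},Z}0$, and then uses the real lc-trivial case ($\boldsymbol{\rm M}=0$ in Theorem \ref{thm--gen-can-bundle-formula-lc}).

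Even for rational data, your passage to an ordinary lc-trivial fibration fails:
\begin{itemize}
\item an ample perturbation taken on $X'$ destroys $K_{X'}+B'+M'\sim_{\mathbb{R},Z'}0$;
\item the admissible perturbation $M'+\epsilon f'^{*}H$ is semi-ample over $Z'$ only if $M'$ already is (that case is Proposition \ref{prop--gen-can-bundle-formula-semiample});
\item no finite cover turns a nef, non-semi-ample divisor into a boundary component.
\end{itemize}
The missing idea is Filipazzi's MMP dichotomy (Steps 2--3 of Proposition \ref{prop--gen-can-bundle-formula-klt-Q-div}). If $K_X+B$ is not pseudo-effective over $Z$, an MMP reduces to the semi-ample case. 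If it is pseudo-effective, then $-(K_X+B)|_T$ is pseudo-effective over $Z$ on every lc center $T$, so $\kappa_\sigma\le 0$ there, and Gongyo's result gives log abundance. The resulting MMP makes $K_{\tilde X}+\tilde B$ semi-ample over $Z$. After replacing the base by its ample model, $M\sim_{\mathbb{Q}}0$ over the new base, and semistable reduction brings you to the klt-trivial case.

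\textbf{The lc case, descent, and restriction.} You cite an analytic Fujino--Gongyo formula for lc-trivial fibrations, via mixed Hodge theory, as a black box. You cannot quote that here: the analytic input available is Fujino's klt-trivial $\mathbb{Q}$-result behind Theorem \ref{thm--klt-trivial-fib-start}, and the paper deliberately avoids mixed Hodge theory. Instead, after a dlt reduction, it runs an MMP to a Mori fibre space $X\to Y$ over $Z$ on which $\lfloor B\rfloor$ is relatively ample, handling $Y\to Z$ by induction on dimension. It then passes to the generalized klt-trivial fibrations with boundary $B-\tfrac{1}{m}\lfloor B\rfloor$ and nef part $\boldsymbol{\rm M}+\tfrac{1}{m}\overline{(\lfloor B\rfloor+f^{*}A)}$.

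The limit is b-Cartier only because weak semistable reduction and Remark \ref{rem--descend-variety} give a single model $\overline{Z}$ to which all these moduli parts descend. You name this uniform descent as the main obstacle but never produce it. Your identity $\boldsymbol{\rm G}=\sum r_j\boldsymbol{\rm G}_j$ needs it as well, since lc thresholds are only piecewise linear and the chamber depends on the divisor over $Z$. The paper fixes a common model $Z^{\vee}$ for all rational points and chooses the convex decomposition afresh for each higher model.

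Finally, for (ii), computing coefficients at generic points only gives part $(a)$ of Lemma \ref{lem--restriction-can-bundle-formula}, i.e.\ for models of $\tilde Z$ obtained by restricting models of $Z$. Divisors over $\tilde Z$ not of this form (Remark \ref{rem--rest-b-divisor}) need the inequality of part $(b)$ and the negativity argument of part $(c)$. Part $(c)$ requires that $\tilde{\boldsymbol{\rm N}}$ be b-nef, and this must first be obtained by applying (i) near each point of $\tilde S$; it is not simply inherited from $Z'$.
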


We explain the differences between Theorem \ref{thm--gen-can-bundle-formula-full-intro} and \cite[Theorem 2.3]{haconpaun}. 
The main difference lies in the compactness assumptions in generalized lc-trivial fibrations. 
Theorem \ref{thm--gen-can-bundle-formula-full-intro} applies to generalized lc-trivial fibrations over non-compact bases.
Hence, the nef part of the generalized sub-pair is not necessarily globally nef. 
In particular, if $S=Z$ in Theorem \ref{thm--gen-can-bundle-formula-full-intro}, then the nef part is only fiberwise semi-positive. 
Moreover, Theorem \ref{thm--gen-can-bundle-formula-full-intro} allows divisors with irrational coefficients. 

The compatibility of the discriminant $\mathbb{R}$-b-divisor and the moduli $\mathbb{R}$-b-divisor under restriction is non-trivial and useful in applications. 
This property does not appear in the literature even for klt-trivial fibrations. 
The starting point for the proof of Theorem \ref{thm--gen-can-bundle-formula-full-intro} is \cite[Theorem 21.4]{fujino-analytic-bchm}. 
After improving \cite[Theorem 21.4]{fujino-analytic-bchm}, we prove Theorem \ref{thm--gen-can-bundle-formula-full-intro} for generalized klt-trivial fibrations by using Filipazzi's idea \cite{filipazzi-gen-can-bundle-formula}.
We then extend the framework to generalized lc-trivial fibrations. In the proof, weak semistable reduction and the minimal model program (MMP, for short) in the complex analytic setting play important roles. 
We refer to \cite{eh-semistablereduction} and \cite{eh-analytic-mmp} for these key ingredients, respectively. We emphasize that we do not rely on any result from \cite{bfmt} or variations of mixed Hodge structures (cf.~\cite[Theorem 3.6]{fg-lctrivial}) in the proof of Theorem \ref{thm--gen-can-bundle-formula-full-intro}. 
We also remark that we do not use the foliation theory as in \cite{chlx}.  
To the best of our knowledge, this strategy of proof does not appear even in the algebraic setting. 

The first and most fundamental difficulty goes back to the definition of b-divisors. 
In the complex analytic setting, it is not clear whether there is a reasonable definition of the restriction of b-divisors to open subsets. 
This is because the number of bimeromorphic morphisms increases under restriction, and we do not have a compactification of open analytic varieties. 
Hence, it is not clear whether the discriminant $\mathbb{R}$-b-divisor and the moduli $\mathbb{R}$-b-divisor associated to generalized lc-trivial fibrations are compatible with restriction over the base. 
However, to run a relative MMP or take a weak semistable reduction, restriction over the base is unavoidable. 
Therefore, the first step toward Theorem \ref{thm--gen-can-bundle-formula-full-intro} is to prove a suitable compatibility of the discriminant $\mathbb{R}$-b-divisor and the moduli $\mathbb{R}$-b-divisor with respect to restriction. 
This is achieved in Lemma \ref{lem--restriction-can-bundle-formula}. 
The second difficulty is the $\mathbb{R}$-b-Cartier property of the moduli $\mathbb{R}$-b-divisor. 
Note that \cite[Theorem 21.4]{fujino-analytic-bchm} does not explicitly assert the $\mathbb{Q}$-b-Cartier property of the moduli $\mathbb{Q}$-b-divisor. 
This is treated in Remark \ref{rem--descend-variety} and Theorem \ref{thm--klt-trivial-fib-start}, which is Theorem \ref{thm--gen-can-bundle-formula-full-intro} in the case of klt-trivial fibrations. 
After extending Theorem \ref{thm--klt-trivial-fib-start} to the case of generalized klt-trivial fibrations using Filipazzi's idea \cite{filipazzi-gen-can-bundle-formula}, we encounter the final difficulty: the extension to the generalized lc case. 
We overcome this difficulty by using the techniques of the MMP \cite{eh-analytic-mmp} together with Theorem \ref{thm--gen-can-bundle-formula-full-intro} in the case of generalized klt-trivial fibrations. 
For details, see the proof of Proposition \ref{prop--gen-can-bundle-formula-lc-Q-div}. 

By combining the proof of Theorem \ref{thm--gen-can-bundle-formula-full-intro} and \cite[Theorem 7.3]{bfmt}, we obtain the following result (see \cite[7.2.3]{bfmt}). 
We note again that the proof of Theorem \ref{thm--gen-can-bundle-formula-full-intro} does not rely on any result from \cite{bfmt}.

\begin{thm}[= Theorem \ref{thm--lc-trivial-bfmt}]\label{thm--lc-trivial-bfmt-intro}
Let $f \colon (X,\Delta) \to Z$ be an lc-trivial fibration over a complex analytic space $S$, and let $W \subset S$ be a Stein compact subset. 
Suppose that $\Delta|_{f^{-1}(U)}$ is effective for some Zariski open dense subset $U \subset Z$.
Then, after shrinking $S$ around $W$, there exists a projective bimeromorphic morphism $Z' \to Z$ such that for any open subset $\tilde{S} \subset S$ that does not necessarily contain $W$ (possibly $\tilde{S}=S$), if we put $\tilde{X} \subset X$, $\tilde{Z} \subset Z$, and $\tilde{Z}' \subset Z'$ as the inverse images of $\tilde{S}$, then the moduli $\mathbb{R}$-b-divisor $\tilde{\boldsymbol{\rm N}}$ associated to $(\tilde{X},\Delta|_{\tilde{X}}) \to \tilde{Z}$ descends to $\tilde{Z}'$ and $\tilde{\boldsymbol{\rm N}}_{\tilde{Z}'}$ is semi-ample over $\tilde{S}$. 
Moreover, we have $\boldsymbol{\rm N}|_{\tilde{Z}}=\tilde{\boldsymbol{\rm N}}$, where  $\boldsymbol{\rm N}$ is the moduli $\mathbb{R}$-b-divisor associated to $f \colon (X,\Delta) \to Z$. 
\end{thm}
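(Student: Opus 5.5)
The plan is to reduce to the algebraic-style statement of \cite[Theorem 7.3]{bfmt} through the structural results behind Theorem \ref{thm--gen-can-bundle-formula-full-intro}, and then to obtain the open-subset statements from the restriction compatibility in Lemma \ref{lem--restriction-can-bundle-formula}.

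First, we shrink $S$ around $W$ and apply Theorem \ref{thm--gen-can-bundle-formula-full-intro} with $M=0$ and $\boldsymbol{\rm M}=0$. This shows that the moduli part $\boldsymbol{\rm N}$ is b-nef$/S$ and $\mathbb{R}$-b-Cartier, so it descends to some projective bimeromorphic model $Z_0\to Z$. Write $\Delta=\sum_i r_i\Delta_i$, with $r_i$ linearly independent over $\mathbb{Q}$. Following the proof of the theorem, we express $f$ as a convex combination of $\mathbb{Q}$-lc-trivial fibrations $f_j\colon (X,\Delta_j)\to Z$, all with the same support and with $K_X+\Delta_j\sim_{\mathbb{Q}}f^*L_j$. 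The moduli parts then satisfy $\boldsymbol{\rm N}=\sum_j a_j\boldsymbol{\rm N}^{(j)}$ with $a_j>0$. So it suffices to prove semi-ampleness over $\tilde{S}$ for each $\mathbb{Q}$-part on a common model, since a positive combination of semi-ample divisors is semi-ample.

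Second, we handle the $\mathbb{Q}$-case. By the weak semistable reduction of \cite{eh-semistablereduction}, after shrinking $S$ around $W$ we obtain a generically finite base change $Z''\to Z$ and a model over which the fibration is semistable in codimension one. Here the moduli part descends, and it agrees with the pullback of the moduli part of $f_j$. We then use the argument of \cite[7.2.3]{bfmt}, namely the period map to a Baily--Borel type space together with \cite[Theorem 7.3]{bfmt}. This gives that the descended moduli divisor on $Z''$ is the pullback of an ample $\mathbb{Q}$-line bundle under a proper morphism to a definable or algebraic target. Hence it is semi-ample over $S$. Semi-ampleness then descends along the finite part of $Z''\to Z'$, where $Z'$ is the normalization of $Z$ in the appropriate Stein factorization (a common resolution of the $Z_0$ for all $j$), because semi-ampleness descends under finite surjective maps of normal spaces. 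Thus $\boldsymbol{\rm N}_{Z'}$ is semi-ample over $S$.

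Third, we pass to open subsets. Let $\tilde{S}\subset S$ be any open subset. Lemma \ref{lem--restriction-can-bundle-formula} gives $\boldsymbol{\rm N}|_{\tilde Z}=\tilde{\boldsymbol{\rm N}}$, which is the last assertion. Since $\boldsymbol{\rm N}$ descends to $Z'$, the trace on any higher model of $\tilde Z'$ is the pullback of $\boldsymbol{\rm N}_{Z'}|_{\tilde Z'}$. The subtle point is that $\tilde Z$ has more bimeromorphic models than those restricted from $Z$. We handle this with Lemma \ref{lem--restriction-can-bundle-formula}, which is exactly the statement that $\tilde{\boldsymbol{\rm N}}$ is computed by restricting $\boldsymbol{\rm N}$, including on models not extending over $S$. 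We will check it by comparing the defining formulas via lc thresholds over codimension-one points, which are local over $\tilde S$. Semi-ampleness over $S$ restricts to semi-ampleness over $\tilde S$. The main obstacle is the second step: ensuring that the model $Z'$ and the Hodge-theoretic semi-ampleness are uniform in a neighborhood of the compact set $W$ rather than only locally. We will handle this with the Stein compactness of $W$, which gives finitely many components and finite coverings, and with the fact that the period-map target is independent of $\tilde S$.
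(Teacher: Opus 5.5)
Your skeleton is the paper's: split $\Delta$ into $\mathbb{Q}$-lc-trivial pieces as in the proof of Theorem~\ref{thm--gen-can-bundle-formula-lc}, shrink $S$ once to get a single projective bimeromorphic $Z'\to Z$ to which $\boldsymbol{\rm N}$ and all the $\boldsymbol{\rm N}^{(j)}$ descend, apply \cite{bfmt} to each piece, and deduce the statements over $\tilde S$ from restriction compatibility. Your extra weak semistable reduction with finite descent of semi-ampleness is just one way of unpacking \cite[Proof of Theorem 7.3]{bfmt}. It is harmless, given the analytic base change property of moduli parts. Here $Z'$ should simply be the common descent model, with the finite cover lying over it.

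\textbf{Step 3 is justified incorrectly.} Comparing lc thresholds over codimension-one points only gives part (a) of Lemma~\ref{lem--restriction-can-bundle-formula}, i.e.\ agreement of traces on models of $\tilde Z$ that are restrictions of models of $Z$. By definition, $\boldsymbol{\rm N}|_{\tilde Z}$ is the Cartier closure of $\boldsymbol{\rm N}_{Z'}|_{\tilde Z'}$. So $\boldsymbol{\rm N}|_{\tilde Z}=\tilde{\boldsymbol{\rm N}}$ is equivalent to $\tilde{\boldsymbol{\rm N}}$ descending to $\tilde Z'$. On models of $\tilde Z$ that do not extend over $S$, this is new information about $\tilde{\boldsymbol{\rm N}}$, and no comparison with thresholds computed on models of $Z$ can supply it. The lemma alone does not supply it either. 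Part (b) gives only $\boldsymbol{\rm N}|_{\tilde Z}\le\tilde{\boldsymbol{\rm N}}$, and part (c) presupposes that $\tilde{\boldsymbol{\rm N}}$ is already $\mathbb{R}$-b-Cartier and b-nef$/\tilde S$. That presupposition is part (ii) of Theorem~\ref{thm--gen-can-bundle-formula-lc}, proved by the local argument in the proof of Theorem~\ref{thm--klt-trivial-fib-start}. This is what the paper actually invokes.

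\textbf{Step 2 skips the lc-to-klt reduction.} You apply the period-map description directly to each $\mathbb{Q}$-lc-trivial piece. The paper points out that running \cite[Proof of Theorem 7.3]{bfmt} analytically needs \cite[Remark 7.4]{bfmt}. That remark must be recovered from \cite[Proof of Theorem 1.1]{fg-lctrivial}, which identifies, after a generically finite base change, the moduli part of an lc-trivial fibration with that of a klt-trivial one. This also needs the analytic lc canonical bundle formula, Theorem~\ref{thm--gen-can-bundle-formula-lc}. Your proposal never addresses this step, and it is not automatic when the general fiber is not klt.

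\textbf{Uniformity near $W$.} The paper gets this by shrinking $S$ once so that $Z$ and $X$ embed into $S\times\mathbb{P}^{k}$ and $S\times\mathbb{P}^{k}\times\mathbb{P}^{k'}$ \cite[Lemma 2.2]{eh-semistablereduction}. After that, semi-ampleness over an arbitrary $\tilde S$ is just restriction, so independence of the period target from $\tilde S$ plays no role.

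\textbf{The decomposition of $\boldsymbol{\rm N}$.} The identity $\boldsymbol{\rm N}=\sum_j a_j\boldsymbol{\rm N}^{(j)}$ does not follow formally from $\Delta=\sum_j a_j\Delta_j$, because lc thresholds are not linear in the boundary. It holds for rational points chosen via \cite[Proof of Lemma 5.5]{fujino-hashizume-adjunction} on a model where all these b-divisors descend, exactly as in the proof of Theorem~\ref{thm--gen-can-bundle-formula-lc}.
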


Once we establish the canonical bundle formula and the MMP \cite{eh-analytic-mmp-2}, we expect that many results for projective varieties can be extended to projective morphisms between complex analytic varieties. Although we do not explore such applications in depth in this paper, we prove the boundedness of complements in the complex analytic setting. 
Roughly speaking, complements provide a way to measure singularities of generalized pairs.
The boundedness of complements was originally conjectured by Shokurov \cite{shokurov-flip} for the existence of threefold log flips. 
In addition to Birkar's breakthrough mentioned before, the boundedness of complements is applied to the ascending chain condition for minimal log discrepancies (\cite{hanliushokurov-acc-mld}, \cite{hanliuluo-acc-mld-3fold}), the moduli problem of Calabi--Yau pairs (\cite{abb+23}), and the Yau--Tian--Donaldson conjecture  and the projectivity of the K-moduli of log Fano pairs (\cite{liuxuzhuang-Kstability}). 
In this paper, we show the complex analytic analogs of results by Filipazzi--Moraga \cite{filipazzi-moraga} and Chen--Han--He--Xie \cite{chhx-compl}.

\begin{thm}[= Theorem \ref{thm--generalized-compl-main-1}]\label{thm--generalized-compl-main-1-intro}
Let $d$ and $p$ be positive integers and let $\Lambda  \subset [0,1] \cap \mathbb{Q}$ be a DCC set whose accumulation points are rational numbers. 
Then there exists $n \in \mathbb{Z}_{>0}$, depending only on $d$, $p$, and $\Lambda$, satisfying the following. 
Let $\pi \colon(X,B+M) \to Z$ be a generalized lc pair with the nef part $\boldsymbol{\rm M}$ such that 
\begin{itemize}
\item
$\pi \colon X \to Z$ is a contraction of normal analytic varieties  and ${\rm dim}\,X=d$, 
\item
$B \in \Lambda$ and $p \boldsymbol{\rm M}$ is b-Cartier, 
\item
$X$ is of Fano type over $Z$, and 
\item
$-(K_{X}+B+M)$ is nef over $Z$.
\end{itemize} 
Then, for any point $z \in Z$, after shrinking $Z$ around $z$ suitably, $(X,B+M)\to Z$ has an $n$-complement $(X,B^{+}+M)\to Z$ over $z$ such that $B^{+} \geq B$. 
\end{thm}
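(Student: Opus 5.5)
The plan is to follow Birkar's inductive scheme for complements, as adapted to generalized pairs by Filipazzi--Moraga and Chen--Han--He--Xie. We work locally over $z$ and induct on $d$. The complex analytic setting enters at three points: the MMP \cite{eh-analytic-mmp}, shrinking $Z$ around $z$ (Stein compact neighborhoods), and the canonical bundle formula of Theorem \ref{thm--gen-can-bundle-formula-full-intro}. Since $X$ is of Fano type over $Z$, after shrinking $Z$ around $z$ we may run any MMP over $Z$; every step is projective and terminates locally over a Stein compact subset.

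The first step is to reduce to finite coefficients. Following \cite{chhx-compl}, we use the DCC assumption and the rationality of the accumulation points of $\Lambda$ to replace $B$ by $B' \le B$ with coefficients in a finite rational set $\Gamma(d,p,\Lambda)$, chosen so that $(X,B'+M)$ is still generalized lc and $-(K_X+B'+M)$ is still nef over $Z$ near $z$. An $n$-complement of $B'$ with $B^+\ge B$ is then obtained by the usual rounding argument, since $n\Gamma\subset \mathbb{Z}$ and the coefficients of $B$ are at most $\frac{\lfloor (n+1)b\rfloor}{n}$. This step relies on ACC-type results for generalized lc thresholds and the global ACC, which must be checked in the analytic setting. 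We would reduce them to the algebraic case by noting that the relevant boundedness statements concern fibers or general points over $z$.

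Next we treat the case where $z$ is in the image of a generalized non-klt center. We pass to a $\mathbb{Q}$-factorial generalized dlt model and run an MMP on $-(K_X+B+M)$ to produce a plt-type model with a component $S$ of $\lfloor B\rfloor$ mapping onto a neighborhood of $z$'s fiber. Divisorial adjunction gives $(S,B_S+M_S)$ in dimension $d-1$ with DCC coefficients and with $p$ controlled (adjusted to a multiple). Induction gives an $n$-complement on $S$, and we lift it using Kawamata--Viehweg vanishing for projective morphisms of analytic spaces, which holds after shrinking to a Stein neighborhood of $z$. If no such center exists, we create one by adding a divisor $\equiv -(K_X+B+M)$ with high multiplicity at the fiber over $z$, in the style of Birkar. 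If instead $-(K_X+B+M)$ is not big over $Z$, we take the contraction it defines (semiampleness over $Z$ near $z$ follows from Fano type and the basepoint-free theorem) and apply Theorem \ref{thm--gen-can-bundle-formula-full-intro}. By its last assertion, the moduli part is a finite combination of b-nef $\mathbb{Q}$-b-Cartier divisors, and we need the effective bound $p'\mathbf{N}$ b-Cartier with $p'$ depending only on $d,p,\Lambda$. The base has lower dimension, so induction on the base yields a complement, which we pull back.

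The main obstacle is this last fibration step: controlling the Cartier index of the moduli b-divisor and the coefficients of the discriminant uniformly. For this we would invoke the effective generalized canonical bundle formula of Filipazzi--Moraga. Its proof uses boundedness of the general fibers, which are projective, so it transfers once Theorem \ref{thm--gen-can-bundle-formula-full-intro}(ii) guarantees compatibility with shrinking around $z$. The remaining case, where $X$ is exceptional over $z$ (only relevant when $z$ is a point and $Z$ is a point germ with $\dim Z=0$ after restriction), reduces to projective Fano type varieties. There the algebraic result \cite{filipazzi-moraga} applies directly via GAGA.
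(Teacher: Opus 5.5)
Your architecture (induction on $d$, lifting complements from a plt divisor via Kawamata--Viehweg vanishing, GAGA when $Z$ is a point) matches what the paper imports from \cite[Section 4]{filipazzi-moraga}. However, your first step, the reduction to finitely many coefficients, fails as stated, for two reasons.

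\textbf{Nefness is not preserved.} Lowering the boundary does not preserve nefness: $-(K_X+B'+M)=-(K_X+B+M)+(B-B')$, and $B-B'\geq 0$ is not nef over $Z$. Any curve $C$ over $z$ with $(K_X+B+M)\cdot C=0$ and $(B-B')\cdot C<0$ breaks it; for instance, a negative curve in a reducible fibre lying in ${\rm Supp}(B-B')$. No choice of $B'$ avoids this.

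\textbf{The rounding argument fails.} Even granting nefness, if $n\Gamma\subset\mathbb{Z}$ and $b'<1$, then $\frac{1}{n}\lfloor (n+1)b'\rfloor=b'$. So an $n$-complement of $(X,B'+M)$ is only known to satisfy $B^{+}\geq B'$. This gives neither $B^{+}\geq B$ nor, in general, $nB^{+}\geq n\lfloor B\rfloor+\lfloor (n+1)\{B\}\rfloor$. You also cannot make $B-B'$ small compared with $1/n$, since $n$ is only determined after $\Gamma$ is fixed.

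The standard fix is to move coefficients \emph{up}. A DCC set accumulates only from below, so each coefficient can be raised slightly to an element of a finite subset of $\overline{\Lambda}$; this is where rationality of the accumulation points is used. ACC-type results then keep the enlarged pair generalized lc and $-(K_X+\bar{B}+M)$ pseudo-effective over $Z$. An MMP over $Z$ on this divisor, available since $X$ is of Fano type, restores nefness. The complement pulled back along that MMP dominates the enlarged boundary, hence $B$. These are exactly the analytic inputs the paper names: ACC for generalized lc thresholds \cite{hacon-xie-acc-lct} and the relative MMP and base point free theorem \cite{fujino-analytic-bchm}. Saying the ACC statements ``concern fibres over $z$'' is not a substitute for the former.

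Separately, your case analysis contains a detour the paper avoids. When $\dim Z>0$ you never need a divisor $\sim_{\mathbb{R},Z}-(K_X+B+M)$ with high multiplicity, which requires bigness and is what forced your non-big branch. Instead, pick an effective Cartier divisor $L\ni z$ on a Stein neighbourhood of $z$ and add $t\pi^{*}L$, with $t$ the generalized lc threshold over $z$. This is $\sim_{\mathbb{R},Z}0$, so nefness of $-(K_X+B+M)$ over $Z$ is untouched, and a non-klt centre over $z$ always appears. Passing to a generalized dlt model and running the MMP constructs the plt pair $(X,\Gamma+\beta M)$, which puts you in Corollary \ref{cor--compl-extend-2}, i.e.\ Lemma \ref{lem--compl-extend-main}. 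There, besides Kawamata--Viehweg vanishing \cite{nakayama}, you also need inversion of adjunction and connectedness (Lemmas \ref{lem--gen-inv-adj} and \ref{lem--conn-princi}) to get generalized lc-ness near $\pi^{-1}(z)$. When $\dim Z=0$, $X$ is projective and the algebraic theorem applies.

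So the paper proves this theorem with no canonical bundle formula at all. The effective formula (Theorem \ref{thm--can-bundle-formula-coeff-inherited}) and pulling complements back from the base (Proposition \ref{prop--compl-extend-fib}) are used only for Theorem \ref{thm--generalized-compl-main-2}. Your fibration branch could be made to work with those tools, but ``which we pull back'' hides real work. You need a high model of the base on which the discriminant is effective and the vertical components of $B$ have divisorial images, and that model must stay of Fano type over $Z$ for the induction to apply.
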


\begin{thm}[= Theorem \ref{thm--generalized-compl-main-2}]\label{thm--generalized-compl-main-2-intro}
Let $d$ and $p$ be positive integers. 
Then there exists a finite set $\mathcal{N}$ of positive integers, depending only on $d$ and $p$, satisfying the following.
Let $\pi \colon(X,B+M) \to Z$ be a generalized pair with the nef part $\boldsymbol{\rm M}$ and let $z \in Z$ be a point such that 
\begin{itemize}
\item
$\pi \colon X \to Z$ is a contraction of normal analytic varieties and ${\rm dim}\,X=d$, 
\item
$p \boldsymbol{\rm M}$ is b-Cartier, 
\item
$X$ is of Fano type over $Z$, and 
\item
after shrinking $Z$ around $z$, there exists an effective $\mathbb{R}$-divisor $\Delta$ on $X$ such that $(X,(B+\Delta)+M) \to Z$ is generalized klt and $K_{X}+B+\Delta+M \sim_{\mathbb{R}} 0$.
\end{itemize} 
Then, after shrinking $Z$ around $z$ suitably, $(X,B+M) \to Z$ has an $n$-complement over $z \in Z$ for some $n \in \mathcal{N}$. 
\end{thm}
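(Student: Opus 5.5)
The plan is to reduce Theorem \ref{thm--generalized-compl-main-2-intro} to Theorem \ref{thm--generalized-compl-main-1-intro}, following the algebraic argument of Chen--Han--He--Xie \cite{chhx-compl}. The reduction goes through a finite rational polytope decomposition, and it uses that the Fano-type condition is preserved under the MMP \cite{eh-analytic-mmp}.

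\textbf{Step 1: rationalize $B+\Delta$ around $z$.} Put $\Gamma:=B+\Delta$ and shrink $Z$ around a Stein compact neighbourhood of $z$. Apply the analytic version of Nakamura's uniform rational polytope argument to the finitely many components of $\Gamma$, with the nef part $\boldsymbol{\rm M}$ fixed (it has the rational coefficient vector coming from $p\boldsymbol{\rm M}$). This writes
\[
K_X+\Gamma+M=\sum_i a_i\,(K_X+\Gamma_i+M),
\]
with $a_i>0$, $\sum a_i=1$, each $\Gamma_i$ a $\mathbb{Q}$-divisor, each $(X,\Gamma_i+M)$ generalized klt (klt is open in the polytope), and each $K_X+\Gamma_i+M\sim_{\mathbb{Q}}0$ over $Z$. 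The last condition uses that the $\mathbb{R}$-linear triviality locus is a rational affine subspace.

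\textbf{Step 2: uniformity of the coefficients.} The coefficients of $\Gamma_i$ should lie in a finite set depending only on $d$ and $p$. For this I expect the ACC for generalized lc thresholds and the global ACC in the analytic setting to be needed. The argument is the standard one of \cite{chhx-compl}: the coefficients of $B$ are arbitrary reals, but only the "$\mathbb{R}$-complementary" structure matters. Concretely, I would instead replace $B$ by $B':=\lfloor B\rfloor+$ the hyperstandard approximation $\frac{1}{m}\lfloor (m+1)\{B\}\rfloor$-type truncation. Here $m$ is chosen via the ACC so that $(X,B'+M)$ is still $\mathbb{R}$-complementary. In other words, some $\Delta'$ exists with $K_X+B'+\Delta'+M\sim_{\mathbb{R}}0$ and generalized lc. This is obtained by running an MMP on $-(K_X+B'+M)$ over $Z$, which terminates because $X$ is of Fano type. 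The coefficients of $B'$ now lie in a finite rational set $\Lambda_0$ depending only on $d$ and $p$.

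\textbf{Step 3: produce the complement and pull it back.} On the model $X'$ where $-(K_{X'}+B'+M')$ is nef over $Z$, apply Theorem \ref{thm--generalized-compl-main-1-intro} with $\Lambda=\Lambda_0$. This gives an $n$-complement $B'^+\ge B'$ with $n$ depending only on $d$, $p$, $\Lambda_0$. Pulling it back to $X$ is standard, because the MMP is $(K+B'+M)$-negative and hence the complement property lifts. Finally, an $n$-complement of $B'$ is an $n$-complement of $B$, since the truncation satisfies $\lfloor (n+1)\{B\}\rfloor/n\le$ the complement coefficients. To make that last step hold I would choose $m$ divisible by the relevant $n$. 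I then take $\mathcal{N}$ to be the finitely many $n$ produced.

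\textbf{Main obstacle.} Step 2 is where I expect the difficulty. It needs the analytic ACC for generalized lc thresholds and the global ACC, and it needs to keep track of the Stein compact shrinkings so that everything remains relative to a neighbourhood of $z$. I would first try to transfer the algebraic ACC statements via the local algebraization of analytic germs over Stein compacts. Failing that, I would rederive them by the usual induction using adjunction and the canonical bundle formula of Theorem \ref{thm--gen-can-bundle-formula-full-intro}.
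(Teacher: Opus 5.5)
Your Step 2 is a genuine gap, and it is not a technical one: it is essentially the theorem itself. If $(X,B^{+}+M)\to Z$ is an $n$-complement, then $B^{+}\geq B_{n}:=\lfloor B\rfloor+\frac{1}{n}\lfloor (n+1)\{B\}\rfloor$. So $(X,B_{n}+M)\to Z$ is $\mathbb{R}$-complementary, with $B^{+}-B_{n}$ as complement. The claim that ``some $m$ in a bounded set keeps $B_{m}$ $\mathbb{R}$-complementary'' is therefore a consequence of the statement you want, not a preliminary reduction.

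ACC cannot supply such an $m$. ACC for generalized lc thresholds is already available analytically (\cite{hacon-xie-acc-lct}), so algebraization is not the issue. The problem is that ACC needs coefficients in a DCC set, whereas here the coefficients of $B$ are arbitrary reals and the number of components is unbounded. Since $B_{m}$ can exceed $B$ by up to $\frac{1}{m}\{B\}$ on every component, whether $B_{m}$ stays complementary is a simultaneous Diophantine approximation problem on the coefficient vector of $B$.

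Most tellingly, nothing in Steps 2 and 3 uses that $(X,(B+\Delta)+M)\to Z$ is generalized klt rather than generalized lc, and the output of Step 1 is never used afterwards. The paper notes after Corollary \ref{cor--generalized-compl-main-2-intro}, citing \cite[Chapter 11, Example 11]{shokurov-compl}, that the statement fails for lc $\mathbb{R}$-complements. An argument that never uses the klt hypothesis in an essential way cannot be correct.

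Step 3 also has a circularity. The index $n$ given by Theorem \ref{thm--generalized-compl-main-1-intro} for $\Lambda_{0}=\frac{1}{m}\mathbb{Z}\cap[0,1]$ depends on $m$. Along the route of Corollary \ref{cor--compl-extend-2} it is even a multiple of the denominators of $\Lambda_{0}$, so $n\mid m$ would force $n=m$. Hence you cannot afterwards choose $m$ divisible by $n$. Nor does $B'^{+}\geq B'$ give $nB'^{+}\geq n\lfloor B\rfloor+\lfloor (n+1)\{B\}\rfloor$ when merely $m\mid n$: for $m=1$, $n=2$ and a coefficient $b=0.49$ of $B$, the coefficient in $B'$ is $0$, while $\lfloor 3b\rfloor=1$.

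This index bookkeeping is exactly why the paper does not reduce to Theorem \ref{thm--generalized-compl-main-1-intro}. Instead it reruns \cite[Proof of Theorem 8.1]{chhx-compl}:
\begin{enumerate}
\item It works with the coefficient sets $\Gamma(\mathcal{N},\Phi)$ and imports the projective results \cite[Theorem 5.1, Theorem 6.1, Proposition 6.6]{chhx-compl}.
\item It obtains the analytic analogue of \cite[Theorem 7.1]{chhx-compl} from the plt lifting of Corollary \ref{cor--compl-extend-1}. There, Remark \ref{rem--monotonic-compl} gives $B^{+}\geq B$ precisely for index-compatible coefficients $B\in\Gamma(n,\Phi(\mathfrak{R}))$.
\item It produces complements on the base of a fibration via the canonical bundle formula with coefficient control (Theorem \ref{thm--can-bundle-formula-coeff-inherited}) and lifts them back by Proposition \ref{prop--compl-extend-fib}.
\end{enumerate}
None of this has a counterpart in your sketch, whose whole weight rests on the unproved Step 2.
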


One interesting case is $\boldsymbol{\rm M}=0$ and $X=Z$ in these theorems.

\begin{cor}[cf.~{\cite[Corollary 1.9]{birkar-compl}}]\label{cor--generalized-compl-main-1-intro}
Let $d$ be a positive integer and $\Lambda  \subset [0,1] \cap \mathbb{Q}$ a DCC set whose accumulation points are rational numbers. 
Then there exists $n \in \mathbb{Z}_{>0}$, depending only on $d$ and $\Lambda$, satisfying the following. 
Let $(X,B)$ be a $d$-dimensional lc pair such that the coefficients of $B$ belong to $\Lambda$ and $(X,\Delta)$ is klt for some $\mathbb{R}$-divisor $\Delta$ on $X$. 
Let $x \in X$ be a point. 
Then, after shrinking $X$ around $x$, there exists $B^{+} \geq B$ such that $(X,B^{+})$ is lc and $n(K_{X}+B^{+})$ is Cartier. 
In particular, if $x$ is a non-klt center of $(X,B)$, then $n(K_{X}+B)$ is Cartier. 
\end{cor}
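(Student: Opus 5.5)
We deduce the corollary from Theorem \ref{thm--generalized-compl-main-1-intro} by taking $Z=X$, $\pi={\rm id}_X$, and $\boldsymbol{\rm M}=0$, with $p=1$. Then $n$ depends only on $d$ and $\Lambda$, since $p$ is fixed. The generalized pair $(X,B+0)\to X$ is generalized lc because $(X,B)$ is lc, and $B\in\Lambda$ holds by assumption. The nef part $\boldsymbol{\rm M}=0$ is b-Cartier, so $p\boldsymbol{\rm M}$ is b-Cartier. Finally, $-(K_X+B)$ is nef over $X$ trivially, since the identity morphism contracts no curves.

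The only hypothesis requiring an argument is that $X$ is of Fano type over $Z=X$. We need a klt pair $(X,\Gamma)$ with $-(K_X+\Gamma)$ big and nef over $X$, or an equivalent formulation in the paper's definition. Over the identity, every $\mathbb{R}$-Cartier divisor is ample and big$/X$. So it suffices to have a klt pair $(X,\Gamma)$ with $\Gamma\ge0$ and $K_X+\Gamma$ $\mathbb{R}$-Cartier.

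The given $\Delta$ need not be effective, and this is the one point needing care. After shrinking $X$ around $x$, I take a general member of a very ample (locally, Stein) linear system to write $-\Delta\le H$, with $H$ a suitable effective divisor chosen so that $\Delta+H\ge0$. Since $H$ is Cartier, $K_X+\Delta+H\sim K_X+\Delta$ locally. The pair remains klt if the added part has small coefficients and is general: choose $H'\sim_{\mathbb{R}}0$ near $x$, of the form $\Gamma=\Delta+\epsilon(\text{general divisors})$ plus a principal divisor cancelling the negative part of $\Delta$. A cleaner route: $(X,\Delta)$ klt implies $(X,\Delta^{\geq0})$ is klt whenever $K_X+\Delta^{\ge0}$ is $\mathbb{R}$-Cartier. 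Near $x$ I pick an effective Cartier divisor $D$ containing ${\rm Supp}\,\Delta^{<0}$ and set $\Gamma=\Delta+tD+(\text{div}(g))$, where $g$ is a local equation of $D$, so that $\Gamma\sim_{\mathbb{R}}\Delta$. I expect this local positivity manipulation, which is standard and may already be built into the paper's definition of Fano type in the analytic setting, to be the main (mild) obstacle. Then $X$ is of Fano type over itself near $x$.

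Applying Theorem \ref{thm--generalized-compl-main-1-intro} at $z=x$ gives, after shrinking around $x$, an $n$-complement $(X,B^+)$ over $x$ with $B^+\ge B$. By definition of $n$-complement over the identity, $(X,B^+)$ is lc and $n(K_X+B^+)\sim 0$ near $x$; in particular $n(K_X+B^+)$ is Cartier.

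For the last statement, suppose $x$ is a non-klt center of $(X,B)$. Let $E$ be a divisor over $X$ with center containing $x$ and $a(E,X,B)=0$. Since $B^+\ge B$ and $B^+-B$ is $\mathbb{R}$-Cartier near $x$ (as $K_X+B$ is assumed $\mathbb{R}$-Cartier for an lc pair), we have
\[
a(E,X,B^+)=a(E,X,B)-{\rm mult}_E(B^+-B).
\]
Lc-ness of $(X,B^+)$ forces ${\rm mult}_E(B^+-B)=0$. Because $B^+-B\ge0$ is $\mathbb{R}$-Cartier and its pullback has zero multiplicity along $E$, whose center contains $x$, the support of $B^+-B$ avoids $x$. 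After shrinking, $B^+=B$, so $n(K_X+B)$ is Cartier.
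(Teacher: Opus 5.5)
Your main line is the paper's proof: specialize Theorem \ref{thm--generalized-compl-main-1-intro} to $Z=X$, $\pi={\rm id}_X$, $\boldsymbol{\rm M}=0$, $p=1$, and then use that an $n$-complement with $B^{+}\geq B$ must equal $B$ near a non-klt center. Two parts of your write-up need correcting, though neither changes the strategy.

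First, drop the paragraph on non-effective $\Delta$: it is unnecessary, and the argument in it is wrong. In this paper a pair has effective boundary, so $(X,\Delta)$ itself shows that $X$ is of Fano type over itself, since every $\mathbb{R}$-Cartier divisor is ample over the identity. Your ``cleaner route'' goes in the wrong direction. We have $\Delta^{\geq 0}\geq \Delta$, and enlarging the boundary lowers log discrepancies. For example, on $\mathbb{C}^{2}$ take three lines $L_{1},L_{2},L_{3}$ and a smooth curve $C$ through the origin. Then $0.9(L_{1}+L_{2}+L_{3})-C$ is sub-klt, but $(\mathbb{C}^{2},0.9(L_{1}+L_{2}+L_{3}))$ is not even lc. Adding $(1+t)D$ has the same defect. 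In fact, if non-effective $\Delta$ were allowed, no repair would be possible. Let $X$ be the cone over an elliptic curve and $H$ a hyperplane section through the vertex. Then $(X,-tH)$ is sub-klt for $0<t\leq 1$. But $K_{X}$ is Cartier and the exceptional curve $E_{0}$ of the blow-up of the vertex has $a(E_{0},X,0)=0$, so $a(E_{0},X,\Gamma)\leq 0$ for every effective $\Gamma$. Hence $X$ is not of Fano type over itself.

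Second, in the last paragraph choose $E$ whose center on $X$ is exactly $\{x\}$; this is what ``$x$ is a non-klt center'' provides. The vanishing ${\rm mult}_{E}(B^{+}-B)=0$ only shows that the center of $E$ is not contained in ${\rm Supp}\,(B^{+}-B)$. That gives $x\notin {\rm Supp}\,(B^{+}-B)$ only when the center is the point $x$ itself, not merely a subvariety containing $x$. For instance, for $(\mathbb{C}^{2},L)$ with $x\in L$ and $B^{+}=L+\frac{1}{2}L'$, the divisor $L$ has log discrepancy $0$ and contains $x$, yet $B^{+}\neq B$ near $x$.
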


For a related result, see Corollary \ref{cor--compl-fujinoconj}. 

\begin{cor}\label{cor--generalized-compl-main-2-intro}
Let $d$ be a positive integer. 
Then there exists a finite set $\mathcal{N}$ of positive integers, depending only on $d$, satisfying the following. 
Let $(X,B)$ be a $d$-dimensional klt pair, and let $x \in X$ be a point. 
Then, after shrinking $X$ around $x$, there exists an effective $\mathbb{Q}$-divisor $B^{+}$ on $X$ such that $(X,B^{+})$ is lc, $nB^{+} \geq n\lfloor B \rfloor+\lfloor (n+1)\{B\} \rfloor$, and $n(K_{X}+B^{+})$ is Cartier for some $n \in \mathcal{N}$. 
\end{cor}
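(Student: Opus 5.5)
Corollary \ref{cor--generalized-compl-main-2-intro} should follow from Theorem \ref{thm--generalized-compl-main-2-intro} by specializing to $\boldsymbol{\rm M}=0$, $Z=X$, $\pi={\rm id}_X$, with $d$ fixed and $p=1$. Let $\mathcal{N}$ be the finite set given by Theorem \ref{thm--generalized-compl-main-2-intro} for $d$ and $p=1$; it depends only on $d$. Since $\pi$ is the identity, it is trivially a contraction of normal analytic varieties with $\dim X=d$, and $p\boldsymbol{\rm M}=0$ is b-Cartier. We must check that $X$ is of Fano type over itself, and produce the auxiliary divisor $\Delta$.

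\emph{Fano type over itself.} Since $(X,B)$ is klt, $X$ is of Fano type over $X$: $-(K_X+B)$ is trivially big over $X$, and ampleness over the identity is automatic. So $(X,B)$ itself witnesses the Fano type condition.

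\emph{Construction of $\Delta$.} After shrinking $X$ around $x$ to a Stein neighborhood, write $K_X+B\sim_{\mathbb{R}}\sum r_i D_i$ with Cartier divisors $D_i$. On a small Stein neighborhood each $D_i$ is principal, so $K_X+B\sim_{\mathbb{R}}0$. Thus we take $\Delta=0$. Then $(X,B+\Delta)$ is klt and $K_X+B+\Delta\sim_{\mathbb{R}}0$. (If one wants to avoid this shrinking subtlety, choose a general effective $\mathbb{R}$-divisor $\Delta\sim_{\mathbb{R}}-(K_X+B)$ with small coefficients, using global generation over a Stein neighborhood. Klt is then preserved by Bertini-type genericity.)

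\emph{Unwinding the complement.} Theorem \ref{thm--generalized-compl-main-2-intro} then gives, after shrinking around $x$, an $n$-complement $(X,B^{+})$ of $(X,B)$ over $x$ for some $n\in\mathcal{N}$. I would unwind the definition of $n$-complement from the body of the paper (the generalized version of Birkar's definition) in the case $\boldsymbol{\rm M}=0$ and $Z=X$. It says exactly three things:
\begin{itemize}
\item $(X,B^{+})$ is lc near $x$;
\item $n(K_X+B^{+})\sim 0$ over a neighborhood of $x$, so in particular $n(K_X+B^{+})$ is Cartier;
\item $nB^{+}\geq n\lfloor B\rfloor+\lfloor (n+1)\{B\}\rfloor$.
\end{itemize}
Since $B^{+}\geq 0$ follows from the last inequality, and $nB^+$ is integral, $B^+$ is an effective $\mathbb{Q}$-divisor.

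The only point needing care is the trivial-base bookkeeping, namely the Fano type condition and the existence of $\Delta$ over a neighborhood. Both are immediate once $X$ is shrunk to a Stein neighborhood of $x$. I expect no real obstacle; the content lies entirely in Theorem \ref{thm--generalized-compl-main-2-intro}.
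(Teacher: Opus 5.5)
Your proposal is correct and follows the paper's own argument. The paper proves this corollary in one line, as the special case $\boldsymbol{\rm M}=0$, $X=Z$ (with $p=1$) of Theorem \ref{thm--generalized-compl-main-2}. Your checks supply the bookkeeping the paper leaves implicit: $X$ is of Fano type over itself via $(X,B)$, and after shrinking around $x$ one has $K_X+B\sim_{\mathbb{R}}0$, so $\Delta=0$ works. Your unwinding of the definition of $n$-complement is also fine.
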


As shown in \cite[Chapter 11, Example 11]{shokurov-compl}, we cannot extend Theorem \ref{thm--generalized-compl-main-2-intro} to the lc case. 

The contents of this paper are as follows: 
In Section \ref{sec--preliminaries}, we collect definitions and basic properties of generalized pairs. 
In Section \ref{sec--gen-can-bundle-formula}, we prove Theorem \ref{thm--gen-can-bundle-formula-full-intro}. 
In Section \ref{sec--compl}, we study the boundedness of complements. 

\begin{ack}
The author was partially supported by JSPS KAKENHI Grant Number JP23K20787. 
The author would like to express his deep gratitude to Professor Osamu Fujino for answering questions, carefully reading an earlier draft, and giving valuable advice. The author is grateful to Professors Benjamin Bakker, Guodu Chen, Makoto Enokizono, Stefano Filipazzi, Christopher D. Hacon, Jingjun Han, Yang He, Mirko Mauri, Mihai P\u{a}un, Jacob Tsimerman, and Lingyao Xie for answering questions.
\end{ack}

\section{Preliminaries}\label{sec--preliminaries}

Throughout this paper, complex analytic spaces are always assumed to be Hausdorff and second countable. 
{\em Analytic varieties} are reduced and irreducible complex analytic spaces. 
For notations and definitions used in this paper, see \cite{fujino-analytic-bchm}, \cite{eh-analytic-mmp}, and  \cite{eh-semistablereduction}. 

\subsection{Divisors and morphisms}
In this subsection we collect definitions and notations of divisors and morphisms. 

\begin{itemize}
\item
Let $\pi \colon X \to Z$ be a projective morphism from a normal analytic variety to a complex analytic space. 
We will freely use the definitions of $\pi$-nef $\mathbb{R}$-divisor, $\pi$-ample $\mathbb{R}$-divisor, $\pi$-semi-ample $\mathbb{R}$-Cartier divisor, $\pi$-base point free $\mathbb{R}$-Cartier divisor, and $\pi$-big $\mathbb{R}$-divisor as in \cite{fujino-analytic-bchm}. 
The real vector space spanned by the prime divisors on $X$ is denoted by ${\rm WDiv}_{\mathbb{R}}(X)$ (\cite[Definition 2.34]{fujino-analytic-bchm}). 

\item 
A {\em contraction} $f\colon X\to Y$ is a projective morphism of analytic varieties such that $f_{*}\mathcal{O}_{X}\cong \mathcal{O}_{Y}$. 
For an analytic variety $X$ and an $\mathbb{R}$-divisor $D$ on $X$, a {\em log resolution of} $(X,D)$ is a proper bimeromorphic morphism $g \colon V\to X$ from a smooth analytic variety $V$ such that $g$ is projective over a neighborhood of any compact subset of $X$, the exceptional locus ${\rm Ex}(g)$ of $g$ is of pure codimension one, and ${\rm Ex}(g)\cup {\rm Supp}\,g_{*}^{-1}D$ is a simple normal crossing divisor. 
We note that a log resolution $g \colon V\to X$ of $(X,D)$ always exists (\cite[Theorem 13.2]{resolution-1}). 

\item
We refer to \cite[Definition 2.30]{fujino-analytic-bchm} for {\em meromorphic map} and {\em bimeromorphic map}. 
We say that a bimeromorphic map $\phi\colon X \dashrightarrow X'$ of analytic varieties is a {\em bimeromorphic contraction} if $\phi^{-1}$ does not contract any divisor. 
\end{itemize}

\begin{defn}[Property (P), see {\cite{fujino-analytic-bchm}}]\label{defn--property(P)}
Let $\pi \colon X \to Y$ be a projective morphism of complex analytic spaces, and let $W \subset Y$ be a compact subset. 
In this paper, we will use the following conditions:
\begin{itemize}
\item[(P1)]
$X$ is a normal analytic variety,
\item[(P2)]
$Y$ is a Stein space,
\item[(P3)]
$W$ is a Stein compact subset of $Y$, and
\item[(P4)]
$W \cap Z$ has only finitely many connected components for any analytic subset $Z$
which is defined over an open neighborhood of $W$. 
\end{itemize}
We say that {\em $\pi \colon X \to Y$ and $W \subset Y$ satisfy (P)} if the conditions (P1)--(P4) hold. 
\end{defn}

\begin{defn}
Let $a$ be a real number. 
We define the {\em round up} $\lceil a \rceil$ to be the smallest integer not less than $a$, and we define the {\em round down} $\lfloor a \rfloor$ to be the largest integer not greater than $a$. 
It is easy to see that $\lfloor a \rfloor=- \lceil -a \rceil$. 

Let $D$ be an $\mathbb{R}$-divisor on an analytic variety and $D=\sum_{i}d_{i}D_{i}$ the decomposition of $D$ into distinct prime divisors. 
We define
\begin{equation*}
\begin{split}
\lceil D\rceil:=\sum_{i}\lceil d_{i} \rceil D_{i}, \qquad \lfloor D \rfloor:= \sum_{i}\lfloor d_{i} \rfloor D_{i}, \qquad {\rm and} \qquad \{D\}:=D-\lfloor D \rfloor.
\end{split}
\end{equation*}
We also define
\begin{equation*}
D^{<0} :=\sum_{d_{i}<0}d_{i}D_{i}, \quad 
D^{<1} :=\sum_{d_{i}<1}d_{i}D_{i}, \quad 
D^{= 1}:=\sum_{d_{i}= 1} D_{i}, \quad 
{\rm and} \quad 
D^{\geq 1}:=\sum_{d_{i}\geq 1}d_{i} D_{i}. 
\end{equation*}
By definition, we have $\lfloor D \rfloor = -\lceil -D \rceil$. 
\end{defn}

\begin{defn}[{\cite[Definition 2.2]{chhx-compl}}]\label{defn--hyperstandardset}
Let $\mathfrak{R}\subset \mathbb{R}$ be a subset. 
Throughout this paper, $\Phi(\mathfrak{R})$ is defined by
$$\Phi(\mathfrak{R})=\left\{1-\frac{r}{l}\;\middle| \;r\in \mathfrak{R}, \, l\in \mathbb{Z}_{>0}\right\},$$
and we call it a {\em hyperstandard set associated to $\mathfrak{R}$}. 

Let $\mathcal{N} \subset \mathbb{Z}_{>0}$ be a (possibly empty) finite subset and $\Phi:=\Phi(\mathfrak{R})$ a hyperstandard set associated to $\mathfrak{R} \subset \mathbb{R}\cap[0,1]$. 
If $\mathcal{N}$ is not empty, then we define
$$\Gamma(\mathcal{N}, \Phi):=\left\{1-\frac{r}{l}+\frac{1}{l}\sum_{n \in \mathcal{N}}\frac{m_{n}}{n+1}\;\middle| \;r\in \mathfrak{R}, \, l\in \mathbb{Z}_{>0}, \, m_{n} \in \mathbb{Z}_{\geq 0} \right\}\cap[0,1],$$
and otherwise we define $\Gamma(\mathcal{N}, \Phi):=\Phi$. 
When $\mathcal{N}=\{n\}$ for some positive integer $n$, we denote $\Gamma(\{n\}, \Phi)$ by $\Gamma(n, \Phi)$ for simplicity. 
\end{defn}

\begin{defn}
Let $\mathfrak{R} \subset \mathbb{R}$ be a subset and $D$ an $\mathbb{R}$-divisor on an analytic variety. 
Then $D \in \mathfrak{R}$ denotes the condition that every coefficient of $D$ belongs to $\mathfrak{R}$. 
\end{defn}

We define the notion of Fano type for contractions. 
The definition of klt pairs used below is the usual one. 
For the definition of singularities of pairs, see \cite{fujino-analytic-bchm} and  \cite{eh-analytic-mmp}.  

\begin{defn}\label{defn--Fano-type}
Let $\pi \colon X \to Z$ be a projective morphism from a normal analytic variety $X$ to a complex analytic space $Z$. 
We say that $X$ is {\em of Fano type over $Z$} if $\pi$ is a contraction and there exists an $\mathbb{R}$-divisor $\Delta$ on $X$ such that $(X,\Delta)$ is a klt pair and $-(K_{X}+\Delta)$ is $\pi$-ample. 
We note that if $X$ is of Fano type over $Z$, then $\pi^{-1}(U)$ is of Fano type over $U$ for any open subset $U \subset Z$. 
\end{defn}

\begin{lem}\label{lem--Fano-type-bimero-cont}
Let $\pi \colon X \to Z$ be a contraction such that $X$ is of Fano type over $Z$, and let $f\colon X \dashrightarrow X'$ be a  bimeromorphic contraction over $Z$, where $X'$ is projective over $Z$. 
Let $W \subset Z$ be a Stein compact subset. 
Then $X'$ is of Fano type over an open neighborhood of $W$.
\end{lem}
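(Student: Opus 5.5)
The plan is to follow the standard algebraic argument: push forward a klt boundary on $X$ that has anti-ample log canonical class, after first perturbing it so that it is big and effective in a controlled way. By assumption there is an $\mathbb{R}$-divisor $\Delta$ on $X$ such that $(X,\Delta)$ is klt and $-(K_X+\Delta)$ is $\pi$-ample. We replace $Z$ by a relatively compact Stein open neighborhood of $W$. Then $\pi$ and a slightly larger Stein compact subset satisfy (P), so we may write
\[
-(K_X+\Delta)\sim_{\mathbb{R},Z} A,
\]
where $A\ge 0$ is a general $\pi$-ample $\mathbb{R}$-divisor; genericity is available after shrinking, by Bertini-type arguments over a Stein base. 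Let $A'$ be the strict transform on $X$ of a general $\pi'$-ample $\mathbb{R}$-divisor $H'$ on $X'$, where $\pi'\colon X'\to Z$. Since $A$ is ample over $Z$, for small $\epsilon>0$ we can write
\[
A\sim_{\mathbb{R},Z}\epsilon A'+G
\]
with $G\ge 0$ general, after shrinking $Z$ around $W$. Set $\Gamma:=\Delta+\epsilon A'+G$. Then $K_X+\Gamma\sim_{\mathbb{R},Z}0$, and $(X,\Gamma)$ is klt, because the added divisors are general with small coefficients. (For $G$, use that $A-\epsilon A'$ is still ample and take $G$ to be a general member with small multiplicities.)

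Next let $\Gamma':=f_*\Gamma$. Since $f$ is a bimeromorphic contraction and $K_X+\Gamma\sim_{\mathbb{R},Z}0$, we get $K_{X'}+\Gamma'\sim_{\mathbb{R},Z}0$. Moreover, on a common resolution $p\colon V\to X$, $q\colon V\to X'$ we have $p^*(K_X+\Gamma)=q^*(K_{X'}+\Gamma')$, since both sides are $\mathbb{R}$-linearly trivial over $Z$ and their difference is $q$-exceptional, hence zero by the negativity lemma. Therefore the discrepancies of $(X',\Gamma')$ coincide with those of $(X,\Gamma)$, and $(X',\Gamma')$ is klt. A subtlety is that $\Gamma$ may have negative coefficients if $\Delta$ does. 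This is harmless: klt only requires $\Gamma'$ to be a boundary with coefficients $<1$, and we can first replace $\Delta$ by an effective one using the ampleness of $-(K_X+\Delta)$ in the usual way. Also, $\Gamma'\ge \epsilon H'$, where $H'=f_*A'$ is $\pi'$-ample.

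Finally set $\Delta':=\Gamma'-\epsilon H'$. Then
\[
-(K_{X'}+\Delta')\sim_{\mathbb{R},Z}\epsilon H'
\]
is ample over $Z$, and $(X',\Delta')$ is klt because $\Delta'\le\Gamma'$, using that $H'$ is a general member so that the subtraction does not create problems. It remains to check that $X'\to Z$ is a contraction: $\pi'$ is projective by assumption, and $\pi'_*\mathcal{O}_{X'}=\mathcal{O}_Z$ because $X'$ is normal and bimeromorphic to $X$ over $Z$. The step I expect to require the most care is the analytic bookkeeping: relative $\mathbb{R}$-linear equivalence, ampleness, and general members are only available after shrinking around the Stein compact set $W$, and $Z$ itself is not Stein. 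So all choices must be made over a Stein neighborhood of $W$ on which (P) holds, using the results of \cite{fujino-analytic-bchm}.
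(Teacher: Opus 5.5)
There is a genuine gap in the step where you write $A\sim_{\mathbb{R},Z}\epsilon A'+G$ by using that $A-\epsilon A'$ is still ample. This step assumes that $A'=f^{-1}_{*}H'$ is $\mathbb{R}$-Cartier on $X$. However, $X$ is not assumed $\mathbb{Q}$-factorial: the lemma is stated, and used in Theorem \ref{thm--mmp-Fano-type}, without that hypothesis. In the basic cases $A'$ is in fact not $\mathbb{R}$-Cartier. Suppose $f^{-1}\colon X'\to X$ is a small projective bimeromorphic morphism that is not an isomorphism, e.g.\ $X$ a nodal quadric threefold and $X'$ one of its small resolutions. Then $f^{-1}_{*}H'$ is never $\mathbb{R}$-Cartier. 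Otherwise its pullback to $X'$ would be $H'$ itself, since $f^{-1}$ is small, and then $H'$ would be numerically trivial on the curves contracted by $f^{-1}$, contradicting ampleness.

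Consequently three things are not justified: the ampleness of $A-\epsilon A'$, the existence of the general member $G$, and the klt property of $(X,\Delta+\epsilon A'+G)$. Even when $A'$ is $\mathbb{R}$-Cartier, the klt property for $\epsilon A'$ must come from $\epsilon$ being small, not from genericity. The strict transforms on $X$ of members of $|mH'|$ form a linear system with base points along the indeterminacy locus of $f$. Your remaining steps are fine: pushing forward $K_X+\Gamma\sim_{\mathbb{R},Z}0$, comparing pullbacks on a common resolution via the negativity lemma, and subtracting $\epsilon H'$.

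The paper avoids this by doing the perturbation on a common resolution $g\colon Y\to X$, $g'\colon Y\to X'$, where $g'^{*}H'$ is Cartier. Since $g^{*}A$ is only nef and big over $Z$, ampleness is restored with an effective $g$-exceptional Cartier divisor $F$ such that $-F$ is $g$-ample. For small $\epsilon,\delta>0$, and after shrinking around $W$, the pair $(Y,\Delta_{Y}+\epsilon F)$ is sub-klt and $g^{*}A-\epsilon F-\delta g'^{*}H'$ is ample over $Z$.

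Take a general $G_{Y}$ in the $\mathbb{R}$-linear system of that divisor and a general choice of $H'$. Then $(Y,\Delta_{Y}+\epsilon F+G_{Y}+\delta g'^{*}H')$ is sub-klt and $\mathbb{R}$-linearly trivial over $Z$. Hence it equals $g'^{*}(K_{X'}+B'+\delta H')$ with $B'=f_{*}\Delta+g'_{*}G_{Y}$. Here $g'_{*}F=0$ because $f$ is a bimeromorphic contraction, so $g$-exceptional divisors are $g'$-exceptional.

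Alternatively, you can keep your argument but first replace $X$ by a small $\mathbb{Q}$-factorial modification, which exists after shrinking $Z$ around $W$ by \cite{fujino-analytic-bchm}. On it, $-(K+\tilde{\Delta})$ is nef and big over $Z$, and a Kodaira-lemma perturbation restores the Fano type condition. On that model the strict transform of $H'$ is $\mathbb{R}$-Cartier, and your construction goes through with $\epsilon$ small.
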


\begin{proof}
Pick $\Delta$ such that $(X,\Delta)$ is klt and $A:=-(K_{X}+\Delta)$ is ample over $Z$. 
By shrinking $Z$ around $W$, we can find a common resolution $g \colon Y \to X$ and $g' \colon Y \to X'$ of $f$ and an effective Cartier divisor $F$ on $Y$ such that ${\rm Supp}\,F={\rm Ex}(g)$ and $-F$ is $g$-ample. 
Let $H'$ be a $\mathbb{Q}$-Cartier divisor on $X'$ that is ample over $Z$. 
Shrinking $Z$ around $W$, we can find sufficiently small positive rational numbers $\epsilon$ and $\delta$ such that 
\begin{itemize}
\item
defining $\Delta_{Y}$ by $K_{Y}+\Delta_{Y}=g^{*}(K_{X}+\Delta)$, then $(Y,\Delta_{Y}+\epsilon F)$ is sub-klt, and 
\item
$-\epsilon F + g^{*}A-\delta g'^{*}H'$ is ample over $Z$. 
\end{itemize}
By Bertini's theorem and shrinking $Z$ around $W$, we can find an effective $\mathbb{R}$-divisor $G_{Y} \sim_{\mathbb{R},\,Z}-\epsilon F + g^{*}A-\delta g'^{*}H'$ on $Y$ such that $(Y,\Delta_{Y}+\epsilon F+G_{Y})$ is sub-klt. 
Replacing $H'$ if necessary, we may assume that $(Y,\Delta_{Y}+\epsilon F+G_{Y}+\delta g'^{*}H')$ is sub-klt. 
Now
\begin{equation*}
\begin{split}
K_{Y}+\Delta_{Y}+\epsilon F+G_{Y}+\delta g'^{*}H'\sim_{\mathbb{R},\,Z}K_{Y}+\Delta_{Y}+g^{*}A=g^{*}(K_{X}+\Delta+A) = 0.
\end{split}
\end{equation*}
Thus, putting $B':=g'_{*}(\Delta_{Y}+\epsilon F+G_{Y})=f_{*}\Delta+g'_{*}G_{Y}$, we have $K_{X'}+B'+\delta H' \sim_{\mathbb{R},\,Z}0$ and 
$$K_{Y}+\Delta_{Y}+\epsilon F+G_{Y}+\delta g'^{*}H'=g'^{*}(K_{X'}+B'+\delta H').$$ 
Therefore $(X',B'+\delta H')$ is klt. 
Then $(X',B')$ is klt, and $-(K_{X'}+B') \sim_{\mathbb{R},\,Z}\delta H'$ is ample over $Z$. 
This shows that $X'$ is of Fano type over $Z$. 
\end{proof}

The following result is useful since we do not need to shrink the base after focusing on a certain $\mathbb{R}$-Cartier divisor.

\begin{thm}\label{thm--mmp-Fano-type}
Let $\pi \colon X \to Z$ be a contraction such that $X$ is of Fano type over $Z$, and let $W \subset Z$ be a compact subset such that $\pi$ and $W$ satisfy (P). 
Then, there exists a Stein open neighborhood $\tilde{Z}$ of $W$ with the inverse image $\tilde{X}:=\pi^{-1}(\tilde{Z})$ satisfying the following property. 
For any $\mathbb{R}$-Cartier divisor $D$ on $X$ with $\tilde{D}:=D|_{\tilde{X}}$, there exists a bimeromorphic contraction $f \colon \tilde{X} \dashrightarrow \tilde{X}'$ over $\tilde{Z}$, where $\tilde{X}'$ is projective over $\tilde{Z}$, such that if we put $\tilde{D}':=f_{*}\tilde{D}$ then $\tilde{D}'$ is $\mathbb{R}$-Cartier and 
\begin{itemize}
\item
$\tilde{X}'$ is of Fano type over $\tilde{Z}$, 
\item
for any common resolution $g \colon V \to \tilde{X}$ and $g' \colon V \to \tilde{X}'$ of $f$, we have
 $$g^{*}\tilde{D}=g'^{*}\tilde{D}'+E$$ 
 for some effective $\mathbb{R}$-divisor $E$ on $V$ and ${\rm Supp}\,E$ contains the strict transform of every $f$-exceptional prime divisor on $\tilde{X}$, and
\item
$\tilde{D}'$ is semi-ample over $\tilde{Z}$ or there exists a contraction $\tilde{X}' \to Y$ over $\tilde{Z}$ such that ${\rm dim}\,\tilde{X}' > {\rm dim}\,Y$ and $-\tilde{D}'$ is ample over $Y$.  
\end{itemize}
\end{thm}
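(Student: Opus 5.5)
Since $X$ is of Fano type over $Z$, I would fix once and for all a klt pair $(X,\Delta)$ with $A:=-(K_{X}+\Delta)$ ample over $Z$. Then I would choose the Stein neighborhood $\tilde{Z}$ of $W$ so that the analytic MMP of \cite{eh-analytic-mmp} applies over $\tilde{Z}$ for all klt pairs at once. Concretely, I would shrink $Z$ to a Stein open neighborhood $\tilde{Z}$ of $W$ on which $\pi$ has finite-dimensional relative Picard group and the cone theorem, contraction theorem, and existence/termination of flips with scaling hold for $\tilde{\pi}\colon \tilde{X}\to\tilde{Z}$. Here (P4) and finiteness of $N^{1}(\tilde{X}/\tilde{Z})$ are what guarantee that no further shrinking is needed afterwards. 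The key point is that $\tilde{Z}$ is chosen depending only on $(X,\Delta)$ and not on $D$, since the MMP results for pairs of the form $(X,\Delta')$ with $\Delta'$ in a bounded family are uniform on such a neighborhood. In the analytic setting this uniformity typically holds after fixing a compact $W$ satisfying (P) and a slightly larger Stein compact neighborhood.

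Given $D$, I would proceed by perturbation. Since $A$ is ample over $\tilde{Z}$, for small $\epsilon>0$ the divisor $\epsilon D+A$ is ample over $\tilde{Z}$ after choosing $\epsilon$ depending on $D$; this is fine because $\tilde{Z}$ is now relatively compact in $Z$ and $A$ is ample over a neighborhood of its closure. By Bertini I pick $0\le H\sim_{\mathbb{R},\tilde{Z}}\epsilon D+A$ with $(\tilde{X},\Delta+H)$ klt. Then
\[
K_{\tilde{X}}+\Delta+H\sim_{\mathbb{R},\tilde{Z}}\epsilon \tilde{D}.
\]
I would run a $(K_{\tilde{X}}+\Delta+H)$-MMP over $\tilde{Z}$ with scaling of an ample divisor. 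This is a $\tilde{D}$-MMP. Because $X$ is of Fano type, $\Delta+H$ can be arranged so that the relevant boundary is big, so the MMP terminates over $\tilde{Z}$ by the analytic BCHM-type results \cite{fujino-analytic-bchm}, \cite{eh-analytic-mmp}.

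The MMP ends either with a good minimal model, where $\tilde{D}'$ is nef and hence semi-ample by the basepoint-free theorem (using the klt boundary and bigness of $-(K+\Delta')$), or with a Mori fiber space $\tilde{X}'\to Y$ on which $-\tilde{D}'$ is ample over $Y$. Each step is $\tilde{D}$-negative, so the negativity lemma gives $g^{*}\tilde{D}=g'^{*}\tilde{D}'+E$ with $E\ge0$ and support containing every divisor contracted by $f$. Finally, $\tilde{X}'$ is of Fano type by Lemma \ref{lem--Fano-type-bimero-cont}. Its proof needs to be checked to work without further shrinking; alternatively, one can track directly that $(\tilde{X}',f_{*}\Delta)$ stays klt with $-(K+f_{*}\Delta)$ big and nef up to perturbation.

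I expect the main obstacle to be the uniformity: choosing $\tilde{Z}$ independent of $D$. The standard analytic MMP results require shrinking around $W$ depending on the pair. The approach I would try first is to use that Fano type makes $\tilde{X}$ a Mori dream space over $\tilde{Z}$. There are finitely many birational models and chambers, obtained via finite generation of a Cox-type ring over a neighborhood of $W$. Each of these models is constructed after a single shrinking, and every $D$ falls into one of the finitely many chambers.
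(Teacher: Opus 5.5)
Your proposal has a genuine gap, and it is the one you flag yourself: making $\tilde{Z}$ independent of $D$. You assert that one can shrink once to a Stein neighborhood $\tilde{Z}$ over which the cone and contraction theorems, flips, termination with scaling and the basepoint-free theorem all hold over $\tilde{Z}$ itself, uniformly in the pair. That is not a result available in \cite{fujino-analytic-bchm} or \cite{eh-analytic-mmp}. Those MMPs run over $Z$ \emph{around} a compact set, and each step may shrink the base to a neighborhood that depends on the divisor. The same holds for Bertini and for Lemma \ref{lem--Fano-type-bimero-cont}. The relevant finiteness there is of $N^{1}$ relative to $W$, not of $N^{1}(\tilde{X}/\tilde{Z})$. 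Concretely, your $(K_{\tilde{X}}+\Delta+H)$-MMP only ends with $\tilde{D}'$ nef over $W$. The basepoint-free theorem then gives semi-ampleness only over some $D$-dependent neighborhood of $W$, not over the fixed $\tilde{Z}$. The boundaries $\Delta+H$ do not form a bounded family as $D$ varies, so no uniformity comes for free. The Mori dream space/Cox ring fallback would need a relative analytic finite generation and chamber decomposition valid on a fixed open set, which is not available and meets the same shrinking problem.

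The missing idea is a nesting trick. By \cite[Lemma 2.16]{fujino-analytic-bchm}, choose $W\subset\tilde{Z}\subset\tilde{W}\subset Z^{\circ}\subset W^{\circ}\subset Z$, where $\tilde{Z},Z^{\circ}$ are Stein open and $\tilde{W},W^{\circ}$ are Stein compact satisfying (P4), and take this $\tilde{Z}$. Given $D$, run the $D$-MMP around the \emph{larger} compact set $W^{\circ}$. It lives over some Stein $Z_{D}\supset W^{\circ}$ that may depend on $D$, but since $\tilde{Z}\subset W^{\circ}$ its restriction over $\tilde{Z}$ is always defined. That restriction gives the negativity statement. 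Lemma \ref{lem--Fano-type-bimero-cont}, applied around $W^{\circ}$, gives Fano type over a neighborhood of $W^{\circ}$, hence over $\tilde{Z}$. If the output is not a Mori fiber space, then $D'$ is nef over $W^{\circ}$, hence over the open set $Z^{\circ}$. Applying \cite[Theorem 8.1]{fujino-analytic-bchm} to the compact set $\tilde{W}\subset Z^{\circ}$ gives semi-ampleness over a neighborhood of $\tilde{W}$, hence over $\tilde{Z}$. The two buffer layers absorb all $D$-dependent shrinking. Your perturbation by $A$, the negativity lemma and the Mori fiber space alternative are otherwise fine once placed in this framework.
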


\begin{proof}
By \cite[Lemma 2.16]{fujino-analytic-bchm}, we get
$$W \subset \tilde{Z} \subset \tilde{W} \subset Z^{\circ} \subset W^{\circ} \subset  Z,$$
where $\tilde{Z}$ and $Z^{\circ}$ are Stein open subsets and $\tilde{W}$ and $W^{\circ}$ are Stein compact subsets that satisfy (P4) in Definition \ref{defn--property(P)}. 
We show that $\tilde{Z}$ is the desired Stein open neighborhood of $W$. 
For any $\mathbb{R}$-Cartier divisor $D$ on $X$, using \cite{fujino-analytic-bchm} we can find a Stein open subset $Z_{D} \subset Z$ containing $W^{\circ}$ and a finite sequence of steps of a $D$-MMP over $Z_{D}$
$$f_{D}\colon \pi^{-1}(Z_{D})=:X_{D} \dashrightarrow X'_{D}$$
such that $D':=f_{D*}(D|_{X_{D}})$ is nef over $W^{\circ}$ or $X'_{D}$ has the structure $X'_{D} \to Y_{D}$ of a Mori fiber space over $Z_{D}$ for $D'$. 
Then $\tilde{Z}$ satisfies the second condition of Theorem \ref{thm--mmp-Fano-type}, and $\tilde{Z}$ also satisfies the first condition of Theorem \ref{thm--mmp-Fano-type} since Lemma \ref{lem--Fano-type-bimero-cont} shows that $X'_{D}$ is of Fano type over an open neighborhood of $W^{\circ}$. 
Furthermore, the third condition of Theorem \ref{thm--mmp-Fano-type} is satisfied if there exists the structure $X'_{D} \to Y_{D}$ of a Mori fiber space over $Z_{D}$ for $D'$. 
Thus, we assume that $D'$ is nef over $W^{\circ}$. 
In this case, $D'$ is nef over $Z^{\circ}$, and Lemma \ref{lem--Fano-type-bimero-cont} shows that $X'_{D}$ is of Fano type over $Z^{\circ}$. 
By applying \cite[Theorem 8.1]{fujino-analytic-bchm} to $\tilde{W} \subset Z^{\circ}$, we see that $D'$ is semi-ample over a neighborhood of $\tilde{W}$. 
In particular, we see that if $D'$ is nef over $W^{\circ}$ then $D'$ is semi-ample over $\tilde{Z}$. 
Hence, the third condition of Theorem \ref{thm--mmp-Fano-type} is satisfied for this $\tilde{Z}$. 
We complete the proof. 
\end{proof}

\subsection{Generalized pairs}

In this subsection we define b-divisors and generalized pairs. 
After that, we introduce some basic results.  

\begin{defn}[b-divisor]\label{defn--b-divisor}
Let $X$ be a normal analytic variety. 
Consider all proper bimeromorphic morphisms $f \colon Y \to X$ from normal analytic varieties $Y$ and the sets ${\rm WDiv}_{\mathbb{R}}(Y)$ of $\mathbb{R}$-divisors on $Y$. 
For two proper bimeromorphic morphisms $Y_{1} \to X$ and $Y_{2} \to X$ such that the induced map $\phi \colon Y_{1} \dashrightarrow Y_{2}$ is a bimeromorphic contraction, we define the map
$$\phi_{*} \colon {\rm WDiv}_{\mathbb{R}}(Y_{1}) \longrightarrow {\rm WDiv}_{\mathbb{R}}(Y_{2})$$
by taking the strict transform. 
Then an {\em $\mathbb{R}$-b-divisor} $\boldsymbol{\rm D}$ on $X$ is an element of the inverse limit
$$\boldsymbol{\rm WDiv}_{\mathbb{R}}(X):=\underset{Y \to X}{\rm lim}{\rm WDiv}_{\mathbb{R}}(Y)$$
defined with all such $\phi_{*}$ as above. 
Similarly, a {\em $\mathbb{Q}$-b-divisor} on $X$ is an element of the inverse limit $\underset{Y \to X}{\rm lim}{\rm WDiv}_{\mathbb{Q}}(Y)$, where ${\rm WDiv}_{\mathbb{Q}}(Y)$  is the set of $\mathbb{Q}$-divisors on $Y$. 

For an $\mathbb{R}$-b-divisor $\boldsymbol{\rm D}$ on $X$ and a proper bimeromorphic morphism $Y \to X$ from a normal analytic variety $Y$, the image of $\boldsymbol{\rm D}$ by the projection $\boldsymbol{\rm WDiv}_{\mathbb{R}}(X) \to {\rm WDiv}_{\mathbb{R}}(Y)$ is called the {\em trace of $\boldsymbol{\rm D}$ on $Y$}, and it is denoted by $\boldsymbol{\rm D}_{Y}$. 

Let $\boldsymbol{\rm D}$ be an $\mathbb{R}$-b-divisor on $X$. 
We say that $\boldsymbol{\rm D}$ is {\em $\mathbb{R}$-b-Cartier} if there exist a proper bimeromorphic morphism $f \colon Y \to X$ from a normal analytic variety $Y$ and an $\mathbb{R}$-Cartier divisor $D$ on $Y$ satisfying the following: 
For any normal analytic variety $Y'$ together with a proper bimeromorphic morphism $f' \colon Y' \to X$, taking  proper bimeromorphic morphisms $g \colon V \to Y$ and $g' \colon V \to Y'$ from a normal analytic variety $V$ such that $f \circ g = f' \circ g'$, we have $\boldsymbol{\rm D}_{Y'}=g'_{*}g^{*}D$. 
In this situation, we say that {\em $\boldsymbol{\rm D}$ descends to $Y$}, we say that $\boldsymbol{\rm D}$ is the {\em Cartier closure of $D$}, and we use the notation $\overline{D}$ to denote $\boldsymbol{\rm D}$. 
When $\boldsymbol{\rm D}=\overline{D}$ and $D$ is $\mathbb{Q}$-Cartier, we say that $\boldsymbol{\rm D}$ is {\em $\mathbb{Q}$-b-Cartier}. 
If $D$ is Cartier, then we say that $\boldsymbol{\rm D}$ is {\em b-Cartier}. 

Suppose that $X$ has a projective morphism $X\to Z$ to a complex analytic space $Z$. 
We say that $\boldsymbol{\rm D}$ is {\em b-nef$/Z$} if $\boldsymbol{\rm D}$ is the Cartier closure of an $\mathbb{R}$-Cartier divisor that is nef over $Z$.   
\end{defn}

\begin{defn}[Restriction of $\mathbb{R}$-b-Cartier $\mathbb{R}$-b-divisor]\label{defn--rest-b-divisor}
Let $X$ be a normal analytic variety and $\boldsymbol{\rm D}$ an $\mathbb{R}$-b-Cartier $\mathbb{R}$-b-divisor on $X$. 
Let $U \subset X$ be an open subset. 
Then, we define the {\em restriction of $\boldsymbol{\rm D}$ over $U$}, which is an element of $\boldsymbol{\rm WDiv}_{\mathbb{R}}(U)$ and we denote by $\boldsymbol{\rm D}|_{U}$, as follows: 
Pick a proper bimeromorphic morphism $f \colon Y \to X$ from a normal analytic variety $Y$ such that $\boldsymbol{\rm D}=\overline{\boldsymbol{\rm D}_{Y}}$. 
Then we define $\boldsymbol{\rm D}|_{U}$ to be the Cartier closure of $(\boldsymbol{\rm D}_{Y})|_{f^{-1}(U)}$. We remark that the restriction $\boldsymbol{\rm D}|_{U}$ is well defined. 
In other words, $\boldsymbol{\rm D}|_{U}$ does not depend on the choice of $f \colon Y \to X$.  
\end{defn}

\begin{rem}\label{rem--rest-b-divisor}
It is not easy to give a reasonable definition of the restriction $\boldsymbol{\rm D}|_{U}$ for an $\mathbb{R}$-b-divisor that is not $\mathbb{R}$-b-Cartier. 
This is because a proper bimeromorphic morphism $V \to U$ is not necessarily extended to a proper bimeromorphic morphism $Y \to X$.    
\end{rem}

The following statement is important to define the moduli $\mathbb{R}$-b-divisors associated to generalized lc-trivial fibrations in Definition \ref{defn--can-bundle-formula}. 

\begin{thm}\label{thm--welldef-can-b-div}
Let $\pi \colon X \to S$ be a projective morphism from a normal analytic variety $X$ to a complex analytic space $S$, and let $W \subset S$ be a Stein compact subset. 
Then, after shrinking $S$ around $W$ suitably, we can globally define the canonical b-divisor $\boldsymbol{\rm K}$ on $X$, which is a $\mathbb{Z}$-b-divisor whose traces are the canonical divisors.
In other words, for any proper bimeromorphic morphism $X' \to X$ from a normal analytic variety $X'$, the canonical divisor $K_{X'}$ on $X'$ is well defined as a Weil divisor on $X'$, and furthermore, we can define these canonical divisors so that for any two proper bimeromorphic morphisms $X_{1} \to X$ and $X_{2} \to X$ such that the induced bimeromorphic map $\phi \colon X_{1} \dashrightarrow X_{2}$ is a bimeromorphic contraction, we have $\phi_{*}K_{X_{1}}=K_{X_{2}}$ as Weil divisors on $X_{2}$. 
\end{thm}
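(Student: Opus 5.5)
The plan is to fix a global canonical divisor $K_{X}$ on $X$ after shrinking $S$ around $W$, and then define $K_{X'}$ on every proper bimeromorphic model $X' \to X$ by transporting the choice through a rational top-form. The point is that on a normal analytic variety the canonical divisor is only defined up to linear equivalence, and in general a global meromorphic section of $\omega_{X}$ need not exist. Once such a section exists, the divisors of its pullbacks give compatible canonical divisors on all models.

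The first step is to produce a nonzero meromorphic section $\omega$ of the canonical sheaf $\omega_{X}$ (equivalently, of $\Omega^{d}$ on the smooth locus) defined over $\pi^{-1}$ of a neighborhood of $W$. Since $\pi$ is projective and $W$ is Stein compact, after shrinking $S$ to a Stein open neighborhood of $W$ we can pick a $\pi$-ample line bundle $\mathcal{H}$ on $X$. For $m \gg 0$, both $\omega_{X}\otimes \mathcal{H}^{\otimes m}$ and $\mathcal{H}^{\otimes m}$ are $\pi$-globally generated over a neighborhood of $W$. Here we use Cartan's Theorem A on the Stein base together with relative Serre vanishing and generation over a compact subset, as in \cite{fujino-analytic-bchm}. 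We then choose nonzero sections $s \in H^{0}(X,\omega_{X}\otimes\mathcal{H}^{m})$ and $t\in H^{0}(X,\mathcal{H}^{m})$, and set $\omega:=s/t$, a nonzero meromorphic section of $\omega_{X}$. We put $K_{X}:={\rm div}(\omega)$, a Weil divisor on the normal variety $X$.

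Next, for any proper bimeromorphic $h\colon X'\to X$ with $X'$ normal, $h$ is an isomorphism over a dense Zariski open subset. Hence $\omega$ pulls back to a meromorphic $d$-form $h^{*}\omega$ on the smooth locus of $X'$ away from a codimension-$\ge 2$ set together with the exceptional locus, and it extends as a rational section of $\omega_{X'}$. This works because meromorphic $d$-forms on a dense open subset with poles along analytic sets extend meromorphically, and pulling back a meromorphic form along a bimeromorphic morphism is well defined. We define $K_{X'}:={\rm div}(h^{*}\omega)$.

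For compatibility, let $X_{1}\to X$ and $X_{2}\to X$ be such that $\phi\colon X_{1}\dashrightarrow X_{2}$ is a bimeromorphic contraction. Then $\phi$ is an isomorphism between open subsets $U_{1}\subset X_{1}$ and $U_{2}\subset X_{2}$ where ${\rm codim}(X_{2}\setminus U_{2})\ge 2$ and the forms $h_{1}^{*}\omega$, $h_{2}^{*}\omega$ correspond there. So the divisors agree on $U_{2}$, and since Weil divisors are determined in codimension one, $\phi_{*}K_{X_{1}}=K_{X_{2}}$. This yields the $\mathbb{Z}$-b-divisor $\boldsymbol{\rm K}$.

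The main obstacle is the first step: producing a global meromorphic section of $\omega_{X}$ on a non-compact analytic $X$. This is exactly where shrinking $S$ around the Stein compact $W$ and the projectivity of $\pi$ are needed, and I would rely on the relative global generation results over Stein compacta from \cite{fujino-analytic-bchm}. The remaining steps are formal.
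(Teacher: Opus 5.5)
Your strategy matches the paper's in outline. You shrink $S$, twist by a relatively ample bundle, use Cartan's Theorem A to get a nonzero section, divide by a section of the twisting bundle to get a global meromorphic top form, and take divisors of its pullbacks. The gap is at the one place where you deviate from the paper.

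You construct $\omega=s/t$ as a meromorphic section of $\omega_X$ on the possibly singular $X$, where $\omega_X$ is only reflexive of rank one. You then justify that $h^{*}\omega$ extends meromorphically over $X'$ by saying that meromorphic $d$-forms on a dense open subset with poles along analytic sets extend meromorphically. That principle is false when the complement has codimension one. This is exactly the situation here, because the locus where $h$ fails to be an isomorphism contains the $h$-exceptional divisors. For example, $e^{1/z}\,dz$ is holomorphic on the dense Zariski open subset $\mathbb{C}^{*}\subset\mathbb{C}$ but has no meromorphic extension. Automatic extension holds only across analytic subsets of codimension at least two on a normal space.

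So meromorphy of $h^{*}\omega$ has to come from $\omega$ being meromorphic on all of $X$. A section of $\omega_X$ is merely a form on $X_{\rm reg}$, and $h$ may send exceptional divisors into $X_{\rm sing}$. Your other justification, that pulling back a meromorphic form along a bimeromorphic morphism is well defined, is therefore precisely the point that needs proof.

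The step can be repaired. Locally on $X$, the cokernel of $\Omega^{d}_{X}\to\omega_X$ is coherent and supported on $X_{\rm sing}$. Hence there is a holomorphic $\delta\not\equiv 0$ with $\delta\,\omega_X$ contained in the image of K\"{a}hler $d$-forms. Locally you can then write $\omega=\alpha/g$, with $\alpha$ the restriction of an ambient holomorphic $d$-form and $g\not\equiv 0$ holomorphic. Then $h^{*}\omega=h^{*}\alpha/h^{*}g$ is visibly a meromorphic section of $\omega_{X'}$.

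The paper sidesteps the issue differently. It builds the form $u/v$ on a resolution $Y\to X$, where $\omega_Y$ is invertible, so pullbacks are unproblematic. It pulls $u/v$ back to the normalization $Y'$ of the main component of $Y\times_X X'$ and sets $K_{X'}:=q_{*}K_{Y'}$, so only pushforwards of divisors are needed on the singular models.

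With either fix, the rest of your argument goes through. This includes nonvanishing via relative Serre in place of the paper's cohomology and base change, and the compatibility under bimeromorphic contractions.
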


\begin{proof}
Let $f \colon Y \to X$ be a resolution of $X$. 
By shrinking $S$ around $W$, we may assume that $S$ is Stein, $\pi \circ f \colon Y \to S$ is projective,   and there exists a $(\pi \circ f)$-very ample divisor $A$ on $Y$ that defines an embedding of $Y$ into $S \times \mathbb{P}^{N}$ for some $N$ (\cite[Lemma 2.2]{eh-semistablereduction}). 
From now on, we do not shrink $S$ anymore. 

Let $\omega_{Y}$ be the canonical bundle on $Y$. 
We fix a positive integer $m$ such that some analytically general fiber $F$ of $\pi \circ f$ satisfies $H^{i}(F,(\omega_{Y}\otimes \mathcal{O}_{Y}(mA))|_{F}) = 0$ for all $i>0$ and $H^{0}(F,(\omega_{Y}\otimes \mathcal{O}_{Y}(mA))|_{F}) \neq 0$.
By the cohomology and base change theorem, we have $(\pi \circ f)_{*}(\omega_{Y}\otimes \mathcal{O}_{Y}(mA))\neq 0$. 
By Cartan's Theorem A (cf.~\cite[Theorem 2.6]{fujino-analytic-bchm}), we see that $(\pi \circ f)_{*}\bigl(\omega_{Y}\otimes \mathcal{O}_{Y}(mA)\bigr)$ is generated by global sections. 
Thus, $\omega_{Y}\otimes \mathcal{O}_{Y}(mA)$ has a non-zero global section $u$. 
Considering the local expression of $u$, we can easily check that $u$ defines a Weil divisor $D$ on $Y$. 
On the other hand, the choice of $A$ shows that $\mathcal{O}_{Y}(mA)$ has a global meromorphic section $v$ with ${\rm div}(v)=mA$. 
Then $K_{Y}:=D-mA$ is the canonical divisor on $Y$ because the multiplication by $\frac{u}{v}$ defines an isomorphism $\mathcal{O}_{Y}(K_{Y})\to \omega_{Y}$. 
Note that the divisor $K_{Y}$ depends on the choice of $u$. 
We fix $K_{Y}$ with the corresponding meromorphic section $\frac{u}{v}$.

For any proper bimeromorphic morphism $X' \to X$ from a normal analytic variety $X'$, let $Y'$ be the normalization of the main component of $Y \times_{X}X'$ and let $p \colon Y' \to Y$ and $q \colon Y' \to X'$ be the projections. 
Then $p^{*}\bigl(\frac{u}{v}\bigr)$ is a meromorphic section of the canonical sheaf $\omega_{Y'}$, and therefore it globally defines the canonical divisor $K_{Y'}$ on $Y'$ such that $p_{*}K_{Y'}=K_{Y}$. 
Hence, $K_{X'}:=q_{*}K_{Y'}$ is also well defined, and these $K_{X'}$ for all $X' \to X$ form the canonical b-divisor $\boldsymbol{\rm K}$.
\end{proof}

\begin{defn}[Generalized pair, {cf.~\cite{bz}}]\label{defn--gen-pair}
A {\em generalized sub-pair}, which we denote by $\pi \colon (X,B+M) \to Z$, consists of 
\begin{itemize}
\item
a projective morphism $\pi \colon X\to Z$ from a normal analytic variety $X$ to a complex analytic space $Z$, 
\item
an $\mathbb{R}$-divisor $B$ on $X$, and
\item
the trace $M$ of a b-nef$/Z$ $\mathbb{R}$-b-Cartier $\mathbb{R}$-b-divisor $\boldsymbol{\rm M}$ on $X$
\end{itemize}
such that $K_{X}+B+M$ is $\mathbb{R}$-Cartier. 
The $\mathbb{R}$-divisor $B$ is called the {\em boundary part}, and $\boldsymbol{\rm M}$ is called the {\em nef part}. 
A {\em generalized pair} is a generalized sub-pair whose boundary part is an effective $\mathbb{R}$-divisor. 
When the nef part is zero, a generalized (sub-)pair $(X,B) \to Z$ is a usual (sub-)pair $(X,B)$ together with $X\to Z$. 
When $Z$ is a point, we simply denote a generalized (sub-)pair by $(X,B+M)$, and this is nothing but a generalized (sub-)pair in the algebraic setting. 

Let $(X,B+M) \to Z$ be a generalized sub-pair and $P$ a prime divisor over $X$. 
Let $f \colon X' \to X$ be a proper bimeromorphic morphism from a normal analytic variety $X'$ such that $P$ appears as a prime divisor on $X'$. 
Let $M'$ be the trace of the nef part on $X'$. 
As in the case of usual pairs, there exists an $\mathbb{R}$-divisor $B'$ on $X'$ such that
$$K_{X'}+B'+M'=f^{*}(K_{X}+B+M).$$
Then we call $1-{\rm coeff}_{P}(B')$ the {\em generalized log discrepancy $a(P,X,B+M)$ of $P$ with respect to $(X,B+M)\to Z$}. 
When the nef part is zero, the generalized log discrepancy $a(P,X,B)$ coincides with the log discrepancy of $P$ with respect to the pair $(X,B)$. 

A generalized sub-pair $(X,B+M) \to Z$ is {\em generalized sub-klt} (resp.~{\em generalized sub-lc}) if we have $a(P,X,B+M)>0$ (resp.~$a(P,X,B+M)\geq 0$) for every prime divisor $P$ over $X$. 
A {\em generalized klt} (resp.~{\em generalized lc}) {\em pair} is a generalized pair that is generalized sub-klt (resp.~generalized sub-lc). 
We say that a generalized lc pair $(X,B+M) \to Z$ is {\em generalized plt} if $a(P,X,B+M)>0$ for all exceptional prime divisors $P$ over $X$. 

A {\em generalized non-klt locus} of a generalized sub-pair $(X,B+M) \to Z$ is a union of images on $X$ of all prime divisors $P$ over $X$ satisfying $a(P,X,B+M) \leq 0$. 
\end{defn}

\begin{defn}[Generalized dlt pair, {cf.~\cite[2.13]{birkar-compl}}]\label{defn--gen-dlt}
Let $(X,B+M) \to Z$ be a generalized pair with the nef part $\boldsymbol{\rm M}$. 
We say that $(X,B+M) \to Z$ is a {\em generalized dlt pair} if this is generalized lc and there exists a Zariski open dense subset $U \subset X$ satisfying the following.
\begin{itemize}
\item
$U$ is smooth and $B|_{U}$ has a simple normal crossing support, 
\item
$\boldsymbol{\rm M}|_{U}$ descends to $U$ (see Definition \ref{defn--rest-b-divisor} and Definition \ref{defn--b-divisor}), and
\item
any prime divisor $P$ over $X$ mapped into $X \setminus U$ satisfies $a(P,X,B+M)>0$.  
\end{itemize}
\end{defn}

The following result is useful, although we do not use it in this paper.

\begin{thm}[{cf.~\cite[Theorem 6.1]{has-iitakafibration}}] Let $(X,B+M) \to Z$ be a generalized lc pair whose nef part $\boldsymbol{\rm M}$ is a finite $\mathbb{R}_{>0}$-linear combination of b-nef$/Z$ $\mathbb{Q}$-b-Cartier $\mathbb{Q}$-b-divisors on $X$. Then the following conditions are equivalent: \begin{enumerate} \item$(X,B+M) \to Z$ is generalized dlt, and \item there exist a log resolution $f \colon X' \to X$ of $(X,B)$ and a Zariski open dense subset $V \subset X$ such that $\boldsymbol{\rm M}$ descends to $X'$, $f$ is a biholomorphism over $V$, and any prime divisor $P$ over $X$ mapped into $X \setminus V$ satisfies $a(P,X,B+M)>0$.  \end{enumerate} \end{thm}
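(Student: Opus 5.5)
The direction (2)$\Rightarrow$(1) is formal; (1)$\Rightarrow$(2) requires building a single model $X'$ that is simultaneously a log resolution of $(X,B)$, a biholomorphism over a large open set, and a model on which $\boldsymbol{\rm M}$ descends.

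\emph{(2)$\Rightarrow$(1).} Put $U:=V$ and check the three conditions of Definition \ref{defn--gen-dlt}.
\begin{enumerate}
\item Since $f$ is a biholomorphism over $V$, $V\cong f^{-1}(V)$ is smooth.
\item Because $f$ is a log resolution of $(X,B)$, $B|_{V}$ has simple normal crossing support.
\item Since $\boldsymbol{\rm M}=\overline{\boldsymbol{\rm M}_{X'}}$, Definition \ref{defn--rest-b-divisor} gives $\boldsymbol{\rm M}|_{V}=\overline{(\boldsymbol{\rm M}_{X'})|_{f^{-1}(V)}}$. Under $f^{-1}(V)\cong V$ this means $\boldsymbol{\rm M}|_{V}$ descends to $V$.
\end{enumerate}
The condition on generalized log discrepancies over $X\setminus V$ is assumed in (2). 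Generalized lc-ness is part of the standing hypothesis.

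\emph{(1)$\Rightarrow$(2), reduction to the $\mathbb{Q}$-b-Cartier pieces.} Let $U$ be as in Definition \ref{defn--gen-dlt}, and write $\boldsymbol{\rm M}=\sum_{i}r_{i}\boldsymbol{\rm M}_{i}$ with $r_{i}>0$ and each $\boldsymbol{\rm M}_{i}$ b-nef$/Z$ and $\mathbb{Q}$-b-Cartier. After possibly re-choosing the decomposition, I first arrange that each $\boldsymbol{\rm M}_{i}|_{U}$ descends to $U$. Since $\boldsymbol{\rm M}|_{U}$ descends, this can be done by a standard perturbation of the $r_{i}$ inside the rational polytope of relations among the $\boldsymbol{\rm M}_{i,Y}$. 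Everything is local over $Z$, so I work after shrinking around a Stein compact subset, as in the proof of Theorem \ref{thm--welldef-can-b-div}.

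\emph{The key step.} I must produce a proper bimeromorphic $X'\to X$ that is a biholomorphism over $U$ and on which every $\boldsymbol{\rm M}_{i}$ descends. The difficulty is that a model $g\colon Y_{0}\to X$ on which the $\boldsymbol{\rm M}_{i}$ descend need not be a biholomorphism over $U$. I plan to use the rationality as follows.
\begin{enumerate}
\item Fix $m$ such that every $m\boldsymbol{\rm M}_{i,Y_{0}}$ is Cartier.
\item Consider the coherent fractional ideal $\mathcal{I}_{i}:=g_{*}\mathcal{O}_{Y_{0}}(m\boldsymbol{\rm M}_{i,Y_{0}}+mN)$ for a suitable $g$-anti-ample twist $N$.
\item Over $U$, the descent of $\boldsymbol{\rm M}_{i}|_{U}$ gives $m\boldsymbol{\rm M}_{i,Y_{0}}=g^{*}(m\boldsymbol{\rm M}_{i,U})$. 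Hence, after comparing with the invertible sheaf $\mathcal{O}_{U}(m\boldsymbol{\rm M}_{i,U})$, the ideal $\mathcal{I}_{i}$ is invertible there, and the non-invertibility locus of $\mathcal{I}_{i}$ lies in $X\setminus U$.
\item Blow up all these ideals, which are cosupported off $U$. Then take a log resolution of the total transform of $B$ plus the exceptional locus, chosen to be a biholomorphism over the snc locus $U$; this exists by functorial resolution, cf.~\cite{resolution-1}.
\end{enumerate}
The resulting $f\colon X'\to X$ is a log resolution of $(X,B)$ and a biholomorphism over $U$. Each $m\boldsymbol{\rm M}_{i}$ should descend to $X'$: it is Cartier on $X'$ because the blown-up ideal is invertible, and it equals the trace of $\boldsymbol{\rm M}_{i}$ by the Cartier-closure property of the pushforward ideal.

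\emph{Conclusion and main obstacle.} Set $V:=U$. Divisors over $X\setminus V$ have positive generalized log discrepancy by (1), so (2) holds. The main obstacle is the ideal-sheaf step: showing that the trace on $X'$ really coincides with the Cartier divisor defined by $\mathcal{I}_{i}\mathcal{O}_{X'}$, that is, that blowing up this ideal reproduces $\boldsymbol{\rm M}_{i}$ and not merely something bounded by it. If this fails, I would instead shrink $V\subset U$ by removing the images of the centres of $Y_{0}\to X$ meeting $U$. I would then argue that these can be chosen disjoint from the strata of $\lfloor B|_{U}\rfloor$, which are the only centres of divisors with $a=0$ over $U$, since $(U,B|_{U})$ is snc and $\boldsymbol{\rm M}|_{U}$ descends. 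This would preserve the positivity condition over $X\setminus V$.
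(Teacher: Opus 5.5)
The direction (2)$\Rightarrow$(1) is correct. So is your perturbation making each $\boldsymbol{\rm M}_{i}|_{U}$ descend: the coefficient vectors for which the combination descends over $U$ form a rational linear subspace containing $(r_{i})$.

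The gap is the key step of (1)$\Rightarrow$(2), the one you flag yourself, and it is a real obstruction rather than bookkeeping. Let $D_{i}$ be the Cartier divisor with $\mathcal{I}_{i}\mathcal{O}_{X'}=\mathcal{O}_{X'}(D_{i})$. On a common resolution you only get $\overline{D_{i}}\leq m\boldsymbol{\rm M}_{i}$. Equality holds exactly when $\mathcal{O}_{Y_{0}}(m\boldsymbol{\rm M}_{i,Y_{0}})$ is $g$-globally generated; the difference is the fixed part of the relative linear system.

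Nefness over $X$ does not give $g$-generation for an arbitrary bimeromorphic $g$. For instance, let $g\colon Y_{0}\to X$ be the blow-up of the vertex of a cone over an elliptic curve $E$. The pull-back via $Y_{0}\to E$ of a non-torsion $\xi\in{\rm Pic}^{0}(E)$ is $g$-nef, but no multiple of it is $g$-generated.

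The twist $N$ cannot repair this. Since $g$ is in general not biholomorphic over $U$, a twist positive enough to force $g$-generation is non-trivial on the $g$-exceptional curves over $U$. Then $g_{*}\mathcal{O}_{Y_{0}}(mN)$, and hence $\mathcal{I}_{i}$, is not invertible over $U$, and the blow-up stops being an isomorphism there. An effective exceptional twist keeps $\mathcal{I}_{i}|_{U}$ invertible but does nothing for generation. In either case a twisted ideal recovers at best data attached to $m\boldsymbol{\rm M}_{i,Y_{0}}+mN$, not $m\boldsymbol{\rm M}_{i}$ itself.

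Your fallback restates the difficulty rather than resolving it. That $\boldsymbol{\rm M}|_{U}$ descends to $U$ only says $\boldsymbol{\rm M}_{Y_{0}}$ is a pull-back over $U$. It does not produce a model on which $\boldsymbol{\rm M}$ descends and which is biholomorphic near the generic points of the strata of $\lfloor B|_{U}\rfloor$. The centres of $Y_{0}\to X$ forced by what happens over $X\setminus U$ may be closures of such strata, for example a stratum curve passing through a point of $X\setminus U$. Deleting their traces from $V$ then puts divisors with $a=0$ over $X\setminus V$, which violates (2), and nothing in your argument rules this out.

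What is missing is an MMP input to replace relative generation. The paper follows \cite[Proof of Theorem 6.1]{has-iitakafibration} in three steps:
\begin{enumerate}
\item It reduces, without shrinking $X$, to the case where $K_{X}$ and $M$ are $\mathbb{R}$-Cartier, via the analytic analogue of \cite[Theorem 1.2]{has-class} obtained from \cite[Theorem 1.3]{eh-analytic-mmp}.
\item It applies the analytic analogue of \cite[Lemma 6.2]{has-iitakafibration}, proved with the relative MMP for generalized klt pairs coming from \cite{fujino-analytic-bchm}.
\item It takes a suitable log resolution from \cite[Section 13]{resolution-1}.
\end{enumerate}
Your ideal-sheaf blow-up is trying to do the job of step 2 without the MMP, and as it stands it does not.
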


\begin{proof} The argument of \cite[Proof of Theorem 6.1]{has-iitakafibration} works without any changes. In that proof, we used \cite[Theorem 1.2]{has-class} for the reduction to the case where $K_{X}$ and $M$ are $\mathbb{R}$-Cartier. The complex analytic analog of \cite[Theorem 1.2]{has-class} can be proved by using \cite[Theorem 1.3]{eh-analytic-mmp}. We recall that we do not assume that $X$ is quasi-projective in the algebraic case (\cite[Theorem 6.1]{has-iitakafibration}). Similarly, we do not need to shrink $X$ in the reduction. After that, we applied \cite[Lemma 6.2]{has-iitakafibration} and a suitable log resolution. The complex analytic analog of \cite[Lemma 6.2]{has-iitakafibration} can be proved by using the relative MMP for generalized klt pairs, which is a simple application of \cite{fujino-analytic-bchm}, and the log resolution can be found in \cite[Section 13]{resolution-1}. We note that we do not need to shrink $X$ in the latter part of the proof, as shown in the algebraic case. \end{proof}

\begin{thm}[Generalized dlt blow-up, {cf.~\cite[2.13 (3)]{birkar-compl}, \cite[Theorem 2.6]{hacon-xie-acc-lct}}]\label{thm--gen-dlt-model}
Let $\pi \colon (X,B+M) \to Z$ be a generalized pair with the nef part $\boldsymbol{\rm M}$, and let $W \subset Z$ be a compact subset such that $\pi \colon X \to Z$ and $W$ satisfy {\rm (P)}. 
Then, after shrinking $Z$ around $W$, there exist a projective bimeromorphic morphism $f \colon X' \to X$ and a generalized pair $\pi \circ f \colon (X',B'+M') \to Z$  with the nef part $\boldsymbol{\rm M}$ such that
\begin{itemize}
\item
$X'$ is $\mathbb{Q}$-factorial over $W$, 
\item
any $f$-exceptional prime divisor $E$ on $X'$ satisfies $a(E,X,B+M) \leq 0$, 
\item
$K_{X'}+B'+M'=f^{*}(K_{X}+B+M)$, and
\item
$(X', (B'^{<1}+{\rm Supp}\,B'^{\geq 1})+M') \to Z$ is a generalized dlt pair with the nef part $\boldsymbol{\rm M}$. 
\end{itemize}
We call $f \colon (X',B'+M') \to (X,B+M)$ a {\rm generalized dlt blow-up} and $(X',B'+M') \to Z$ a {\em generalized dlt model} of $(X,B+M) \to Z$. 
\end{thm}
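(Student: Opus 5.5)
The plan is to follow the algebraic argument of \cite[2.13 (3)]{birkar-compl} and \cite[Theorem 2.6]{hacon-xie-acc-lct}, with the MMP input replaced by the analytic MMP for generalized pairs of \cite{fujino-analytic-bchm} and \cite{eh-analytic-mmp}, run over a neighborhood of $W$.

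\textbf{Step 1: a log resolution carrying the nef part.} After shrinking $Z$ around $W$, take a log resolution $g \colon Y \to X$ of $(X,B)$ such that $g$ is projective, $\boldsymbol{\rm M}$ descends to $Y$, and ${\rm Ex}(g) \cup {\rm Supp}\,g_{*}^{-1}B$ is a simple normal crossing divisor. This uses \cite[Theorem 13.2]{resolution-1} together with the projectivity over a neighborhood of a compact set. Write
$$K_{Y}+B_{Y}+M_{Y}=g^{*}(K_{X}+B+M).$$
Set $\Gamma_{Y}:=B_{Y}^{<1,\,{\rm non\text{-}exc}}+{\rm Supp}\,B_{Y}^{\geq 1}+\sum E$, where the sum runs over all $g$-exceptional prime divisors $E$. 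Concretely, $\Gamma_{Y}$ is $g_{*}^{-1}(B^{<1})+g_{*}^{-1}{\rm Supp}\,B^{\geq 1}$ plus the reduced exceptional divisor. Then $(Y,\Gamma_{Y}+M_{Y})$ is generalized dlt, since $Y$ is smooth, $\Gamma_{Y}$ is snc, and $\boldsymbol{\rm M}$ descends to $Y$. Moreover
$$K_{Y}+\Gamma_{Y}+M_{Y}=g^{*}(K_{X}+B+M)+F,$$
where $F$ is supported on ${\rm Ex}(g)$, modulo the exceptional divisors $E$ with $a(E,X,B+M)\le 0$. More precisely, write $F=F_{+}-F_{-}$. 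Here $F_{+}$ is the exceptional part with coefficient $1-{\rm coeff}(B_{Y})=a(E)>0$. The non-exceptional components of $B_{Y}^{\geq 1}$ contribute $-F_{-}\le 0$, as do exceptional $E$ with $a\le 0$.

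\textbf{Step 2: run the MMP.} Run a $(K_{Y}+\Gamma_{Y}+M_{Y})$-MMP over $X$ with scaling of an ample divisor, over a neighborhood of the image of $W$. To make it fit \cite{fujino-analytic-bchm}, perturb to a klt situation: since $\boldsymbol{\rm M}$ descends to the smooth $Y$ and is nef over $X$, add a small ample divisor and replace $M_{Y}$ by a general effective member. Also slightly lower the boundary coefficients, following the perturbation arguments in \cite[Theorem 2.6]{hacon-xie-acc-lct}. Alternatively, invoke \cite{eh-analytic-mmp} directly, which treats generalized dlt pairs and contains the analytic analogue of the dlt blow-up construction via running an MMP over $X$ for $F_{+}$ being very exceptional.

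The key point is that $F_{+}$ is $g$-exceptional and effective, while $-F_{-}$ is supported on divisors that we keep anyway. Apply the negativity lemma in the form \cite[Lemma 3.3]{birkar-flip}-style, i.e. the analytic version of the very exceptional MMP as in \cite{fujino-analytic-bchm}. Then the MMP terminates near $W$ with a model $X'$ on which $F_{+}$ has been contracted. Consequently $K_{X'}+\Gamma'+M'=f^{*}(K_{X}+B+M)-F'_{-}$, where $F'_{-}$ is the pushforward of $F_{-}$.

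\textbf{Step 3: the boundary and the remaining properties.} Define $B'$ by $K_{X'}+B'+M'=f^{*}(K_{X}+B+M)$. Then $B'^{<1}+{\rm Supp}\,B'^{\geq 1}=\Gamma'$. The remaining exceptional divisors are exactly those with $a\le 0$, since all exceptional divisors with $a>0$ are contracted. Generalized dlt-ness is preserved along the MMP, so $(X',\Gamma'+M')$ is generalized dlt. Finally, $\mathbb{Q}$-factoriality over $W$ is inherited from the smooth $Y$, because the MMP steps preserve $\mathbb{Q}$-factoriality over $W$ in the sense of \cite{fujino-analytic-bchm}.

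\textbf{Main obstacle.} The main obstacle is Step 2. We need termination, or at least contraction of $F_{+}$, for an MMP with a generalized dlt rather than klt boundary, in the analytic setting and only over a neighborhood of $W$. We also need to verify that no shrinking beyond a fixed neighborhood is required. I would first try reducing to the klt MMP of \cite{fujino-analytic-bchm} via the standard perturbation $\Gamma_{Y}-\epsilon\lfloor\Gamma_{Y}\rfloor+\epsilon g^{*}H$ trick. If that fails, I would cite the generalized-pair MMP of \cite{eh-analytic-mmp}.
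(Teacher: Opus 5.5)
Your overall strategy is the one the paper relies on: a projective log resolution on which $\boldsymbol{\rm M}$ descends, the boundary $\Gamma_Y$, an MMP over $X$, and the negativity lemma. The paper's proof is only a pointer to \cite[Theorem 2.6]{hacon-xie-acc-lct}.

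\textbf{The gap is in Step 2, which carries all the content.}

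\emph{Termination.} You assert that the $(K_Y+\Gamma_Y+M_Y)$-MMP over $X$ terminates. Termination of a generalized dlt (non-klt) MMP is not something you can simply cite, and it is not what you need.

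\emph{Very exceptional results.} The results you allude to require $K_Y+\Gamma_Y+M_Y\equiv_X F$ with $F\geq 0$ exceptional. Here $F=F_+-F_-$, and $F_-$ can contain non-exceptional components, namely strict transforms of components of $B$ with coefficient $>1$.

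\emph{Your fallback perturbation.} Since $g^*H\equiv_X 0$, using $\Gamma_Y-\epsilon\lfloor\Gamma_Y\rfloor+\epsilon g^*H$ amounts to running a different MMP, the klt MMP for $K_Y+\Gamma_Y-\epsilon\lfloor\Gamma_Y\rfloor+M_Y$.
\begin{itemize}
\item That MMP does contract $F_+$ for small $\epsilon$, since negativity gives $a(E,X,B+M)\leq\epsilon$ for every surviving component of $F_+$.
\item However, its steps need not be $(K_Y+\Gamma_Y+M_Y)$-negative. So you lose the conclusion that $(X',\Gamma'+M')$ is generalized dlt, or even generalized lc.
\item All you would know is $K_{X'}+\Gamma'+M'=f^*(K_X+B+M)-F'_-$, and $(X,B+M)$ need not be lc.
\end{itemize}
Likewise, adding a small ample divisor to $M_Y$ changes the divisor you are running the MMP for, unless that ample divisor is the scaling divisor.

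\textbf{What is needed and how to get it.} You do not need termination. You need only that $F_+$ is contracted after finitely many steps, and this follows from scaling plus negativity.

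\emph{Setup.} After shrinking, take a $g$-exceptional $E_0\geq 0$ with $-E_0$ $g$-ample, and run the MMP with scaling of a general $C\sim_{\mathbb{R},X}-E_0$.

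\emph{Steps exist; $\lambda_i$ cannot stay bounded away from $0$.} For $t>0$ and $0<\epsilon\ll t$, the divisor $M_Y+\frac{t}{2}C+\epsilon\lfloor\Gamma_Y\rfloor$ is ample over $X$. Hence $K_Y+\Gamma_Y+M_Y+tC$ is $\mathbb{R}$-linearly equivalent over $X$ to a klt pair with big boundary. This gives each step while $\lambda_i>0$. By \cite{fujino-analytic-bchm} it also rules out an infinite sequence with $\lambda_i$ bounded away from $0$.

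\emph{Negativity on each model.} On each model,
\[
K_{X_i}+\Gamma_i+M_i+\lambda_iC_i\equiv_X F_{+,i}-F_{-,i}-\lambda_iE_{0,i}
\]
is nef over $X$. The negativity lemma then gives $F_{+,i}\leq F_{-,i}+\lambda_iE_{0,i}$.

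\emph{Conclusion.} After the last divisorial contraction, the components of $F_{+,i}$ are fixed. They carry the positive coefficients $a(E,X,B+M)$ and are not components of $F_{-,i}$. Letting $\lambda_i\to 0$ (or $\lambda_i=0$ if the MMP stops) therefore forces $F_{+,i}=0$.

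\textbf{Finishing.} Stop at that point. Every step was $(K+\Gamma+M)$-negative, so generalized dlt-ness and $\mathbb{Q}$-factoriality over $W$ are preserved. Then $B':=\Gamma'+F'_-$ works because ${\rm Supp}\,F_-\subset\lfloor\Gamma_Y\rfloor$. That inclusion is the property of $F_-$ that matters, not that its components are ``kept''; some components of $F_-$ may in fact be contracted. Correspondingly, you only need $a\leq 0$ for the surviving exceptional divisors, not that they are ``exactly those'' with $a\leq 0$.
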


\begin{proof}
This is an application of the relative MMP for generalized dlt pairs. 
For details, see \cite[Proof of Theorem 2.6]{hacon-xie-acc-lct}. 
\end{proof}

We use the following result, so-called connectedness principle, in Section \ref{sec--compl}. 

\begin{lem}[cf.~{\cite[Lemma 2.14]{birkar-compl}, \cite[Proposition 3.1]{filipazzi-svaldi}, \cite[Lemma 3.3]{birkar-connect}}]\label{lem--conn-princi}
Let $\pi \colon (X,B+M) \to Z$ be a generalized sub-pair such that $\pi \colon X \to Z$ is a contraction, the codimension of $\pi({\rm Supp}\,B^{<0})$ in $Z$ is at least two, and $-(K_{X}+B+M)$ is $\pi$-nef and $\pi$-big. 
Let $z \in Z$ be a point. 
Then the generalized non-klt locus of $(X,B+M) \to Z$ is connected around $\pi^{-1}(z)$. 
In particular, if $\pi \colon (X,B+M) \to Z$ is generalized plt and $\pi(\lfloor B\rfloor) \ni z$, then there exists a (not necessarily Stein) open subset $U \ni z$ of $Z$ such that $\lfloor B\rfloor|_{\pi^{-1}(U)}$ is a prime divisor. 
\end{lem}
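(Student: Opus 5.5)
We follow the standard route of Koll\'ar--Shokurov connectedness via Kawamata--Viehweg vanishing, adapted to generalized pairs. The question is local over $z$, so we first shrink $Z$ to a Stein open neighborhood of $z$ and then pass to a log resolution. Take a log resolution $g\colon Y\to X$ of $(X,B)$, projective over a relatively compact Stein neighborhood of $z$, to which $\boldsymbol{\rm M}$ descends. Write
$$K_{Y}+B_{Y}+M_{Y}=g^{*}(K_{X}+B+M),$$
with $M_{Y}$ nef over $Z$. Since $-(K_{X}+B+M)$ is nef and big over $Z$, the divisor $g^{*}(-(K_X+B+M))+M_{Y}$ is nef and big over $Z$. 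After perturbing by a small multiple of an effective $g$-exceptional divisor $F$ with $-F$ ample over $X$, and accepting further shrinking of $Z$, it becomes ample over $Z$ while the coefficients of $B_Y$ change only slightly. The perturbation is chosen so that the set of components with coefficient $\ge 1$ stays unchanged (a standard tie-breaking).

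Next define the integral divisors
$$A:=\lceil -B_{Y}^{<1}\rceil,\qquad N:=\lfloor B_{Y}^{\geq 1}\rfloor .$$
Then $A-N-(K_Y+\{\text{fractional part}\})\equiv_{Z}$ an ample divisor, so $A-N-K_{Y}$ is ample modulo an snc boundary with coefficients in $[0,1)$. The analytic relative Kawamata--Viehweg vanishing (available in Fujino's framework) gives
$$R^{1}(\pi\circ g)_{*}\mathcal{O}_{Y}(A-N)=0.$$
Hence the map $(\pi g)_{*}\mathcal{O}_{Y}(A)\to(\pi g)_{*}\mathcal{O}_{N}(A|_{N})$ is surjective.

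Now $A$ is effective, and its components are either $g$-exceptional or come from components of $B^{<0}$, whose images in $Z$ have codimension $\ge 2$. Therefore $(\pi g)_{*}\mathcal{O}_{Y}(A)=\mathcal{O}_{Z}$: it is torsion free, agrees with $\mathcal{O}_Z$ outside codimension two, and $Z$ is normal because $\pi$ is a contraction. Take any connected component decomposition of $N$ near the fiber $(\pi g)^{-1}(z)$. Each connected component $N_i$ meeting the fiber contributes a nonzero section of $\mathcal{O}_{N_i}(A)$ near that fiber, namely $1$, using that $A$ has no common components with $N$. Surjectivity of the map above then forces $N\cap (\pi g)^{-1}(z)$ to be connected. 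Since $g(N)$ is exactly the generalized non-klt locus (the image of places with $a\le 0$, after the tie-breaking), the generalized non-klt locus is connected near $\pi^{-1}(z)$.

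For the plt statement: if $(X,B+M)\to Z$ is generalized plt, the non-klt locus equals $\lfloor B\rfloor$ and is normal in codimension one in the appropriate sense. Connectedness of $\lfloor B\rfloor$ around $\pi^{-1}(z)$ then shows there is only one component through the fiber. Indeed, two components meeting along a codimension-two locus would produce an exceptional place with $a\le 0$ (blow up their intersection where both are Cartier generically, as usual), contradicting plt. Components not meeting $\pi^{-1}(z)$ are removed by choosing a small open $U\ni z$, since $\pi$ is proper. So $\lfloor B\rfloor|_{\pi^{-1}(U)}$ is prime.

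The main obstacle will be making the vanishing and perturbation step rigorous in the analytic setting. This requires a resolution that is projective over a neighborhood of the compact fiber, with the $\mathbb{R}$-coefficient tie-breaking done simultaneously for infinitely many potential places over a non-compact base. We handle it by working over a Stein compact neighborhood of $z$ satisfying (P) and using \cite{fujino-analytic-bchm} for vanishing.
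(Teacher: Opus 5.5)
Your route is the paper's. The first statement is the Koll\'ar--Shokurov argument via analytic Kawamata--Viehweg vanishing, which the paper simply cites as \cite[II, 5.12 Corollary]{nakayama}. The plt statement comes from disjointness of the components of $\lfloor B\rfloor$ plus connectedness, with $U:=Z\setminus\pi(\lfloor B\rfloor-T)$.

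Your perturbation step is unnecessary, and as stated it is wrong. If $-(K_X+B+M)$ is only nef and big over $Z$, then $g^{*}(-(K_X+B+M))+M_Y-\epsilon F$ need not be ample over $Z$. For example, take $\boldsymbol{\rm M}=0$ and a $\pi$-vertical curve away from $g({\rm Ex}(g))$ on which $K_X+B$ is trivial; its strict transform has degree $-\epsilon F\cdot\tilde C\le 0$. Forcing ampleness would need Kodaira's lemma. Just drop the step. Nakayama's vanishing only asks that $L-(K_Y+\Delta)$ be nef and big over $Z$ with $\Delta$ snc and coefficients in $[0,1)$; this is how the paper applies it in Lemma \ref{lem--compl-extend-main}. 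It applies directly to $L=A-N=\lceil -B_Y\rceil$ and $\Delta=\{B_Y\}$. No tie-breaking is then needed: $g({\rm Supp}\,N)$ is the generalized non-klt locus simply because $(Y,B_Y)$ is log smooth and $\boldsymbol{\rm M}$ descends to $Y$, so $a(P,X,B+M)=a(P,Y,B_Y)$ for every $P$.

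The step that actually fails as written is your proof that two components of $\lfloor B\rfloor$ cannot meet. You argue on $X$, but $X$ is only normal. Two prime divisors may meet only in codimension $\geq 3$. Along a codimension-two component of their intersection, $X$ may be singular with neither divisor Cartier, as with two torus-invariant curves through a cyclic quotient surface singularity. So there is nothing to blow up with a computable discrepancy.

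Run the argument on $Y$ instead:
\begin{enumerate}
\item Generalized plt forces $g$-exceptional components of $B_Y$ to have coefficient $<1$ and the others $\le 1$. Hence ${\rm Supp}\,N=g^{-1}_{*}\lfloor B\rfloor$.
\item If two of these strict transforms met on the smooth $Y$, their intersection would be snc of pure codimension two. Blowing up a component of it gives a divisor $E$, exceptional over $X$, with $a(E,X,B+M)=a(E,Y,B_Y)=2-1-1=0$. This contradicts plt.
\item So the components of $N$ are pairwise disjoint. The connectedness of $N\cap(\pi\circ g)^{-1}(z)$, which you already proved, then forces exactly one of them to meet the fiber.
\item Hence exactly one component $T$ of $\lfloor B\rfloor$ meets $\pi^{-1}(z)$.
\end{enumerate}
This is the standard fact the paper uses in one line (``$\lfloor B\rfloor$ is a disjoint union of its irreducible components''). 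With it, your final step agrees with the paper's choice $U=Z\setminus\pi(\lfloor B\rfloor-T)$.
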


\begin{proof}
The first statement follows from the Kawamata--Viehweg vanishing theorem in the complex analytic setting (\cite[II, 5.12 Corollary]{nakayama}). 
When $\pi \colon (X,B+M) \to Z$ is generalized plt, $\lfloor B\rfloor$ is a disjoint union of its irreducible components. 
Hence, for any point $z \in Z$ such that $\pi(\lfloor B\rfloor) \ni z$, there exists a unique prime divisor $T \subset \lfloor B\rfloor$ such that $\pi(T) \ni z$. 
Then, $U:=Z\setminus \pi(\lfloor B\rfloor-T)$ is the desired open subset. 
\end{proof}

We recall the definition of generalized adjunction. 

\begin{defn}[cf.~{\cite[Definition 4.7]{bz}}]\label{defn--gen-adjunction}
Let $(X,B+M)\to Z$ be a generalized pair with the nef part $\boldsymbol{\rm M}$, and let $\phi \colon X' \to X$ be a log resolution of $(X,B)$ such that $\boldsymbol{\rm M}$ descends to $X'$. 
Let $T$ be a component of $B^{=1}$, set $T':=\phi_{*}^{-1}T$, and let $T^{\nu}$ be the normalization of $T$. 
Let $\phi_{T^{\nu}} \colon T' \to T^{\nu}$ denote the induced morphism. 
We define $B'$ by 
$$K_{X'}+T'+B'+\boldsymbol{\rm M}_{X'}=\phi^{*}(K_{X}+B+M).$$
Suppose that there exists an $\mathbb{R}$-divisor $M' \sim_{\mathbb{R},\,Z}\boldsymbol{\rm M}_{X'}$ on $X'$ such that $T' \not\subset {\rm Supp}\,M'$. 
Note that this condition always holds when $Z$ is Stein. 
We define $\mathbb{R}$-divisors $B_{T'}$ and $B_{T^{\nu}}$ and an $\mathbb{R}$-b-divisor $\boldsymbol{\rm N}$ by
\begin{equation*}
B_{T'}:=B'|_{T'},\quad B_{T^{\nu}}:=\phi_{T^{\nu}*}B_{T'}, \qquad {\rm and}\qquad \boldsymbol{\rm N}:=\overline{(M'|_{T'})} 
\end{equation*}
Then $T^{\nu}$, $B_{T^{\nu}}$, and $\boldsymbol{\rm N}$ define a generalized pair $(T^{\nu}, B_{T^{\nu}}+\boldsymbol{\rm N}_{T^{\nu}}) \to Z$ with the nef part $\boldsymbol{\rm N}$ such that
$$K_{T^{\nu}}+B_{T^{\nu}}+\boldsymbol{\rm N}_{T^{\nu}} \sim_{\mathbb{R},\,Z}(K_{X}+B+M)|_{T^{\nu}}.$$ 
We call this process a {\em generalized adjunction}. 
By definition, it is easy to check that if $(X,B+M)\to Z$ is generalized lc (resp.~generalized plt) then $(T^{\nu}, B_{T^{\nu}}+\boldsymbol{\rm N}_{T^{\nu}}) \to Z$ is also generalized lc (resp.~generalized klt). 
\end{defn}

The following lemma is a variant of the generalized inversion of adjunction. 

\begin{lem}[cf.~{\cite[Lemma 3.2]{birkar-compl}}]\label{lem--gen-inv-adj}
Let $\pi \colon (X,B+M) \to Z$ be a generalized pair, let $T$ be a component of $B^{=1}$, and let $W \subset Z$ be a compact subset.   
Suppose that $X$ is $\mathbb{Q}$-factorial over $W$ and $(X,T)$ is plt.
Let $\pi|_{T}\colon (T,B_{T}+M_{T}) \to Z$ be the generalized pair defined by using the generalized adjunction $K_{T}+B_{T}+M_{T}\sim_{\mathbb{R},\,Z}(K_{X}+B+M)|_{T}$. 
In this situation, if $(T,B_{T}+M_{T}) \to Z$ is generalized lc around $\pi^{-1}(W) \cap T$, then so is $(X,B+M) \to Z$. 
\end{lem}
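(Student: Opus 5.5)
We follow the standard proof of \cite[Lemma 3.2]{birkar-compl}: compare with a generalized dlt model and apply the connectedness principle (Lemma \ref{lem--conn-princi}). The statement is local around $\pi^{-1}(W)\cap T$. So we fix a point $x \in \pi^{-1}(W)\cap T$, shrink $Z$ around $\pi(x)$ to a Stein neighborhood so that (P) holds for a small Stein compact neighborhood of $\pi(x)$, and argue by contradiction. Suppose $(X,B+M)\to Z$ is not generalized lc near $x$. Since $(T,B_T+M_T)$ is generalized lc near $x$, we can first replace $B$ by $B-\epsilon(B-T)$ for a small $\epsilon$ (not needed if $B$ is already lc off $T$). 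More concretely, let $c<1$ be the generalized lc threshold type number such that $(X,T+c(B-T)+cM)$ is generalized lc but not generalized klt near $x$, with a non-klt place whose center meets $T$ near $x$. Outside $T$ we may already have non-lc points, but we only care about a neighborhood of $x$. Since $(X,T)$ is plt and $X$ is $\mathbb{Q}$-factorial, $T + c(B-T)$ has $T$ as its only divisorial lc center near $x$ when $c$ is close to where we need it.

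Next we take a generalized dlt blow-up (Theorem \ref{thm--gen-dlt-model}) $f\colon X'\to X$ of $(X,T+c(B-T)+cM)$. We write $K_{X'}+B'+M'=f^{*}(K_X+T+c(B-T)+cM)$ with $M'$ the trace of $c\boldsymbol{\rm M}$. Let $T'$ be the strict transform of $T$. Because $(X,T)$ is plt and $X$ is $\mathbb{Q}$-factorial over $W$, we have $f^{*}T=T'+(\text{effective exceptional})$. We then set $\Gamma:=B'-$ (a small multiple of) the exceptional part so that $-(K_{X'}+\Gamma+M')\equiv$ a positive combination of $f$-exceptional divisors. This is nef and big over $X$ after running an MMP. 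Alternatively, we use directly that $-(K_{X'}+B'+M')\sim_{\mathbb{R},X}0$ and perturb with $-f^{*}(B-T)$-type terms, which gives $f$-nef and $f$-big. We now apply Lemma \ref{lem--conn-princi} to $X'\to X$ over the point $x$. The generalized non-klt locus is connected over $x$. It contains $T'$ and an exceptional divisor $E$ with coefficient $1$ centered at a point of $T$ near $x$, since the threshold was attained by an exceptional place. Hence some component $E$ with coefficient $1$ meets $T'$.

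The final step is to restrict to $T'$. Generalized adjunction along $T'$ gives a pair $(T'^{\nu},B'_{T'}+M'_{T'})$ that is crepant over $(T,B_T+M_T)$ with respect to $c$. Adjunction is monotone in the coefficients, so it is crepant over the adjunction of $(X,T+c(B-T)+cM)$, which is bounded above by $(T,B_T+M_T)$. The divisor $E\cap T'$ then produces a non-klt center of $(T,c\text{-adjunction})$ near $x$. If $c<1$ strictly and $B\neq T$ near $x$, this contradicts $(T,B_T+M_T)$ being generalized lc, since decreasing $c$ strictly increases the log discrepancies along such centers. We handle this by taking $c$ maximal with lc and noting $c<1$ by assumption. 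The main obstacle is arranging the negativity and the precise bookkeeping. We must ensure that the threshold place maps into $T$ near $x$ and that the lc center on $T'$ is genuinely non-lc after increasing $c$ to $1$. This needs the $\mathbb{Q}$-factoriality over $W$ and the plt hypothesis on $(X,T)$, which makes $T'$ normal and makes $(X',T')$ behave well. There is also the analytic point that all shrinkings of $Z$ must be around $\pi(x)$ only, with the fibre over $x$ compact.
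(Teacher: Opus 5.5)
Your argument is correct in substance, but it takes a different route from the paper. The paper simply transports Birkar's proof of \cite[Lemma 3.2]{birkar-compl}. That proof reduces the statement to the classical lc inversion of adjunction for usual pairs, whose analytic version is \cite[Theorem 1.1]{fujino-analytic-inv-adjunction}. You instead interpolate via $K_{X}+T+t(B-T)+tM=(1-t)(K_{X}+T)+t(K_{X}+B+M)$ and stop at the threshold $c<1$. At level $c$ you get a plt-type inversion of adjunction directly from the generalized connectedness principle (Lemma \ref{lem--conn-princi}) applied to a bimeromorphic model of $X$. This makes the proof self-contained: it uses only Kawamata--Viehweg vanishing (through Lemma \ref{lem--conn-princi}) and never leaves the category of generalized pairs. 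The plt hypothesis on $(X,T)$ is exactly what lets this elementary plt-type argument replace the deep lc theorem. In fact your route only needs $K_{X}+T$ to be $\mathbb{R}$-Cartier, not $\mathbb{Q}$-factoriality over $W$. The paper's route, in exchange, is one line given the literature.

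Your write-up should be tightened in three places.

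First, any pair in which $T$ has coefficient one is non-klt, so ``lc but not klt'' is vacuous. Define $c$ as the largest $t$ with $(X,T+t(B-T)+tM)$ generalized lc near $x$. What you need at $c$ is a generalized lc place $P\neq T$ whose center contains $x$. $P$ need not be exceptional: it can be a component of $B$ other than $T$ with coefficient $>1$. This does no harm.

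Second, no perturbation, MMP, or dlt blow-up is needed. Take a log resolution $g\colon Y\to X$ near $x$ on which $\boldsymbol{\rm M}$ descends and $P$ is a divisor, and write $K_{Y}+\Theta_{Y}+cM_{Y}=g^{*}(K_{X}+T+c(B-T)+cM)$. Then $-(K_{Y}+\Theta_{Y}+cM_{Y})\sim_{\mathbb{R},\,X}0$ is already $g$-nef and $g$-big. Also $\Theta_{Y}^{<0}$ is $g$-exceptional because $T+c(B-T)\geq 0$. So Lemma \ref{lem--conn-princi} yields a component $E\neq T_{Y}$ of $\Theta_{Y}^{=1}$ meeting $T_{Y}$ over $x$.

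Third, make the final contradiction explicit. Write $K_{Y}+T_{Y}+D_{Y}=g^{*}(K_{X}+T)$ and $K_{Y}+T_{Y}+B^{\circ}_{Y}+M_{Y}=g^{*}(K_{X}+B+M)$. Then $\Theta_{Y}-T_{Y}=(1-c)D_{Y}+cB^{\circ}_{Y}$, and ${\rm coeff}_{E}(D_{Y})<1$ because $(X,T)$ is plt. Hence $1=(1-c){\rm coeff}_{E}(D_{Y})+c\,{\rm coeff}_{E}(B^{\circ}_{Y})$ forces ${\rm coeff}_{E}(B^{\circ}_{Y})>1$. So the components of $E\cap T_{Y}$ over $x$ have coefficient $>1$ in $B^{\circ}_{Y}|_{T_{Y}}$. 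This is the trace on $T_{Y}$ of the boundary of $(T,B_{T}+M_{T})$, so it contradicts generalized lc-ness of $(T,B_{T}+M_{T})$ near $x$. This is the precise content of your ``decreasing $c$ strictly increases the log discrepancies'', and the proviso $B\neq T$ near $x$ is superfluous.
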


\begin{proof}
We can apply \cite[Proof of Lemma 3.2]{birkar-compl} because we can use the inversion of adjunction \cite[Theorem 1.1]{fujino-analytic-inv-adjunction}. 
\end{proof}

We close this section with a result on how the coefficients of the boundary parts are inherited under generalized adjunction.

\begin{lem}[{\cite[Proposition 3.2]{chhx-compl}}]\label{lem--adj-coeff}
Fix a positive integer $p$ and a finite subset $\mathfrak{R} \subset \mathbb{Q}\cap[0,1]$. 
Then there exists a finite subset $\mathfrak{R}' \subset \mathbb{Q}\cap[0,1]$, depending only on $p$ and $\mathfrak{R}$, satisfying the following.
Let $(X,B+M)\to Z$ be a generalized lc pair with the nef part $\boldsymbol{\rm M}$ such that $B \in \Phi(\mathfrak{R})$ and $p \boldsymbol{\rm M}$ is b-Cartier. 
Let $T$ be an irreducible component of $\lfloor B \rfloor$ with the normalization $T^{\nu} \to T$, and let $(T^{\nu}, B_{T^{\nu}}+M_{T^{\nu}}) \to Z$ be the generalized lc pair defined by using the generalized adjunction $K_{T^{\nu}}+B_{T^{\nu}}+M_{T^{\nu}}\sim_{\mathbb{R},\,Z}(K_{X}+B+M)|_{T^{\nu}}$. 
Then $B_{T^{\nu}} \in \Phi(\mathfrak{R}')$. 
Moreover, if we have $B \in \Gamma(n, \Phi(\mathfrak{R}))$ for some positive integer $n$, then $B_{T^{\nu}} \in \Gamma(n, \Phi(\mathfrak{R}'))$. 
\end{lem}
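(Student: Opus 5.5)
The plan is to reduce the statement to the algebraic result \cite[Proposition 3.2]{chhx-compl}. The point is that the computation there is purely local along codimension-one points of $T^{\nu}$: it only involves coefficients, Cartier indices, and the local structure of $X$ near codimension-two points contained in $T$. The first step is to fix a prime divisor $P$ on $T^{\nu}$. I will compute ${\rm coeff}_{P}(B_{T^{\nu}})$ after shrinking $X$ to a small neighborhood of the generic point of the image of $P$ in $X$. Shrinking is harmless here: the generalized adjunction in Definition \ref{defn--gen-adjunction} is compatible with restriction to open subsets of $X$, and so are the trace of $\boldsymbol{\rm M}$ and its b-Cartier property. Concretely, if $X$ is replaced by an open subset $U$ meeting the image of $P$, then $B_{T^{\nu}}|_{U}$ and $\boldsymbol{\rm M}|_{U}$ are the corresponding objects for $(U,B|_{U}+M|_{U})$, and $p\boldsymbol{\rm M}|_{U}$ is still b-Cartier. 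So it suffices to prove that each coefficient lies in $\Phi(\mathfrak{R}')$, respectively in $\Gamma(n,\Phi(\mathfrak{R}'))$.

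Next, I will cut down to a surface germ. Near a general point $x$ of the image of $P$, which is a codimension-two point of $X$, I take a general complete intersection of $\dim X-2$ general hypersurfaces through $x$, working in a Stein neighborhood. This produces a surface germ $(S,x)$ that is normal, and along which the pair, the trace $\boldsymbol{\rm M}_{X'}$ on a resolution, and $T$ restrict compatibly. Bertini's theorem is available in the analytic Stein setting, so generic hyperplane-type sections exist. Generalized adjunction and generalized log discrepancies over $x$ are computed on a log resolution, and that resolution restricts to a resolution of $S$. Therefore ${\rm coeff}_{P}(B_{T^{\nu}})$ equals the coefficient at $x$ of the adjunction of $(S,B_S+M_S)$ to the curve $T\cap S$, where the surface pair is generalized lc and $p$ times its nef part is still b-Cartier.

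The remaining step, which I expect to be the main obstacle, is the local surface computation. In the algebraic case it reads: if $(S,T_S+\sum b_iB_i)$ near $x$ is generalized lc with $T_S$ a curve, the singularity is cyclic quotient of index $l$ along $T_S$ (by plt-ness near $T$, or after a dlt reduction), and the nef part contributes $\mu\in\frac{1}{p}\mathbb{Z}_{\geq 0}$ locally, then the coefficient is
\begin{equation*}
1-\frac{1}{l}+\frac{\sum m_ib_i+\mu}{l}
\end{equation*}
with $m_i\in\mathbb{Z}_{\ge0}$. One then checks, exactly as in \cite{chhx-compl}, that with $b_i\in\Phi(\mathfrak{R})$ this lies in $\Phi(\mathfrak{R}')$ for a finite $\mathfrak{R}'$ determined by $p$ and $\mathfrak{R}$, using the lc bound $\le1$. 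The analogous check gives membership in $\Gamma(n,\Phi(\mathfrak{R}'))$.

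The analytic concern is that the classification of lc surface germs and the index computation are local analytic statements. They are classically valid for analytic germs, since surface singularities are determined by their resolution graphs. Hence the combinatorial arguments of \cite[Proposition 3.2]{chhx-compl} should transfer verbatim. I would verify in particular that the nef part's local contribution $\mu$ has denominator dividing $p$; this uses $p\boldsymbol{\rm M}$ b-Cartier and the negativity lemma on the resolution of the germ.
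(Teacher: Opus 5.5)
Your proposal is correct and is essentially the paper's argument. The paper simply observes that the proof of \cite[Proposition 3.2]{chhx-compl} carries over verbatim, because its only geometric input is \cite[Lemma 3.3]{birkar-compl}, which in turn rests on the local index and different computations \cite[3.9.~Proposition, 3.10.~Corollary]{shokurov-flip}; these remain valid for analytic germs. Your localization at general points of the image of $P$, your reduction to a surface germ, and your cyclic-quotient formula for the coefficient (including the nef part's contribution) are exactly an unpacking of that chain.

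One small technical point: cut by general members of a base-point-free system on a Stein neighborhood, rather than by hypersurfaces through a fixed point $x$. This lets Bertini apply on the resolution, and such sections automatically meet the image of $P$ at general points.
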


\begin{proof}
The argument of \cite[Proof of Proposition 3.2]{chhx-compl} works with no changes. 
In that proof, we only need \cite[Lemma 3.3]{birkar-compl}. 
The proof of \cite[Lemma 3.3]{birkar-compl} only needs \cite[3.9.~Proposition, 3.10.~Corollary]{shokurov-flip}, which hold even in the complex analytic setting. 
\end{proof}

\section{Generalized canonical bundle formula}\label{sec--gen-can-bundle-formula}

The goal of this section is to establish the generalized canonical bundle formula for projective morphisms between analytic varieties over non-compact bases. 
For a more general setup, see \cite[Conjecture 2.1]{haconpaun}.

\begin{defn}[{cf.~\cite[Section 4]{filipazzi-gen-can-bundle-formula}, \cite[Subsection 2.5]{filipazzi-svaldi}}]\label{defn--can-bundle-formula}
Let $(X,B+M) \to S$ be a generalized sub-pair with the nef part $\boldsymbol{\rm M}$.  
We define an $\mathbb{R}$-b-divisor $\boldsymbol{\rm A}^{*}(X,B+M)$ on $X$ by the condition that for any proper bimeromorphic morphism $Y \to X$, the trace of $\boldsymbol{\rm A}^{*}(X,B+M)$ on $Y$ is 
$$\boldsymbol{\rm A}^{*}(X,B+M)_{Y}:=\sum_{a(P,X,B+M)>0}(a(P,X,B+M)-1)P,$$
where $P$ runs over prime divisors on $Y$. 
The sheaf $\mathcal{O}_{X}(\lceil \boldsymbol{\rm A}^{*}(X,B+M)\rceil)$ is defined by
$$\mathcal{O}_{X}(\lceil \boldsymbol{\rm A}^{*}(X,B+M)\rceil)(V):=\left\{
\begin{array}{l}
\text{meromorphic function $\varphi$ on $V$ such that}\\
\text{${\rm div}(h^{*}\varphi)|_{h^{-1}(V)}+\lceil \boldsymbol{\rm A}^{*}(X,B+M)_{Y}\rceil|_{h^{-1}(V)} \geq 0$ for}\\
\text{any proper bimeromorphic morphism $h \colon Y \to X$}
\end{array}\right\}$$
for any open subset $V \subset X$. 

Let $f \colon X \to Z$ be a contraction over $S$ such that 
\begin{itemize}
\item
the structure morphism $Z \to S$ is projective, 
\item
$K_{X}$ is globally defined as a Weil divisor on $X$ and $K_{X}+B+M \sim_{\mathbb{R},\,Z}0$, 
\item
there exists a Zariski open dense subset $U \subset Z$ such that if we put $X_{U}:=f^{-1}(U)$ then
 $(X_{U},B|_{X_{U}}+M|_{X_{U}}) \to U$ is generalized sub-lc, and
 \item
${\rm rank}f_{*}\mathcal{O}_{X}(\lceil \boldsymbol{\rm A}^{*}(X,B+M)\rceil)=1$.
\end{itemize}
We call the structure of $f \colon X \to Z$ over $S$ together with $(X,B+M) \to S$ a {\em generalized lc-trivial fibration} over $S$, and we denote it by $f\colon (X,B+M) \to Z$ if there is no risk of confusion. 
If $(X_{U},B|_{X_{U}}+M|_{X_{U}}) \to U$ is generalized sub-klt in the third condition, then we call $f\colon (X,B+M) \to Z$ a {\em generalized klt-trivial fibration} over $S$. 
When $\boldsymbol{\rm M}=0$ and the sub-pair $(X_{U},B|_{X_{U}})$ is sub-lc (resp.~sub-klt), we call $f\colon (X,B) \to Z$  an {\em lc-trivial fibration} (resp.~{\em klt-trivial fibration}) over $S$. 

We define the {\em discriminant $\mathbb{R}$-b-divisor} and the {\em moduli $\mathbb{R}$-b-divisor} on $Z$ associated to $f$ as follows: 
For any proper bimeromorphic morphism $\tau \colon Z' \to Z$, we take a resolution $\tau_{X} \colon X' \to X$ of $X$ such that the induced meromorphic map $f' \colon X' \dashrightarrow Z'$ is a morphism. 
Then we have the diagram
 $$
\xymatrix{
X' \ar@{->}[d]_{f'} \ar[r]^{\tau_{X}}&X\ar@{->}[d]^{f}\\
Z'\ar[r]_{\tau}&Z. 
}
$$
Although $X' \to S$ is only locally projective, we still regard $(X',B'+M') \to S$ as a generalized sub-pair, where $M':=\boldsymbol{\rm M}_{X'}$ and $B'$ is defined to be 
$$K_{X'}+B'+M'=\tau_{X}^{*}(K_{X}+B+M).$$
For any prime divisor $Q$ on $Z'$, after shrinking $Z'$ around a general point of $Q$ we can define the generalized lc threshold $t_{Q}$ of $f'^{*}Q$ with respect to $(X',B'+M')\to S$. 
Then, the discriminant $\mathbb{R}$-b-divisor $\boldsymbol{\rm G}$ associated to $f$ is defined by 
$$\boldsymbol{\rm G}_{Z'}:=\sum_{Q}(1-t_{Q})Q,$$
where $Q$ runs over prime divisors on $Z'$. 
To define the moduli $\mathbb{R}$-b-divisor, we fix an $\mathbb{R}$-Cartier divisor $D$ on $Z$ such that $K_{X}+B+M\sim_{\mathbb{R}}f^{*}D$. 
Then, the moduli $\mathbb{R}$-b-divisor $\boldsymbol{\rm N}$ associated to $f$ is defined by 
$$\boldsymbol{\rm N}_{Z'}:= \tau^{*}D-(K_{Z'}+\boldsymbol{\rm G}_{Z'}).$$
A priori, $\boldsymbol{\rm N}$ is not globally defined as an $\mathbb{R}$-b-divisor on $Z$ because the canonical divisors are only locally defined in general. 
However, once we shrink $S$ around a Stein compact subset appropriately, $\boldsymbol{\rm N}$ is globally defined (see Theorem \ref{thm--welldef-can-b-div}).  
We remark that the discriminant $\mathbb{R}$-b-divisor depends only on $(X,B+M)\to S$ and $f \colon X \to Z$. 
On the other hand, to define the moduli $\mathbb{R}$-b-divisor, we need to fix such $D$ as above. 
Hence, the moduli $\mathbb{R}$-b-divisor is defined up to $\mathbb{R}$-linear equivalence over $S$. 

By definition, we have
$$K_{X}+B+M\sim_{\mathbb{R}}f^{*}(K_{Z}+\boldsymbol{\rm G}_{Z}+\boldsymbol{\rm N}_{Z}).$$
If $\boldsymbol{\rm N}$ is a b-nef$/S$ $\mathbb{R}$-b-Cartier $\mathbb{R}$-b-divisor, in other words, if $(Z, \boldsymbol{\rm G}_{Z}+\boldsymbol{\rm N}_{Z}) \to S$ is a generalized sub-pair with the nef part $\boldsymbol{\rm N}$, then we call the above relation the {\em generalized canonical bundle formula}.  
\end{defn}

\begin{rem}\label{rem--can-bundle-formula-rest}
With notations as in Definition \ref{defn--can-bundle-formula}, pick an open subset $\tilde{S} \subset S$, and let $\tilde{X} \subset X$ and $\tilde{Z} \subset Z$ be the inverse images of $\tilde{S}$. 
By the restriction, we get a generalized sub-pair $(\tilde{X},\tilde{B}+\tilde{M}) \to \tilde{S}$ with the nef part $\boldsymbol{\rm M}|_{\tilde{X}}$ and a contraction $\tilde{f} \colon \tilde{X} \to \tilde{Z}$. 
Hence, using the restriction $D|_{\tilde{Z}}$, we can define the discriminant $\mathbb{R}$-b-divisor $\tilde{\boldsymbol{\rm G}}$ and the moduli $\mathbb{R}$-b-divisor $\tilde{\boldsymbol{\rm N}}$ associated to $\tilde{f}$. 
It is  natural to consider relations between $\boldsymbol{\rm G}$ (resp.~$\boldsymbol{\rm N}$) and $\tilde{\boldsymbol{\rm G}}$ (resp.~$\tilde{\boldsymbol{\rm N}}$). 
However, we cannot directly compare these $\mathbb{R}$-b-divisors because the restriction of $\mathbb{R}$-b-divisors to an open subset cannot be defined well (Remark \ref{rem--rest-b-divisor}). 
\end{rem}

\begin{lem}\label{lem--restriction-can-bundle-formula}
With notations as in Definition \ref{defn--can-bundle-formula} and Remark \ref{rem--can-bundle-formula-rest}, fix  an $\mathbb{R}$-Cartier divisor $D$ on $Z$ such that $K_{X}+B+M\sim_{\mathbb{R}}f^{*}D$. 
Then the following properties hold.
\begin{enumerate}[$(a)$]
\item\label{lem--restriction-can-bundle-formula-(a)}
For any proper bimeromorphic morphism $\tau \colon Z' \to Z$ with $\tilde{Z}':=\tau^{-1}(\tilde{Z})$, we have 
$$(\boldsymbol{\rm G}_{Z'})|_{\tilde{Z}'}=\tilde{\boldsymbol{\rm G}}_{\tilde{Z}'} \qquad {\rm and} \qquad (\boldsymbol{\rm N}_{Z'})|_{\tilde{Z}'}=\tilde{\boldsymbol{\rm N}}_{\tilde{Z}'}$$
as $\mathbb{R}$-divisors on $\tilde{Z}'$.

\item\label{lem--restriction-can-bundle-formula-(b)}
Let $\boldsymbol{\rm K}$ {\rm (}resp.~$\tilde{\boldsymbol{\rm K}}${\rm)} be the canonical b-divisor on $Z$ {\rm (}resp.~$\tilde{Z}${\rm )}, that is, a $\mathbb{Z}$-b-divisor whose traces are the canonical divisors. 
Suppose that $\boldsymbol{\rm N}$ {\rm (}or, equivalently, $\boldsymbol{\rm K}+\boldsymbol{\rm G}${\rm )} is $\mathbb{R}$-b-Cartier. 
Define $\boldsymbol{\rm G}|_{\tilde{Z}}:=(\boldsymbol{\rm K}+\boldsymbol{\rm G})|_{\tilde{Z}}-\tilde{\boldsymbol{\rm K}}$. 
Then 
$$\boldsymbol{\rm G}|_{\tilde{Z}}\geq \tilde{\boldsymbol{\rm G}} \qquad {\rm and} \qquad \boldsymbol{\rm N}|_{\tilde{Z}}\leq \tilde{\boldsymbol{\rm N}}$$
as $\mathbb{R}$-b-divisors on $\tilde{Z}$. 

\item\label{lem--restriction-can-bundle-formula-(c)}
Suppose that both $\boldsymbol{\rm N}$ and $\tilde{\boldsymbol{\rm N}}$ are $\mathbb{R}$-b-Cartier and $\tilde{\boldsymbol{\rm N}}$ is b-nef$/\tilde{S}$. 
Then
$$(\boldsymbol{\rm K}+\boldsymbol{\rm G})|_{\tilde{Z}} = \tilde{\boldsymbol{\rm K}}+\tilde{\boldsymbol{\rm G}} \qquad {\rm and} \qquad \boldsymbol{\rm N}|_{\tilde{Z}}=\tilde{\boldsymbol{\rm N}}$$
as $\mathbb{R}$-b-divisors on $\tilde{Z}$. 
\end{enumerate}
\end{lem}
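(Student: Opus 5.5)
For (a), the plan is to observe that the generalized lc threshold is a local invariant. Fix $\tau \colon Z' \to Z$ and a resolution $\tau_X \colon X' \to X$ with $f' \colon X' \to Z'$ a morphism. Over $\tilde{S}$, the restrictions $\tilde{X}' := \tau_X^{-1}(\tilde{X})$ and $\tilde{Z}' := \tau^{-1}(\tilde{Z})$ form an admissible diagram for $\tilde{f}$. Moreover, $\boldsymbol{\rm M}|_{\tilde{X}}$ has trace $M'|_{\tilde{X}'}$, so $\tilde{B}' = B'|_{\tilde{X}'}$.

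For a prime divisor $\tilde{Q}$ on $\tilde{Z}'$, let $Q$ be its closure in $Z'$. A general point of $\tilde{Q}$ is a general point of $Q$, and the threshold $t_Q$ is computed after shrinking around such a point. Since generalized log discrepancies are computed on neighborhoods, we get $t_Q = \tilde{t}_{\tilde{Q}}$. This gives $(\boldsymbol{\rm G}_{Z'})|_{\tilde{Z}'} = \tilde{\boldsymbol{\rm G}}_{\tilde{Z}'}$. The equality for $\boldsymbol{\rm N}$ then follows from the definition, using $D|_{\tilde{Z}}$ and $K_{Z'}|_{\tilde{Z}'} = K_{\tilde{Z}'}$ (canonical divisors chosen compatibly as in Theorem \ref{thm--welldef-can-b-div}).

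For (b), choose $Z' \to Z$ to which $\boldsymbol{\rm N}$ descends. Then $\boldsymbol{\rm N}|_{\tilde{Z}}$ is the Cartier closure of $\boldsymbol{\rm N}_{Z'}|_{\tilde{Z}'} = \tilde{\boldsymbol{\rm N}}_{\tilde{Z}'}$ by (a). Take any proper bimeromorphic $V \to \tilde{Z}$. After passing to a higher model $V' \to \tilde{Z}'$ dominating $V$, it suffices to show
\[
\tilde{\boldsymbol{\rm N}}_{V'} \geq (\text{pullback of } \tilde{\boldsymbol{\rm N}}_{\tilde{Z}'}),
\]
and then push forward to $V$. Equivalently, we need $\tilde{\boldsymbol{\rm K}}_{V'} + \tilde{\boldsymbol{\rm G}}_{V'} \leq h^{*}(K_{\tilde{Z}'} + \tilde{\boldsymbol{\rm G}}_{\tilde{Z}'})$ for $h \colon V' \to \tilde{Z}'$. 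This is the standard comparison of lc thresholds under pullback. Write
\[
K_{V'} + \Gamma = h^{*}(K_{\tilde{Z}'} + \tilde{\boldsymbol{\rm G}}_{\tilde{Z}'}).
\]
For each prime $Q$ on $V'$, the fact that $\tilde{\boldsymbol{\rm N}}$ pulls back to the same $\mathbb{R}$-Cartier class locally means the boundary $B_{X''}$ on a model over $V'$ satisfies
\[
B_{X''} + t f''^{*}Q \leq \bigl(\text{pullback of } B' + \textstyle\sum_P t_P f'^{*}P\bigr) + (\text{coefficient comparison}).
\]
From this we get $1 - t_Q \leq \operatorname{coeff}_Q \Gamma$. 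This is exactly the argument in Ambro's and Filipazzi's works (the discriminant of a higher model is bounded by the pulled-back discriminant), and it is local over $\tilde{Z}'$, so it works in the analytic setting.

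For (c), the plan uses the negativity lemma. Let $\boldsymbol{\rm E} := \tilde{\boldsymbol{\rm N}} - \boldsymbol{\rm N}|_{\tilde{Z}}$, which is $\geq 0$ by (b). Choose a model $V \to \tilde{Z}$, locally over compacts of $\tilde{S}$, on which both are descended and such that $V$ dominates $\tilde{Z}'$. By (a), $\boldsymbol{\rm E}_V$ pushes forward to $0$ on $\tilde{Z}'$, so $\boldsymbol{\rm E}_V$ is exceptional over $\tilde{Z}'$. Moreover, $-\boldsymbol{\rm E}_V = \boldsymbol{\rm N}|_{\tilde{Z},V} - \tilde{\boldsymbol{\rm N}}_V$ equals the pullback of a divisor from $\tilde{Z}'$ minus a divisor that is nef over $\tilde{S}$, hence $-\boldsymbol{\rm E}_V$ is anti-nef over $\tilde{Z}'$. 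The negativity lemma then gives $\boldsymbol{\rm E}_V \leq 0$, so $\boldsymbol{\rm E}_V = 0$. Since both b-divisors descend to $V$, they agree, and then $(\boldsymbol{\rm K}+\boldsymbol{\rm G})|_{\tilde{Z}} = \tilde{\boldsymbol{\rm K}} + \tilde{\boldsymbol{\rm G}}$.

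The care needed is that a descent model for $\tilde{\boldsymbol{\rm N}}$ exists only as a model of $\tilde{Z}$, not of $Z$. I handle this by working locally over relatively compact opens of $\tilde{S}$, where the negativity lemma applies, and then gluing, since equality of traces is local. The main obstacle is the inequality in (b), in particular justifying the threshold comparison for models of $\tilde{Z}$ not coming from $Z$.
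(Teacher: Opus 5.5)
\textbf{(b) is the main gap.} You reduce (b) correctly to the inequality $\tilde{\boldsymbol{\rm K}}_{V'}+\tilde{\boldsymbol{\rm G}}_{V'}\le h^{*}(K_{\tilde{Z}'}+\tilde{\boldsymbol{\rm G}}_{\tilde{Z}'})$. But that inequality is the whole content of (b), and it is not a formal ``comparison of lc thresholds under pullback''. On a general model it is false.

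\begin{itemize}
\item The inequality is equivalent to $\tilde{\boldsymbol{\rm N}}_{V'}\ge h^{*}\tilde{\boldsymbol{\rm N}}_{\tilde{Z}'}$.
\item For a b-nef moduli part, the negativity lemma always gives the opposite inequality $\le$.
\item So on a smooth model to which a b-nef moduli part does not descend, your inequality fails on some higher model. An example is an elliptic fibration over a surface whose $j$-map has an indeterminacy point.
\end{itemize}

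Thus (b) is a genuine descent statement for the restricted fibration. Global descent of $\boldsymbol{\rm K}+\boldsymbol{\rm G}$ to $Z'$ says nothing formal about divisors over $\tilde{Z}'$ that are not realized over $Z'$, whose centers need not extend to analytic subsets of $Z'$. Your displayed ``coefficient comparison'' does not close this, and you flag it yourself as unresolved.

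The missing ingredient is the structural input the paper uses:
\begin{enumerate}
\item Replace $Z$ by a smooth model with simple normal crossing $\boldsymbol{\rm G}_{Z}$ to which $\boldsymbol{\rm K}+\boldsymbol{\rm G}$ descends.
\item Take a weak semistable reduction $X^{\rm ws}\to Z^{\rm ws}$, with $\mu\colon Z^{\rm ws}\to Z$ finite, whose base boundary contains ${\rm Supp}\,\mu^{*}\boldsymbol{\rm G}_{Z}$ and the ramification locus.
\item By Bierstone--Milman, reduce an arbitrary $\overline{Z}\to\tilde{Z}$ to blow-ups with centers admissible with respect to ${\rm Supp}\,\boldsymbol{\rm G}_{Z}$ and the branch locus. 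Then the base changes remain weakly semistable (analytic version of Karu's Lemma~8.3).
\item Compute log discrepancies and thresholds on that side, where fibres over boundary divisors are reduced.
\item Return to $\overline{Z}$ via Ambro's finite base change property, as in Filipazzi's proof of his Theorem~4.13.
\end{enumerate}
This is where the inequality actually comes from.

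\textbf{(a) also has a gap.} ``Let $Q$ be the closure of $\tilde{Q}$ in $Z'$'' is not legitimate in the analytic category. A prime divisor on $\tilde{Z}'$ need not be a component of $Q'|_{\tilde{Z}'}$ for any prime divisor $Q'$ on $Z'$, and then its closure is not analytic. Already for $Z=S=\mathbb{C}^{2}$ and $\tilde{S}$ a bidisc, the graph of a bounded holomorphic function on the disc with natural boundary is such a divisor.

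For such $\tilde{Q}$ the coefficient of $(\boldsymbol{\rm G}_{Z'})|_{\tilde{Z}'}$ is $0$, so you must show $\tilde{t}_{\tilde{Q}}=1$. Locality of thresholds alone does not give this. The paper handles it as follows.
\begin{itemize}
\item Without shrinking $S$ (flattening plus resolution), pass to $X'\to Z'$ with $\boldsymbol{\rm M}$ descending to $X'$.
\item Choose global snc divisors $\Sigma_{X'}$, $\Sigma_{Z'}$ with $\Sigma_{Z'}\supset{\rm Supp}\,\boldsymbol{\rm G}_{Z'}\cup f'(\Sigma^{v}_{X'})$ and $\Sigma_{X'}\supset{\rm Supp}\,B'\cup f'^{-1}(\Sigma_{Z'})$.
\item Arrange that $f'$ is smooth and $\Sigma_{X'}$ is relatively snc over $Z'\setminus\Sigma_{Z'}$.
\end{itemize}
Then any $\tilde{Q}\not\subset\Sigma_{Z'}$ has coefficient $0$ on both sides. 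Any $\tilde{Q}\subset\Sigma_{Z'}$ is a component (possibly one of several) of $Q'|_{\tilde{Z}'}$ for a component $Q'$ of $\Sigma_{Z'}$, and there your local computation works. Passing to such a higher model is harmless, since traces on lower models are pushforwards. This is easy to repair, but it is exactly the analytic subtlety the lemma is about.

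\textbf{(c) is fine.} Your argument (effective by (b), exceptional by (a), nef over the descent model, negativity lemma) is the same as the paper's. It goes through once (a) and (b) are established.
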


\begin{proof}
Because we will give the detailed arguments, the proof and the notations will be quite complicated. 
However, the ideas are standard and well known to the experts. 
The first assertion is shown by modifying $f \colon X \to Z$ to the situation of \cite[Theorem 2]{kawamata-subadjunction-II} and calculations of lc thresholds. 
The second assertion is shown by computations of log discrepancies and generalized lc thresholds after using the weak semistable reduction as in \cite[Theorem 1.17]{haconpaun} (cf.~\cite[Subsection 4.5]{eh-semistablereduction}) and the finite base change property (\cite[Theorem 3.2]{ambro-phdthesis}). 
The third assertion follows from the second assertion and the negativity lemma. 
In each assertion, we only need to show one of two relations. 
In Lemma \ref{lem--restriction-can-bundle-formula} (\ref{lem--restriction-can-bundle-formula-(a)}) and Lemma \ref{lem--restriction-can-bundle-formula} (\ref{lem--restriction-can-bundle-formula-(b)}) we focus on the discriminant $\mathbb{R}$-b-divisors, whereas we deal with the moduli $\mathbb{R}$-b-divisors in Lemma \ref{lem--restriction-can-bundle-formula} (\ref{lem--restriction-can-bundle-formula-(c)}). 

Since $\tilde{\boldsymbol{\rm G}}$ and $\boldsymbol{\rm G}$ are $\mathbb{R}$-b-divisors, to prove Lemma \ref{lem--restriction-can-bundle-formula} (\ref{lem--restriction-can-bundle-formula-(a)}), we may replace $\tau \colon Z' \to Z$ by a suitable bimeromorphic morphism (\cite[Corollary 2]{hironaka-flattening}, \cite[Theorem 13.2]{resolution-1}) so that the induced diagram 
 $$
\xymatrix{
X' \ar@{->}[d]_{f'} \ar[r]^{\tau_{X}}&X\ar@{->}[d]^{f}\\
Z'\ar[r]_{\tau}&Z
}
$$
of locally projective morphisms, where $X'$ is a resolution of the main component of $X \times_{Z}Z'$ on which the nef part $\boldsymbol{\rm M}$ descends, satisfies the following: 
If we put $M':=\boldsymbol{\rm M}_{X'}$ and define $B'$ by
$$K_{X'}+B'+M'=\tau_{X}^{*}(K_{X}+B+M),$$
then there exist simple normal crossing divisors $\Sigma_{X'} \subset X'$ and $\Sigma_{Z'} \subset Z'$ such that
\begin{itemize}
\item
 $\Sigma_{Z'}\supset {\rm Supp}\,\boldsymbol{\rm G}_{Z'} \cup f'(\Sigma_{X'}^{v})$ and $\Sigma_{X'} \supset {\rm Supp}\,B' \cup f'^{-1 }(\Sigma_{Z'})$, where $\Sigma_{X'}^{v}$ denotes the sum of all components of $\Sigma_{X'}$ that do not dominate $Z'$, and
\item
$f'$ is smooth over  $Z' \setminus \Sigma_{Z'}$ and $\Sigma_{X'}$ is simple normal crossing over $Z' \setminus \Sigma_{Z'}$. 
\end{itemize}
We note that we do not need to shrink $S$ to obtain the above diagram since we do not assume that $\tau$ or $\tau_{X}$ is a finite sequence of blow-ups. 
Let $\tilde{Q}$ be a prime divisor on $\tilde{Z}'$. 
If ${\tilde{Q}} \not\subset \Sigma_{Z'}|_{\tilde{Z}'}$, then the second condition immediately implies 
$${\rm coeff}_{\tilde{Q}}\bigl((\boldsymbol{\rm G}_{Z'})|_{\tilde{Z}'}\bigr)={\rm coeff}_{\tilde{Q}}\bigl(\tilde{\boldsymbol{\rm G}}_{\tilde{Z}'}\bigr)=0.$$
Note that this equality holds regardless of whether $\tilde{Q}$ is a component of $P'|_{\tilde{Z}'}$ for some prime divisor $P'$ on $Z'$. 
If ${\tilde{Q}} \subset \Sigma_{Z'}|_{\tilde{Z}'}$, then $\tilde{Q}$ is a component of $Q'|_{\tilde{Z}'}$ for some prime divisor $Q' \subset \Sigma_{Z'}$. 
Let $t_{Q'}$ (resp.~$t_{\tilde{Q}}$) be the generalized lc threshold of $f'^{*}Q'$ (resp.~$f'^{*}\tilde{Q}$) with respect to $(X',B'+M') \to S$ over a general point of $Q'$ (resp.~$\tilde{Q}$) after shrinking $Z'$ (resp.~$\tilde{Z}'$). 
Since all the relevant divisors have simple normal crossing supports, we can easily check that $t_{Q'}=t_{\tilde{Q}}$. 
This shows 
$${\rm coeff}_{\tilde{Q}}\bigl((\boldsymbol{\rm G}_{Z'})|_{\tilde{Z}'}\bigr)= 1-t_{Q'} = 1-t_{\tilde{Q}} ={\rm coeff}_{\tilde{Q}}\bigl(\tilde{\boldsymbol{\rm G}}_{\tilde{Z}'}\bigr).$$
Thus, we have $(\boldsymbol{\rm G}_{Z'})|_{\tilde{Z}'}=\tilde{\boldsymbol{\rm G}}_{\tilde{Z}'}$, and therefore Lemma \ref{lem--restriction-can-bundle-formula} (\ref{lem--restriction-can-bundle-formula-(a)}) holds. 

For Lemma \ref{lem--restriction-can-bundle-formula} (\ref{lem--restriction-can-bundle-formula-(b)}), 
we fix a proper bimeromorphic morphism $\overline{Z} \to \tilde{Z}$. 
We will show $(\boldsymbol{\rm G}|_{\tilde{Z}})_{\overline{Z}} \geq \tilde{\boldsymbol{\rm G}}_{\overline{Z}}$. 
Since it is sufficient to show this relation locally, by Lemma \ref{lem--restriction-can-bundle-formula} (\ref{lem--restriction-can-bundle-formula-(a)}), we may shrink $S$ freely whenever necessary. 
In particular,  using \cite[Corollary 2]{hironaka-flattening} we can assume that every morphism that appears in this paragraph is projective. 
Replacing $Z$ we may assume that $Z$ is smooth, $\boldsymbol{\rm G}_{Z}$ is a simple normal crossing $\mathbb{R}$-divisor on $Z$, and $\boldsymbol{\rm K}+\boldsymbol{\rm G}$ descends to $Z$. 
Replacing $X$ by a suitable resolution, we may assume that $X$ is smooth and the nef part $\boldsymbol{\rm M}$ of the generalized sub-pair $(X,B+M) \to S$ descends to $X$. 
We apply \cite[Theorem 1.17]{haconpaun} after we shrink $S$ (see also \cite[Theorem 9.5]{karu-phdthesis} and \cite[Subsection 4.5]{eh-semistablereduction}). 
By this theorem and replacing $Z$, we obtain a diagram
 $$
\xymatrix{
X^{\rm ws} \ar@{->}[r]^{\mu_{X}}\ar@{->}[d]_{f^{\rm ws}} & X \ar@{->}[d]^{f}\\
Z^{\rm ws}\ar[r]_{\mu}&Z
}
$$
such that 
\begin{itemize}
\item
$\mu \colon Z^{\rm ws} \to Z$ is a finite surjective morphism and $\mu_{X} \colon X^{\rm ws} \to X$ is a generically finite surjective morphism, and
\item
there exist reduced divisors $\Delta_{X^{\rm ws}}$ and $\Delta_{Z^{\rm ws}}$ such that 
$$f^{\rm ws}\colon (X^{\rm ws},\Delta_{X^{\rm ws}}) \to (Z^{\rm ws},\Delta_{Z^{\rm ws}})$$
is weakly semistable as a toroidal morphism with good horizontal divisors (see \cite[Definition 9.1]{karu-phdthesis}), and furthermore, if we define 
$B^{\rm ws}$ and $M^{\rm ws}$ by 
$$K_{X^{\rm ws}}+B^{\rm ws}=\mu^{*}_{X}(K_{X}+B) \qquad {\rm and} \qquad M^{\rm ws}:=\mu^{*}_{X}M$$ respectively, then  $\Delta_{X^{\rm ws}} \supset {\rm Supp}\,B^{\rm ws} \cup {\rm Ex}(\mu_{X})$ and $\Delta_{Z^{\rm ws}}$ contains the union of ${\rm Supp}\,\mu^{*}\boldsymbol{\rm G}_{Z}$ and the ramification locus of $\mu \colon Z^{\rm ws} \to Z$.   
\end{itemize}
By applying \cite[Theorem 13.2]{resolution-1} after shrinking $S$, we may assume that $\overline{Z} \to \tilde{Z}$ is a finite sequence of blow-ups
$$\overline{Z}:=\overline{Z}_{n} \to \overline{Z}_{n-1} \to \cdots \to \overline{Z}_{1} \to \overline{Z}_{0}:=\tilde{Z}$$
such that the center of each blow-up is admissible (\cite[(1.2)]{resolution-1}) with respect to the inverse image of a union of ${\rm Supp}\,(\boldsymbol{\rm G}_{Z}|_{\tilde{Z}})$ and the branch locus of $\mu \colon Z^{\rm ws} \to Z$. 
By the complex analytic analog of \cite[Chapter 2, Lemma 8.3]{karu-phdthesis} and taking the base change of $f^{\rm ws}\colon (X^{\rm ws},\Delta_{X^{\rm ws}}) \to (Z^{\rm ws},\Delta_{Z^{\rm ws}})$ by this sequence over $Z$, for any $0\leq i \leq n$, we get a toroidal and weakly semistable morphism with good horizontal divisors
$$\overline{f}^{\rm ws}_{i}\colon  (\overline{X}^{\rm ws}_{i},\Delta_{\overline{X}^{\rm ws}_{i}}) \to (\overline{Z}^{\rm ws}_{i},\Delta_{\overline{Z}^{\rm ws}_{i}})$$
 satisfying the same property as above, where $\overline{Z}^{\rm ws}_{i}=Z^{\rm ws} \times_{Z}\overline{Z}_{i}$. 
In this situation, we can apply the complex analytic analog of \cite[Theorem 3.2]{ambro-phdthesis} and direct computations of log discrepancies (\cite[Lemma 2.29]{kollar-mori}) and generalized lc thresholds by using the weak semistability with good horizontal divisors (\cite[Definition 9.1]{karu-phdthesis}).  
By carrying out a similar argument as in \cite[Proof of Theorem 4.13]{filipazzi-gen-can-bundle-formula}, we obtain $(\boldsymbol{\rm G}|_{\tilde{Z}})_{\overline{Z}} \geq \tilde{\boldsymbol{\rm G}}_{\overline{Z}}$. 

To prove Lemma \ref{lem--restriction-can-bundle-formula} (\ref{lem--restriction-can-bundle-formula-(c)}), we fix a proper bimeromorphic morphism $Z'' \to Z$ such that $\boldsymbol{\rm N}$ descends to $Z''$. 
Let $\tilde{Z}''$ be the inverse image of $\tilde{S}$ in $Z''$. 
We also fix a proper bimeromorphic morphism $\sigma \colon \tilde{Z}''' \to \tilde{Z}''$ such that $\tilde{\boldsymbol{\rm N}}$ descends to $\tilde{Z}'''$ and $\tilde{\boldsymbol{\rm N}}_{\tilde{Z}'''}$ is nef over $\tilde{S}$. 
$$
\xymatrix{
\tilde{Z}''' \ar@{->}[r]^{\sigma} & \tilde{Z}'' \ar@{_{(}->}[d]\ar[r]&\tilde{Z}\ar@{_{(}->}[d]\ar[r]&\tilde{S}\ar@{_{(}->}[d]\\
&Z''\ar[r]&Z\ar[r]&S.
}
$$
By Definition \ref{defn--rest-b-divisor}, it is enough to prove the equality $(\boldsymbol{\rm N}|_{\tilde{Z}})_{\tilde{Z}'''}=\tilde{\boldsymbol{\rm N}}_{\tilde{Z}'''}$ of the traces on $\tilde{Z}'''$. 
As in the proof of Lemma \ref{lem--restriction-can-bundle-formula} (\ref{lem--restriction-can-bundle-formula-(b)}), using Lemma \ref{lem--restriction-can-bundle-formula} (\ref{lem--restriction-can-bundle-formula-(a)}) and \cite[Corollary 2]{hironaka-flattening}, we can assume that every morphism that appears in this paragraph is projective. 
By Lemma \ref{lem--restriction-can-bundle-formula} (\ref{lem--restriction-can-bundle-formula-(b)}), we have $(\boldsymbol{\rm N}|_{\tilde{Z}})_{\tilde{Z}'''}+E=\tilde{\boldsymbol{\rm N}}_{\tilde{Z}'''}$ for some effective $\mathbb{R}$-divisor $E$ on $\tilde{Z}'''$, and $\sigma_{*}E=0$ by Lemma \ref{lem--restriction-can-bundle-formula} (\ref{lem--restriction-can-bundle-formula-(a)}).  
Furthermore, we have $(\boldsymbol{\rm N}|_{\tilde{Z}})_{\tilde{Z}'''}=\sigma^{*}\bigl( (\boldsymbol{\rm N}|_{\tilde{Z}})_{\tilde{Z}''}\bigr)$ by Definition \ref{defn--rest-b-divisor}. 
From these arguments, we have
$$\tilde{\boldsymbol{\rm N}}_{\tilde{Z}'''}=(\boldsymbol{\rm N}|_{\tilde{Z}})_{\tilde{Z}'''}+E\sim_{\mathbb{R},\,\tilde{Z}''}E,$$
where $E$ is effective and $\sigma$-exceptional. 
Since $\tilde{\boldsymbol{\rm N}}_{\tilde{Z}'''}$ is nef over $\tilde{S}$, the negativity lemma implies $E=0$. 
Hence, we have $(\boldsymbol{\rm N}|_{\tilde{Z}})_{\tilde{Z}'''}=\tilde{\boldsymbol{\rm N}}_{\tilde{Z}'''}$, and Lemma \ref{lem--restriction-can-bundle-formula} (\ref{lem--restriction-can-bundle-formula-(c)}) holds.  
\end{proof}

\begin{rem}[{cf.~\cite[Theorem 4.13]{filipazzi-gen-can-bundle-formula}}]\label{rem--descend-variety}
Let $(X,B+M) \to S$ be a generalized sub-pair with the nef part $\boldsymbol{\rm M}$, and let $f\colon (X,B+M) \to Z$ be a generalized lc-trivial fibration over $S$. 
Let $W \subset S$ be a Stein compact subset. 
Suppose that the following properties hold.
\begin{itemize}
\item
$\boldsymbol{\rm M}$ descends to $X$,  
\item
$Z$ is smooth and the trace $\boldsymbol{\rm G}_{Z}$ of the discriminant $\mathbb{R}$-b-divisor $\boldsymbol{\rm G}$ on $Z$ has simple normal crossing support,
\item
there exists a diagram
 $$
\xymatrix{
X^{\rm ws} \ar@{->}[r]^{\mu_{X}}\ar@{->}[d]_{f^{\rm ws}} & X \ar@{->}[d]^{f}\\
Z^{\rm ws}\ar[r]_{\mu}&Z
}
$$
from normal analytic varieties $X^{\rm ws}$ and $Z^{\rm ws}$ such that 
\begin{itemize}
\item
$Z^{\rm ws} \to Z$ is a finite surjective morphism and $X^{\rm ws} \to X$ is a generically finite surjective morphism, and
\item
there exist divisors $\Delta_{X^{\rm ws}}$ and $\Delta_{Z^{\rm ws}}$ such that the morphism
$$f^{\rm ws}\colon (X^{\rm ws},\Delta_{X^{\rm ws}}) \to (Z^{\rm ws},\Delta_{Z^{\rm ws}})$$
is weakly semistable as a toroidal morphism with good horizontal divisors, and furthermore, if we define $B^{\rm ws}$ and $M^{\rm ws}$ by $K_{X^{\rm ws}}+B^{\rm ws}=\mu^{*}_{X}(K_{X}+B)$ and $M^{\rm ws}:=\mu^{*}_{X}M$ respectively, then $\Delta_{X^{\rm ws}} \supset {\rm Supp}\,B^{\rm ws} \cup {\rm Ex}(\mu_{X})$ and $\Delta_{Z^{\rm ws}}$ contains the union of ${\rm Supp}\,\mu^{*}\boldsymbol{\rm G}_{Z}$ and the ramification locus of $\mu \colon Z^{\rm ws} \to Z$.  
\end{itemize}
\end{itemize}
As in the proof of Lemma \ref{lem--restriction-can-bundle-formula} (\ref{lem--restriction-can-bundle-formula-(b)}), after shrinking $S$ around $W$ and taking an appropriate modification we may always assume these properties. 
Suppose the following condition holds (cf.~\cite[Theorem 21.4 (4)]{fujino-analytic-bchm}):
\begin{itemize}
\item[($*$)]
For any projective bimeromorphic morphism $Z' \to Z$ and any point $s \in S$, there exist an open neighborhood $\tilde{S}$ of $s$ with the restrictions $\tilde{f} \colon \tilde{X} \to \tilde{Z}$ and $\tilde{Z}' \to \tilde{Z}$ over $\tilde{S}$ and a projective bimeromorphic morphism $\tilde{Z}'' \to \tilde{Z}'$ such that the trace $\tilde{\boldsymbol{\rm N}}_{\tilde{Z}''}$ of the moduli $\mathbb{R}$-b-divisor $\tilde{\boldsymbol{\rm N}}$ associated to $\tilde{f}$ is nef over $\tilde{S}$. 
\end{itemize}
In this situation, it follows that the moduli $\mathbb{R}$-b-divisor $\boldsymbol{\rm N}$ associated to $f$ is a b-nef$/S$ $\mathbb{R}$-b-Cartier $\mathbb{R}$-b-divisor on $Z$ and it descends to $Z$.
Indeed, we fix an arbitrary proper bimeromorphic morphism $\tau \colon Z' \to Z$. 
Replacing $Z'$ if necessary, we may assume that $Z'$ is smooth, ${\rm Supp}\,\tau^{*}\boldsymbol{\rm G}_{Z} \cup {\rm Ex}(\tau)$ is a normal crossing divisor, and $\tau$ is locally a finite sequence of blow-ups with admissible centers (\cite[(1.2)]{resolution-1}). 
By Lemma \ref{lem--restriction-can-bundle-formula} (\ref{lem--restriction-can-bundle-formula-(a)}) and the same argument as in the proof of Lemma \ref{lem--restriction-can-bundle-formula} (\ref{lem--restriction-can-bundle-formula-(b)}), we can locally check 
$$K_{Z'}+\boldsymbol{\rm G}_{Z'} \leq \tau^{*}(K_{Z}+\boldsymbol{\rm G}_{Z}).$$ 
Therefore $\boldsymbol{\rm N}_{Z'} \geq \tau^{*}\boldsymbol{\rm N}_{Z}$. 
On the other hand, for any point $s \in S$, the assumption ($*$) says that there exist an open neighborhood $\tilde{S}$ of $s$ with the restrictions $\tilde{f} \colon \tilde{X} \to \tilde{Z}$ and $\tilde{\tau} \colon \tilde{Z}' \to \tilde{Z}$ over $\tilde{S}$ and a projective bimeromorphic morphism $\tilde{\tau}' \colon \tilde{Z}'' \to \tilde{Z}'$ such that the trace $\tilde{\boldsymbol{\rm N}}_{\tilde{Z}''}$ of the moduli $\mathbb{R}$-b-divisor $\tilde{\boldsymbol{\rm N}}$ associated to $\tilde{f}$ is nef over $\tilde{S}$
$$\xymatrix{\tilde{Z}'' \ar@{->}[r]^{\tilde{\tau}'} & \tilde{Z}' \ar@{_{(}->}[d]\ar[r]^{\tilde{\tau}}&\tilde{Z}\ar@{_{(}->}[d]\ar[r]&\tilde{S}\ar@{_{(}->}[d]\\&Z'\ar[r]_{\tau}&Z\ar[r]&S.}$$
Then 
$\tilde{\tau}'^{*}\tilde{\tau}^{*}(\boldsymbol{\rm N}_{Z}|_{\tilde{Z}}) \leq \tilde{\tau}'^{*}(\boldsymbol{\rm N}_{Z'}|_{\tilde{Z}'})\leq \tilde{\boldsymbol{\rm N}}_{\tilde{Z}''}$ by the previous discussion. 
Since $\boldsymbol{\rm N}_{Z}|_{\tilde{Z}}=\tilde{\boldsymbol{\rm N}}_{\tilde{Z}}$ by Lemma \ref{lem--restriction-can-bundle-formula} (\ref{lem--restriction-can-bundle-formula-(a)}), the negativity lemma implies $\tilde{\boldsymbol{\rm N}}_{\tilde{Z}''}\leq \tilde{\tau}'^{*}\tilde{\tau}^{*}(\boldsymbol{\rm N}_{Z}|_{\tilde{Z}})$. 
Then
$$\tilde{\tau}'^{*}\tilde{\tau}^{*}(\boldsymbol{\rm N}_{Z}|_{\tilde{Z}}) \leq \tilde{\tau}'^{*}(\boldsymbol{\rm N}_{Z'}|_{\tilde{Z}'})\leq \tilde{\boldsymbol{\rm N}}_{\tilde{Z}''}\leq \tilde{\tau}'^{*}\tilde{\tau}^{*}(\boldsymbol{\rm N}_{Z}|_{\tilde{Z}}),$$
which shows $(\boldsymbol{\rm N}_{Z'})|_{\tilde{Z}'} = (\tau^{*}\boldsymbol{\rm N}_{Z})|_{\tilde{Z}'}$. 
Since $s \in S$ is arbitrary and $\tilde{Z}'$ contains the fiber over $s$, we see that $\boldsymbol{\rm N}_{Z'} = \tau^{*}\boldsymbol{\rm N}_{Z}$. 
Therefore $\boldsymbol{\rm N}$ descends to $Z$. 
The property of being b-nef$/S$ can also be checked locally by using ($*$).  
\end{rem}

The following result is a variant of \cite[Theorem 21.4]{fujino-analytic-bchm}. 

\begin{thm}[{cf.~\cite[Theorem 21.4]{fujino-analytic-bchm}}]\label{thm--klt-trivial-fib-start}
Let $\pi \colon X \to S$ be a projective morphism from a normal analytic variety $X$ to a Stein space $S$, and let $W \subset S$ be a Stein compact subset. 
Let $f \colon (X,\Delta) \to Z$ be a klt-trivial fibration over $S$ such that $\Delta$ is a $\mathbb{Q}$-divisor on $X$. 
Then, replacing $S$ with a suitable open neighborhood of $W$, we have the following properties.
\begin{enumerate}[(i)]
\item \label{thm--klt-trivial-fib-start-(i)}
Let $\boldsymbol{\rm G}$ (resp.~$\boldsymbol{\rm N}$) be the discriminant $\mathbb{Q}$-b-divisor (resp.~the moduli $\mathbb{Q}$-b-divisor) associated to $f$. 
Then $\boldsymbol{\rm N}$ is a b-nef$/S$ $\mathbb{Q}$-b-Cartier $\mathbb{Q}$-b-divisor on $Z$. 
In particular, $(Z, \boldsymbol{\rm G}_{Z} +\boldsymbol{\rm N}_{Z}) \to S$ is a generalized sub-pair with the nef part $\boldsymbol{\rm N}$. 
Furthermore, if $(X,\Delta)$ is klt, then $(Z, \boldsymbol{\rm G}_{Z} +\boldsymbol{\rm N}_{Z}) \to S$ is generalized klt. 
\item \label{thm--klt-trivial-fib-start-(ii)}
For any open subset $\tilde{S} \subset S$ that does not necessarily contain $W$, if we put $\tilde{X} \subset X$ and $\tilde{Z} \subset Z$ as the inverse images of $\tilde{S}$, then the klt-trivial fibration $(\tilde{X}, \Delta|_{\tilde{X}}) \to \tilde{Z}$ over $\tilde{S}$ satisfies (\ref{thm--klt-trivial-fib-start-(i)}). 
\end{enumerate}
In particular, the discriminant $\mathbb{Q}$-b-divisor and the moduli $\mathbb{Q}$-b-divisor associated to $f$ are compatible with the restriction over any open subset (see Lemma \ref{lem--restriction-can-bundle-formula}).  
\end{thm}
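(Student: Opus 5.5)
The plan is to reduce everything to Remark \ref{rem--descend-variety}. The input for condition ($*$) there will be \cite[Theorem 21.4]{fujino-analytic-bchm}, which is a local statement.

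First, shrink $S$ to a Stein open neighborhood of $W$. After further shrinking, Theorem \ref{thm--welldef-can-b-div} lets us define $\boldsymbol{\rm K}$, hence $\boldsymbol{\rm G}$ and $\boldsymbol{\rm N}$, globally as $\mathbb{Q}$-b-divisors. Since $\Delta$ is a $\mathbb{Q}$-divisor, we may choose the divisor $D$ with $K_X+\Delta\sim_{\mathbb{Q}}f^{*}D$ to be a $\mathbb{Q}$-Cartier divisor. Next, again shrinking $S$ around $W$, we replace $Z$ by a projective resolution $Z_{0}\to Z$ and $X$ by a resolution of the main component of $X\times_Z Z_0$. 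We arrange this so that $Z_0$ is smooth, $\boldsymbol{\rm G}_{Z_0}$ has snc support, and there is a weakly semistable diagram as in Remark \ref{rem--descend-variety}, obtained by \cite[Theorem 1.17]{haconpaun} exactly as in the proof of Lemma \ref{lem--restriction-can-bundle-formula} (\ref{lem--restriction-can-bundle-formula-(b)}). Replacing $Z$ by $Z_0$ does not change $\boldsymbol{\rm G}$ or $\boldsymbol{\rm N}$ as b-divisors, and $\mathbb{Q}$-b-Cartierness over $Z_0$ implies it over $Z$. We fix this $S$ from now on.

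The key step is to verify condition ($*$) for every open $\tilde S\subset S$ simultaneously. Take a projective bimeromorphic $Z'\to Z$ and a point $s\in S$. By \cite[Theorem 21.4]{fujino-analytic-bchm}, applied to the klt-trivial fibration over a small Stein neighborhood $\tilde S$ of $s$ with a Stein compact around $s$, there is, after shrinking around $s$, a modification $\tilde Z''\to\tilde Z'$ on which the moduli part $\tilde{\boldsymbol{\rm N}}$ is nef over $\tilde S$. Lemma \ref{lem--restriction-can-bundle-formula} (\ref{lem--restriction-can-bundle-formula-(a)}) guarantees that this $\tilde{\boldsymbol{\rm N}}$ agrees with the restriction of traces of $\boldsymbol{\rm N}$. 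Then Remark \ref{rem--descend-variety} gives that $\boldsymbol{\rm N}$ descends to $Z$ and is b-nef$/S$. Its trace is a $\mathbb{Q}$-divisor by construction, so $\boldsymbol{\rm N}$ is $\mathbb{Q}$-b-Cartier.

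For (\ref{thm--klt-trivial-fib-start-(ii)}), observe that all hypotheses of Remark \ref{rem--descend-variety} restrict to any open $\tilde S$: smoothness, the snc condition, and the weakly semistable diagram. By Lemma \ref{lem--restriction-can-bundle-formula} (\ref{lem--restriction-can-bundle-formula-(a)}), $\boldsymbol{\rm G}_{Z}|_{\tilde Z}=\tilde{\boldsymbol{\rm G}}_{\tilde Z}$. Condition ($*$) is local on $S$, so it holds for $\tilde S$ as well. Hence $\tilde{\boldsymbol{\rm N}}$ descends to $\tilde Z$ and is b-nef$/\tilde S$. Lemma \ref{lem--restriction-can-bundle-formula} (\ref{lem--restriction-can-bundle-formula-(c)}) then gives $\boldsymbol{\rm N}|_{\tilde Z}=\tilde{\boldsymbol{\rm N}}$, which is the compatibility claim.

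For the klt statement, assume $(X,\Delta)$ is klt. On any model $Z'$ where $\boldsymbol{\rm N}$ descends, each coefficient of $\boldsymbol{\rm G}_{Z'}$ equals $1-t_Q$, where $t_Q$ is an lc threshold. This threshold is positive because $(X,\Delta)$ is klt over the generic point of $Q$, as in the standard argument (\cite{ambro1}). It follows that $(Z,\boldsymbol{\rm G}_Z+\boldsymbol{\rm N}_Z)$ is generalized klt.

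I expect the main obstacle to be the precise match with \cite[Theorem 21.4]{fujino-analytic-bchm}. That theorem gives nefness only after shrinking around a compact set, and only on some model. This is exactly why ($*$) is phrased locally, and why the weakly semistable setup must be fixed before shrinking, so that the descent on $Z$ is uniform over all $\tilde S$.
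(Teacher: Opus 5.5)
Your argument for (i) is the paper's: \cite[Theorem 21.4]{fujino-analytic-bchm}, applied on small Stein neighborhoods, supplies condition ($*$), and Remark~\ref{rem--descend-variety} turns this into descent to $Z$ and b-nefness.

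For (ii) you take a different route. You re-run Remark~\ref{rem--descend-variety} for the restricted fibration over $\tilde S$. This works because its standing hypotheses survive restriction: smoothness of $Z$, the snc condition on $\boldsymbol{\rm G}_Z$ (via Lemma~\ref{lem--restriction-can-bundle-formula}~(a)), and the weakly semistable diagram. Condition ($*$) is again verified pointwise by Fujino's theorem. So $\tilde{\boldsymbol{\rm N}}$ descends to $\tilde Z$ and is b-nef$/\tilde S$, and compatibility follows at once. In fact part (c) is not needed: Lemma~\ref{lem--restriction-can-bundle-formula}~(a) with $\tau=\mathrm{id}$ already gives $\boldsymbol{\rm N}|_{\tilde Z}=\overline{\boldsymbol{\rm N}_Z|_{\tilde Z}}=\overline{\tilde{\boldsymbol{\rm N}}_{\tilde Z}}=\tilde{\boldsymbol{\rm N}}$.

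The paper instead derives (ii) formally from (i) and Lemma~\ref{lem--restriction-can-bundle-formula}. For $s\in\tilde S$, it applies (i) on a small neighborhood $S'$ of $s$ to get a b-nef $\boldsymbol{\rm N}'$, and identifies $\boldsymbol{\rm N}'=\boldsymbol{\rm N}|_{Z'}$ by part (c). It then squeezes the inequality $(\boldsymbol{\rm N}|_{\tilde Z})_{\overline Z}\le\tilde{\boldsymbol{\rm N}}_{\overline Z}$ from part (b) against this over $S'$, which gives equality on every model $\overline Z$.

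Your route yields slightly more: a single fixed model to which every restricted moduli part descends. But it depends on the semistable set-up and on ($*$) being supplied by Fujino's theorem. The paper's route uses only (i) and the restriction lemma. That is exactly why the paper can quote it verbatim for the (ii)-parts of Propositions~\ref{prop--gen-can-bundle-formula-klt-Q-div} and \ref{prop--gen-can-bundle-formula-lc-Q-div} and Theorem~\ref{thm--gen-can-bundle-formula-lc}.

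One small addition to your klt paragraph. Positivity of the thresholds $t_Q$ only gives generalized sub-klt. To get a generalized pair you also need $\boldsymbol{\rm G}_Z\ge 0$, which follows from $\Delta\ge 0$ (forcing $t_Q\le 1$). This is a statement about the original $Z$: on your modified base, the trace of $\boldsymbol{\rm G}$ need not be effective.
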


\begin{proof}
The first assertion follows from \cite[Theorem 21.4]{fujino-analytic-bchm} and Remark \ref{rem--descend-variety}. 
In the following discussion we will show 
(\ref{thm--klt-trivial-fib-start-(ii)}) only using (\ref{thm--klt-trivial-fib-start-(i)}) and Lemma \ref{lem--restriction-can-bundle-formula}. 

Let $\tilde{\boldsymbol{\rm N}}$ be the moduli $\mathbb{Q}$-b-divisor associated to $(\tilde{X}, \Delta|_{\tilde{X}}) \to \tilde{Z}$. 
Then it is enough to show that $\tilde{\boldsymbol{\rm N}}= \boldsymbol{\rm N}|_{\tilde{Z}}$. 
Pick any point $s \in \tilde{S}$. 
By (\ref{thm--klt-trivial-fib-start-(i)}), there exists an open neighborhood $S' \subset \tilde{S}$ of $s$ such that if we put $X' \subset \tilde{X}$ and $Z' \subset \tilde{Z}$ as the inverse images of $S'$, then the moduli $\mathbb{Q}$-b-divisor $\boldsymbol{\rm N}'$ associated to $(X', \Delta|_{X'}) \to Z'$ is b-nef$/S'$. 
Then, for every proper bimeromorphic morphism $\overline{Z} \to \tilde{Z}$ and the inverse image $\overline{Z}' \subset \overline{Z}$ over $S' \subset \tilde{S}$, we have the diagram
$$
\xymatrix{
 \overline{Z}' \ar@{_{(}->}[d]\ar[r]&Z'\ar@{_{(}->}[d]\ar[r]&S'\ar@{_{(}->}[d]\\
 \overline{Z} \ar[r]&\tilde{Z}\ar[r]&\tilde{S},\\
}
$$
and 
\begin{equation*}
\begin{split}
(\boldsymbol{\rm N}|_{Z'})_{\overline{Z}'}=\bigl((\boldsymbol{\rm N}|_{\tilde{Z}})_{\overline{Z}}\bigr)|_{\overline{Z}'} \leq  \bigl(\tilde{\boldsymbol{\rm N}}_{\overline{Z}}\bigr)|_{\overline{Z}'} = \boldsymbol{\rm N}'_{\overline{Z}'}=(\boldsymbol{\rm N}|_{Z'})_{\overline{Z}'}.
\end{split}
\end{equation*}
Here, the first equality immediately follows from Definition \ref{defn--rest-b-divisor}, and the other relations follow from Lemma \ref{lem--restriction-can-bundle-formula}. 
This shows $\bigl((\boldsymbol{\rm N}|_{\tilde{Z}})_{\overline{Z}}\bigr)|_{\overline{Z}'} =  \bigl(\tilde{\boldsymbol{\rm N}}_{\overline{Z}}\bigr)|_{\overline{Z}'}$. 
Since $s \in \tilde{S}$ is arbitrary and $\overline{Z}' \subset \overline{Z}$ is the inverse image of an open neighborhood $S'$ of $s$, we have 
$(\boldsymbol{\rm N}|_{\tilde{Z}})_{\overline{Z}} =  \tilde{\boldsymbol{\rm N}}_{\overline{Z}}$
for any proper bimeromorphic morphism $\overline{Z} \to \tilde{Z}$. 
Therefore, we have $\tilde{\boldsymbol{\rm N}}= \boldsymbol{\rm N}|_{\tilde{Z}}$, from which we see that Theorem \ref{thm--klt-trivial-fib-start} (\ref{thm--klt-trivial-fib-start-(ii)}) holds. 
\end{proof}

From here until Proposition \ref{prop--gen-can-bundle-formula-klt-Q-div}, we follow the strategy of \cite{filipazzi-gen-can-bundle-formula}. 

\begin{rem}[cf.~{\cite[Remark 4.8]{filipazzi-gen-can-bundle-formula}}]\label{rem--description-div-gen-can-bundle-formula}
Let $(X,B+M) \to S$ be a generalized sub-pair with the nef part $\boldsymbol{\rm M}$ such that  $B$ is a $\mathbb{Q}$-divisor on $X$ and $\boldsymbol{\rm M}$ is a $\mathbb{Q}$-b-divisor on $X$. 
Let $f\colon (X,B+M) \to Z$ be a generalized klt-trivial fibration over $S$. 
Let $W \subset S$ be a Stein compact subset.
We fix a $\mathbb{Q}$-Cartier divisor $D$ on $Z$ such that $K_{X}+B+M \sim_{\mathbb{Q}}f^{*}D$. 
Suppose that after shrinking $S$ around $W$, there exists a projective bimeromorphic morphism $\phi \colon X' \to X$ such that $\boldsymbol{\rm M}$ descends to $X'$ and $M':=\boldsymbol{\rm M}_{X'}$ is semi-ample over $Z$. 
Following \cite[Remark 4.8]{filipazzi-gen-can-bundle-formula}, we describe the discriminant $\mathbb{Q}$-b-divisor and the moduli $\mathbb{Q}$-b-divisor of $f$ over an open neighborhood of $W$ by using the corresponding $\mathbb{Q}$-b-divisors of other klt-trivial fibrations. 

By \cite[Lemma 2.16]{fujino-analytic-bchm}, we get
$$W \subset S^{\circ} \subset W^{\circ} \subset S,$$
where $S^{\circ}\subset S$ is a Stein open subset and $W^{\circ}\subset S$ is a Stein compact subset. 
We fix a Cartier divisor $H$ on $Z$ that is ample over $S$. 
As in \cite[Proof of Proposition 4.7]{filipazzi-gen-can-bundle-formula} (see also \cite[Theorem 4.10]{fujino-analytic-bchm}), for any $\epsilon \in \mathbb{Q}_{>0}$, there exist a Stein open neighborhood $S_{\epsilon}$ of $W^{\circ}$ and $m_{\epsilon} \in \mathbb{Z}_{>0}$ such that $m_{\epsilon}(M'+\epsilon (f \circ \phi)^{*}H)|_{X'_{\epsilon}}$ is Cartier and base point free over $S_{\epsilon}$, where $X'_{\epsilon} \subset X'$ is the inverse image of $S_{\epsilon}$. 
By replacing $S$ with $S^{\circ}$ and replacing $(X,B+M) \to S$ and $f \colon X \to Z$ accordingly, we may assume that $m_{\epsilon}(M'+\epsilon (f \circ \phi)^{*}H)$ is Cartier and base point free over $S$. 

By \cite[Lemma 2.16]{fujino-analytic-bchm}, we get
$$W \subset \tilde{S} \subset \tilde{W} \subset S,$$
where $\tilde{S}\subset S$ is a Stein open subset and $\tilde{W}\subset S$ is a Stein compact subset. 
We define a $\mathbb{Q}$-divisor $B'$ on $X'$ by 
$$K_{X'}+B'+M' = \phi^{*}(K_{X}+B+M).$$ 
For any $\epsilon \in \mathbb{Q}_{>0}$, we consider every element $A' \in |M'+\epsilon(f \circ \phi)^{*}H/S|_{\mathbb{Q}}$ that defines a klt-trivial fibration $(X',B'+A') \to Z$ over $S$ such that 
$$K_{X'}+B'+A' \sim_{\mathbb{Q}}(f \circ \phi)^{*}(D+\epsilon H).$$ 
Such an $A'$  exists because $m_{\epsilon}(M'+\epsilon (f \circ \phi)^{*}H)$ is base point free over $S$. 
Let $\tilde{X}' \subset X'$ and $\tilde{Z} \subset Z$ be the inverse images of $\tilde{S}$, and let $\tilde{\boldsymbol{\rm G}}^{\underset{A'}{}}_{\epsilon}$ (resp.~$\tilde{\boldsymbol{\rm N}}^{\underset{A'}{}}_{\epsilon}$) be the discriminant $\mathbb{Q}$-b-divisor (resp.~the moduli $\mathbb{Q}$-b-divisor) associated to $(\tilde{X}',B'|_{\tilde{X}'}+A'|_{\tilde{X}'}) \to \tilde{Z}$. 
By Theorem \ref{thm--klt-trivial-fib-start} (\ref{thm--klt-trivial-fib-start-(ii)}) for $\tilde{S} $ after applying Theorem \ref{thm--klt-trivial-fib-start} (\ref{thm--klt-trivial-fib-start-(i)}) to $(X',B'+A') \to Z$ and $\tilde{W} \subset S$, we see that $\tilde{\boldsymbol{\rm N}}^{\underset{A'}{}}_{\epsilon}$ is a b-nef$/\tilde{S}$ $\mathbb{Q}$-b-Cartier $\mathbb{Q}$-b-divisor on $\tilde{Z}$. 

Now we are ready to carry out the same argument as in the algebraic setting. 
Let $\tilde{\boldsymbol{\rm G}}$ and $\tilde{\boldsymbol{\rm N}}$ (resp.~$\tilde{\boldsymbol{\rm G}}_{\epsilon}$ and $\tilde{\boldsymbol{\rm N}}_{\epsilon}$) be the discriminant $\mathbb{Q}$-b-divisor and the moduli $\mathbb{Q}$-b-divisor associated to $(\tilde{X}',B'|_{\tilde{X}'}+M'|_{\tilde{X}'}) \to \tilde{Z}$ (resp.~$\bigl(\tilde{X}',B'|_{\tilde{X}'}+(M'+\epsilon(f \circ \phi)^{*}H)|_{\tilde{X}'}\bigr) \to \tilde{Z}$). 
By the same argument as in \cite[Remark 4.8]{filipazzi-gen-can-bundle-formula}, we have
$$\tilde{\boldsymbol{\rm G}}=\tilde{\boldsymbol{\rm G}}_{\epsilon}=\underset{A'}{\rm inf }\,\tilde{\boldsymbol{\rm G}}^{\underset{A'}{}}_{\epsilon}$$
for any $\epsilon \in \mathbb{Q}_{>0}$, and 
$$\tilde{\boldsymbol{\rm N}}=\underset{\epsilon \to 0}{\rm lim}\,\tilde{\boldsymbol{\rm N}}_{\epsilon}=\underset{\epsilon \to 0}{\rm lim}\,\left(\underset{A'}{\rm sup }\,\tilde{\boldsymbol{\rm N}}^{\underset{A'}{}}_{\epsilon}\right),$$
where $A'$ runs over elements of $ |M'+\epsilon(f \circ \phi)^{*}H/S|_{\mathbb{Q}}$ such that $(X',B'+A') \to Z$ is a klt-trivial fibration over $S$. 
Here, we used the base point freeness of $m_{\epsilon}(M'+\epsilon (f \circ \phi)^{*}H)$ over $S$ for the final equalities.  
Recall that $\tilde{\boldsymbol{\rm G}}^{\underset{A'}{}}_{\epsilon}$ and $\tilde{\boldsymbol{\rm N}}^{\underset{A'}{}}_{\epsilon}$ are the discriminant $\mathbb{Q}$-b-divisor and the moduli  $\mathbb{Q}$-b-divisor associated to $(\tilde{X}',B'|_{\tilde{X}'}+A'|_{\tilde{X}'}) \to \tilde{Z}$, respectively, and $\tilde{\boldsymbol{\rm N}}^{\underset{A'}{}}_{\epsilon}$ is a b-nef$/\tilde{S}$ $\mathbb{Q}$-b-Cartier $\mathbb{Q}$-b-divisor for every $\epsilon \in \mathbb{Q}_{>0}$ and $A'$. 
Hence, $\tilde{\boldsymbol{\rm N}}_{\epsilon}$ is almost b-nef$/\tilde{S}$ (\cite[Definition 4.9, Proposition 4.10]{filipazzi-gen-can-bundle-formula}). 
Moreover, the arguments in \cite{filipazzi-gen-can-bundle-formula} are valid for the generalized klt-trivial fibration $(\tilde{X}',B'|_{\tilde{X}'}+M'|_{\tilde{X}'}) \to \tilde{Z}$ over $\tilde{S}$.  
\end{rem}

\begin{prop}[cf.~{\cite[Theorem 4.12]{filipazzi-gen-can-bundle-formula}}]\label{prop--gen-can-bundle-formula-semiample}
Let $\pi \colon (X,B+M) \to S$ be a generalized sub-pair with the nef part $\boldsymbol{\rm M}$, let  $f\colon (X,B+M) \to Z$ be a generalized klt-trivial fibration over $S$, and let $W \subset S$ be a Stein compact subset. 
Suppose that
\begin{itemize}
\item
$B$ is a $\mathbb{Q}$-divisor on $X$ and $\boldsymbol{\rm M}$ is a $\mathbb{Q}$-b-divisor on $X$, 
\item
there exists a Zariski open dense subset $U \subset Z$ such that $B|_{f^{-1}(U)}$ is effective, and
\item
after shrinking $S$ around $W$, there exists a resolution $X' \to X$ of $X$ such that $\boldsymbol{\rm M}$ descends to $X'$ and $M':=\boldsymbol{\rm M}_{X'}$ is semi-ample over $Z$. 
\end{itemize}
Then, after shrinking $S$ around $W$ suitably, we have the following.
Let $\boldsymbol{\rm G}$ (resp.~$\boldsymbol{\rm N}$) be the discriminant $\mathbb{Q}$-b-divisor (resp.~the moduli $\mathbb{Q}$-b-divisor) associated to $f$. 
Then $\boldsymbol{\rm N}$ is a b-nef$/S$ $\mathbb{Q}$-b-Cartier $\mathbb{Q}$-b-divisor on $Z$. 
In particular, $(Z, \boldsymbol{\rm G}_{Z} +\boldsymbol{\rm N}_{Z}) \to S$ is a generalized sub-pair with the nef part $\boldsymbol{\rm N}$. 
Furthermore, if $(X,B+M) \to S$ is generalized klt, then so is $(Z, \boldsymbol{\rm G}_{Z} +\boldsymbol{\rm N}_{Z}) \to S$. 
\end{prop}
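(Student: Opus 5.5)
The plan is to reduce to the set-up of Remark \ref{rem--description-div-gen-can-bundle-formula} and then run Filipazzi's argument \cite[Theorem 4.12]{filipazzi-gen-can-bundle-formula}. The key point is that $\tilde{\boldsymbol{\rm N}}$ is a limit of b-nef$/\tilde{S}$ $\mathbb{Q}$-b-Cartier moduli parts of honest klt-trivial fibrations.

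\emph{Reduction.} Shrink $S$ so that $X' \to X$ is projective and $M'$ is semi-ample over $Z$. Replace $(X,B+M)$ by $(X',B'+M')$, where $K_{X'}+B'+M'=\phi^{*}(K_{X}+B+M)$. This changes neither the discriminant nor the moduli $\mathbb{Q}$-b-divisor, and it keeps $B'$ effective over $U$ away from $\phi$-exceptional divisors. The rank-one condition is unaffected. Next, fix the Stein neighbourhoods $W\subset \tilde{S}\subset\tilde{W}\subset S^{\circ}$ as in the remark, and a Cartier divisor $H$ on $Z$ that is ample over $S$. For each $\epsilon$, general members $A'\in|M'+\epsilon(f\circ\phi)^{*}H/S|_{\mathbb{Q}}$ give klt-trivial fibrations $(X',B'+A')\to Z$. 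Generality ensures that $B'+A'$ stays sub-klt over $U$ and that the rank condition still holds, since $A'$ is effective with small coefficients near the generic fiber. Theorem \ref{thm--klt-trivial-fib-start} then gives that each $\tilde{\boldsymbol{\rm N}}^{A'}_{\epsilon}$ is b-nef$/\tilde{S}$ and $\mathbb{Q}$-b-Cartier, compatibly with further restriction.

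\emph{b-Cartier and b-nef.} By the remark, $\tilde{\boldsymbol{\rm G}}=\inf_{A'}\tilde{\boldsymbol{\rm G}}^{A'}_{\epsilon}$ and $\tilde{\boldsymbol{\rm N}}=\lim_{\epsilon\to0}\sup_{A'}\tilde{\boldsymbol{\rm N}}^{A'}_{\epsilon}$. Hence $\tilde{\boldsymbol{\rm N}}$ is almost b-nef, and we want to show it descends. I would use a weak semistable reduction as in Remark \ref{rem--descend-variety}, chosen so that $Z$ is smooth, $\boldsymbol{\rm G}_{Z}$ is snc, and $\Delta_{Z^{\rm ws}}$ contains the relevant loci. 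With this choice, Lemma \ref{lem--restriction-can-bundle-formula} (\ref{lem--restriction-can-bundle-formula-(b)})-type computations give $K_{Z'}+\boldsymbol{\rm G}_{Z'}\le\tau^{*}(K_{Z}+\boldsymbol{\rm G}_{Z})$ for all $\tau\colon Z'\to Z$, so $\boldsymbol{\rm N}_{Z'}\ge\tau^{*}\boldsymbol{\rm N}_{Z}$. For the reverse inequality, $\boldsymbol{\rm N}_{Z'}$ is a limit of the nef divisors $\tilde{\boldsymbol{\rm N}}^{A'}_{\epsilon,Z'}$ after passing to models where these descend. Since $\tau_{*}\boldsymbol{\rm N}_{Z'}=\boldsymbol{\rm N}_{Z}$, the negativity lemma, applied to the limit of $\epsilon$-perturbations exactly as in \cite[Proposition 4.10, Theorem 4.12]{filipazzi-gen-can-bundle-formula}, gives $\boldsymbol{\rm N}_{Z'}\le\tau^{*}\boldsymbol{\rm N}_{Z}$. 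Equivalently, one verifies condition ($*$) of Remark \ref{rem--descend-variety} locally over $S$ and invokes that remark.

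\emph{Main obstacle.} The hardest step is passing from the sup and limit to genuine nefness over $\tilde{S}$, since $\tilde{S}$ is non-compact and the models on which $\tilde{\boldsymbol{\rm N}}^{A'}_{\epsilon}$ descend vary with $A'$. I would first fix a single model $Z$ coming from the semistable reduction. Filipazzi's descent argument then shows that $\tilde{\boldsymbol{\rm N}}_{\epsilon}$ descends to this fixed $Z$, and that on $Z$ it is a limit of nef$/\tilde{S}$ classes, hence nef. Nefness is a curve-by-curve condition checkable over compact subsets, so shrinking is harmless, and Lemma \ref{lem--restriction-can-bundle-formula} glues the local descriptions into a statement over a neighbourhood of $W$. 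Finally, suppose $(X,B+M)$ is generalized klt. For every prime divisor $Q$ over $Z$, the generalized lc threshold $t_Q$ is positive, so the coefficients of $\boldsymbol{\rm G}$ are $<1$ on every model. Because $\boldsymbol{\rm N}$ descends, this is exactly generalized klt for $(Z,\boldsymbol{\rm G}_{Z}+\boldsymbol{\rm N}_{Z})\to S$.
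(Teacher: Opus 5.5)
Your proposal is correct and matches the paper's proof. The paper likewise reduces to the set-up of Remark \ref{rem--description-div-gen-can-bundle-formula}: klt-trivial approximations $(X',B'+A')\to Z$ whose moduli parts are b-nef and compatible with restriction by Theorem \ref{thm--klt-trivial-fib-start}. It then fixes a weak semistable reduction after shrinking $S$ and runs Filipazzi's proof of \cite[Theorem 4.12]{filipazzi-gen-can-bundle-formula} unchanged.

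One small point of precision: the remark gives almost b-nefness of each $\tilde{\boldsymbol{\rm N}}_{\epsilon}$, not of $\tilde{\boldsymbol{\rm N}}$ itself. B-nefness of $\tilde{\boldsymbol{\rm N}}$ then comes from letting $\epsilon\to 0$ on the fixed semistable model, which is in effect what your final paragraph does.
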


\begin{proof}
By using Remark \ref{rem--description-div-gen-can-bundle-formula} and the weak semistable reduction that is constructed by using \cite[Theorem 9.5]{karu-phdthesis} and \cite[Subsection 4.5]{eh-semistablereduction} after shrinking $S$, we may carry out \cite[Proof of Theorem 4.12]{filipazzi-gen-can-bundle-formula} with no changes. 
\end{proof}

\begin{prop}\label{prop--gen-can-bundle-formula-klt-Q-div}
Let $\pi \colon (X,B+M) \to S$ be a generalized sub-pair with the nef part $\boldsymbol{\rm M}$, and let $f\colon (X,B+M) \to Z$ be a generalized klt-trivial fibration over $S$. 
Let $W \subset S$ be a compact subset such that $\pi \colon X \to S$ and $W$ satisfy (P). 
Suppose that
\begin{itemize}
\item
$B$ is a $\mathbb{Q}$-divisor on $X$ and $\boldsymbol{\rm M}$ is a $\mathbb{Q}$-b-divisor on $X$, and
\item
there exists a Zariski open dense subset $U \subset Z$ such that $B|_{f^{-1}(U)}$ is effective.
\end{itemize}
Then, after replacing $S$ with a suitable open neighborhood of $W$, we have the following properties.
\begin{enumerate}[(i)]
\item \label{prop--gen-can-bundle-formula-klt-Q-div-(i)}
Let $\boldsymbol{\rm G}$ (resp.~$\boldsymbol{\rm N}$) be the discriminant $\mathbb{Q}$-b-divisor (resp.~the moduli $\mathbb{Q}$-b-divisor) associated to $f$. 
Then $\boldsymbol{\rm N}$ is a b-nef$/S$ $\mathbb{Q}$-b-Cartier $\mathbb{Q}$-b-divisor on $Z$. 
In particular, $(Z, \boldsymbol{\rm G}_{Z} +\boldsymbol{\rm N}_{Z}) \to S$ is a generalized sub-pair with the nef part $\boldsymbol{\rm N}$. 
Furthermore, if $(X,B+M) \to S$ is generalized klt, then so is $(Z, \boldsymbol{\rm G}_{Z} +\boldsymbol{\rm N}_{Z}) \to S$. 
\item \label{prop--gen-can-bundle-formula-klt-Q-div-(ii)}
For any open subset $\tilde{S} \subset S$ that does not necessarily contain $W$, if we put $\tilde{X} \subset X$ and $\tilde{Z} \subset Z$ as the inverse images of $\tilde{S}$, then the generalized klt-trivial fibration $(\tilde{X}, B|_{\tilde{X}}+M|_{\tilde{X}}) \to \tilde{Z}$ over $\tilde{S}$ satisfies (\ref{prop--gen-can-bundle-formula-klt-Q-div-(i)}). 
\end{enumerate}
In particular, the discriminant $\mathbb{Q}$-b-divisor and the moduli $\mathbb{Q}$-b-divisor associated to $f$ are compatible with the restriction over any open subset (see Lemma \ref{lem--restriction-can-bundle-formula}). 
\end{prop}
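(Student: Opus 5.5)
The plan is to reduce Proposition \ref{prop--gen-can-bundle-formula-klt-Q-div} to Proposition \ref{prop--gen-can-bundle-formula-semiample}, following Filipazzi's reduction in \cite[Proof of Theorem 4.13]{filipazzi-gen-can-bundle-formula}, and then to derive (\ref{prop--gen-can-bundle-formula-klt-Q-div-(ii)}) from (\ref{prop--gen-can-bundle-formula-klt-Q-div-(i)}) as in the proof of Theorem \ref{thm--klt-trivial-fib-start}.

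\emph{Reduction to the semi-ample case.} Since $\pi$ and $W$ satisfy (P), after shrinking $S$ around $W$ we take a resolution $\phi\colon X'\to X$ on which $\boldsymbol{\rm M}$ descends, with $M':=\boldsymbol{\rm M}_{X'}$ nef over $S$. Define $B'$ by crepant pullback. The fibration $(X',B'+M')\to Z$ is again generalized klt-trivial, and it has the same discriminant and moduli b-divisors as $f$, because these depend only on $\tau_X^{*}(K_X+B+M)$.

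The essential point is that $M'$ is only nef over $Z$, not semi-ample. I would handle this by running an MMP over $Z$ and exploiting bigness.
\begin{itemize}
\item Over the generic point of $Z$, the pair $(X',B'+M')$ is generalized klt with $K+B'+M'\sim_{\mathbb{Q},Z}0$.
\item Following Filipazzi, first replace $X'$ by a model on which $B'^{<0}$ and the vertical non-klt part are controlled.
\item Then perturb: choose an ample$/Z$ divisor $A$ on $X'$, so that $M'+\epsilon A$ is ample over $Z$ for every $\epsilon>0$.
\item Using the klt property over $U$ together with Bertini, pick $A_\epsilon\sim_{\mathbb{Q},Z}M'+\epsilon A$ general. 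Then $(X',B'-\epsilon A+A_\epsilon)$ is sub-klt over $U$, and $\epsilon A$ is compensated by subtracting a general member.
\end{itemize}
This expresses the fibration as a limit of fibrations whose nef part is semi-ample over $Z$; in fact the perturbed nef part is ample$/Z$, hence semi-ample.

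Proposition \ref{prop--gen-can-bundle-formula-semiample} then applies to each perturbed fibration. Combining this with Remark \ref{rem--description-div-gen-can-bundle-formula}-type inf/sup descriptions, the moduli part of $f$ is a limit of b-nef b-divisors, and so it is almost b-nef. To obtain b-Cartier descent I would use Remark \ref{rem--descend-variety}: reduce, after shrinking $S$ and modifying $Z$, to the weakly semistable situation there. It then suffices to verify condition ($*$), namely local b-nefness over small open subsets of $S$, which the perturbation argument gives locally. Remark \ref{rem--descend-variety} then yields that $\boldsymbol{\rm N}$ descends to $Z$ and is b-nef$/S$. Rationality is clear because all data are $\mathbb{Q}$-divisors.

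The generalized klt statement follows from the usual comparison of generalized lc thresholds: every $t_Q>0$, and log discrepancies on higher models of $Z$ are computed on $X$.

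\emph{Restriction statement.} For (\ref{prop--gen-can-bundle-formula-klt-Q-div-(ii)}), first choose a Stein compact $\tilde{W}$ with $W\subset S^\circ\subset\tilde W$, as in Theorem \ref{thm--mmp-Fano-type}. Apply (\ref{prop--gen-can-bundle-formula-klt-Q-div-(i)}) to every point of an arbitrary open $\tilde S$, localizing around each point $s$ with a compact neighborhood satisfying (P). This gives local b-nefness of $\tilde{\boldsymbol{\rm N}}$. Then copy the chain of inequalities from the proof of Theorem \ref{thm--klt-trivial-fib-start}, using Lemma \ref{lem--restriction-can-bundle-formula} (a)–(c), to conclude $\tilde{\boldsymbol{\rm N}}=\boldsymbol{\rm N}|_{\tilde Z}$. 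This shows that $\tilde{\boldsymbol{\rm N}}$ is b-Cartier and b-nef$/\tilde S$.

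\emph{Main obstacle.} I expect the hardest step to be the uniformity of the shrinking in the perturbation step. For each $\epsilon$, the semi-ampleness and the base point freeness of $m_\epsilon(M'+\epsilon A)$ hold only over a neighborhood $S_\epsilon$ of $W$, and the $S_\epsilon$ may shrink as $\epsilon\to0$. To control this, I would first try to use property (P4) together with \cite[Lemma 2.16]{fujino-analytic-bchm} to fix one neighborhood $S^\circ$ in advance, as in Remark \ref{rem--description-div-gen-can-bundle-formula}. Then I would argue that descent and nefness, being local over $S$ and independent of $\epsilon$, only need to be checked pointwise via ($*$).
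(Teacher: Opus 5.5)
Your reduction to Proposition \ref{prop--gen-can-bundle-formula-semiample}, by perturbing with a divisor $A$ on $X'$ that is ample over $Z$, does not work. In the only nontrivial case $\dim X>\dim Z$, every effective divisor $\mathbb{Q}$-linearly equivalent to $\epsilon A$ over $Z$ has horizontal components. So after you ``compensate by subtracting a general member'', the new boundary ($B'-\epsilon A+A_\epsilon$, or $B'-\epsilon A$ with nef part $M'+\epsilon A$) has a negative coefficient along a divisor dominating $Z$.

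This causes two failures. First, the boundary is effective over no Zariski open dense $U\subset Z$, which is a hypothesis of Proposition \ref{prop--gen-can-bundle-formula-semiample}. More seriously, the perturbed object is not a generalized klt-trivial fibration at all. That horizontal divisor has log discrepancy $1+\epsilon$, so it enters $\lceil\boldsymbol{\rm A}^{*}\rceil$ with coefficient $1$. Then $f_{*}\mathcal{O}(\lceil\boldsymbol{\rm A}^{*}\rceil)$ contains $f_{*}\mathcal{O}(A)$ generically, and the rank-one condition in Definition \ref{defn--can-bundle-formula} fails once $A$ is sufficiently ample over $Z$. So neither Proposition \ref{prop--gen-can-bundle-formula-semiample} nor Theorem \ref{thm--klt-trivial-fib-start} applies to your approximants.

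The perturbation in Remark \ref{rem--description-div-gen-can-bundle-formula} avoids this only because it adds $\epsilon(f\circ\phi)^{*}H$, which is pulled back from the base and leaves the horizontal part untouched. That is exactly why it needs $M'$ to be semi-ample over $Z$ from the start. Consequently your appeal to Remark \ref{rem--descend-variety} is circular, since condition ($*$) is statement (i) itself over small open subsets of $S$. The uniform-shrinking issue you single out is not the real obstacle; it is already handled by (P4) and \cite[Lemma 2.16]{fujino-analytic-bchm}.

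What is missing is the MMP dichotomy that removes the merely nef part. First, one replaces $f$ by a generalized klt-trivial fibration with the same moduli part. This uses equidimensional reduction \cite{eh-semistablereduction}, a $(K_{X'}+B'+\boldsymbol{\rm M}_{X'}+H')$-MMP over $Z'$ with $H'$ pulled back from $Z'$, and the negativity lemma for very exceptional divisors. On the new fibration the boundary is effective, $X$ is $\mathbb{Q}$-factorial over $W$, and $(X,B)$ is dlt. Your vague ``control of $B'^{<0}$'' has to be exactly this, or no MMP can be run. One then distinguishes whether $K_X+B$ is pseudo-effective over $Z$.

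If it is not, one obtains a Mori fibre space $X\to Y$ over $Z$ for $K_X+B$. On it $M\sim_{\mathbb{Q},Y}-(K_X+B)$ is relatively ample, and the argument of \cite[Proof of Lemma 5.2 and Theorem 1.4]{filipazzi-gen-can-bundle-formula} reduces to Proposition \ref{prop--gen-can-bundle-formula-semiample}.

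If it is, then $-(K_X+B)\sim_{\mathbb{Q},Z}M$ is also pseudo-effective over $Z$, so $K_X+B$ has numerical dimension zero over $Z$. Gongyo's abundance \cite{gongyo1} and the log abundant MMP of \cite{eh-analytic-mmp}, run for $K_X+B+H$ with $H$ pulled back from $Z$, give a model on which $K_{\tilde X}+\tilde B$ is semi-ample over $Z$. Passing to the contraction it defines, one may assume $M\sim_{\mathbb{Q},Z}0$. Then \cite{eh-semistablereduction} reduces everything to the klt-trivial case, Theorem \ref{thm--klt-trivial-fib-start}.

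Your derivation of (ii) from (i) via Lemma \ref{lem--restriction-can-bundle-formula} agrees with the paper and is fine once (i) is established.
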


\begin{proof}
We only prove the first assertion since the second assertion follows from the first assertion and the same argument as in the proof of Theorem \ref{thm--klt-trivial-fib-start} (\ref{thm--klt-trivial-fib-start-(ii)}). 
We can apply the idea of \cite[Proof of Theorem 1.4]{filipazzi-gen-can-bundle-formula} to our situation since we may use \cite{fujino-analytic-bchm} and \cite{eh-semistablereduction} to reduce Proposition \ref{prop--gen-can-bundle-formula-klt-Q-div} to the situation of Theorem \ref{thm--klt-trivial-fib-start} or  Proposition \ref{prop--gen-can-bundle-formula-semiample}. 
However, 
we write the detailed argument because we will apply a similar argument in the proof of Theorem \ref{thm--gen-can-bundle-formula-full} later. 

We fix a $\mathbb{Q}$-Cartier divisor $D$ on $Z$ such that $K_{X}+B+M \sim_{\mathbb{Q}}f^{*}D$. 
We divide the proof into several steps. 

\begin{step3}\label{step1--prop--gen-can-bundle-formula-klt-Q-div}
In this step we reduce the proposition to the case where $X$ is $\mathbb{Q}$-factorial over $W$ and $(X,B+M) \to S$ is generalized dlt. 

By the equi-dimensional reduction (\cite{eh-semistablereduction}), we obtain the diagram
 $$
\xymatrix{
X \ar@{->}[d]_{f}& \ar@{->}[l]_{\phi}X' \ar@{->}[d]^{f'}\\
Z &Z', \ar[l]^{\varphi}
}
$$
where $X'$ is a normal analytic variety that is $\mathbb{Q}$-factorial over $W$, such that
\begin{itemize}
\item
$\boldsymbol{\rm M}$ descends to $X'$, 
\item
 the pair $(X',0)$ is klt and there exists a reduced divisor $\Sigma_{X'}$ on $X'$ such that  $(X',\Sigma_{X'})$ is an lc pair and $\Sigma_{X'}\supset {\rm Supp}\,\phi^{-1}_{*}B\cup {\rm Ex}(\phi)$, and
\item
all fibers of $f'$ have the same dimension. 
\end{itemize}
Let $B_{U}$ be a $\mathbb{Q}$-divisor on $X$ that is the sum of all the components of $B$ intersecting $f^{-1}(U)$. 
Note that the coefficients of $B_{U}$ belong to $[0,1)$. 
We may write
$$K_{X'}+B'+\boldsymbol{\rm M}_{X'}=\phi^{*}(K_{X}+B+M)+E',$$
where $B'$ is the sum of $\phi^{-1}_{*}B_{U}$ and the reduced $\phi$-exceptional divisor on $X'$. 
Note that $E'$ may not be effective or $\phi$-exceptional because $(X,B+M) \to S$ is not necessarily a generalized lc pair, whereas $E'|_{(f \circ \phi)^{-1}(U)}$ is effective and exceptional over $f^{-1}(U)$. 
By construction, $(X',B'+\boldsymbol{\rm M}_{X'}) \to S$ is generalized lc. 

Let $H_{Z'}$ be an ample Cartier divisor on $Z'$, and we set $H':=3 \cdot ({\rm dim}\,X) f'^{*}H_{Z'}$. 
We run a $(K_{X'}+B'+\boldsymbol{\rm M}_{X'}+H')$-MMP over $S$ around $W$ with scaling of an ample $\mathbb{R}$-divisor. 
By the length of extremal rays (\cite[Theorem 1.1.6 (5)]{fujino-analytic-conethm}), the induced meromorphic map $X'_{i} \dashrightarrow Z'$ from any variety $X'_{i}$ appearing in this MMP is a contraction morphism. 
By the same argument as in \cite[Proof of Lemma 5.4]{eh-analytic-mmp}, after shrinking $S$ around $W$, we obtain a bimeromorphic contraction 
$$\psi \colon X' \dashrightarrow X''$$
over $Z'$ and the induced contraction $f'' \colon X'' \to Z'$ such that if we put $B'':=\psi_{*}B'$ and $M'':=\psi_{*}\boldsymbol{\rm M}_{X'}$, then $K_{X''}+B''+M''\sim_{\mathbb{Q},\,Z'}\psi_{*}E'$ is the limit of movable divisors over $Z'$ and $\psi_{*}E'|_{(\varphi \circ f'')^{-1}(U)}=0$. 
By the equi-dimensionality of $f'\colon X' \to Z'$, we can find a $\mathbb{Q}$-Cartier divisor $L'$ on $Z'$ such that $F'':=\psi_{*}E'-f''^{*}L'$ is an effective $\mathbb{Q}$-divisor on $X''$ that is very exceptional over $Z'$.  
We apply the negativity lemma for very exceptional divisors (\cite[Corollary 2.15]{eh-analytic-mmp}) to $F''$ and $f''$, and we see that $F'' =0$. 
Therefore, we have $\psi_{*}E'=f''^{*}L'$ as $\mathbb{Q}$-divisors. 
Then
\begin{equation*}
\begin{split}
K_{X''}+B''+M''=&\psi_{*}\phi^{*}(K_{X}+B+M)+\psi_{*}E'\\
\sim_{\mathbb{Q}}& \psi_{*}(f \circ \phi)^{*}D+\psi_{*}E'
=f''^{*}(\varphi^{*}D+L').
\end{split}
\end{equation*}
By construction, we can check that $f'' \colon (X'',B''+M'') \to Z'$ is a generalized klt-trivial fibration over $S$ whose moduli $\mathbb{Q}$-b-divisor coincides with $\boldsymbol{\rm N}$. 
Therefore, we may replace $f \colon (X,B+M) \to Z$ by $f'' \colon (X'',B''+M'') \to Z'$. 
By this replacement and taking a generalized dlt-blow-up as in Theorem \ref{thm--gen-dlt-model}, we may assume that $X$ is $\mathbb{Q}$-factorial over $W$ and $(X,B+M) \to S$ is generalized dlt. 
\end{step3}

By shrinking $S$ around $W$, we may assume that $(X,B)$ is a dlt pair. 

\begin{step3}\label{step2--prop--gen-can-bundle-formula-klt-Q-div}
When $K_{X}+B$ is not pseudo-effective over $Z$, we may apply \cite{fujino-analytic-bchm} and the argument of \cite[Proof of Lemma 5.2 and Theorem 1.4]{filipazzi-gen-can-bundle-formula} to our situation, and we can reduce Proposition \ref{prop--gen-can-bundle-formula-klt-Q-div} to Proposition \ref{prop--gen-can-bundle-formula-semiample}. 
Therefore, we may assume that $K_{X}+B$ is pseudo-effective over $Z$. 
\end{step3}

\begin{step3}\label{step3--prop--gen-can-bundle-formula-klt-Q-div}
In this step we prove that we may assume $M \sim_{\mathbb{Q},\,Z}0$. 

Let $H_{Z}$ be an ample Cartier divisor on $Z$ and $H$ a general member of the $\mathbb{R}$-linear system $|3 \cdot ({\rm dim}\,X) f^{*}H_{Z}|_{\mathbb{R}}$ such that $(X,B+H)$ is a dlt pair whose lc centers are those  of $(X,B)$. 
We prove the existence of a finite sequence of steps of a $(K_{X}+B+H)$-MMP over $S$ around $W$
$$(X,B+H) \dashrightarrow (\tilde{X},\tilde{B}+\tilde{H})$$
such that the induced meromorphic map $\tilde{X} \dashrightarrow Z$ is a morphism and the divisor $K_{\tilde{X}}+\tilde{B}$ is semi-ample over $Z$. 
To prove this assertion, we show that every dlt pair appearing in a $(K_{X}+B+H)$-MMP over $S$ around $W$ satisfies the property of being log abundant over $S$ around $W$ (\cite[Definition 2.23]{eh-analytic-mmp}), and then we apply \cite[Theorem 1.3]{eh-analytic-mmp}, \cite[Theorem 1.5]{fujino-analytic-lcabundance}, and \cite[Theorem 1.1]{eh-analytic-mmp-2}.

Let $(X,B+H) \dashrightarrow (X_{i},B_{i}+H_{i})$ be a finite sequence of steps of a $(K_{X}+B+H)$-MMP over $S$ around $W$. 
We may assume that this MMP is represented by bimeromorphic contractions over $S$. 
By the length of extremal rays (\cite[Theorem 1.1.6 (5)]{fujino-analytic-conethm}), the induced meromorphic map $X_{i} \dashrightarrow Z$ is a contraction morphism. 
Let $T_{i}$ be $X_{i}$ or an lc center of $(X_{i},B_{i}+H_{i})$, and let $T$ be $X$ or the corresponding lc center of $(X,B+H)$ admitting an induced bimeromorphic map $T \dashrightarrow T_{i}$. 
Let $(T,\Delta_{T})$ and $(T,B_{T}+M_{T})$ be (generalized) dlt pairs defined by applying (generalized) adjunction to $(X, B)$ and $(X,B+M)$, respectively.  
By considering a contribution of $M$ to the singularities of generalized pairs, we have $\Delta_{T}\leq B_{T}$. 
Since $K_{T}+B_{T}+M_{T} \sim_{\mathbb{Q},\,Z}0$, we see that 
$$-(K_{X}+B)|_{T}=-(K_{T}+\Delta_{T})\sim_{\mathbb{Q},\,Z}B_{T}-\Delta_{T}+M_{T}$$
is pseudo-effective over $Z$.
This fact and the definition of a $(K_{X}+B+H)$-MMP imply the inequalities
$$\kappa_{\sigma}(T_{i}/Z, (K_{X_{i}}+B_{i})|_{T_{i}}) \leq \kappa_{\sigma}(T/Z, (K_{X}+B)|_{T}) \leq 0$$
of the relative numerical dimensions (\cite[Definition 2.22]{eh-analytic-mmp}). 
By \cite{gongyo1} and restricting $(K_{X_{i}}+B_{i})|_{T_{i}}$ to an analytically sufficiently general fiber of the Stein factorization of $T_{i} \to Z$, we see that $(K_{X_{i}}+B_{i})|_{T_{i}}$ is abundant over $Z$ (see \cite[Definition 2.23]{eh-analytic-mmp}). 
By \cite[Lemma 2.19]{has-finite} and restricting $(K_{X_{i}}+B_{i}+H_{i})|_{T_{i}}$ to an analytically sufficiently general fiber of the Stein factorization of $T_{i} \to S$, we see that $(K_{X_{i}}+B_{i}+H_{i})|_{T_{i}}$ is abundant over $S$. 
Hence, $(X_{i},B_{i}+H_{i})$ is a log abundant lc pair over $S$ around $W$. 
This fact shows that every dlt pair appearing in a $(K_{X}+B+H)$-MMP over $S$ around $W$ satisfies the property of being log abundant over $S$ around $W$. 
As stated above, applying \cite[Theorem 1.3]{eh-analytic-mmp}, \cite[Theorem 1.5]{fujino-analytic-lcabundance}, and \cite[Theorem 1.1]{eh-analytic-mmp-2}, we obtain a finite sequence of steps of a $(K_{X}+B+H)$-MMP over $S$ around $W$
$$(X,B+H) \dashrightarrow (\tilde{X},\tilde{B}+\tilde{H})$$
such that after shrinking $S$ around $W$ suitably, $K_{\tilde{X}}+\tilde{B}+\tilde{H}$ is semi-ample over $S$. 
By the length of extremal rays (\cite[Theorem 1.1.6 (5)]{fujino-analytic-conethm}), the induced meromorphic map $\tilde{X} \dashrightarrow Z$ is a contraction morphism. 
Then $K_{\tilde{X}}+\tilde{B}+\tilde{H} \sim_{\mathbb{R},\,Z}K_{\tilde{X}}+\tilde{B}$, and therefore $K_{\tilde{X}}+\tilde{B}$ is semi-ample over $Z$. 
Therefore, the map $(X,B+H) \dashrightarrow (\tilde{X},\tilde{B}+\tilde{H})$ is what we wanted. 

Let $\tilde{f} \colon \tilde{X} \to \tilde{Z}$ be the contraction over $Z$ induced by $K_{\tilde{X}}+\tilde{B}$. 
Then it is easy to see that we may replace $f\colon (X,B+M) \to Z$ by $\tilde{f} \colon (\tilde{X},\tilde{B}+M_{\tilde{X}}) \to \tilde{Z}$. 
By this replacement, we may in particular assume that $M \sim_{\mathbb{Q},\,Z}-(K_{X}+B) \sim_{\mathbb{Q},\,Z}0$. 
\end{step3}

\begin{step3}\label{step4--prop--gen-can-bundle-formula-klt-Q-div}
Finally we use the argument in \cite[Proof of Theorem 1.4]{filipazzi-gen-can-bundle-formula}. 
Since we have $M \sim_{\mathbb{Q},\,Z}0$, we can avoid using the notion of the generic fiber in our setting. 
By \cite{eh-semistablereduction}, we can reduce Proposition \ref{prop--gen-can-bundle-formula-klt-Q-div} to the situation of Theorem \ref{thm--klt-trivial-fib-start}. 
Hence, Proposition \ref{prop--gen-can-bundle-formula-klt-Q-div} holds. 
\end{step3}

We finish the proof. 
\end{proof}

We extend Proposition \ref{prop--gen-can-bundle-formula-klt-Q-div} to generalized lc-trivial fibrations.   

\begin{prop}[cf.~{\cite[Theorem 1.4]{filipazzi-gen-can-bundle-formula}}]\label{prop--gen-can-bundle-formula-lc-Q-div}
Let $\pi \colon (X,B+M) \to S$ be a generalized sub-pair with the nef part $\boldsymbol{\rm M}$, let $f\colon (X,B+M) \to Z$ be a generalized lc-trivial fibration over $S$, and let $W \subset S$ be a compact subset such that $\pi \colon X \to S$ and $W$ satisfy (P). 
Suppose that
\begin{itemize}
\item
$B$ is a $\mathbb{Q}$-divisor on $X$ and $\boldsymbol{\rm M}$ is a $\mathbb{Q}$-b-divisor on $X$, and
\item
there exists a Zariski open dense subset $U \subset Z$ such that $B|_{f^{-1}(U)}$ is effective.
\end{itemize}
Then, after replacing $S$ with a suitable open neighborhood of $W$, we have the following properties.
\begin{enumerate}[(i)]
\item \label{prop--gen-can-bundle-formula-lc-Q-div-(i)}
Let $\boldsymbol{\rm G}$ (resp.~$\boldsymbol{\rm N}$) be the discriminant $\mathbb{Q}$-b-divisor (resp.~the moduli $\mathbb{Q}$-b-divisor) associated to $f$. 
Then $\boldsymbol{\rm N}$ is a b-nef$/S$ $\mathbb{Q}$-b-Cartier $\mathbb{Q}$-b-divisor on $Z$. 
In particular, $(Z, \boldsymbol{\rm G}_{Z} +\boldsymbol{\rm N}_{Z}) \to S$ is a generalized sub-pair with the nef part $\boldsymbol{\rm N}$. 
Furthermore, if $(X,B+M) \to S$ is generalized lc, then so is $(Z, \boldsymbol{\rm G}_{Z} +\boldsymbol{\rm N}_{Z}) \to S$. 
\item \label{prop--gen-can-bundle-formula-lc-Q-div-(ii)}
For any open subset $\tilde{S} \subset S$ that does not necessarily contain $W$, if we put $\tilde{X} \subset X$ and $\tilde{Z} \subset Z$ as the inverse images of $\tilde{S}$, then the generalized lc-trivial fibration $(\tilde{X}, B|_{\tilde{X}}+M|_{\tilde{X}}) \to \tilde{Z}$ over $\tilde{S}$ satisfies (\ref{prop--gen-can-bundle-formula-lc-Q-div-(i)}). 
\end{enumerate}
In particular, the discriminant $\mathbb{Q}$-b-divisor and the moduli $\mathbb{Q}$-b-divisor associated to $f$ are compatible with the restriction over any open subset.   
\end{prop}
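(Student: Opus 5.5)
As in Proposition \ref{prop--gen-can-bundle-formula-klt-Q-div}, I would prove only (\ref{prop--gen-can-bundle-formula-lc-Q-div-(i)}). Assertion (\ref{prop--gen-can-bundle-formula-lc-Q-div-(ii)}) then follows from (\ref{prop--gen-can-bundle-formula-lc-Q-div-(i)}) and Lemma \ref{lem--restriction-can-bundle-formula}, by the argument in the proof of Theorem \ref{thm--klt-trivial-fib-start} (\ref{thm--klt-trivial-fib-start-(ii)}). The strategy is to reduce the lc case to the klt case, Proposition \ref{prop--gen-can-bundle-formula-klt-Q-div}, using the MMP of \cite{eh-analytic-mmp}. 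We fix $D$ with $K_{X}+B+M\sim_{\mathbb{Q}}f^{*}D$.

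First, I would repeat Step \ref{step1--prop--gen-can-bundle-formula-klt-Q-div} of the proof of Proposition \ref{prop--gen-can-bundle-formula-klt-Q-div} verbatim. That step does not use the klt hypothesis: it uses equi-dimensional reduction, the $(K_{X'}+B'+\boldsymbol{\rm M}_{X'}+H')$-MMP over $Z'$, and the negativity lemma for very exceptional divisors. After it, and after a generalized dlt blow-up (Theorem \ref{thm--gen-dlt-model}), we may assume the following:
\begin{itemize}
\item $X$ is $\mathbb{Q}$-factorial over $W$;
\item $(X,B+M)\to S$ is generalized dlt and $(X,B)$ is dlt;
\item the moduli b-divisor is unchanged;
\item $\boldsymbol{\rm G}$ changes only by the pullback of a $\mathbb{Q}$-Cartier divisor $L'$, which does not affect b-Cartierness or b-nefness of $\boldsymbol{\rm N}$.
\end{itemize}

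Next, I would perturb to a klt-trivial situation. Write $B=B^{h}+B^{v}$, where $B^{h}$ is the horizontal part, and let $\lfloor B^{h}\rfloor$ collect the horizontal non-klt components. Since $f$ is lc-trivial, the rank-one condition ${\rm rank}\,f_{*}\mathcal{O}_{X}(\lceil\boldsymbol{\rm A}^{*}\rceil)=1$ holds. The main device is to take a small rational $\delta>0$ and an ample $H_{Z}$ on $Z$, and to run a $(K_{X}+B-\delta\lfloor B^{h}\rfloor+M)$-MMP over $Z$ (with scaling, around $W$). Over the generic point of $Z$ this is an MMP for a generalized klt pair whose log canonical class is $\equiv -\delta\lfloor B^{h}\rfloor$, so it is not pseudo-effective over $Z$ as long as $\lfloor B^{h}\rfloor\neq0$. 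By \cite{fujino-analytic-bchm} (generalized klt, $\mathbb{Q}$-factorial), it terminates with a Mori fiber space $X_{1}\to Y$ over $Z$.

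On the other side of the reduction, the discriminant and moduli of $f$ are determined by those of the induced fibration on an lc center $T$ that dominates $Z$, which is minimal among such centers. By generalized adjunction (Definition \ref{defn--gen-adjunction}), $(T,B_{T}+M_{T})\to Z$ is a generalized lc-trivial fibration whose horizontal part has no further non-klt centers over the generic point. As in \cite[Proof of Theorem 1.4]{filipazzi-gen-can-bundle-formula} and \cite[Section 3]{fg-lctrivial}, I expect $\boldsymbol{\rm G}$ and $\boldsymbol{\rm N}$ of $f$ to coincide with those of $T\to Z$, at least after a finite base change handled by Lemma \ref{lem--restriction-can-bundle-formula} and the finite base change property. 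The plan is therefore: reduce to a minimal dominant lc center via dlt adjunction, then apply Proposition \ref{prop--gen-can-bundle-formula-klt-Q-div} to $T\to Z$. If $T\to Z$ is not a contraction, I would first take its Stein factorization $T\to Z_{T}\to Z$, apply the klt result over $Z_{T}$, and descend along the finite map $Z_{T}\to Z$ by the finite base change property.

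For the lc statement, I would check that every lc threshold $t_{Q}$ is at least $0$ when $(X,B+M)$ is generalized lc. The b-divisorial version follows from descent, as in Remark \ref{rem--descend-variety}.

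I expect the main obstacle to be the comparison of discriminants in this reduction. One must show that for every prime divisor $Q$ on every model $Z'$, the generalized lc threshold of $f'^{*}Q$ with respect to $(X,B+M)$ equals the threshold computed on $T$. This requires a generalized inversion of adjunction over the generic point of $Q$ (Lemma \ref{lem--gen-inv-adj} together with \cite{fujino-analytic-inv-adjunction}), and it requires that the MMP steps above are crepant over codimension-one points of $Z$. I would first try to deduce the inequality "$\geq$" directly from adjunction, and the reverse inequality from connectedness (Lemma \ref{lem--conn-princi}) applied over a general point of $Q$. Shrinking $S$ is harmless throughout because of Lemma \ref{lem--restriction-can-bundle-formula}.
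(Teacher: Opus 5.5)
\textbf{There is a genuine gap.} It is the step you yourself flag: the claim that $\boldsymbol{\rm G}$ and $\boldsymbol{\rm N}$ of $f$ coincide, after the finite base change $Z_{T}\to Z$, with those of a minimal dominant lc center $T\to Z_{T}$. Everything rests on this, and it is not proved.

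Your proposed proof of the hard inequality does not work. Inversion of adjunction (Lemma \ref{lem--gen-inv-adj}) only gives generalized lc-ness near $T$. You must therefore rule out non-lc places of $(X,B+tf^{*}Q+M)$ that lie over $Q$ away from $T$. Lemma \ref{lem--conn-princi} cannot do this: it needs $-(K_{X}+B+M)$ to be $\pi$-nef and $\pi$-big, and $\pi({\rm Supp}\,B^{<0})$ to have codimension $\geq 2$. Here $K_{X}+B+tf^{*}Q+M\sim_{\mathbb{R},\,Z}0$ with $\dim X>\dim Z$, which is exactly the setting where non-klt loci need not be connected.

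Moreover, $\boldsymbol{\rm G}$ and $\boldsymbol{\rm N}$ are b-divisors, so the comparison must hold on every model $Z'\to Z$. There the crepant pullback of $(X,B+M)$ is only a sub-pair, and $T$ would have to be re-identified as a minimal lc center of some dlt model. Your proposal does not address this. In \cite{fg-lctrivial} this comparison is a main theorem, proved with a semistable MMP plus further input (cf.~\cite[Theorem 3.6]{fg-lctrivial}), and the paper explicitly avoids that input. Finally, the MMP ending in $X_{1}\to Y$ is never used afterwards. As set up it is not a generalized klt MMP either, since vertical components of $\lfloor B\rfloor$ keep coefficient one.

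\textbf{The missing idea} is what the paper does with that MMP, and then with the lc places.
\begin{enumerate}
\item After Step 1, shrink $S$ so that $(X,\{B\}+M)$ is generalized klt. If $f(\lfloor B\rfloor)\neq Z$, you are in the situation of Proposition \ref{prop--gen-can-bundle-formula-klt-Q-div}.
\item Otherwise, run a $(K_{X}+\{B\}+M+H)$-MMP over $S$ with $H=3(\dim X)f^{*}H_{Z}$. By the length of extremal rays it runs over $Z$, and it ends with a Mori fiber space $g\colon X\to Y$ over $Z$. Then $\lfloor B\rfloor\sim_{\mathbb{Q},\,Z}-(K_{X}+\{B\}+M)$ is $g$-ample.
\item By \cite[Lemma 5.1]{filipazzi-gen-can-bundle-formula} (compatibility with the composition $X\to Y\to Z$) and induction on $\dim X$, one may assume $\lfloor B\rfloor$ is $f$-ample.
\item Choose $A$ with $\lfloor B\rfloor+f^{*}A$ ample over $S$. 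The lc places are not restricted to; they are absorbed into the nef part. Set $B_{m}=B-\frac{1}{m}\lfloor B\rfloor$ and $\boldsymbol{\rm L}_{m}=\boldsymbol{\rm M}+\frac{1}{m}\overline{(\lfloor B\rfloor+f^{*}A)}$. These give generalized klt-trivial fibrations with $\boldsymbol{\rm G}=\lim\boldsymbol{\rm G}_{m}$ and $\boldsymbol{\rm N}=\lim\boldsymbol{\rm N}_{m}$.
\item Since ${\rm Supp}\,B_{m}\subset{\rm Supp}\,B$, one weak semistable reduction serves all $m$. So Remark \ref{rem--descend-variety} and Proposition \ref{prop--gen-can-bundle-formula-klt-Q-div} (ii) give a single model $\overline{Z}$, after shrinking to $S^{\circ}$, on which every $\boldsymbol{\rm N}_{m}$ descends and is nef.
\item Nefness and descent then pass to the limit.
\end{enumerate}
No comparison of thresholds with an lc center is needed.
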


\begin{proof}
We fix a $\mathbb{Q}$-Cartier divisor $D$ on $Z$ such that $K_{X}+B+M \sim_{\mathbb{Q}}f^{*}D$. 
By the same argument as in Step \ref{step1--prop--gen-can-bundle-formula-klt-Q-div} in the proof of Proposition \ref{prop--gen-can-bundle-formula-klt-Q-div}, we may assume that $X$ is $\mathbb{Q}$-factorial over $W$ and $(X,B+M) \to S$ is generalized dlt. 
By shrinking $S$ around $W$ suitably, we may assume that $(X, \{B\}+M) \to S$ is generalized klt. 

We divide the proof into two steps. 

\begin{step2}\label{step2--prop-gen-can-bundle-formula-lc-Q-div}
In this step, we prove that we may assume $\lfloor B \rfloor$ is $f$-ample. 

If $f(\lfloor B \rfloor)\subsetneq Z$, then $f \colon (X,B+M) \to Z$ is a generalized klt-trivial fibration over $S$. 
In this case, Proposition \ref{prop--gen-can-bundle-formula-lc-Q-div} is nothing but Proposition \ref{prop--gen-can-bundle-formula-klt-Q-div} and there is nothing to prove. 
Therefore, we may assume that $f(\lfloor B \rfloor)=Z$. 
We note that this condition shows ${\rm dim}\,X > {\rm dim}\,Z$. 
Let $H_{Z}$ be a Cartier divisor on $Z$ that is ample over $S$, and we set $H:=3 \cdot ({\rm dim}\,X) f^{*}H_{Z}$. 
We run a $(K_{X}+\{B\}+M+H)$-MMP over $S$ with scaling of an ample $\mathbb{R}$-divisor. 
By the length of extremal rays (\cite[Theorem 1.1.6 (5)]{fujino-analytic-conethm}), the induced meromorphic map $X_{i} \dashrightarrow Z$ from any variety $X_{i}$ appearing in this MMP is a morphism. 
By shrinking $S$ around $W$ and replacing $X$, we may assume that $X$ has the structure of a Mori fiber space $g \colon X \to Y$ over $S$ for $K_{X}+\{B\}+M$ such that the induced meromorphic map $Y \dashrightarrow Z$ is a morphism. 
Then ${\rm dim}\,X>{\rm dim}\,Y$, and
$$\lfloor B \rfloor = K_{X}+B+M-(K_{X}+\{B\}+M)\sim_{\mathbb{Q},\,Z} -(K_{X}+\{B\}+M)$$ 
is $g$-ample. 

By \cite[Lemma 5.1]{filipazzi-gen-can-bundle-formula} and an induction on the dimension of $X$, it is sufficient to prove Proposition \ref{prop--gen-can-bundle-formula-lc-Q-div} for $g \colon (X,B+M) \to Y$. 
Indeed, assume that Proposition \ref{prop--gen-can-bundle-formula-lc-Q-div} holds for $g \colon (X,B+M) \to Y$. 
Let $\boldsymbol{\rm G}'$ (resp.~$\boldsymbol{\rm N}'$) be the discriminant $\mathbb{Q}$-b-divisor (resp.~the moduli $\mathbb{Q}$-b-divisor) associated to $g$. 
Then $(Y,\boldsymbol{\rm G}'_{Y}+\boldsymbol{\rm N}'_{Y}) \to S$ is a generalized lc pair with the nef part $\boldsymbol{\rm N}'$, and 
$(Y,\boldsymbol{\rm G}'_{Y}+\boldsymbol{\rm N}'_{Y}) \to Z$ defines the structure of a generalized lc-trivial fibration over $S$. 
By applying the induction hypothesis of Proposition \ref{prop--gen-can-bundle-formula-lc-Q-div} to $(Y,\boldsymbol{\rm G}'_{Y}+\boldsymbol{\rm N}'_{Y}) \to Z$, we obtain the discriminant $\mathbb{Q}$-b-divisor and the moduli $\mathbb{Q}$-b-divisor. 
Then \cite[Lemma 5.1]{filipazzi-gen-can-bundle-formula} shows that they are equal to the discriminant $\mathbb{Q}$-b-divisor $\boldsymbol{\rm G}$ and the moduli $\mathbb{Q}$-b-divisor $\boldsymbol{\rm N}$ associated to $f\colon (X,B+M) \to Z$, respectively. 
From this discussion, we may replace $f\colon (X,B+M) \to Z$ by $g \colon (X,B+M) \to Y$, and we may assume that $\lfloor B \rfloor$ is $f$-ample. 
\end{step2}

\begin{step2}\label{step3--prop-gen-can-bundle-formula-lc-Q-div}
By shrinking $S$ around $W$, we can find an ample $\mathbb{Q}$-Cartier divisor $A$ on $Z$ such that $\lfloor B \rfloor+f^{*}A$ is ample over $S$. 
By \cite[Lemma 2.16]{fujino-analytic-bchm}, we get
$$W \subset S^{\circ} \subset W^{\circ} \subset S,$$
where $S^{\circ} \subset S$ is a Stein open subset and $W^{\circ} \subset S$ is a Stein compact subset such that $X \to S$ and $W^{\circ} \subset S$ satisfy (P). 
In this step we show that Proposition \ref{prop--gen-can-bundle-formula-lc-Q-div} holds if we shrink $S$ to $S^{\circ}$. 

For each positive integer $m$, we put $B_{m}:=B-\frac{1}{m}\lfloor B \rfloor$, $\boldsymbol{\rm L}_{m}:=\boldsymbol{\rm M}+\frac{1}{m}\overline{(\lfloor B \rfloor+f^{*}A)}$, and $L_{m}:=(\boldsymbol{\rm L}_{m})_{X}$. 
Then $(X,B_{m}+L_{m}) \to Z$ is a generalized klt-trivial fibration over $S$ for every $m \in \mathbb{Z}_{>0}$. 
Let $\boldsymbol{\rm G}_{m}$ (resp.~$\boldsymbol{\rm N}_{m}$) be the discriminant $\mathbb{Q}$-b-divisor (resp.~the moduli $\mathbb{Q}$-b-divisor) associated to $(X,B_{m}+L_{m}) \to Z$. 
Since 
$$K_{X}+B_{m}+L_{m}=K_{X}+B+M+\frac{1}{m}f^{*}A\sim_{\mathbb{Q}}f^{*}(D+\frac{1}{m}A),$$
we have $$\boldsymbol{\rm G}=\underset{m \to \infty}{\rm lim}\boldsymbol{\rm G}_{m} \qquad {\rm and} \qquad \boldsymbol{\rm N}=\underset{m \to \infty}{\rm lim}\boldsymbol{\rm N}_{m},$$where the limit means that
every prime divisor $P$ over $Z$ satisfies 
$${\rm coeff}_{P}(\boldsymbol{\rm G})=\underset{m \to \infty}{\rm lim}{\rm coeff}_{P}(\boldsymbol{\rm G}_{m}) \qquad {\rm and} \qquad {\rm coeff}_{P}(\boldsymbol{\rm N})=\underset{m \to \infty}{\rm lim}{\rm coeff}_{P}(\boldsymbol{\rm N}_{m}).$$
Furthermore, we have ${\rm Supp}\,B_{m}\subset {\rm Supp}\,B$ for every $m \in \mathbb{Z}_{>0}$. 
By Proposition \ref{prop--gen-can-bundle-formula-klt-Q-div} (\ref{prop--gen-can-bundle-formula-klt-Q-div-(i)}) and Remark \ref{rem--descend-variety}, there exists a projective bimeromorphic morphism $\overline{Z} \to Z$ such that 
\begin{itemize}
\item
$\overline{Z}$ is smooth, and 
\item
for each $m \in \mathbb{Z}_{>0}$, there exists an open neighborhood $S_{m}$ of $W^{\circ}$ such that 
after restricting $(X,B_{m}+L_{m}) \to Z$ over $S_{m}$, the associated moduli $\mathbb{Q}$-b-divisor descends to $\overline{Z} $. 
\end{itemize}
By Proposition \ref{prop--gen-can-bundle-formula-klt-Q-div} (\ref{prop--gen-can-bundle-formula-klt-Q-div-(ii)}) and replacing $S$ by $S^{\circ}$, we may assume that all $\boldsymbol{\rm N}_{m}$ descend to $\overline{Z} $. 
Then, $\boldsymbol{\rm N}_{\overline{Z}}$ is $\mathbb{Q}$-Cartier and this is the limit of $(\boldsymbol{\rm N}_{m})_{\overline{Z}}$, which are nef over $S$. 
Hence, $\boldsymbol{\rm N}_{\overline{Z}}$ is nef over $S$. 
Moreover, for any proper bimeromorphic morphism $\tau \colon \widetilde{Z} \to \overline{Z}$, we have
$$\boldsymbol{\rm N}_{\widetilde{Z}}-\tau^{*}\boldsymbol{\rm N}_{\overline{Z}}=\underset{m \to \infty}{\rm lim}\bigl((\boldsymbol{\rm N}_{m})_{\widetilde{Z}}-\tau^{*}((\boldsymbol{\rm N}_{m})_{\overline{Z}})\bigr)=0.$$
Therefore, $\boldsymbol{\rm N}$ descends to $\overline{Z}$. 
In particular, $\boldsymbol{\rm N}$ is b-nef$/S$ $\mathbb{Q}$-b-Cartier $\mathbb{Q}$-b-divisor. 
This shows Proposition \ref{prop--gen-can-bundle-formula-lc-Q-div} (\ref{prop--gen-can-bundle-formula-lc-Q-div-(i)}). 
We can easily check Proposition \ref{prop--gen-can-bundle-formula-lc-Q-div} (\ref{prop--gen-can-bundle-formula-lc-Q-div-(ii)}) by using the same argument as in the proof of Theorem \ref{thm--klt-trivial-fib-start} (\ref{thm--klt-trivial-fib-start-(ii)}). 
\end{step2}
As discussed in Step \ref{step2--prop-gen-can-bundle-formula-lc-Q-div}, we complete the proof. 
\end{proof}

Now we are ready to prove the main results of this section.

\begin{thm}\label{thm--gen-can-bundle-formula-lc}
Let $\pi \colon (X,B+M) \to S$ be a generalized sub-pair with the nef part $\boldsymbol{\rm M}$, and let $f\colon (X,B+M) \to Z$ be a generalized lc-trivial fibration over $S$. 
Let $W \subset S$ be a Stein compact subset. 
Suppose that
\begin{itemize}
\item
$\boldsymbol{\rm M}$ is a finite $\mathbb{R}_{>0}$-linear combination of b-nef$/S$ $\mathbb{Q}$-b-Cartier $\mathbb{Q}$-b-divisors on $X$, and
\item
there exists a Zariski open dense subset $U \subset Z$ such that $B|_{f^{-1}(U)}$ is effective.
\end{itemize}
Then, after replacing $S$ with a suitable open neighborhood of $W$, we have the following properties.
\begin{enumerate}[(i)]
\item \label{thm--gen-can-bundle-formula-lc-(i)}
Let $\boldsymbol{\rm G}$ (resp.~$\boldsymbol{\rm N}$) be the discriminant $\mathbb{R}$-b-divisor (resp.~the moduli $\mathbb{R}$-b-divisor) associated to $f$. 
Then $\boldsymbol{\rm N}$ is a finite $\mathbb{R}_{>0}$-linear combination of b-nef$/S$ $\mathbb{Q}$-b-Cartier $\mathbb{Q}$-b-divisors on $Z$. 
In particular, $(Z, \boldsymbol{\rm G}_{Z} +\boldsymbol{\rm N}_{Z}) \to S$ is a generalized sub-pair with the nef part $\boldsymbol{\rm N}$. 
Furthermore, if $(X,B+M) \to S$ is generalized lc (resp.~generalized klt), then $(Z, \boldsymbol{\rm G}_{Z} +\boldsymbol{\rm N}_{Z}) \to S$ is also generalized lc (resp.~generalized klt). 
\item \label{thm--gen-can-bundle-formula-lc-(ii)}
For any open subset $\tilde{S} \subset S$ that does not necessarily contain $W$, if we put $\tilde{X} \subset X$ and $\tilde{Z} \subset Z$ as the inverse images of $\tilde{S}$, then the generalized lc-trivial fibration $(\tilde{X}, B|_{\tilde{X}}+M|_{\tilde{X}}) \to \tilde{Z}$ over $\tilde{S}$ satisfies (\ref{thm--gen-can-bundle-formula-lc-(i)}). 
\end{enumerate}
In particular, the discriminant $\mathbb{R}$-b-divisor and the moduli $\mathbb{R}$-b-divisor associated to $f$ are compatible with the restriction over any open subset.  
\end{thm}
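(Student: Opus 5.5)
We reduce the $\mathbb{R}$-coefficient statement to Proposition \ref{prop--gen-can-bundle-formula-lc-Q-div} by a standard convex-decomposition argument.

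\textbf{Set-up.} Write $\boldsymbol{\rm M}=\sum_{j=1}^{k}\mu_{j}\boldsymbol{\rm M}_{j}$ with $\mu_{j}\in\mathbb{R}_{>0}$ and each $\boldsymbol{\rm M}_{j}$ a b-nef$/S$ $\mathbb{Q}$-b-Cartier $\mathbb{Q}$-b-divisor. After shrinking $S$ around $W$ (so that $\pi$ and a slightly larger Stein compact subset satisfy (P), via \cite[Lemma 2.16]{fujino-analytic-bchm}):
\begin{enumerate}
\item take a log resolution $\phi\colon X'\to X$ of $(X,B)$ on which every $\boldsymbol{\rm M}_{j}$ descends;
\item apply the equi-dimensional reduction and MMP of Step \ref{step1--prop--gen-can-bundle-formula-klt-Q-div} in the proof of Proposition \ref{prop--gen-can-bundle-formula-klt-Q-div}, which works verbatim for $\mathbb{R}$-coefficients;
\item take a generalized dlt blow-up (Theorem \ref{thm--gen-dlt-model}).
\end{enumerate}
After these replacements we may assume $X$ is $\mathbb{Q}$-factorial over $W$, that $K_{X}+B+M\sim_{\mathbb{R}}f^{*}D$, and that we are given a finite set of prime divisors containing ${\rm Supp}\,B$. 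Enlarging the list, consider the real vector space $V$ spanned by the coefficients of $B$, by $(\mu_{1},\dots,\mu_{k})$, and by the coefficients expressing $D$ and the $\mathbb{R}$-linear equivalence as combinations of $\mathbb{Q}$-Cartier divisors and principal divisors. The $\mathbb{Q}$-structure of this data defines a rational affine subspace $\mathcal{L}$ containing the given point $v_{0}=(B,\mu)$. Consider the locus in $\mathcal{L}$ where:
\begin{itemize}
\item $K_{X}+B_{v}+\sum_{j}\mu_{v,j}M_{j}\sim_{\mathbb{R}}f^{*}D_{v}$ for some $\mathbb{Q}$-linear $D_{v}$;
\item the $\mu_{v,j}$ are positive;
\item the generic effectivity and generalized sub-lc (resp.\ sub-klt) conditions over $U$ hold.
\end{itemize}
The linear-equivalence condition cuts out a rational affine subspace, and the remaining conditions are open or rational polyhedral near $v_{0}$. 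Because the log discrepancies along divisors of a fixed log resolution are affine in $v$, the rank-one condition ${\rm rank}\,f_{*}\mathcal{O}_{X}(\lceil\boldsymbol{\rm A}^{*}\rceil)=1$ is preserved as well, using the non-klt centers that dominate $Z$. We therefore obtain rational points $v_{1},\dots,v_{r}$ near $v_{0}$ and positive reals $r_{i}$ with $\sum r_{i}=1$ and $v_{0}=\sum r_{i}v_{i}$. Each $v_{i}$ gives a generalized lc-trivial fibration $f\colon(X,B_{i}+M_{i})\to Z$ over $S$ with $\mathbb{Q}$-coefficients, nef part $\sum_{j}\mu_{i,j}\boldsymbol{\rm M}_{j}$, and $B_{i}$ effective over $U$.

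\textbf{Applying the $\mathbb{Q}$-case.} Apply Proposition \ref{prop--gen-can-bundle-formula-lc-Q-div} to each of these finitely many fibrations, shrinking $S$ once for all of them. This gives b-nef$/S$ $\mathbb{Q}$-b-Cartier moduli parts $\boldsymbol{\rm N}_{i}$, compatible with restriction over any open $\tilde{S}$. The key claim is
$$\boldsymbol{\rm G}=\sum_{i}r_{i}\boldsymbol{\rm G}_{i},\qquad \boldsymbol{\rm N}=\sum_{i}r_{i}\boldsymbol{\rm N}_{i}.$$
The inequality $\boldsymbol{\rm G}\geq\sum r_{i}\boldsymbol{\rm G}_{i}$ follows from concavity of generalized lc thresholds in the data. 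For equality, compute the thresholds on a model where $f'$ is log smooth over the complement of an snc divisor $\Sigma_{Z'}$, as in the proof of Lemma \ref{lem--restriction-can-bundle-formula} (a). There, $t_{Q}$ is the minimum over the finitely many components $P$ of $f'^{*}Q$ of
$$\frac{1-{\rm coeff}_{P}B'_{v}}{{\rm mult}_{P}f'^{*}Q},$$
where the pieces of $M'$ are $\mathbb{R}$-Cartier on a model where they descend. Each of these expressions is affine in $v$, so $t_{Q}$ is a minimum of affine functions, hence concave. If the $v_{i}$ are chosen close enough to $v_{0}$, within the chamber where the minimizing component $P$ is the same, the function is affine on their convex hull. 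This has to be arranged simultaneously for all prime divisors $Q$ over $Z$, not only on one model. To do so, we reduce via the weak semistable reduction model of Remark \ref{rem--descend-variety}. On that model the coefficient over every higher $Q$ is governed by the finitely many toroidal data, so only finitely many affine functions occur, and a single choice of the $v_{i}$ works.

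\textbf{Conclusion.} Granting the claim, $\boldsymbol{\rm N}=\sum r_{i}\boldsymbol{\rm N}_{i}$ is a positive real combination of b-nef$/S$ $\mathbb{Q}$-b-Cartier $\mathbb{Q}$-b-divisors, which proves (i). Generalized lc (resp.\ klt) of $(Z,\boldsymbol{\rm G}_{Z}+\boldsymbol{\rm N}_{Z})$ follows from the corresponding statement for each $v_{i}$: when $(X,B+M)$ is generalized lc (resp.\ klt) and $B\geq0$, we choose the $v_{i}$ inside the same open (resp.\ rational polyhedral) condition, and then use the convex combination. For (ii), restrict to $\tilde{S}$. The decomposition $v_{0}=\sum r_{i}v_{i}$ restricts, and Proposition \ref{prop--gen-can-bundle-formula-lc-Q-div}(ii) holds for each $i$. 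The affineness claim is local over $S$ by Lemma \ref{lem--restriction-can-bundle-formula}(a), so it holds over $\tilde{S}$ as well, and the argument of Theorem \ref{thm--klt-trivial-fib-start}(ii) finishes.

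\textbf{Main obstacle.} The hardest step is the affineness of the discriminant along the segment, uniformly over all divisors over $Z$. I would prove it first on the weak semistable reduction model, using the finite base change property of \cite[Theorem 3.2]{ambro-phdthesis}, and only then choose the $v_{i}$.
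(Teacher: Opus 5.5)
There is a genuine gap, and it sits exactly at the step you call the main obstacle. First, a direction slip: concavity of $t_{Q}$ in $v$ gives $t_{Q}(v_{0})\geq\sum_{i}r_{i}t_{Q}(v_{i})$, that is, $\boldsymbol{\rm G}\leq\sum_{i}r_{i}\boldsymbol{\rm G}_{i}$, not $\geq$. The semistable computation (as in Lemma \ref{lem--restriction-can-bundle-formula} (b) and Remark \ref{rem--descend-variety}) only gives $K_{Z'}+\boldsymbol{\rm G}_{Z'}\leq\tau^{*}(K_{Z}+\boldsymbol{\rm G}_{Z})$. Combined with descent for the $v_{i}$, this points in the same direction again.

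So what you actually need is $\boldsymbol{\rm G}\geq\sum_{i}r_{i}\boldsymbol{\rm G}_{i}$ on all divisors over $Z$, for one fixed finite set of $v_{i}$, and your argument does not provide it. The claim that everything is ``governed by finitely many toroidal data'' is a fan computation for valuations that are toroidal for the semistable structure. For non-toroidal $Q$, however, it is equivalent to $\boldsymbol{\rm K}+\boldsymbol{\rm G}_{v}$ descending to the semistable model for real $v$ near $v_{0}$, which is essentially what you are trying to prove. In the paper that equality comes only from b-nefness of the moduli part plus the negativity lemma (Remark \ref{rem--descend-variety}), hence only for rational data. Ambro's finite base change only compares $Z$ with $Z^{\rm ws}$ and does not supply it.

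The paper avoids uniform affineness by reversing the order of choices. It writes the data as convex combinations of rational vertex data and works with the whole simplex $\mathcal{P}$. Using $W\subset S^{\circ}\subset W^{\circ}$, Proposition \ref{prop--gen-can-bundle-formula-lc-Q-div} (ii) and Lemma \ref{lem--restriction-can-bundle-formula} (c), it makes $\boldsymbol{\rm N}_{\boldsymbol{t}}$ b-nef$/S$ and $\mathbb{Q}$-b-Cartier for \emph{every} rational $\boldsymbol{t}\in\mathcal{P}$ over one fixed $S$. One weak semistable reduction built for the vertices works for every rational $\boldsymbol{t}$, so Remark \ref{rem--descend-variety} makes all these $\boldsymbol{\rm N}_{\boldsymbol{t}}$ descend to the \emph{same} model $Z^{\vee}$. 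Only then, separately for each $\sigma\colon Z^{\vee\vee}\to Z^{\vee}$, does it choose rational $\boldsymbol{q}_{k}$ depending on $\sigma$, as in \cite[Proof of Lemma 5.5]{fujino-hashizume-adjunction}, so that the finitely many relevant coefficients on $Z^{\vee\vee}$ are affine. The common descent of the $\boldsymbol{\rm K}+\boldsymbol{\rm G}_{\boldsymbol{q}_{k}}$ to $Z^{\vee}$ then yields $\sigma^{*}(K_{Z^{\vee}}+\boldsymbol{\rm G}_{Z^{\vee}})=K_{Z^{\vee\vee}}+\boldsymbol{\rm G}_{Z^{\vee\vee}}$. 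Your plan applies the $\mathbb{Q}$-case only to finitely many pre-chosen $v_{i}$. It therefore lacks this uniform descent for all rational points to a single model, and cannot close the argument as written.
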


\begin{proof}
By Proposition \ref{prop--gen-can-bundle-formula-lc-Q-div} and Remark \ref{rem--descend-variety}, this theorem holds if $B$ is a $\mathbb{Q}$-divisor on $X$ and $\boldsymbol{\rm M}$ is a $\mathbb{Q}$-b-divisor on $X$. 
By the same argument as in Step \ref{step1--prop--gen-can-bundle-formula-klt-Q-div} in the proof of Proposition \ref{prop--gen-can-bundle-formula-klt-Q-div}, we may assume that $(X,B+M) \to S$ is generalized lc. 

We fix an $\mathbb{R}$-Cartier divisor $D$ on $Z$ such that $K_{X}+B+M \sim_{\mathbb{R}}f^{*}D$. 
Since $\boldsymbol{\rm M}$ is a finite $\mathbb{R}_{>0}$-linear combination of b-nef$/S$ $\mathbb{Q}$-b-Cartier $\mathbb{Q}$-b-divisors on $X$, we can write $\boldsymbol{\rm M}=\sum_{i=1}^{l} r_{i}\boldsymbol{\rm L}_{i}$ for some $r_{1},\,\cdots r_{l} \in \mathbb{R}_{>0}$ and b-nef$/S$ $\mathbb{Q}$-b-Cartier $\mathbb{Q}$-b-divisors $\boldsymbol{\rm L}_{1},\,\cdots,\,\boldsymbol{\rm L}_{l}$ on $X$.  
Then there exist a log resolution $\phi \colon X' \to X$ of $(X,B)$ and $\mathbb{Q}$-Cartier divisors $L'_{1},\,\cdots,\,L'_{l}$ on $X'$ such that $L'_{i}$ is nef over $S$ and $\boldsymbol{\rm L}_{i}=\overline{L'_{i}}$ for all $1 \leq i \leq l$. 
By shrinking $S$ around $W$ suitably, we may assume that all the $\mathbb{R}$-divisors appearing so far have  finitely many components and all appearing $\mathbb{R}$-Cartier divisors are globally $\mathbb{R}$-Cartier. 
By using \cite[Proof of Lemma 5.4]{fujino-hashizume-adjunction}, which is an argument from convex geometry, we obtain $\mathbb{Q}$-divisors $B_{1},\,\cdots, B_{m}$ on $X$, $\mathbb{Q}$-Cartier divisors $D_{1},\,\cdots,\,D_{m}$ on $Z$, $\mathbb{Q}$-Cartier divisors $M'_{1},\,\cdots,\,M'_{m}$ on $X'$ that are nef over $S$, and positive real numbers $\alpha_{1},\,\cdots,\,\alpha_{m}$ such that
\begin{itemize}
\item
$\sum_{j=1}^{m}\alpha_{j}=1$, $\sum_{j=1}^{m}\alpha_{j}B_{j}=B$, $\sum_{j=1}^{m}\alpha_{j}M'_{j}=\boldsymbol{\rm M}_{X'}$, and $\sum_{j=1}^{m}\alpha_{j}D_{j}=D$, 
\item
${\rm Supp}\,B_{j} = {\rm Supp}\,B$ and $B_{j}$ is effective for every $1 \leq j \leq m$, 
\item
if we put $\boldsymbol{\rm M}_{j}:=\overline{M'_{j}}$ and $M_{j}:=\phi_{*}M'_{j}=(\boldsymbol{\rm M}_{j})_{X}$ for each $1 \leq j \leq m$, then every $(X,B_{j}+M_{j}) \to S$ is a generalized sub-pair with the nef part $\boldsymbol{\rm M}_{j}$, 
\item
every $f \colon (X,B_{j}+M_{j}) \to Z$ is a generalized lc-trivial fibration over $S$ such that $K_{X}+B_{j}+M_{j} \sim_{\mathbb{Q}}f^{*}D_{j}$ for every $1\leq j \leq m$. 
\end{itemize}

We put
$$\mathcal{P}:=\left\{\boldsymbol{t}=(t_{1},\,\cdots, t_{m})\in (\mathbb{R}_{\geq 0})^{m}\,\middle| \,\sum_{j=1}^{m}t_{j}=1\right\}$$
and we define 
$$B_{\boldsymbol{t}}:=\sum_{j=1}^{m}t_{j}B_{j}, \qquad \boldsymbol{\rm M}_{\boldsymbol{t}}:=\sum_{j=1}^{m}t_{j}\boldsymbol{\rm M}_{j}, \qquad {\rm and} \qquad M_{\boldsymbol{t}}:=\sum_{j=1}^{m}t_{j}M_{j}=(\boldsymbol{\rm M}_{\boldsymbol{t}})_{X}$$
for $\boldsymbol{t} \in \mathcal{P}$. 
Then $f \colon (X, B_{\boldsymbol{t}}+M_{\boldsymbol{t}}) \to Z$ is a generalized lc-trivial fibration over $S$ such that $B_{\boldsymbol{t}}|_{f^{-1}(U)}$ is effective. 
Let $\boldsymbol{\rm G}_{\boldsymbol{t}}$ (resp.~$\boldsymbol{\rm N}_{\boldsymbol{t}}$) be the discriminant $\mathbb{R}$-b-divisor (resp.~the moduli $\mathbb{R}$-b-divisor) associated to $f \colon (X, B_{\boldsymbol{t}}+M_{\boldsymbol{t}}) \to Z$, where $\boldsymbol{\rm N}_{\boldsymbol{t}}$ is defined by using $\sum_{j=1}^{m}t_{j}D_{j}$. 
By \cite[Lemma 2.16]{fujino-analytic-bchm}, we obtain a Stein open subset $S^{\circ}\subset S$ and a Stein compact subset $W^{\circ}\subset S$ such that
$$W \subset S^{\circ} \subset W^{\circ} \subset S.$$
By Theorem \ref{thm--gen-can-bundle-formula-lc} (\ref{thm--gen-can-bundle-formula-lc-(i)}) in the rational case, for every $\boldsymbol{t} \in \mathcal{P} \cap {\mathbb{Q}}^{m}$, shrinking $S$ around $W^{\circ}$ suitably, $\boldsymbol{\rm N}_{\boldsymbol{t}}$ becomes a b-nef$/S$ $\mathbb{Q}$-b-Cartier $\mathbb{Q}$-b-divisor on $Z$. 
By Theorem \ref{thm--gen-can-bundle-formula-lc} (\ref{thm--gen-can-bundle-formula-lc-(ii)}) in the rational case and Lemma \ref{lem--restriction-can-bundle-formula} (\ref{lem--restriction-can-bundle-formula-(c)}), replacing $S$ by $S^{\circ}$ and restricting other objects over $S^{\circ}$, we may assume that $\boldsymbol{\rm N}_{\boldsymbol{t}}$ is a b-nef$/S$ $\mathbb{Q}$-b-Cartier $\mathbb{Q}$-b-divisor  on $Z$ for any $\boldsymbol{t} \in \mathcal{P} \cap {\mathbb{Q}}^{m}$. 
Note that this property is preserved after any further shrink of $S$. 

For each $1\leq j \leq m$, set $\boldsymbol{\rm G}_{j}:=\boldsymbol{\rm G}_{\boldsymbol{e}_{j}}$ for the $j$-th standard basis $\boldsymbol{e}_{j}$.  
In other words, $\boldsymbol{\rm G}_{j}$ denotes the discriminant $\mathbb{Q}$-b-divisor associated to $f \colon (X, B_{j}+M_{j}) \to Z$. 
 By using \cite[Theorem 1.17]{haconpaun} after shrinking $S$ around $W$ suitably, we can get a diagram
  $$
\xymatrix{
X^{\rm ws} \ar@{->}[r]^{\mu_{X}}\ar@{->}[d]_{f^{\rm ws}} & X^{\vee} \ar@{->}[r]^{\tau_{X}}\ar@{->}[d]^{f^{\vee}}&X \ar@{->}[d]^{f}\\
Z^{\rm ws}\ar[r]_{\mu}&Z^{\vee} \ar@{->}[r]_{\tau} &Z,
}
$$
where $\tau$ and $\tau_{X}$ are projective bimeromorphic morphisms, $\mu$ is finite and surjective, and $\mu_{X}$ is a generically finite surjective morphism, such that 
\begin{itemize}
\item
all $\boldsymbol{\rm M}_{j}$ descend to $X^{\vee}$, 
\item
$Z^{\vee}$ is smooth and there exists a simple normal crossing divisor $\Sigma^{\vee}$ on $Z^{\vee}$ that contains the union of ${\rm Ex}(\tau)$ and all ${\rm Supp}\,(\boldsymbol{\rm G}_{j})_{Z^{\vee}}$, and 
\item
there exist divisors $\Delta_{X^{\rm ws}}$ and $\Delta_{Z^{\rm ws}}$ such that the morphism
$$f^{\rm ws}\colon (X^{\rm ws},\Delta_{X^{\rm ws}}) \to (Z^{\rm ws},\Delta_{Z^{\rm ws}})$$
is weakly semistable as a toroidal morphism with good horizontal divisors, and furthermore, if we define $B^{\rm ws}_{j}$ and $M^{\rm ws}_{j}$ by 
$$K_{X^{\rm ws}}+B^{\rm ws}_{j}+M^{\rm ws}_{j}=(\tau_{X}\circ\mu_{X})^{*}(K_{X}+B_{j}+M_{j}) \quad {\rm and} \quad M^{\rm ws}_{j}:=\mu^{*}_{X}(\boldsymbol{\rm M}_{j})_{X^{\vee}}$$ respectively, then $\Delta_{X^{\rm ws}} \supset {\rm Supp}\,B^{\rm ws}_{j} \cup {\rm Ex}(\mu_{X})$ and $\Delta_{Z^{\rm ws}}$ contains the union of ${\rm Supp}\,\mu^{*}\Sigma^{\vee}$ and the ramification locus of $\mu \colon Z^{\rm ws} \to Z^{\vee}$.  
\end{itemize}
For $\boldsymbol{t} \in \mathcal{P}$, we define $B^{\rm ws}_{\boldsymbol{t}}$ and $M^{\rm ws}_{\boldsymbol{t}}$ by 
$$K_{X^{\rm ws}}+B^{\rm ws}_{\boldsymbol{t}}+M^{\rm ws}_{\boldsymbol{t}}=(\tau_{X}\circ\mu_{X})^{*}(K_{X}+B_{\boldsymbol{t}}+M_{\boldsymbol{t}}) \qquad {\rm and} \qquad M^{\rm ws}_{\boldsymbol{t}}:=\mu_{X}^{*}(\boldsymbol{\rm M}_{\boldsymbol{t}})_{X^{\vee}}.$$
Then we have ${\rm Supp}\,B^{\rm ws}_{\boldsymbol{t}} \subset \bigcup_{j=1}^{m}{\rm Supp}\,B^{\rm ws}_{j} \subset \Delta_{X^{\rm ws}}$. 
Furthermore, the definition of the discriminant $\mathbb{R}$-b-divisors shows ${\rm Supp}\,(\boldsymbol{\rm G}_{\boldsymbol{t}})_{Z^{\vee}} \subset \Sigma^{\vee}$ for every rational point $\boldsymbol{t}$. 
Hence, $(\boldsymbol{\rm G}_{\boldsymbol{t}})_{Z^{\vee}}$ has simple normal crossing support and $\Delta_{Z^{\rm ws}} \supset {\rm Supp}\,\mu^{*}(\boldsymbol{\rm G}_{\boldsymbol{t}})_{Z^{\vee}}$. 
Thus, we may apply Remark \ref{rem--descend-variety} to $f \colon (X, B_{\boldsymbol{t}}+M_{\boldsymbol{t}}) \to Z$ and $\boldsymbol{\rm N}_{\boldsymbol{t}}$ for any $\boldsymbol{t} \in \mathcal{P} \cap {\mathbb{Q}}^{m}$, and we see that $\boldsymbol{\rm N}_{\boldsymbol{t}}$ descends to $Z^{\vee}$ for all rational points $\boldsymbol{t}$. 
In particular, $\boldsymbol{\rm K}+\boldsymbol{\rm G}_{\boldsymbol{t}}$ descends to $Z^{\vee}$ for all rational points $\boldsymbol{t}$, where $\boldsymbol{\rm K}$ is the canonical b-divisor on $Z$.  

Now we have $\boldsymbol{\alpha}:=(\alpha_{1},\,\cdots,\,\alpha_{m}) \in \mathcal{P}$. 
We pick an arbitrary proper bimeromorphic morphism $\sigma \colon Z^{\vee\vee} \to Z^{\vee}$. 
By applying \cite[Proof of Lemma 5.5]{fujino-hashizume-adjunction} to our situation, we can find rational points $\boldsymbol{q}_{1},\,\cdots,\,\boldsymbol{q}_{n} \in \mathcal{P}$ and positive real numbers $c_{1},\, \cdots,\, c_{n}$ such that $\sum_{k=1}^{n}c_{k}=1$ and $\sum_{k=1}^{n}c_{k}\boldsymbol{q}_{k}=\boldsymbol{\alpha}$. 
Then $\sum_{k=1}^{n}c_{k}(\boldsymbol{\rm G}_{\boldsymbol{q}_{k}})_{Z^{\vee\vee}}=(\boldsymbol{\rm G}_{\boldsymbol{\alpha}})_{Z^{\vee\vee}}=\boldsymbol{\rm G}_{Z^{\vee\vee}}$, and
\begin{equation*}
\begin{split}
\sigma^{*}(K_{Z^{\vee}}+\boldsymbol{\rm G}_{Z^{\vee}})=&\sigma^{*}\left(\sum_{k=1}^{n}c_{k}\bigl(K_{Z^{\vee}}+(\boldsymbol{\rm G}_{\boldsymbol{q}_{k}})_{Z^{\vee}}\bigr)\right)=\sum_{k=1}^{n}c_{k}\sigma^{*}\bigl(K_{Z^{\vee}}+(\boldsymbol{\rm G}_{\boldsymbol{q}_{k}})_{Z^{\vee}}\bigr)\\
=&\sum_{k=1}^{n}c_{k}\bigl(K_{Z^{\vee\vee}}+(\boldsymbol{\rm G}_{\boldsymbol{q}_{k}})_{Z^{\vee\vee}}\bigr)=K_{Z^{\vee\vee}}+\boldsymbol{\rm G}_{Z^{\vee\vee}}. 
\end{split}
\end{equation*}
Since $\sigma \colon Z^{\vee\vee} \to Z^{\vee}$ is arbitrary, $\boldsymbol{\rm K}+\boldsymbol{\rm G}$ is $\mathbb{R}$-b-Cartier and this descends to $Z^{\vee}$. 
Thus, $\boldsymbol{\rm N}$ is also $\mathbb{R}$-b-Cartier and this descends to $Z^{\vee}$. 
Since $\boldsymbol{\rm N}_{\boldsymbol{q}_{k}}$ is b-nef$/S$ and descends to $Z^{\vee}$ for every $1\leq k \leq n$ and $\boldsymbol{\rm N}_{Z^{\vee}}=\sum_{k=1}^{n}c_{k}(\boldsymbol{\rm N}_{\boldsymbol{q}_{k}})_{Z^{\vee}}$, we have 
$$\boldsymbol{\rm N}=\overline{\boldsymbol{\rm N}_{Z^{\vee}}}=\sum_{k=1}^{n}c_{k}\overline{(\boldsymbol{\rm N}_{\boldsymbol{q}_{k}})_{Z^{\vee}}}=\sum_{k=1}^{n}c_{k}\boldsymbol{\rm N}_{\boldsymbol{q}_{k}}.$$ 
Thus, we see that Theorem \ref{thm--gen-can-bundle-formula-lc} (\ref{thm--gen-can-bundle-formula-lc-(i)}) holds. 

Theorem \ref{thm--gen-can-bundle-formula-lc} (\ref{thm--gen-can-bundle-formula-lc-(ii)}) follows from the argument in the proof of Theorem \ref{thm--klt-trivial-fib-start} (\ref{thm--klt-trivial-fib-start-(ii)}). 
\end{proof}

\begin{thm}\label{thm--gen-can-bundle-formula-full}
Let $\pi \colon (X,B+M) \to S$ be a generalized sub-pair with the nef part $\boldsymbol{\rm M}$, and let $f\colon (X,B+M) \to Z$ be a generalized lc-trivial fibration over $S$. 
Let $W \subset S$ be a Stein compact subset. 
Suppose that there exists a Zariski open dense subset $U \subset Z$ such that $B|_{f^{-1}(U)}$ is effective.
Then, after replacing $S$ with a suitable open neighborhood of $W$, we have the following properties.
\begin{enumerate}[(i)]
\item \label{thm--gen-can-bundle-formula-full-(i)}
Let $\boldsymbol{\rm G}$ (resp.~$\boldsymbol{\rm N}$) be the discriminant $\mathbb{R}$-b-divisor (resp.~the moduli $\mathbb{R}$-b-divisor) associated to $f$. 
Then $\boldsymbol{\rm N}$ is a b-nef$/S$ $\mathbb{R}$-b-Cartier $\mathbb{R}$-b-divisor on $Z$. 
In particular, $(Z, \boldsymbol{\rm G}_{Z} +\boldsymbol{\rm N}_{Z}) \to S$ is a generalized sub-pair with the nef part $\boldsymbol{\rm N}$. 
Furthermore, if $(X,B+M) \to S$ is generalized lc (resp.~generalized klt), then $(Z, \boldsymbol{\rm G}_{Z} +\boldsymbol{\rm N}_{Z}) \to S$ is also generalized lc (resp.~generalized klt). 
\item \label{thm--gen-can-bundle-formula-full-(ii)}
For any open subset $\tilde{S} \subset S$ that does not necessarily contain $W$, if we put $\tilde{X} \subset X$ and $\tilde{Z} \subset Z$ as the inverse images of $\tilde{S}$, then the generalized lc-trivial fibration $(\tilde{X}, B|_{\tilde{X}}+M|_{\tilde{X}}) \to \tilde{Z}$ over $\tilde{S}$ satisfies (\ref{thm--gen-can-bundle-formula-full-(i)}). 
\end{enumerate}
In particular, the discriminant $\mathbb{R}$-b-divisor and the moduli $\mathbb{R}$-b-divisor associated to $f$ are compatible with the restriction over any open subset. 
\end{thm}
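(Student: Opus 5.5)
The plan is to reduce Theorem \ref{thm--gen-can-bundle-formula-full} to Theorem \ref{thm--gen-can-bundle-formula-lc}. The only difference between the two is that here $\boldsymbol{\rm M}$ is an arbitrary b-nef$/S$ $\mathbb{R}$-b-Cartier $\mathbb{R}$-b-divisor. We do not assume it is a positive combination of b-nef$/S$ $\mathbb{Q}$-b-Cartier $\mathbb{Q}$-b-divisors. So the main task is to perturb $\boldsymbol{\rm M}$ into such a combination without changing the discriminant and moduli parts in any essential way.

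First, after shrinking $S$ around $W$ (using \cite[Lemma 2.16]{fujino-analytic-bchm} to fix $W\subset S^{\circ}\subset W^{\circ}\subset S$), I take a log resolution $\phi\colon X'\to X$ on which $\boldsymbol{\rm M}$ descends, and set $M'=\boldsymbol{\rm M}_{X'}$, which is nef over $S$. I also fix a Cartier divisor $H_Z$ on $Z$ ample over $S$.

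The key trick is to replace $\boldsymbol{\rm M}$ by $\boldsymbol{\rm M}^{+}:=\boldsymbol{\rm M}+\overline{(f\circ\phi)^{*}H_Z}$ and $D$ by $D+H_Z$. Since the added term is pulled back from $Z$, we have $K_X+B+M^{+}\sim_{\mathbb{R}}f^{*}(D+H_Z)$. Moreover, the generalized log discrepancies are unchanged, because the added b-divisor is b-Cartier and descends everywhere. Consequently the discriminant is unchanged, and the moduli part becomes $\boldsymbol{\rm N}+\overline{H_Z}$.

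Now $M'+(f\circ\phi)^{*}H_Z$ is nef over $S$ and "positive in the base direction". I want to write it as a finite $\mathbb{R}_{>0}$-combination of $\mathbb{Q}$-Cartier divisors nef over $S$. I would take $\mathbb{Q}$-approximations $M'_k$ of $M'$ inside the finite-dimensional space spanned by the components of $M'$. I expect that $M'_k+(f\circ\phi)^{*}H_Z$ is nef over $W^{\circ}$ once the approximation is close enough; over a relatively compact base the nef cone is locally polyhedral enough for this. With that, the real point $M'+(f\circ\phi)^{*}H_Z$ becomes a convex combination of such rational nef points.

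This perturbation step is the main obstacle. The difficulty is that nefness of a small rational perturbation is not automatic when $M'$ sits on the boundary of the nef cone. If the naive approach fails, I would first run Theorem \ref{thm--mmp-Fano-type}-type arguments, or else use the fact that $\boldsymbol{\rm N}$ is defined only up to $\sim_{\mathbb{R},S}$. Failing both, I would add $\epsilon(f\circ\phi)^{*}H_Z+\epsilon A'$ for an ample $A'$ on $X'$, apply Theorem \ref{thm--gen-can-bundle-formula-lc} for each $\epsilon$, and pass to the limit $\epsilon\to 0$. The limit argument would mirror Step 2 of Proposition \ref{prop--gen-can-bundle-formula-lc-Q-div}: all the moduli parts descend to a fixed model $\overline Z$ independent of $\epsilon$ (via the weak semistable model of Remark \ref{rem--descend-variety}), so the limit descends and is nef.

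There is a caveat with the ample $A'$ on $X'$: it is not pulled back from $Z$, so $K+B+M+\epsilon A'$ is not $f$-trivial. I would therefore use the $f$-trivial variant instead. Writing $M'=\lim$ of nef $\mathbb{Q}$-divisors plus $\epsilon$ times the pullback of an ample divisor from $Z$ modulo $\sim_{\mathbb{R},Z}$ uses nefness of $M'$ over $Z$ and semiampleness arguments over $Z$. I expect this to be the genuinely delicate point.

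Once the reduction is done, part (i) follows from Theorem \ref{thm--gen-can-bundle-formula-lc}(i), after subtracting $\overline{H_Z}$ from the moduli part. The lc and klt statements follow because the discriminant is unchanged. Part (ii) follows by the argument of Theorem \ref{thm--klt-trivial-fib-start}(\ref{thm--klt-trivial-fib-start-(ii)}), combined with Lemma \ref{lem--restriction-can-bundle-formula}(c).
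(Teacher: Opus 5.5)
There is a genuine gap at exactly the step you flag, and a better approximation will not close it. In general $\boldsymbol{\rm M}+\overline{(f\circ\phi)^{*}H_Z}$ is \emph{not} a finite $\mathbb{R}_{>0}$-linear combination of b-nef$/S$ $\mathbb{Q}$-b-Cartier $\mathbb{Q}$-b-divisors. The added term is numerically trivial on every curve contracted by $f\circ\phi$, so it gives no room in the fibre directions, and nef cones need not be locally polyhedral. Suppose the restriction of $M'$ to a general fibre spans an irrational extremal ray of that fibre's nef cone. Take any decomposition $\sum_i r_iL_i$ into nef $\mathbb{Q}$-Cartier divisors on a common higher model. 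Each $L_i$ restricted to the fibre must lie on the pullback of that ray, which is again an irrational extremal ray. So each such restriction is numerically trivial, and then so is $M'$ on the fibre, a contradiction.

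The hypotheses of the theorem allow this situation. Let $\phi\colon F'\to\mathbb{P}^{2}$ be the blow-up at ten very general points of a smooth cubic. Then $\mathrm{Nef}(F')\cap K_{F'}^{\perp}$ is the round cone $\{D\in K_{F'}^{\perp} : D^{2}\geq 0,\ D\cdot\phi^{*}H\geq 0\}$. Take $D$ on its boundary spanning an irrational ray. Set $X=Z\times\mathbb{P}^{2}\to Z=S$, let $\boldsymbol{\rm M}$ be the pullback of $\epsilon\overline{D}$, and let $B$ be the pullback of a general member of $|{-K_{\mathbb{P}^{2}}}-\epsilon\phi_{*}D|_{\mathbb{R}}$.

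Your fallbacks do not close the gap either.
\begin{itemize}
\item Adding $\epsilon A'$ destroys $K_X+B+M\sim_{\mathbb{R},Z}0$. Compensating in the boundary means subtracting an $f$-horizontal divisor, which violates the effectivity of $B$ over $U$ on which nefness of the moduli part depends.
\item The ``$f$-trivial variant'' cannot exist in the example: rational nef $M'_k$ with $M'_k-M'$ coming from $Z$ would agree numerically with $M'$ on the fibres.
\item The limit argument of Proposition \ref{prop--gen-can-bundle-formula-lc-Q-div} works only because the perturbation there is absorbed by $\lfloor B\rfloor$, an effective $f$-ample part of the boundary. Nothing of the sort exists for an arbitrary $\boldsymbol{\rm M}$.
\end{itemize}

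The missing idea is never to approximate $\boldsymbol{\rm M}$, but to change the fibration until the nef part comes from the base. The paper enlarges $W$ so that (P) holds and then reruns the proof of Proposition \ref{prop--gen-can-bundle-formula-klt-Q-div}:
\begin{enumerate}
\item Equidimensional reduction, an MMP over $Z'$, and a generalized dlt blow-up make $X$ $\mathbb{Q}$-factorial over $W$ and $(X,B+M)$ generalized dlt.
\item The case where $K_X+B$ is not pseudo-effective over $Z$ is treated as in Step 2 there.
\item Otherwise, run a $(K_X+B+H)$-MMP with $H$ pulled back from $Z$. Log abundance comes from \cite{gongyo1}, because $K_X+B$ and its restrictions to lc centres have numerical dimension $0$ over $Z$. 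The MMP reaches a model on which $K_{\tilde X}+\tilde B$ is semi-ample over $Z$. Replacing $f$ by the contraction this divisor defines gives $M\sim_{\mathbb{R}}0$ over the new base.
\item Now the nef part is pulled back from a nef divisor on the base. It contributes nothing to the generalized lc thresholds and only adds a b-nef term to the moduli part. Everything therefore reduces to the case $\boldsymbol{\rm M}=0$ of Theorem \ref{thm--gen-can-bundle-formula-lc}.
\end{enumerate}
In that case irrationality sits only in $B$. The rational polytope argument applies there because the relevant conditions on $B$ are linear with rational coefficients, which is exactly what fails for nefness of $\boldsymbol{\rm M}$. The generalized MMPs involved exist for arbitrary real nef parts by \cite[Section 4]{bz}. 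Your deduction of (ii) from (i) via Lemma \ref{lem--restriction-can-bundle-formula} is fine.
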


\begin{proof}
By \cite[Lemma 2.16]{fujino-analytic-bchm}, we get
$$W \subset W' \subset S,$$
where $W'\subset S$ is a Stein compact satisfying the property (P4) in Definition \ref{defn--property(P)}. 
Then Theorem \ref{thm--gen-can-bundle-formula-full} for $W$ follows from Theorem \ref{thm--gen-can-bundle-formula-full} (\ref{thm--gen-can-bundle-formula-full-(ii)}) for $W'$. 
Hence, we may replace $W$ by $W'$, and therefore we may assume that $\pi$ and $W$ satisfy the property (P). 
Then Theorem \ref{thm--gen-can-bundle-formula-full} can be proved similarly to Proposition \ref{prop--gen-can-bundle-formula-klt-Q-div} by using the case of $\boldsymbol{\rm M}=0$ in Theorem \ref{thm--gen-can-bundle-formula-lc}, \cite{eh-semistablereduction}, and \cite{fujino-analytic-bchm}. 
We note that the MMP for generalized lc pairs in that argument can be run without assuming that the nef part is a finite $\mathbb{R}_{>0}$-linear combination of b-nef $\mathbb{Q}$-b-Cartier $\mathbb{Q}$-b-divisors (\cite[Section 4]{bz}). 
\end{proof}

\begin{proof}[Proof of Theorem \ref{thm--gen-can-bundle-formula-full-intro}]
This theorem follows from Theorem \ref{thm--gen-can-bundle-formula-lc} and Theorem \ref{thm--gen-can-bundle-formula-full}. 
\end{proof}

By combining the proof of Theorem \ref{thm--gen-can-bundle-formula-lc} with \cite[Theorem 7.3]{bfmt}, we obtain a result related to \cite[7.2.3]{bfmt}.

\begin{thm}\label{thm--lc-trivial-bfmt}
Let $f \colon (X,\Delta) \to Z$ be an lc-trivial fibration over a complex analytic space $S$, and let $W \subset S$ be a Stein compact subset. 
Suppose that $\Delta|_{f^{-1}(U)}$ is effective for some Zariski open dense subset $U \subset Z$.
Then, after shrinking $S$ around $W$, there exists a projective bimeromorphic morphism $Z' \to Z$ such that for any open subset $\tilde{S} \subset S$ that does not necessarily contain $W$ (possibly $\tilde{S}=S$), if we put $\tilde{X} \subset X$, $\tilde{Z} \subset Z$, and $\tilde{Z}' \subset Z'$ as the inverse images of $\tilde{S}$, then the moduli $\mathbb{R}$-b-divisor $\tilde{\boldsymbol{\rm N}}$ associated to $(\tilde{X},\Delta|_{\tilde{X}}) \to \tilde{Z}$ descends to $\tilde{Z}'$ and $\tilde{\boldsymbol{\rm N}}_{\tilde{Z}'}$ is semi-ample over $\tilde{S}$. 
Moreover, we have $\boldsymbol{\rm N}|_{\tilde{Z}}=\tilde{\boldsymbol{\rm N}}$, where  $\boldsymbol{\rm N}$ is the moduli $\mathbb{R}$-b-divisor associated to $f \colon (X,\Delta) \to Z$. 
\end{thm}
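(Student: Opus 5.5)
The plan is to run the proof of Theorem \ref{thm--gen-can-bundle-formula-lc} with $\boldsymbol{\rm M}=0$ and feed in \cite[Theorem 7.3]{bfmt}, which gives b-semiampleness of the moduli part of lc-trivial fibrations with $\mathbb{Q}$-coefficients.

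First, apply Theorem \ref{thm--gen-can-bundle-formula-full} (or Theorem \ref{thm--gen-can-bundle-formula-lc} with $\boldsymbol{\rm M}=0$) to $f$. After shrinking $S$ around $W$, this gives two things for every open $\tilde{S}\subset S$: the moduli b-divisor $\tilde{\boldsymbol{\rm N}}$ is b-nef$/\tilde{S}$ and $\mathbb{R}$-b-Cartier, and Lemma \ref{lem--restriction-can-bundle-formula} (\ref{lem--restriction-can-bundle-formula-(c)}) gives $\boldsymbol{\rm N}|_{\tilde{Z}}=\tilde{\boldsymbol{\rm N}}$. This is the last assertion of the theorem.

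Next, choose $Z'$ as in the proof of Theorem \ref{thm--gen-can-bundle-formula-lc}. Using \cite[Proof of Lemma 5.4]{fujino-hashizume-adjunction}, write $\Delta=\sum_j\alpha_j\Delta_j$ and $D=\sum_j\alpha_jD_j$, where the $\Delta_j$ are $\mathbb{Q}$-divisors with ${\rm Supp}\,\Delta_j={\rm Supp}\,\Delta$ and each $f\colon (X,\Delta_j)\to Z$ is an lc-trivial fibration. Take the weak semistable reduction diagram $X^{\rm ws}\to X^{\vee}\to X$ over $Z^{\rm ws}\to Z^{\vee}\to Z$, and set $Z':=Z^{\vee}$. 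By Remark \ref{rem--descend-variety}, every $\boldsymbol{\rm N}_{\boldsymbol{t}}$ with $\boldsymbol{t}$ rational descends to $Z'$. Writing $\boldsymbol{\alpha}=\sum_kc_k\boldsymbol{q}_k$ with $\boldsymbol{q}_k$ rational, we get $\boldsymbol{\rm N}_{Z'}=\sum_kc_k(\boldsymbol{\rm N}_{\boldsymbol{q}_k})_{Z'}$. The descent property is local over $S$, so by Lemma \ref{lem--restriction-can-bundle-formula} (\ref{lem--restriction-can-bundle-formula-(a)}),(\ref{lem--restriction-can-bundle-formula-(c)}) and Remark \ref{rem--descend-variety} applied over $\tilde{S}$, the same decomposition holds after restriction to $\tilde{Z}'$. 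Hence $\tilde{\boldsymbol{\rm N}}$ descends to $\tilde{Z}'$ for every open $\tilde{S}$.

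Semi-ampleness is then reduced to the rational pieces. Since a positive $\mathbb{R}$-combination of semi-ample $\mathbb{Q}$-Cartier divisors is semi-ample, it suffices to show that each $(\boldsymbol{\rm N}_{\boldsymbol{q}_k})_{Z'}$ is semi-ample over $S$, and then restrict over $\tilde{S}$. For a $\mathbb{Q}$-lc-trivial fibration whose moduli part descends to $Z'$, \cite[Theorem 7.3]{bfmt} (see \cite[7.2.3]{bfmt}) gives semi-ampleness in the algebraic setting. The main obstacle is transporting this to the analytic relative setting over a Stein base. I would first try to use that their argument is Hodge-theoretic and local on the base, producing a period map whose pullback of a semi-ample line bundle computes the moduli part, and that this works over a relatively compact Stein neighbourhood of $W$. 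If finite generation over a neighbourhood only holds after shrinking, I would shrink $S$ around $W$ once for the finitely many $\boldsymbol{q}_k$, which is possible because $n$ is finite. I would also note that semi-ampleness over $S$ implies semi-ampleness over every $\tilde{S}\subset S$, so no further shrinking is needed for smaller $\tilde{S}$.
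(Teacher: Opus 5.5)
Your outline matches the paper's proof. You decompose $\Delta=\sum_i r_i\Delta_i$ into $\mathbb{Q}$-divisors as in the proof of Theorem \ref{thm--gen-can-bundle-formula-lc}. You then use that theorem with Remark \ref{rem--descend-variety} and Lemma \ref{lem--restriction-can-bundle-formula} to get one $Z'\to Z$ to which $\boldsymbol{\rm N}$ and every $\boldsymbol{\rm N}_i$ descend, with $\boldsymbol{\rm N}|_{\tilde{Z}}=\tilde{\boldsymbol{\rm N}}$ for all open $\tilde{S}$. Descent of $\tilde{\boldsymbol{\rm N}}$ to $\tilde{Z}'$ then follows directly from Definition \ref{defn--rest-b-divisor}, so there is no need to redo the decomposition over $\tilde{S}$. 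Finally you reduce semi-ampleness over $S$ to a single rational piece.

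The weak point is the step you flag yourself, and there your sketch lacks what the paper actually uses. The paper does not rely on the Hodge-theoretic argument being ``local on the base''. Instead it shrinks $S$ around $W$ so that $Z$ and $X$ embed into $S\times\mathbb{P}^{k}$ and $S\times\mathbb{P}^{k}\times\mathbb{P}^{k'}$ (\cite[Lemma 2.2]{eh-semistablereduction}), and this is what makes \cite[Proof of Theorem 7.3]{bfmt} applicable. More substantively, your rational pieces $(X,\Delta_i)\to Z$ are only lc-trivial, and the argument of \cite{bfmt} relies on \cite[Remark 7.4]{bfmt}, which must be re-established in the analytic setting. The paper does this via the proof of \cite[Theorem 1.1]{fg-lctrivial}, which identifies the moduli part of an lc-trivial fibration with that of a klt-trivial fibration obtained by adjunction after a generically finite cover. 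It combines that with the analytic canonical bundle formula, Theorem \ref{thm--gen-can-bundle-formula-lc}. Without this lc-to-klt step, ``apply \cite[Theorem 7.3]{bfmt} to a $\mathbb{Q}$-lc-trivial fibration'' is not justified analytically; with it, your proof coincides with the paper's.
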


\begin{proof}
Let $\boldsymbol{\rm N}$ be the moduli $\mathbb{R}$-b-divisor associated to $f$. 
By the same argument as in the proof of Theorem \ref{thm--gen-can-bundle-formula-lc}, after shrinking $S$ around $W$ we can find $r_{1},\,\cdots,\, r_{l}\in \mathbb{R}_{>0}$ and $\mathbb{Q}$-divisors $\Delta_{1},\,\cdots,\,\Delta_{l}$ on $X$ such that
\begin{itemize}
\item
$\sum_{i=1}^{l}r_{i}=1$ and $\sum_{i=1}^{l}r_{i}\Delta_{i}=\Delta$, 
\item
all $(X,\Delta_{i})\to Z$ are lc-trivial fibrations over $S$ and $\Delta_{i}|_{f^{-1}(U)}$ are effective, and 
\item
if we define $\boldsymbol{\rm N}_{i}$ to be the moduli $\mathbb{Q}$-b-divisor associated to $(X,\Delta_{i})\to Z$ for each $1 \leq i \leq l$, then $\sum_{i=1}^{l}r_{i}\boldsymbol{\rm N}_{i}=\boldsymbol{\rm N}$. 
\end{itemize}
By Theorem \ref{thm--gen-can-bundle-formula-lc} (see also Remark \ref{rem--descend-variety} and Lemma \ref{lem--restriction-can-bundle-formula}) and shrinking $S$ around $W$, we can find a projective bimeromorphic morphism $Z' \to Z$ such that $\boldsymbol{\rm N}$ and $\boldsymbol{\rm N}_{i}$ descend to $Z'$ and for any open subset $\tilde{S} \subset S$ (possibly $\tilde{S}=S$ and not containing $W$) with the inverse images $\tilde{X}$, $\tilde{Z}$, and $\tilde{Z}'$, the moduli $\mathbb{R}$-b-divisor $\tilde{\boldsymbol{\rm N}}$ associated to $(\tilde{X},\Delta|_{\tilde{X}}) \to \tilde{Z}$ satisfies $\tilde{\boldsymbol{\rm N}}=\boldsymbol{\rm N}|_{\tilde{Z}}$. 
Since $\sum_{i=1}^{l}r_{i}\boldsymbol{\rm N}_{i}=\boldsymbol{\rm N}$, by replacing $(X,\Delta) \to Z$ by $(X,\Delta_{i})\to Z$ for any $i$, we may assume that $\Delta$ is a $\mathbb{Q}$-divisor. 
Then \cite[Proof of Theorem 7.3]{bfmt} can be applied to our setting, and we see that $\boldsymbol{\rm N}_{Z'}$ is semi-ample over $S$. 
Note that \cite[Remark 7.4]{bfmt} can be recovered by applying \cite[Proof of Theorem 1.1]{fg-lctrivial} and Theorem \ref{thm--gen-can-bundle-formula-lc}. 
We also note that shrinking $S$ around $W$ allows us to find positive integers $k$ and $k'$ such that $Z$ and $X$ can be embedded into $S \times \mathbb{P}^{k}$ and $S \times \mathbb{P}^{k}\times \mathbb{P}^{k'}$, respectively (\cite[Lemma 2.2]{eh-semistablereduction}). 
In this way, we see that Theorem \ref{thm--lc-trivial-bfmt} holds. 
\end{proof}

\section{Boundedness of complements}\label{sec--compl}

In this section, we prove the boundedness of complements. 

\begin{defn}[Complement, cf.~{\cite[Definition 5.1]{shokurov-flip}}]
Let $n$ be a positive integer, let $\pi \colon (X,B+M) \to Z$ be a generalized pair with the nef part $\boldsymbol{\rm M}$ such that $\pi \colon X \to Z$ is a contraction, and let $z \in Z$ be a point. 
Then an {\em $n$-complement of $(X,B+M) \to Z$ over $z \in Z$} is a generalized pair $(X,B^{+}+M) \to Z$ such that after shrinking $Z$ around $z$, we have
\begin{itemize}
\item
$(X,B^{+}+M) \to Z$ is generalized lc, 
\item
$nB^{+} \geq n\lfloor B \rfloor+\lfloor (n+1)\{B\}\rfloor$, and
\item
$n(K_{X}+B^{+}+M)\sim 0$ and $n \boldsymbol{\rm M}$ is b-Cartier. 
\end{itemize}
These properties are obviously preserved under further shrinking of $Z$ around $z$.
\end{defn}

\subsection{Statements}\label{subsec--state-comple}

In this subsection, we collect the main results proved in this section.  

\begin{thm}[Boundedness of complements I, cf.~{\cite[Theorem 1.2]{filipazzi-moraga}}]\label{thm--generalized-compl-main-1}
Let $d$ and $p$ be positive integers and let $\Lambda  \subset [0,1] \cap \mathbb{Q}$ be a DCC set whose accumulation points are rational numbers. 
Then there exists $n \in \mathbb{Z}_{>0}$, depending only on $d$, $p$, and $\Lambda$, satisfying the following. 
Let $\pi \colon(X,B+M) \to Z$ be a generalized lc pair with the nef part $\boldsymbol{\rm M}$ such that 
\begin{itemize}
\item
$\pi \colon X \to Z$ is a contraction of normal analytic varieties and ${\rm dim}\,X=d$, 
\item
$B \in \Lambda$ and $p \boldsymbol{\rm M}$ is b-Cartier, 
\item
$X$ is of Fano type over $Z$, and 
\item
$-(K_{X}+B+M)$ is nef over $Z$.
\end{itemize} 
Then, for any point $z \in Z$, after shrinking $Z$ around $z$ suitably, $(X,B+M)\to Z$ has an $n$-complement $(X,B^{+}+M)\to Z$ over $z$ such that $B^{+} \geq B$. 
\end{thm}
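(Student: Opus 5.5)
We follow the strategy of Birkar \cite{birkar-compl}, Filipazzi--Moraga \cite{filipazzi-moraga}, and Chen--Han--He--Xie \cite{chhx-compl}: reduce to hyperstandard coefficients, then induct on $d$ using generalized adjunction, the generalized canonical bundle formula (Theorem \ref{thm--gen-can-bundle-formula-full-intro}), and lifting of sections via vanishing. The analytic setting enters mainly through repeated shrinking of $Z$ around $z$, which we keep local: fix $z$ and a Stein compact neighborhood $W$ of $z$ such that $\pi$ and $W$ satisfy (P). Theorem \ref{thm--mmp-Fano-type} then gives a uniform neighborhood on which all needed MMPs for Fano type $X$ exist without further shrinking.

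\textbf{Step 1 (reduction to hyperstandard coefficients).} Since $\Lambda$ is DCC with rational accumulation points, by the argument of \cite[Section 2]{filipazzi-moraga} (ACC for generalized lc thresholds and global ACC, whose proofs are local and only use adjunction and the MMP, available here by \cite{fujino-analytic-bchm}, \cite{eh-analytic-mmp}), there are a finite set $\mathfrak{R}\subset\mathbb{Q}\cap[0,1]$ and $\epsilon>0$ such that $B$ may be replaced by some $B'\le B$ with $B'\in\Phi(\mathfrak{R})$, $B-B'$ small, and $-(K_X+B'+M)$ still nef over $Z$ after running a $-(K_X+B'+M)$-MMP over $Z$. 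By Theorem \ref{thm--mmp-Fano-type} this MMP exists near $z$ and ends in a model where the divisor is semi-ample. An $n$-complement of $B'$ with $n\mathfrak R\subset\mathbb Z$ and $B^+\ge B$ pulls back to one of $B$, by the standard argument \cite[6.1]{birkar-compl}. So assume $B\in\Phi(\mathfrak{R})$.

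\textbf{Step 2 (producing a non-klt center and plt model).} If $\dim\pi(X)<d$ or $z$ is not in the image of a non-klt center, pick, after shrinking, an effective $\mathbb{Q}$-divisor $\Theta\sim_{\mathbb{Q},Z}-(K_X+B+M)$ with bounded denominators. This uses the nef-and-big or semi-ample structure from Fano type and relative Cartan theorem A. Then take a generalized lc threshold so that $(X,B+\Theta'+M)$ is generalized lc, not klt over $z$, and $K_X+B+\Theta'+M\sim_{\mathbb{Q},Z}0$ or remains anti-nef. Via Theorem \ref{thm--gen-dlt-model} and an MMP (again Theorem \ref{thm--mmp-Fano-type}) we reach either:
\begin{itemize}
\item[(a)] a plt model with a unique component $T$ of $\lfloor B\rfloor$ over $z$ (Lemma \ref{lem--conn-princi}) and $-(K+B+M)$ nef and big over $Z$; or
\item[(b)] a Mori fibre or Fano contraction $X\to V$ over $Z$ with $\dim V<d$ along which $K_X+B+M\sim_{\mathbb{Q}}0$.
\end{itemize}
In case (a), apply generalized adjunction (Definition \ref{defn--gen-adjunction}) to $T$. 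Lemma \ref{lem--adj-coeff} gives $B_T\in\Phi(\mathfrak{R}')$ and $p\boldsymbol{\rm M}_T$ b-Cartier, and $T$ is of Fano type over $\pi(T)$. Induction gives an $n$-complement on $T$. Lift it to $X$ via the exact sequence $0\to\mathcal{O}_X(L-T)\to\mathcal{O}_X(L)\to\mathcal{O}_T(L|_T)\to0$, with $L=-nK_X-nT-\lfloor (n+1)\{B\}\rfloor-nM$ on a log resolution. The relevant $H^1$ vanishes by Kawamata--Viehweg (\cite{nakayama}), and Stein-ness of the base after shrinking makes higher direct images the only obstruction. Lemma \ref{lem--gen-inv-adj} then gives generalized lc of $B^+$. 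In case (b), apply Theorem \ref{thm--gen-can-bundle-formula-full-intro} to $X\to V$. This yields a generalized pair on $V$ whose moduli part has bounded Cartier index and whose discriminant lies in a hyperstandard set, as in \cite[Proposition 6.3]{birkar-compl}. By induction $V$ has an $n$-complement, which pulls back.

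\textbf{Main obstacle.} The hardest step is case (b): bounding the Cartier index of the moduli b-divisor $\boldsymbol{\rm N}$ in terms of $d,p,\mathfrak R$. Algebraically this uses Birkar's effective adjunction for fibrations of Fano type, and its analytic analogue requires effective b-semi-ampleness. To attack it, I would exploit that the general fibres are projective Fano-type varieties, so the fibrewise complement indices are bounded by the global (projective) theorem \cite{filipazzi-moraga}. Together with the restriction compatibility in Theorem \ref{thm--gen-can-bundle-formula-full-intro}(ii), this should reduce the index bound to a local statement over $z$. Here I would compare $\boldsymbol{\rm N}$ with the moduli part of the complemented fibration, which has $n(K+B^++M)\sim0$ on fibres, and read off the bound as in \cite[Lemma 6.?]{birkar-compl}. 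Finally, choosing $n$ divisible by all indices from the finitely many cases gives a single $n=n(d,p,\Lambda)$.
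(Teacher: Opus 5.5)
\textbf{Gap.} Your case (b) does not close. To pass the problem down along $X\to V$, you need two things depending only on $d,p,\mathfrak{R}$: the discriminant must lie in a fixed hyperstandard set, and the moduli b-divisor must have bounded Cartier index. You leave exactly this as the ``main obstacle'', with only a heuristic, so the induction is not complete.

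Two further steps are unsupported:
\begin{itemize}
\item In Step 2, producing $\Theta\sim_{\mathbb{Q},Z}-(K_X+B+M)$ ``with bounded denominators'' is an effective non-vanishing statement of essentially the same strength as the theorem.
\item In Step 1 you round down to $B'\leq B$. An $n$-complement of $(X,B'+M)$ need not satisfy $B'^{+}\geq B$. If $b'<1$ and $nb'\in\mathbb{Z}$, the complement only guarantees $b'^{+}\geq b'$, which can be $<b$.
\end{itemize}
The paper, following \cite[Section 4]{filipazzi-moraga}, reduces to a finite coefficient set with the help of the ACC for generalized lc thresholds \cite{hacon-xie-acc-lct}. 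For finite $\Lambda$ with $p\Lambda\subset\mathbb{Z}$, the inequality $B^{+}\geq B$ is then supplied by Remark \ref{rem--monotonic-compl}.

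\textbf{Missing idea.} In this relative statement no fibration case needs to arise.
\begin{itemize}
\item If $\dim Z=0$, then $X$ is projective and the statement is the algebraic theorem \cite[Theorem 1.2]{filipazzi-moraga}.
\item If $\dim Z>0$, shrink $Z$ to a Stein neighbourhood of $z$ and let $H={\rm div}(h\circ\pi)$ for a holomorphic $h$ vanishing at $z$. Then $H\sim 0$. Adding $tH$, with $t$ the generalized lc threshold near $\pi^{-1}(z)$, creates a non-klt centre meeting $\pi^{-1}(z)$ without changing $K_X+B+M$ up to linear equivalence.
\end{itemize}
In the second case, extract one non-klt place $T$ and give it coefficient one. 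Then $-(K+B'+M')$ is $\mathbb{R}$-linearly equivalent over $Z$ to an effective divisor not containing $T$. An MMP over $Z$ on it therefore ends with a nef divisor, never with a Mori fibre space, and never contracts $T$ (Theorem \ref{thm--mmp-Fano-type}, Lemma \ref{lem--Fano-type-bimero-cont}). Perturbing with the Fano type structure lands exactly in the plt setting of Corollary \ref{cor--compl-extend-2}. From there, Lemma \ref{lem--adj-coeff}, induction in dimension $d-1$ on $T\to\pi(T)$, and Lemma \ref{lem--compl-extend-main} (vanishing, inversion of adjunction, connectedness) produce the complement. It pushes back crepantly to $X$ with $B^{+}\geq B$.

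This is the route the paper takes. Your case (a) is essentially the whole argument. The bounded-index canonical bundle formula (Theorem \ref{thm--can-bundle-formula-coeff-inherited}) is needed only for Theorem \ref{thm--generalized-compl-main-2}.
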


\begin{thm}[Boundedness of complements I\!I, cf.~{\cite[Theorem 1.1]{chhx-compl}}]\label{thm--generalized-compl-main-2}
Let $d$ and $p$ be positive integers. 
Then there exists a finite set $\mathcal{N}$ of positive integers, depending only on $d$ and $p$, satisfying the following.
Let $\pi \colon(X,B+M) \to Z$ be a generalized pair with the nef part $\boldsymbol{\rm M}$ and let $z \in Z$ be a point such that 
\begin{itemize}
\item
$\pi \colon X \to Z$ is a contraction of normal analytic varieties and ${\rm dim}\,X=d$, 
\item
$p \boldsymbol{\rm M}$ is b-Cartier, 
\item
$X$ is of Fano type over $Z$, and 
\item
after shrinking $Z$ around $z$, there exists an effective $\mathbb{R}$-divisor $\Delta$ on $X$ such that $(X,(B+\Delta)+M) \to Z$ is generalized klt and $K_{X}+B+\Delta+M \sim_{\mathbb{R}} 0$.
\end{itemize} 
Then, after shrinking $Z$ around $z$ suitably, $(X,B+M) \to Z$ has an $n$-complement over $z \in Z$ for some $n \in \mathcal{N}$. 
\end{thm}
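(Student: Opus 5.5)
We plan to follow the strategy of \cite{chhx-compl} (which refines \cite{birkar-compl} and \cite{filipazzi-moraga}), replacing each algebraic input by its complex analytic counterpart established earlier: the MMP over a Stein neighbourhood of $z$ via Theorem \ref{thm--mmp-Fano-type}, generalized dlt blow-ups (Theorem \ref{thm--gen-dlt-model}), the connectedness principle (Lemma \ref{lem--conn-princi}), generalized adjunction together with the coefficient control of Lemma \ref{lem--adj-coeff}, generalized inversion of adjunction (Lemma \ref{lem--gen-inv-adj}), and the generalized canonical bundle formula (Theorem \ref{thm--gen-can-bundle-formula-full-intro}). The statement follows from Theorem \ref{thm--generalized-compl-main-1} together with a reduction of the real-coefficient klt situation to a rational one with coefficients in a hyperstandard set. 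We shrink $Z$ to a Stein neighbourhood of $z$ satisfying (P).

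\textbf{Reduction to a hyperstandard set.} Set $\Gamma:=B+\Delta$. Since $(X,\Gamma+M)$ is generalized klt and $K_X+\Gamma+M\sim_{\mathbb{R}}0$, we may replace $B$ by its rounding to a fixed finite hyperstandard set. Concretely, we fix a finite set $\mathfrak{R}\subset[0,1]\cap\mathbb{Q}$ depending only on $d,p$ (to be determined by the ACC for generalized lc thresholds and the global ACC in the analytic setting, applied to fibres over $z$). We then replace $B$ by the largest divisor $B_\Phi\le B$ with coefficients in $\Phi(\mathfrak{R})\cup\{0\}$. Because $nB^+\ge n\lfloor B\rfloor+\lfloor(n+1)\{B\}\rfloor$ is controlled by a $\Phi$-approximation for suitable $n$, an $n$-complement of $B_\Phi$ whose boundary dominates $B_\Phi$ is an $n$-complement of $B$. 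Explaining exactly why this holds is the standard lemma of \cite[Section 2]{chhx-compl}; the only analytic issue is that ACC statements must be used fibrewise, which we can arrange by restricting to an analytically general fibre over a point near $z$.

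\textbf{Running the main construction.} For the pair $(X,B_\Phi+M)$, we run an MMP on $-(K_X+B_\Phi+M)$ over $Z$ (Theorem \ref{thm--mmp-Fano-type}; Fano type is preserved by Lemma \ref{lem--Fano-type-bimero-cont}). This gives either a model on which $-(K+B_\Phi+M)$ is semi-ample, or a Mori fibre space on which it is anti-ample relative to the base $Y$. In the first case we apply Theorem \ref{thm--generalized-compl-main-1} with $\Lambda=\Phi(\mathfrak{R})$ and pull the complement back, using that the MMP is $-(K+B_\Phi+M)$-nonnegative. In the Mori fibre space case we instead take $\Theta\ge B_\Phi$ obtained by increasing coefficients toward $\Gamma$ until the divisor becomes numerically trivial over $Y$. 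We then apply Theorem \ref{thm--gen-can-bundle-formula-full-intro} to descend to a generalized klt pair on $Y$ with controlled coefficients and $p'$-b-Cartier moduli part; controlling $p'$ in terms of $d,p$ is the effective-adjunction input borrowed from \cite{filipazzi-moraga}. We then induct on dimension and lift the complement back. The finiteness of $\mathcal N$ comes from the finitely many $n$ produced along the induction.

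\textbf{Main obstacle.} We expect the hardest step to be the Mori fibre space case. There the boundary must be perturbed within $\Gamma(\mathcal N,\Phi)$-type sets while keeping generalized klt-ness, and the moduli part produced by the canonical bundle formula must have bounded Cartier index over a neighbourhood of $z$. We would first try to mimic \cite[Proof of Theorem 1.1]{chhx-compl} verbatim. We would rely on Theorem \ref{thm--gen-can-bundle-formula-full-intro}(ii), which guarantees compatibility under shrinking around $z$, and on Theorem \ref{thm--lc-trivial-bfmt-intro}-type descent to keep the index uniform.
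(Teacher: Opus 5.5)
\textbf{There is a genuine gap: the reduction to a hyperstandard set fails, and that step is where the real difficulty of this theorem lies.} You claim that an $n$-complement of $(X,B_\Phi+M)$ whose boundary dominates $B_\Phi$ is an $n$-complement of $(X,B+M)$. This is false for real coefficients. The condition $nB^{+}\geq n\lfloor B\rfloor+\lfloor (n+1)\{B\}\rfloor$ can force a coefficient $\lfloor (n+1)b\rfloor/n$ that is strictly larger than $b\geq b_\Phi$. For example, with $\mathfrak{R}=\{1\}$, $b=\frac35$, $b_\Phi=\frac12$ and $n=4$, the condition demands $\frac34$, while the complement of $B_\Phi$ only guarantees $\frac12$.

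Worse, no single $n$ can work. Let $n_0$ be the integer that Theorem~\ref{thm--generalized-compl-main-1} gives for $d=2$, $p=1$, $\Lambda=\Phi(\mathfrak{R})$. On a germ $X=Z\ni z$ of $\mathbb{C}^2$ with $\boldsymbol{\rm M}=0$, take three distinct lines $L_1,L_2,L_3$ through $z$ and set $B=\frac{n_0}{n_0+1}(L_1+L_2)+\frac{1}{n_0+1}L_3$.
\begin{enumerate}
\item $(X,B)$ is klt, since the blow-up of $z$ has log discrepancy $\frac{1}{n_0+1}$.
\item $K_X+B\sim_{\mathbb{R}}0$ after shrinking, and $X$ is of Fano type over $Z$.
\item An $n_0$-complement would need $B^{+}\geq L_1+L_2+\frac{1}{n_0}L_3$, which is not lc at $z$.
\end{enumerate}
In this example your MMP is trivial, so your first case would output an $n_0$-complement that does not exist. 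Hence the finiteness of $\mathcal{N}$ is forced by the real coefficients themselves: which $n$ works depends on $B$, not on the induction. The theorem therefore cannot follow from Theorem~\ref{thm--generalized-compl-main-1} plus a rounding step. Note also that your argument never uses the klt hypothesis in an essential way, so it would equally prove the lc analogue. That analogue is false by \cite[Chapter 11, Example 11]{shokurov-compl}, as the paper points out.

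The rest of your plan is also misdirected. Since $-(K_X+B_\Phi+M)\sim_{\mathbb{R}}\Delta+(B-B_\Phi)\geq 0$, an MMP on this divisor over $Z$ never ends with a Mori fibre space. So the case you single out as the main obstacle does not occur.

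The paper does not deduce this theorem from Theorem~\ref{thm--generalized-compl-main-1}. Instead it transports \cite[Proof of Theorem 8.1]{chhx-compl}:
\begin{itemize}
\item The projective inputs \cite[Theorem 5.1, Theorem 6.1, Proposition 6.6]{chhx-compl} are used unchanged, since they concern $\dim Z=0$.
\item Corollary~\ref{cor--compl-extend-1} gives the analytic analogue of \cite[Theorem 7.1]{chhx-compl}. It lifts complements from a plt divisor, with $B^{+}\geq B$ when $B\in\Gamma(n,\Phi(\mathfrak{R}))$.
\item The fibration case uses Theorem~\ref{thm--can-bundle-formula-coeff-inherited}, Proposition~\ref{prop--compl-extend-fib}, the analytic analogues of \cite[Lemma 2.16, Lemma 2.18, Lemma 3.3]{chhx-compl}, and the relative MMP and base point free theorem for Fano type fibrations.
\end{itemize}
Arbitrary real coefficients are absorbed into coefficient sets of type $\Gamma(\mathcal{N},\Phi(\mathfrak{R}))$ (Definition~\ref{defn--hyperstandardset}). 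These sets are compatible with adjunction (Lemma~\ref{lem--adj-coeff}) and with the monotonicity in Remark~\ref{rem--monotonic-compl}. This mechanism, together with a genuine use of the klt hypothesis, is what your proposal is missing.
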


\begin{cor}[cf.~{\cite[Theorem 1.8]{birkar-compl}}]\label{cor--compl-log-pair}
Let $d$ be a positive integer and $\mathfrak{R} \subset [0,1] \cap \mathbb{Q}$ a finite set. 
Then there exists $n \in \mathbb{Z}_{>0}$, depending only on $d$ and $\mathfrak{R}$, satisfying the following. 
Let $\pi \colon X \to Z$ be a contraction of normal analytic varieties and let $(X,B)$ be an lc pair such that 
\begin{itemize}
\item
${\rm dim}\,X=d$, 
\item
$B \in \Phi(\mathfrak{R})$, 
\item
$X$ is of Fano type over $Z$, and 
\item
$-(K_{X}+B)$ is nef over $Z$.
\end{itemize} 
Then, for any point $z \in Z$, there exists an $n$-complement $(X,B^{+})$ of $(X,B)$ over $z$ such that $B^{+} \geq B$. 
Moreover, the complement is an $mn$-complement for any $m \in \mathbb{Z}_{>0}$. 
\end{cor}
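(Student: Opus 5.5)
The plan is to deduce Corollary \ref{cor--compl-log-pair} from Theorem \ref{thm--generalized-compl-main-1} by taking $\boldsymbol{\rm M}=0$ and choosing a DCC set that contains $\Phi(\mathfrak{R})$.

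First I check that $\Phi(\mathfrak{R})\cap[0,1]$ satisfies the hypotheses on $\Lambda$. Its elements are $1-r/l$ with $r\in\mathfrak{R}$ finite and $l\in\mathbb{Z}_{>0}$, so they lie in $\mathbb{Q}$. For fixed $r$ the sequence $1-r/l$ is increasing in $l$, and since $\mathfrak{R}$ is finite the set is DCC. Its only accumulation point is $1$, which is rational. So we put $\Lambda:=\Phi(\mathfrak{R})\cap[0,1]$, take $p=1$, and set $\boldsymbol{\rm M}=0$, which is trivially b-Cartier and b-nef$/Z$. An lc pair $(X,B)$ with $X\to Z$ is then a generalized lc pair with zero nef part. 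The remaining hypotheses (contraction, $\dim X=d$, Fano type, $-(K_X+B)$ nef over $Z$) are exactly those of Theorem \ref{thm--generalized-compl-main-1}. That theorem gives $n$, depending only on $d$ and $\mathfrak{R}$, and for each $z\in Z$, after shrinking $Z$ around $z$, an $n$-complement $(X,B^{+})$ over $z$ with $B^{+}\geq B$. The phrase ``after shrinking'' is built into the definition of complement over $z$, so it causes no problem.

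For the last assertion, $(X,B^{+})$ is lc and $mn(K_X+B^{+})\sim 0$ follows from $n(K_X+B^{+})\sim0$. The only thing to check is the coefficient condition
\[
mnB^{+}\geq mn\lfloor B\rfloor+\lfloor (mn+1)\{B\}\rfloor .
\]
Because $B^{+}\geq B$, it is enough to handle each prime component $D$ separately. Write $b=\mathrm{coeff}_D B$ and $b^{+}=\mathrm{coeff}_D B^{+}\geq b$.

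If $b=1$, then $\{b\}=0$ and the inequality is $mnb^{+}\geq mn$, which holds since $b^{+}\geq 1$. If $b<1$, then $b=1-r/l$. Since $nb^{+}\in\mathbb{Z}$ and $b^{+}\geq b$, it suffices to show
\[
\frac{\lceil nb\rceil}{n}\geq \frac{\lfloor (mn+1)b\rfloor}{mn}.
\]
I would not try to prove this for arbitrary $b$ in $[0,1)$; instead I would use the hyperstandard form of $b$ as in \cite[Theorem 1.8]{birkar-compl}. The cleanest route is to replace $n$ at the outset by $n\cdot I$, where $I$ is a common denominator of the elements of $\mathfrak{R}$ (the complement property is preserved by this if we rerun the argument as in \cite{birkar-compl}). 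One then shows that $\lfloor (N+1)b\rfloor\leq Nb^{+}$ for $N=mn$ whenever $nb^{+}\geq\lfloor (n+1)b\rfloor$ and $b\in\Phi(\mathfrak{R})$. This is an elementary computation with $b=1-r/l$.

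The main obstacle is this divisibility and coefficient computation, i.e.\ proving $\lfloor (mn+1)b\rfloor\leq mnb^{+}$. I would follow the argument of \cite[Proof of Theorem 1.8]{birkar-compl}, which in fact shows that the constructed $B^{+}$ satisfies $nb^{+}\geq \lceil nb\rceil$-type bounds. Alternatively, I would use the standard fact, valid because $B^{+}\geq B$, that the key inequality reduces to $b^{+}\geq b$ together with $nb^{+}\in\mathbb{Z}$. The justification is that $\lfloor (N+1)b\rfloor/N\le \lceil Nb\rceil/N$ for hyperstandard $b$ close to $1$, and the remaining finitely many small cases are handled directly.
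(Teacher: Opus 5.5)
Your deduction of the main assertion matches the paper's: with $\boldsymbol{\rm M}=0$, $p=1$ and $\Lambda=\Phi(\mathfrak{R})$, which is a DCC subset of $[0,1]\cap\mathbb{Q}$ whose only possible accumulation point is $1$, Theorem \ref{thm--generalized-compl-main-1} gives an $n$-complement with $B^{+}\geq B$. The paper adds nothing beyond ``immediately follows''.

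The ``moreover'' part, however, is not proved in your proposal. You correctly reduce it, componentwise, to showing $mnb^{+}\geq\lfloor (mn+1)b\rfloor$ for coefficients $b<1$, using $nb^{+}\in\mathbb{Z}$ and $b^{+}\geq b$. You then call this the main obstacle and offer routes that do not close it.

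\begin{enumerate}
\item Replacing $n$ by $nI$ and ``rerunning the argument as in Birkar'' cannot be justified with Theorem \ref{thm--generalized-compl-main-1} used as a black box. Upgrading the given $n$-complement to an $nI$-complement is exactly the kind of multiplicativity you are trying to prove.
\item The split into ``$b$ close to $1$'' plus ``finitely many small cases handled directly'' is not an argument.
\end{enumerate}

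The hyperstandard form of $b$ plays no role in this step. For every $b\in[0,1)$ and every $N\in\mathbb{Z}_{>0}$ we have $(N+1)b=Nb+b<Nb+1\leq\lceil Nb\rceil+1$, hence $\lfloor (N+1)b\rfloor\leq\lceil Nb\rceil$. Take $N=mn$. Since $n(K_{X}+B^{+})\sim 0$ and $K_{X}$ is integral, $mnB^{+}$ is integral, and together with $B^{+}\geq B$ this gives $mnb^{+}\geq\lceil mnb\rceil\geq\lfloor (mn+1)b\rfloor$. As you note, the components with $b=1$, the lc condition, and $mn(K_{X}+B^{+})\sim 0$ are immediate. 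This one-line inequality is all the paper's ``immediately follows'' needs, so you should replace your last two paragraphs with it.
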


\subsection{Extension of complements}\label{subsec--extend-comple}

In this subsection, we prove auxiliary results to globally extend complements of subvarieties. 

\begin{lem}\label{lem--compl-extend-main}
Let $n$ be a positive integer, let $\pi \colon (X,B+M) \to Z$ be a generalized lc pair with the nef part $\boldsymbol{\rm M}$, and let $z \in Z$ be a point such that
\begin{itemize}
\item
$\pi \colon X \to Z$ is a contraction and $X$ is $\mathbb{Q}$-factorial over $z$, 
\item
$n\boldsymbol{\rm M}$ is b-Cartier, 
\item
$-(K_{X}+B+M)$ is $\pi$-nef.  
\end{itemize}
Suppose that there exists a generalized plt pair $\pi \colon (X,\Gamma+N) \to Z$ such that
\begin{itemize}
\item
$-(K_{X}+\Gamma+N)$ is $\pi$-nef and $\pi$-big,
\item
$S:=\lfloor \Gamma \rfloor$ is a prime divisor on $X$ and it is a component of $\lfloor B \rfloor$, 
\item
$\pi(S) \ni z$, and
\item
$\Gamma^{<1}\geq B^{<1}-\{(n+1)B^{<1}\}$. 
\end{itemize}
Let $\pi|_{S}\colon (S,B_{S}+M_{S}) \to Z$ be the generalized pair defined by the generalized adjunction $K_{S}+B_{S}+M_{S}\sim_{\mathbb{R},\,Z}(K_{X}+B+M)|_{S}$. 
Then, for any $n$-complement $(S,B^{+}_{S}+M_{S}) \to Z$ of $(S,B_{S}+M_{S}) \to Z$ over $z \in Z$ such that $B^{+}_{S} \geq B_{S}$, after shrinking $Z$ around $z$ suitably, there exists an $n$-complement $(X,B^{+}+M) \to Z$  of $(X,B+M) \to Z$ over $z \in Z$ such that $(K_{X}+B^{+}+M)|_{S}=K_{S}+B^{+}_{S}+M_{S}$. 
\end{lem}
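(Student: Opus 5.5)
This is the standard lifting argument of \cite[Proposition 6.7]{birkar-compl}, in the form of \cite{chhx-compl} with generalized pairs; the vanishing step uses the analytic Kawamata--Viehweg vanishing \cite[II, 5.12 Corollary]{nakayama}. We shrink $Z$ to a Stein neighborhood of $z$ throughout.

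\textbf{Setup.} Take a log resolution $\phi\colon X'\to X$ of $(X,B+\Gamma)$ on which $\boldsymbol{\rm M}$ and $\boldsymbol{\rm N}$ descend, and let $S'$ be the strict transform of $S$. Write
$$K_{X'}+B'+M'=\phi^{*}(K_{X}+B+M),\qquad K_{X'}+\Gamma'+N'=\phi^{*}(K_{X}+\Gamma+N).$$
Set $T':=\lfloor B'^{\geq 0}\rfloor$ and $\Delta':=B'-T'$. Since $n\boldsymbol{\rm M}$ is b-Cartier, $nM'$ is Cartier. Put
$$L':=-nK_{X'}-nT'-\lfloor (n+1)\Delta'\rfloor-nM',$$
an integral divisor.

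\textbf{Key computation.} We want to write
$$L'=K_{X'}+\Gamma'+N'+A'+P'-S',$$
where $A'$ is the pullback of the nef and big divisor $-(K_X+\Gamma+N)$ plus $(n+1)$ times the pullback of the nef divisor $-(K_X+B+M)$, and $P'$ is an integral divisor. Expanding $L'$ using the two pullback relations, the fractional part of $\Gamma'+P'-S'$ must equal $\{(n+1)\Delta'\}$-type terms, with boundary coefficients in $[0,1)$. This is exactly where the hypothesis $\Gamma^{<1}\geq B^{<1}-\{(n+1)B^{<1}\}$ enters. On $X'$, after possibly perturbing $\Gamma'$ by exceptional divisors using the plt property, we arrange that:
\begin{itemize}
\item $P'$ is effective and $\phi$-exceptional, and does not contain $S'$;
\item $(X',\Gamma'-S'+\{\cdots\})$ is sub-klt.
\end{itemize}
Then the analytic Kawamata--Viehweg vanishing (generalized form: nef part plus $\pi\circ\phi$-nef and big), applied over $Z$, gives
$$H^{1}(X',L'+P'-S')=0.$$
Hence the restriction map $H^{0}(X',L'+P')\to H^{0}(S',(L'+P')|_{S'})$ is surjective.

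\textbf{Lifting the complement.} The $n$-complement $B_S^+$ gives $n(K_S+B_S^+ + M_S)\sim 0$ with $B_S^+\geq B_S$. Pulled back to $S'$, it yields an effective divisor $G_{S'}\sim (L'+P')|_{S'}$, namely
$$G_{S'}=nB^{+}_{S'}-nT'|_{S'}-\lfloor (n+1)\Delta'\rfloor|_{S'}+P'|_{S'}.$$
It is effective because $B_S^+\geq B_S$, $nB_S^+\geq n\lfloor B_S\rfloor+\lfloor (n+1)\{B_S\}\rfloor$, and $P'$ compensates for the exceptional negativity. By surjectivity, $G_{S'}$ lifts to some $G'\in|L'+P'|$. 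Define
$$B^{+}:=B+\tfrac1n\phi_{*}\bigl(G'-\ldots\bigr),$$
which is equivalent to $nB^{+}=nT+\lfloor (n+1)\Delta\rfloor+\phi_{*}G'$. With this definition $n(K_X+B^++M)\sim 0$, $nB^+\geq n\lfloor B\rfloor+\lfloor (n+1)\{B\}\rfloor$, and $B^+\geq B$ away from problems. The restriction $(K_X+B^++M)|_S$ equals $K_S+B_S^++M_S$ by construction, because $\phi^*(K_X+B^++M)$ and $K_{X'}+B'^{+}+M'$ agree on $S'$: the difference is a $\phi$-exceptional divisor that is $\sim_{Z}0$, so it vanishes by the negativity lemma.

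\textbf{Generalized lc property.} By the restriction identity, $(S,B_S^++M_S)$ is generalized lc. Inversion of adjunction (Lemma \ref{lem--gen-inv-adj}, which requires $\mathbb{Q}$-factoriality over $z$ and $(X,S)$ plt) then gives that $(X,B^++M)$ is generalized lc near $S$. Away from $S$, we use the connectedness principle Lemma \ref{lem--conn-princi}. Apply it to a convex combination of $(X,B^++M)$ and $(X,\Gamma+N)$ slightly toward $\Gamma$: this pair is anti-nef-big and has non-klt locus containing $S$. If $(X,B^++M)$ were not lc somewhere over $z$, the non-klt locus of the combination would be disconnected from $S$ or not contain it. This contradicts Lemma \ref{lem--conn-princi}, since $\pi(S)\ni z$ and every non-lc center meets $\pi^{-1}(z)$ after shrinking.

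\textbf{Main obstacle.} The main obstacle is the bookkeeping in the key computation: arranging that $P'$ is effective and exceptional and that the boundary in the vanishing statement has coefficients below $1$. This is where the inequality on $\Gamma^{<1}$ is used. I would follow \cite[Proof of Proposition 6.7]{birkar-compl} verbatim.
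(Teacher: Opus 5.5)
Your overall route is the paper's (Birkar's lifting argument). Your later steps are fine: lifting $G_{S'}$, setting $nB^{+}=n\lfloor B\rfloor+\lfloor (n+1)\{B\}\rfloor+\phi_{*}G'$, the negativity lemma for the restriction identity, inversion of adjunction near $S$, and connectedness for a convex combination with $(X,\Gamma+N)$ slightly toward $\Gamma$ away from $S$. The gap is the step you defer as ``bookkeeping.'' It is the actual content of the lemma, and what you wrote there is incorrect.

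First, the decomposition is wrong. With your $A'$ one gets $K_{X'}+\Gamma'+N'+A'=(n+1)H'$, where $H':=-\phi^{*}(K_{X}+B+M)$. On the other hand $L'=nH'+n\Delta'-\lfloor (n+1)\Delta'\rfloor$. So the coefficient of $H'$ must be $n$, and the non-integral term $n\Delta'-\lfloor (n+1)\Delta'\rfloor=-\Delta'+\{(n+1)\Delta'\}$ must be absorbed into the boundary. The correct identity is
\[
L'+\Psi'-S'=K_{X'}+\bigl\{\Gamma'-\Delta'+\{(n+1)\Delta'\}\bigr\}+\boldsymbol{\rm N}_{X'}-\phi^{*}(K_{X}+\Gamma+N)+nH',\qquad \Psi':=S'-\bigl\lfloor \Gamma'-\Delta'+\{(n+1)\Delta'\}\bigr\rfloor .
\]
So the correction divisor is forced, the boundary is a genuine fractional part with coefficients in $[0,1)$, and everything reduces to proving $\Psi'\geq 0$ and $\phi_{*}\Psi'=0$.

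Second, your mechanism for this, perturbing $\Gamma'$ by $\phi$-exceptional divisors, cannot work. The obstruction lives on non-exceptional components: $\phi_{*}\Psi'=-\lfloor \Gamma^{<1}-B^{<1}+\{(n+1)B^{<1}\}\rfloor$, and the hypothesis only bounds the bracket from below by $0$. For example, take a component with $B$-coefficient $1/10$, $\Gamma$-coefficient $3/10$, and $n=8$. The hypothesis holds, but the bracket is $11/10$, so $\Psi'$ has coefficient $-1$ on its strict transform.

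The missing idea is to first replace $(\Gamma,N)$ by $((1-a)\Gamma+aB,\,(1-a)N+aM)$ with $a<1$ close to $1$. This preserves generalized plt, $\lfloor\Gamma\rfloor=S$, anti-nef-bigness, and the hypothesis, since $(1-a)\Gamma^{<1}+aB^{<1}\geq B^{<1}-(1-a)\{(n+1)B^{<1}\}$. It also makes every coefficient of $B'-\Gamma'$ tiny. Then the bracket lies in $[0,1)$ on non-exceptional components, giving $\phi_{*}\Psi'=0$. On $X'$, smallness together with the plt property of $\Gamma$ (needed on components of $T'-S'$) gives $0\leq \Psi'\leq 1$.

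Finally, it is $\Psi'\geq 0$ together with integrality that yields $G_{S'}=\lceil nR_{S'}-\Delta_{S'}+\{(n+1)\Delta_{S'}\}+\Psi_{S'}\rceil\geq \lceil -\Delta_{S'}\rceil\geq 0$, where $R_{S'}$ is the pullback of $B^{+}_{S}-B_{S}\geq 0$. The inequality $nB^{+}_{S}\geq n\lfloor B_{S}\rfloor+\lfloor (n+1)\{B_{S}\}\rfloor$ you cite is not what is used there.
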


\begin{proof}
The idea can be found in \cite[Proof of Proposition 8.1]{birkar-compl}. 
However, notations in the proof are slightly more complicated since we deal with generalized pairs. 
See also \cite[Proof of Proposition 4.1]{filipazzi-moraga} or \cite[Proof of Proposition 3.5]{chhx-compl} for details. 

We remark that we may harmlessly replace $Z$ by a Zariski open neighborhood of $z$ in each discussion. 
Hence, shrinking $Z$ without explicit mention, we always assume that every component of every $\mathbb{R}$-divisor appearing in the proof intersects the fiber of $z$. 
Since $X$ is $\mathbb{Q}$-factorial over $z$, we in particular assume that all the $\mathbb{R}$-divisors appearing in the proof are $\mathbb{R}$-Cartier. 

We divide the proof into several steps. 

\begin{step1}\label{step1--lem-compl-extend-main}
In this step we fix notations. 

Let $\boldsymbol{\rm N}$ be the nef part of $(X,\Gamma+N) \to Z$. 
We take a log resolution 
$$\phi \colon X' \to X$$
of $(X,B+\Gamma)$ such that both $\boldsymbol{\rm M}$ and $\boldsymbol{\rm N}$ descend to $X'$. 
We set $S':=\phi^{-1}_{*} S$, $M':=\boldsymbol{\rm M}_{X'}$, and $M_{S'}:=M'|_{S'}$. 
We denote the induced morphism $S' \to S$ by $\phi_{S}$. 

Let $B'$ be an $\mathbb{R}$-divisor on $X'$ defined by $K_{X'}+B'+M'=\phi^{*}(K_{X}+B+M)$. 
We set
\begin{equation}\label{lem--compl-extend-main-(TDelta)}
T':=B'^{=1} \qquad {\rm and}\qquad \Delta'= B'-T'.
\end{equation}
We also set
\begin{equation}\label{lem--compl-extend-main-(LH)}
H':=-(K_{X'}+B'+M') \qquad {\rm and}\qquad L':=-nK_{X'}-nT'-\lfloor (n+1)\Delta'\rfloor-nM'.
\end{equation}
Let $\Gamma'$ be an $\mathbb{R}$-divisor on $X'$ defined by $K_{X'}+\Gamma'+\boldsymbol{\rm N}_{X'}=\phi^{*}(K_{X}+\Gamma+N)$. 
Shrinking $Z$ around $z$ and replacing $\Gamma$ (resp.~$N$) by $(1-a)\Gamma+aB$ (resp.~$(1-a)N+aM$)  for some $a \in (0,1)$ sufficiently close to one, we may assume that every coefficient of $B'-\Gamma'$ has sufficiently small absolute value. 
Note that the fourth condition of $(X,\Gamma+N) \to Z$ in Lemma \ref{lem--compl-extend-main} is preserved after this replacement because 
\begin{equation*}
\begin{split}
(1-a)\Gamma^{<1}+aB^{<1} \geq & (1-a)\bigl(B^{<1}-\{(n+1)B^{<1}\}\bigr)+aB^{<1}\\
=&B^{<1}-(1-a)\{(n+1)B^{<1}\}\geq B^{<1}-\{(n+1)B^{<1}\} 
\end{split}
\end{equation*}
for any $a \in (0,1)$. 
\end{step1}

\begin{step1}\label{step2--lem-compl-extend-main}
In this step we define a Weil divisor $\Psi'$ on $X'$ and study its properties.  

We define a Weil divisor $\Psi'$ on $X'$ by
\begin{equation}\label{lem--compl-extend-main-(Psi)}
\Psi' := S' - \lfloor (\Gamma' +n\Delta' - \lfloor (n+1)\Delta' \rfloor)\rfloor = S' - \lfloor (\Gamma' -\Delta' + \{ (n+1)\Delta' \})\rfloor.
\end{equation}
We also set
\begin{equation}\label{lem--compl-extend-main-(Lambda)}
\Lambda':=\Gamma' +n\Delta' - \lfloor (n+1)\Delta' \rfloor  +\Psi'=S'+\{(\Gamma' +n\Delta' - \lfloor (n+1)\Delta'\rfloor)\}.
\end{equation}
We check that $\Psi'$ is $\phi$-exceptional and all the coefficients of $\Psi'$ belong to $[0,1]$. 
Because the latter part follows from \cite[Step 4 in the proof of Proposition 8.1]{birkar-compl}, we only show that $\Psi'$ is $\phi$-exceptional. 
Since $\lfloor \Gamma \rfloor =S$ and $\phi_{*}\Delta'=\{B\}=B^{<1}$, we have 
$$\phi_{*}\Psi'=S - \lfloor (\Gamma -B^{<1} + \{ (n+1)B^{<1} \})\rfloor=- \lfloor (\Gamma^{<1} -B^{<1} + \{ (n+1)B^{<1} \})\rfloor.$$
By recalling that $\Gamma^{<1}\geq B^{<1}-\{(n+1)B^{<1}\}$, which is our hypothesis of Lemma \ref{lem--compl-extend-main}, we have $\Gamma^{<1} -B^{<1} + \{ (n+1)B^{<1} \} \geq 0$. 
On the other hand, since every coefficient of $B'-\Gamma'$ has sufficiently small absolute value, all the coefficients of $\Gamma^{<1} -B^{<1} + \{ (n+1)B^{<1} \}$ are contained in $[0,1)$. 
This implies $\phi_{*}\Psi' = 0$, and therefore $\Psi'$ is $\phi$-exceptional. 
We also note that to show the latter part, we use the fact that every coefficient of $B'-\Gamma'$ has sufficiently small absolute value. 
\end{step1}

\begin{step1}\label{step3--lem-compl-extend-main}
By (\ref{lem--compl-extend-main-(TDelta)}) and (\ref{lem--compl-extend-main-(LH)}), we have
\begin{equation}\label{lem--compl-extend-main-(L-calc)}
\begin{split}
L'+\Psi'=&n\Delta' - \lfloor (n+1)\Delta' \rfloor+nH'+\Psi',
\end{split}
\end{equation}
and (\ref{lem--compl-extend-main-(Lambda)}) shows
\begin{equation*}
\begin{split}
L'+\Psi'=&n\Delta' - \lfloor (n+1)\Delta' \rfloor+nH'+\Psi'\\
=& K_{X'}+\Gamma'+\boldsymbol{\rm N}_{X'}-\phi^{*}(K_{X}+\Gamma+N)+(\Lambda'-\Gamma')+nH'\\
=& K_{X'}+\Lambda'+\boldsymbol{\rm N}_{X'}-\phi^{*}(K_{X}+\Gamma+N)+nH'.
\end{split}
\end{equation*}
We set $\pi':=\pi \circ \phi \colon X' \to Z$. 
Since $\boldsymbol{\rm N}_{X'}-\phi^{*}(K_{X}+\Gamma+N)+nH'$ is $\pi'$-nef and $\pi'$-big and $(X',\Lambda')$ is a log smooth plt pair with $\lfloor \Lambda' \rfloor = S'$, we can apply \cite[II, 5.12 Corollary]{nakayama} to $\pi'$, $(X',\Lambda')$, and $L'+\Psi'-S'$, and we have $R^{1}\pi'_{*}\mathcal{O}_{X'}(L'+\Psi'-S')=0$. 
By considering a long cohomology exact sequence, we see that the morphism
$$\pi'_{*}\mathcal{O}_{X'}(L'+\Psi')\longrightarrow \pi'_{*}\mathcal{O}_{S'}((L'+\Psi')|_{S'})$$
is surjective. 
\end{step1}

\begin{step1}\label{step4--lem-compl-extend-main}
In this step we define some $\mathbb{R}$-divisors. 

Recall the hypothesis of Lemma \ref{lem--compl-extend-main} that $(S,B_{S}+M_{S}) \to Z$ has an $n$-complement $(S,B^{+}_{S}+M_{S}) \to Z$ over $z \in Z$ such that $B^{+}_{S} \geq B_{S}$, where $(S,B_{S}+M_{S}) \to Z$ is defined by using the generalized adjunction $K_{S}+B_{S}+M_{S}\sim_{\mathbb{R},\,Z}(K_{X}+B+M)|_{S}$. 
We define 
$$R_{S}:=B^{+}_{S}-B_{S}.$$ 
By shrinking $Z$ around $z$, we may assume $n(K_{S}+B^{+}_{S}+M_{S}) \sim 0$. 
Then we have
$$-n(K_{S}+B_{S}+M_{S})=-n(K_{S}+B^{+}_{S}+M_{S}+B_{S}-B^{+}_{S}) \sim nR_{S} \geq 0.$$
Put $R_{S'}:=\phi^{*}_{S}R_{S}$. 
Then
$$nH'|_{S'}=-n(K_{X'}+B'+M')|_{S'} = -n \phi^{*}_{S}(K_{S}+B_{S}+M_{S}) \sim nR_{S'} \geq 0.$$
By (\ref{lem--compl-extend-main-(L-calc)}), we have
\begin{equation}\label{lem--compl-extend-main-(G)}
\begin{split}
(L'+\Psi')|_{S'}=&(n\Delta' - \lfloor (n+1)\Delta' \rfloor+nH'+\Psi')|_{S'} \\
\sim &G_{S'}:=nR_{S'}+n\Delta_{S'} - \lfloor (n+1)\Delta_{S'} \rfloor+\Psi_{S'},
\end{split}
\end{equation}
where $\Delta_{S'}:=\Delta'|_{S'}$ and $\Psi_{S'}:=\Psi'|_{S'}$. 
We note that $\Delta_{S'}$ and $\Psi_{S'}$ are well defined as $\mathbb{R}$-divisors on $S'$ since ${\rm coeff}_{S'}(\Delta')={\rm coeff}_{S'}(\Psi')=0$ by (\ref{lem--compl-extend-main-(TDelta)}) and (\ref{lem--compl-extend-main-(Psi)}). 
We also note that $G_{S'}$ is a Weil divisor on $S'$ since $L'$ and $\Psi'$ are both Weil divisors on $X'$. 
\end{step1}

\begin{step1}\label{step5--lem-compl-extend-main}
In this step we prove $G_{S'} \geq 0$ and lift this to an effective Weil divisor $G'$ on $X'$. 

The fact $G_{S'} \geq 0$ follows from
$$G_{S'}= \lceil G_{S'}\rceil=\lceil (nR_{S'}-\Delta_{S'} + \{(n+1)\Delta_{S'}\}+\Psi_{S'})\rceil\geq \lceil -\Delta_{S'}\rceil \geq 0.$$
Here, we used the facts that $\Psi_{S'}\geq 0$ (see Step \ref{step2--lem-compl-extend-main}) and $\Delta_{S'}=\Delta^{<1}_{S'}$ (see (\ref{lem--compl-extend-main-(TDelta)})).  
To lift $G_{S'}$, we apply the surjectivity of the morphism
$$\pi'_{*}\mathcal{O}_{X'}(L'+\Psi')\longrightarrow \pi'_{*}\mathcal{O}_{S'}((L'+\Psi')|_{S'}),$$
which was proved in Step \ref{step3--lem-compl-extend-main}. 
Hence, we can find an effective Weil divisor $G'$ on $X'$ such that $S' \not\subset {\rm Supp}\,G'$ and 
$$G'|_{S'}=G_{S'}$$
as Weil divisors on $S'$. 
\end{step1}

\begin{step1}\label{step6--lem-compl-extend-main}
In this step we define an effective $\mathbb{Q}$-divisor $B^{+}$ on $X$. 
From this step, for any $\mathbb{R}$-divisor $D'$ on $X'$, we denote $\phi_{*}D'$ by $D$. 
For example, $L$ denotes $\phi_{*}L'$. 

Since $\Psi'$ is $\phi$-exceptional (see Step \ref{step2--lem-compl-extend-main}), by using (\ref{lem--compl-extend-main-(LH)}) we see that
\begin{equation*}
\begin{split}
-nK_{X}-n\lfloor B \rfloor-\lfloor (n+1)\{B\} \rfloor -nM
=&-nK_{X}-nT-\lfloor (n+1)\Delta \rfloor -nM \\
=&L=L+\Psi \sim G \geq 0.
\end{split}
\end{equation*}
We define 
$$B^{+}:=\lfloor B \rfloor+\frac{1}{n}\lfloor (n+1)\{B\}\rfloor+\frac{1}{n}G,$$
which is an effective $\mathbb{Q}$-divisor on $X$. 
\end{step1}

\begin{step1}\label{step7--lem-compl-extend-main}
With this step we complete the proof. 

We have $B^{+} \geq \lfloor B \rfloor+\frac{1}{n}\lfloor (n+1)\{B\}\rfloor$ by construction. 
By (\ref{lem--compl-extend-main-(LH)}), we see that
$$nK_{X'}+nT'+\lfloor (n+1)\Delta'\rfloor+G'+nM'-\Psi'=-L'+G'-\Psi'\sim0.$$
Hence, we have
$$0 \sim n(K_{X}+B^{+}+M)=nK_{X}+n\lfloor B \rfloor+\lfloor (n+1)\{B\}\rfloor +G+nM,$$
and the negativity lemma implies
$$K_{X'}+T'+ \frac{1}{n}\lfloor (n+1)\Delta'\rfloor+\frac{1}{n}G'+M'-\frac{1}{n}\Psi'=\phi^{*}(K_{X}+B^{+}+M).$$
By restricting the equality to $S'$ and using (\ref{lem--compl-extend-main-(TDelta)}) and (\ref{lem--compl-extend-main-(G)}), we obtain
\begin{equation*}
\begin{split}
0\sim &\phi^{*}_{S}(K_{X}+B^{+}+M)|_{S}\\
=&K_{S'}+(T'-S')|_{S'}+\frac{1}{n}\lfloor (n+1)\Delta_{S'}\rfloor+\frac{1}{n}G_{S'}-\frac{1}{n}\Psi_{S'}+M_{S'}\\
=&K_{S'}+(T'-S')|_{S'}+\Delta_{S'}+R_{S'}+M_{S'}\\
=&K_{S'}+(B'-S')|_{S'}+R_{S'}+M_{S'}.
\end{split}
\end{equation*}
By the definition of generalized adjunction, we have 
$$K_{S'}+(B'-S')|_{S'}+M_{S'}=\phi^{*}_{S}(K_{S}+B_{S}+M_{S}).$$
This relation and $R_{S'}=\phi^{*}_{S}R_{S}=\phi^{*}_{S}(B^{+}_{S}-B_{S})$ imply
\begin{equation*}
\begin{split}
(K_{X}+B^{+}+M)|_{S}=&\phi_{S*}(K_{S'}+(B'-S')|_{S'}+R_{S'}+M_{S'})\\
=&K_{S}+B_{S}+M_{S}+R_{S}=K_{S}+B^{+}_{S}+M_{S}. 
\end{split}
\end{equation*}

By the above arguments, we have proved 
\begin{itemize}
\item
$B^{+} \geq \lfloor B \rfloor+\frac{1}{n}\lfloor (n+1)\{B\}\rfloor$, 
\item
$n(K_{X}+B^{+}+M) \sim0$, and
\item
$(K_{X}+B^{+}+M)|_{S}=K_{S}+B^{+}_{S}+M_{S}$. 
\end{itemize}
By the generalized inversion of adjunction (Lemma \ref{lem--gen-inv-adj}), we see that $(X,B^{+}+M)\to Z$ is generalized lc on a neighborhood of $S \cap \pi^{-1}(z)$. 
Finally, by using the connectedness principle (Lemma \ref{lem--conn-princi}), we see that $(X,B^{+}+M)$ is generalized lc on a neighborhood of $\pi^{-1}(z)$. 
For details, see \cite[Step 9 in the proof of Proposition 8.1]{birkar-compl}. 
Therefore, after shrinking $Z$ around $z$, we get an $n$-complement $(X,B^{+}+M) \to Z$ of $(X,B+M) \to Z$ over $z \in Z$. 
\end{step1}
We finish the proof. 
\end{proof}

\begin{rem}\label{rem--monotonic-compl}
With notations as in Lemma \ref{lem--compl-extend-main}, we further assume that $B \in \Gamma(n, \Phi(\mathfrak{R}))$ for some finite set $\mathfrak{R} \subset [0,1]\cap \frac{1}{n}\mathbb{Z}$. 
Then we have $B^{+}\geq B$. 
This fact is an immediate consequence of \cite[Lemma 2.3]{chx-compl} (see also \cite[Lemma 2.20]{chhx-compl}). 
\end{rem}

\begin{cor}[cf.~{\cite[Proposition 3.5, Remark 3.6]{chhx-compl}}]\label{cor--compl-extend-1}
Let $n$ be a positive integer. 
Let $\pi \colon (X,B+M) \to Z$ be a generalized plt pair with the nef part $\boldsymbol{\rm M}$ and let $z \in Z$ be a point such that
\begin{itemize}
\item
$\pi$ is a contraction and $X$ is $\mathbb{Q}$-factorial over $z$, 
\item
$n\boldsymbol{\rm M}$ is b-Cartier, 
\item
$-(K_{X}+B+M)$ is $\pi$-nef and $\pi$-big, 
\item
$T:=\lfloor B \rfloor$ is a prime divisor on $X$ such that $\pi(T) \ni z$, 
\item
$\pi|_{T}\colon(T,B_{T}+M_{T}) \to Z$ has an $n$-complement $(T,B^{+}_{T}+M_{T}) \to Z$ over $z \in Z$ such that $B^{+}_{T} \geq B_{T}$, where $(T,B_{T}+M_{T}) \to Z$ is the generalized pair defined by using the generalized adjunction $K_{T}+B_{T}+M_{T}\sim_{\mathbb{R},\,Z}(K_{X}+B+M)|_{T}$. 
\end{itemize}
Then $(X,B+M) \to Z$ has an $n$-complement $(X,B^{+}+M) \to Z$ over $z \in Z$ such that $(K_{X}+B^{+}+M)|_{T}=K_{T}+B^{+}_{T}+M_{T}$. 
Furthermore, if $B \in \Gamma(n, \Phi(\mathfrak{R}))$ for some finite set $\mathfrak{R} \subset [0,1]\cap \frac{1}{n}\mathbb{Z}$, then we have $B^{+}\geq B$. 
\end{cor}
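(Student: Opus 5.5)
This follows from Lemma \ref{lem--compl-extend-main} by taking $(X,\Gamma+N):=(X,B+M)$, i.e.\ $\Gamma:=B$ and nef part $\boldsymbol{\rm N}:=\boldsymbol{\rm M}$, and $S:=T$. We check the hypotheses of that lemma in order.

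First, $(X,B+M)\to Z$ is generalized plt, hence generalized lc. Moreover $\pi$ is a contraction, $X$ is $\mathbb{Q}$-factorial over $z$, $n\boldsymbol{\rm M}$ is b-Cartier, and $-(K_{X}+B+M)$ is $\pi$-nef. Second, the auxiliary pair $(X,\Gamma+N)=(X,B+M)$ is generalized plt and $-(K_{X}+\Gamma+N)$ is $\pi$-nef and $\pi$-big. Also $\lfloor\Gamma\rfloor=T$ is a prime divisor that is a component of $\lfloor B\rfloor$, and $\pi(T)\ni z$. The remaining inequality $\Gamma^{<1}\geq B^{<1}-\{(n+1)B^{<1}\}$ reads $B^{<1}\geq B^{<1}-\{(n+1)B^{<1}\}$, which holds because fractional parts are nonnegative.

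Finally, the adjunction pair $(T,B_{T}+M_{T})$ is exactly the one in the lemma. By assumption it admits an $n$-complement $(T,B^{+}_{T}+M_{T})$ over $z$ with $B^{+}_{T}\geq B_{T}$. So Lemma \ref{lem--compl-extend-main} gives, after shrinking $Z$ around $z$, an $n$-complement $(X,B^{+}+M)\to Z$ over $z$ with $(K_{X}+B^{+}+M)|_{T}=K_{T}+B^{+}_{T}+M_{T}$.

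For the last assertion, suppose $B\in\Gamma(n,\Phi(\mathfrak{R}))$ with $\mathfrak{R}\subset[0,1]\cap\frac{1}{n}\mathbb{Z}$ finite. Remark \ref{rem--monotonic-compl} then applies to the complement just constructed, since it comes from Lemma \ref{lem--compl-extend-main} and satisfies $nB^{+}\geq n\lfloor B\rfloor+\lfloor(n+1)\{B\}\rfloor$. This yields $B^{+}\geq B$, via the elementary fact (\cite[Lemma 2.3]{chx-compl}) that for such coefficients $b$ one has $\frac{1}{n}\lfloor(n+1)b\rfloor\geq b$.

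There is no real obstacle here. The only points needing care are the trivial inequality on fractional parts and confirming that the perturbation $\Gamma\mapsto(1-a)\Gamma+aB$ inside the lemma's proof is harmless when $\Gamma=B$; it is the identity in that case.
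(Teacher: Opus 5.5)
Your proposal is correct and is exactly the paper's argument: the corollary is the special case of Lemma \ref{lem--compl-extend-main} with $(X,\Gamma+N)=(X,B+M)$ and $S=T$, where the condition $\Gamma^{<1}\geq B^{<1}-\{(n+1)B^{<1}\}$ holds trivially. The extra assertion $B^{+}\geq B$ then follows from Remark \ref{rem--monotonic-compl}, as you say.
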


\begin{proof}
This is a special case of Lemma \ref{lem--compl-extend-main} where the generalized pair $(X,B+M) \to Z$ equals $(X,\Gamma+N) \to Z$. 
Note that the relation $\Gamma^{<1}\geq B^{<1}-\{(n+1)B^{<1}\}$ is obvious when $\Gamma=B$. 
\end{proof}

\begin{cor}[cf.~{\cite[Proposition 4.1]{filipazzi-moraga}}]\label{cor--compl-extend-2}
Assume Theorem \ref{thm--generalized-compl-main-1} for ${\rm dim}\, X=d-1$. 
Then Theorem \ref{thm--generalized-compl-main-1} holds for ${\rm dim}\, X=d$ such that
\begin{itemize}
\item
$\Lambda$ is a finite set and $X$ is $\mathbb{Q}$-factorial over $z$, 
\item
$(X,\Gamma+\beta M) \to Z$ is generalized plt for some $\Gamma$ and $\beta \in (0,1)$,  
\item
$-(K_{X}+\Gamma+\beta M)$ is ample over $Z$, 
\item
$T:= \lfloor \Gamma \rfloor$ is irreducible and it is a component of $\lfloor B \rfloor$, and 
\item
$T$ intersects $\pi^{-1}(z)$. 
\end{itemize}  
\end{cor}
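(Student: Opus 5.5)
The plan is to reduce to Lemma \ref{lem--compl-extend-main}, with $(X,\Gamma+\beta M)\to Z$ playing the role of the auxiliary generalized plt pair $(X,\Gamma+N)\to Z$, where the nef part is $\beta\boldsymbol{\rm M}$. The dimension $d-1$ case of Theorem \ref{thm--generalized-compl-main-1} supplies the complement on $T$. Since $\Lambda$ is finite, I first choose a finite set $\mathfrak{R}\subset[0,1]\cap\mathbb{Q}$ with $\Lambda\subset\Phi(\mathfrak{R})$ (e.g.\ $\mathfrak{R}=\{1-b\mid b\in\Lambda\}\cup\{0,1\}$). Lemma \ref{lem--adj-coeff} then gives a finite set $\mathfrak{R}'$, depending only on $p$ and $\mathfrak{R}$, such that the boundary $B_{T^{\nu}}$ from generalized adjunction lies in $\Phi(\mathfrak{R}')$.

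Let $n'$ be the integer from Theorem \ref{thm--generalized-compl-main-1} in dimension $d-1$ for $p$ and the DCC set $\Phi(\mathfrak{R}')\cap[0,1]$. Its accumulation points are $1$ and are therefore rational. Set $n:=n'\cdot p\cdot q$, where $q$ clears the denominators of $\mathfrak{R}$. Then $n\boldsymbol{\rm M}$ is b-Cartier, $\mathfrak{R}\subset\frac1n\mathbb{Z}$, and an $n'$-complement is an $n$-complement because multiples of $n'$ work. This last point should be checked, as in Corollary \ref{cor--compl-log-pair}: for coefficients in $\Phi(\mathfrak{R})$ with $\mathfrak{R}\subset\frac1n\mathbb{Z}$, $B^+\ge B$ gives the required rounding inequality. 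It may also be necessary to replace $n'$ by a value attached to $\Gamma(n,\Phi)$-type coefficients; I would follow \cite[Proof of Proposition 4.1]{filipazzi-moraga} on this.

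Next I verify the hypotheses on $T$. By plt-ness, $T$ is normal. Since $-(K_X+\Gamma+\beta M)$ is ample and $(X,\Gamma+\beta M)$ is plt, $X$ is of Fano type. Adjunction of $(X,\Gamma+\beta M)$ to $T$ gives a generalized klt pair with anti-ample log canonical divisor, so $T$ is of Fano type over $\pi(T)$; after taking the Stein factorization of $T\to\pi(T)$ and shrinking around $z$, this gives a contraction. The pair $(T,B_T+M_T)$ is generalized lc with $-(K_T+B_T+M_T)$ nef, and $p\boldsymbol{\rm M}|_T$ is b-Cartier. So dimension $d-1$ gives an $n$-complement $B_T^+\ge B_T$ over each point of $T\cap\pi^{-1}(z)$. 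Only finitely many Stein factorization points lie over $z$, so these complements glue.

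It remains to check the hypotheses of Lemma \ref{lem--compl-extend-main}. The pair $(X,\Gamma+\beta M)$ is generalized plt with $-(K_X+\Gamma+\beta M)$ ample, hence nef and big, and $\lfloor\Gamma\rfloor=T$. For the condition $\Gamma^{<1}\ge B^{<1}-\{(n+1)B^{<1}\}$, I replace $\Gamma$ by $(1-a)\Gamma+aB$ and $\beta M$ by $((1-a)\beta+a)M$ for $a$ close to $1$. Plt-ness, ampleness and $\lfloor\cdot\rfloor=T$ are preserved, with nef$+$ample staying ample after slight perturbation; this requires $\lfloor B\rfloor\ne T$ components to be handled, so I would first reduce those coefficients. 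Lemma \ref{lem--compl-extend-main} then gives an $n$-complement, and Remark \ref{rem--monotonic-compl} gives $B^+\ge B$.

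The main obstacle is this inequality, together with the other components of $\lfloor B\rfloor$. The coefficients of $B$ in $\Phi(\mathfrak{R})$ with $\mathfrak{R}\subset\frac1n\mathbb{Z}$ satisfy $\{(n+1)b\}$ small only when $b$ is of the right form. I would follow \cite{filipazzi-moraga} and use the perturbation together with finiteness of $\Lambda$ to ensure $\Gamma^{<1}$ is close to $B^{<1}$ from above.
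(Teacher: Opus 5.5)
Your overall reduction matches the paper's. You use $(X,\Gamma+\beta M)\to Z$ as the auxiliary generalized plt pair in Lemma \ref{lem--compl-extend-main}, get an $n'$-complement on $T$ from Lemma \ref{lem--adj-coeff} and the $(d-1)$-dimensional case, and take $n$ to be a common multiple of $n'$, $p$ and the denominators of $\Lambda$.

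The gap is the step you yourself call the main obstacle: the hypothesis $\Gamma^{<1}\geq B^{<1}-\{(n+1)B^{<1}\}$. Your remedy does not establish it. Replacing $\Gamma$ by $(1-a)\Gamma+aB$ only \emph{preserves} this inequality, which is all Step 1 of the proof of Lemma \ref{lem--compl-extend-main} uses it for. It cannot create the inequality in general. On a component with coefficient $\gamma$ in $\Gamma$ and $b$ in $B$, you need $(1-a)(\gamma-b)\geq -\{(n+1)b\}$. When $\{(n+1)b\}=0$ and $\gamma<b$, this fails for every $a<1$. The perturbation also does not make $\Gamma^{<1}$ approach $B^{<1}$ from above, since $(1-a)\Gamma+aB-B=(1-a)(\Gamma-B)$ has the same sign as $\Gamma-B$.

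The missing observation is a one-liner, and your choice $n=n'pq$ already supplies it. Because $\Lambda$ is a finite set of rationals, $nB$ is an integral divisor; the paper first enlarges $p$ so that $p\Lambda\subset\mathbb{Z}$. Then for every coefficient $b\in[0,1)$ of $B^{<1}$ we have $\{(n+1)b\}=\{nb+b\}=b$. Hence $\{(n+1)B^{<1}\}=B^{<1}$, the right-hand side is $0$, and the hypothesis reduces to $\Gamma^{<1}\geq 0$, which holds because $\Gamma$ is effective. You suggested $\{(n+1)b\}$ should be small; in fact it equals $b$ and cancels $B^{<1}$ exactly.

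So no perturbation is needed. The components of $\lfloor B\rfloor$ other than $T$ also need no treatment: they do not occur in $B^{<1}$, so there the condition is just ${\rm coeff}\,\Gamma\geq 0$. In particular you must not ``reduce'' coefficients of $B$, since the complement must satisfy $B^{+}\geq B$ and the rounding condition for the original $B$.

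With this fixed, Lemma \ref{lem--compl-extend-main} applies with $n=pn'$, since the conditions on $(X,B+M)$ survive replacing $p$ by $pn'$. Remark \ref{rem--monotonic-compl} then gives $B^{+}\geq B$, exactly as in the paper.
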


\begin{proof}
We only show how to use Lemma \ref{lem--compl-extend-main} for this corollary. 

Since $\Lambda$ is a finite set of rational numbers, replacing $p$ if necessary we may assume $p\Lambda \subset \mathbb{Z}$. 
It is obvious that the generalized pair $\pi \colon(X,B+M) \to Z$ satisfies the three conditions of Lemma \ref{lem--compl-extend-main}. 
We also see that the generalized plt pair $(X,\Gamma+\beta M) \to Z$ satisfies the four conditions of Lemma \ref{lem--compl-extend-main}. 
Indeed, $(X,\Gamma+\beta M) \to Z$ clearly satisfies those conditions except the final condition of Lemma \ref{lem--compl-extend-main}. 
Since $B \in \Lambda$ and $p\Lambda \subset \mathbb{Z}$, it follows that $pB$ is a Weil divisor on $X$. 
Hence, the equality $\{(p+1)B^{<1}\}=B^{<1}$ holds. 
Therefore $\Gamma^{<1} \geq 0 =B^{<1}-\{(p+1)B^{<1}\}$, which is nothing but the final condition of Lemma \ref{lem--compl-extend-main}. 
Let $\pi|_{T} \colon (T,B_{T}+M_{T}) \to Z_{T}$ be the generalized pair defined by the generalized adjunction $K_{T}+B_{T}+M_{T}\sim_{\mathbb{R},\,Z}(K_{X}+B+M)|_{T}$, where $Z_{T}:=\pi(T)$. 
By the same argument as in \cite[Step 1 in the proof of Proposition 8.1]{birkar-compl}, this generalized pair satisfies the conditions of Theorem \ref{thm--generalized-compl-main-1}. 
By Lemma \ref{lem--adj-coeff} and the induction hypothesis of Theorem \ref{thm--generalized-compl-main-1}, there exists a positive integer $n'$, depending only on $d$, $p$, and $\Lambda$, such that $(T,B_{T}+M_{T}) \to Z_{T}$ has an $n'$-complement $(T,B^{+}_{T}+M_{T}) \to Z_{T}$ over $z \in Z_{T}$ such that $B^{+}_{T} \geq B_{T}$. 

Now we apply Remark \ref{rem--monotonic-compl} and Lemma \ref{lem--compl-extend-main} for $n:=pn'$, and we complete the proof. 
Note that the conditions of Lemma \ref{lem--compl-extend-main} are preserved in this situation after we replace $p$ by $pn'$. 
\end{proof}

\subsection{Proofs of statements}\label{subsec--proof-comple}

In this subsection we prove Theorem \ref{thm--generalized-compl-main-1}, Theorem \ref{thm--generalized-compl-main-2}, and corollaries. 
We only outline the proof. 
Throughout this subsection, we will freely use Lemma \ref{lem--Fano-type-bimero-cont} and Theorem \ref{thm--mmp-Fano-type} without mentioning them explicitly. 

Theorem \ref{thm--generalized-compl-main-1} follows from Corollary \ref{cor--compl-extend-2}.  

\begin{proof}[Proof of Theorem \ref{thm--generalized-compl-main-1}]
The argument in \cite[Section 4]{filipazzi-moraga} works in our situation with no changes. 
The new ingredients we need are the complex analytic analogs of the ACC for generalized lc thresholds and the relative MMP and the relative base point free theorem for fibrations of Fano type. 
These results are known by \cite{hacon-xie-acc-lct} and \cite{fujino-analytic-bchm}, respectively. 
\end{proof}

\begin{proof}[Proof of Corollary \ref{cor--compl-log-pair}]
This corollary immediately follows from Theorem \ref{thm--generalized-compl-main-1}. 
\end{proof}

We prepare two results for the proof of Theorem \ref{thm--generalized-compl-main-2}.

\begin{thm}[cf.~{\cite[Theorem 1.5]{filipazzi-moraga}, \cite[Proposition 3.4]{chhx-compl}}]\label{thm--can-bundle-formula-coeff-inherited}
Let $d$ and $p$ be positive integers and let $\Lambda  \subset [0,1] \cap \mathbb{Q}$ be a DCC set whose accumulation points are rational numbers. 
Then there exist a positive integer $q$ and a DCC set $\Omega \subset \mathbb{Q}$ whose accumulation points are rational numbers, depending only on $d$, $p$, and $\Lambda$, satisfying the following. 
Let $\pi \colon X\to S$ be a projective morphism and let $W \subset S$ be a compact subset such that $\pi$ and $W$ satisfy (P). 
Let $f\colon (X,B+M) \to Z$ be a generalized lc-trivial fibration over $S$ such that
\begin{itemize}
\item
$\pi \colon (X,B+M) \to S$ is a $d$-dimensional generalized lc pair with the nef part $\boldsymbol{\rm M}$,
\item
$B \in \Lambda$ and $p\boldsymbol{\rm M}$ is b-Cartier, and 
\item
$f$ is of Fano type.
\end{itemize}
Then, by the generalized canonical bundle formula and shrinking $S$ around $W$ suitably, we obtain a generalized lc pair $(Z, G +N) \to S$ with the nef part $\boldsymbol{\rm N}$ such that
\begin{itemize}
\item
$G \in \Omega$ and $q\boldsymbol{\rm N}$ is b-Cartier, and 
\item
$q(K_{X}+B+M) \sim qf^{*}(K_{Z}+G+N)$.
\end{itemize}
Furthermore, for any open subset $\tilde{S} \subset S$ and the restriction $\tilde{f} \colon (\tilde{X},\tilde{B}+\tilde{M}) \to \tilde{Z}$ of $f\colon (X,B+M) \to Z$ over $\tilde{S}$, the generalized canonical bundle formula yields a generalized lc pair $(\tilde{Z}, \tilde{G} +\tilde{N}) \to \tilde{S}$ with the nef part $\tilde{\boldsymbol{\rm N}}$ that satisfies the above two properties. 
If $\Lambda= \Phi(\mathfrak{R})$ for some finite set $\mathfrak{R} \subset [0,1] \cap \mathbb{Q}$, then we can take $\Omega$ as $\Phi(\mathfrak{S})$ for a finite set $\mathfrak{S} \subset [0,1] \cap \mathbb{Q}$ that depends only on $d$, $p$, and $\mathfrak{R}$. 
\end{thm}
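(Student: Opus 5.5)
The plan is to follow \cite[Proof of Theorem 1.5]{filipazzi-moraga} and \cite[Proof of Proposition 3.4]{chhx-compl}. The only new input is the generalized canonical bundle formula of Theorem \ref{thm--gen-can-bundle-formula-full}, together with its compatibility with restriction over open subsets (part (ii) of that theorem and Lemma \ref{lem--restriction-can-bundle-formula}). Throughout, the canonical divisors are globally defined by Theorem \ref{thm--welldef-can-b-div}. I expect the local algebraic arguments to carry over verbatim and the analytic difficulty to lie in making choices uniform over a neighbourhood of $W$.

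\textbf{Step 1: the formula and control of $\boldsymbol{\rm G}_{Z}$.} First apply Theorem \ref{thm--gen-can-bundle-formula-full} after shrinking $S$ around $W$. This gives a generalized lc pair $(Z,G+N)\to S$ with $G=\boldsymbol{\rm G}_{Z}$, $N=\boldsymbol{\rm N}_{Z}$ and $\boldsymbol{\rm N}$ b-nef$/S$ and $\mathbb{R}$-b-Cartier, and part (ii) gives the corresponding statement over every $\tilde S\subset S$.

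The coefficients of $G$ are computed over the generic point of each prime divisor $Q$ on $Z$, so the question is local over $Q$. Take a general point $z\in Q$ and cut $Z$ by general hyperplanes through $z$, or equivalently shrink to a neighbourhood of $z$, so that $Q$ becomes a point of a curve base. Then $1-t_Q$ is an lc threshold of $f^*Q$ with respect to a generalized lc pair of Fano type over the curve. As in \cite{filipazzi-moraga}, the ACC for generalized lc thresholds \cite{hacon-xie-acc-lct} and the fact that $t_Q$ is attained shows $t_Q\in$ a set satisfying ACC with rational accumulation points. Hence $G\in\Omega$ for a DCC set $\Omega$ depending only on $d,p,\Lambda$. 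In the hyperstandard case $\Lambda=\Phi(\mathfrak R)$, the usual argument with a complement over the generic point of $Q$ gives $G\in\Phi(\mathfrak S)$; here I would use the proof of Theorem \ref{thm--generalized-compl-main-1} in lower dimension.

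\textbf{Step 2: bounded Cartier index of $\boldsymbol{\rm N}$, the main obstacle.} The key step is to find $q$ with $q(K_X+B+M)\sim qf^*(K_Z+G+N)$ and $q\boldsymbol{\rm N}$ b-Cartier. I would do this in three moves.

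1. Restrict to an analytically general fiber $F$ of $f$, which is of Fano type. Apply the boundedness of complements (Theorem \ref{thm--generalized-compl-main-1} in dimension $\le d-\dim Z$, or its algebraic version \cite{birkar-compl}, \cite{filipazzi-moraga}) to get $n$ depending only on $d,p,\Lambda$ and an $n$-complement $B_F^+\ge B|_F$. Since $K_F+B|_F+M|_F\equiv0$ and $B\in\Lambda$ with DCC, ACC-type arguments force $B_F^+=B|_F$. Hence $n(K_X+B+M)\sim_Z 0$ generically, and so $n(K_X+B+M)\sim nf^*D$ for a Cartier-type $D$ after shrinking around $W$.

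2. Along a divisor $Q$, the coefficients of $B$ and the thresholds have bounded denominators. Combined with $n\boldsymbol{\rm M}$ being b-Cartier, the argument of \cite[Theorem 1.5]{filipazzi-moraga} via Kawamata's index theorem / \cite[Proposition 3.4]{chhx-compl} then produces $q$ with $qG$, $q\boldsymbol{\rm N}$ integral on a model where $\boldsymbol{\rm N}$ descends.

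3. Upgrade $\mathbb{Q}$-linear to linear equivalence by choosing $D$ so that $qD$ is Cartier.

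Step 2 is where I expect the main difficulty. It needs a global meromorphic section of $q(K_X+B+M)$ pulled back from $Z$, which exists only after shrinking $S$ around the Stein compact $W$. I would handle this with Cartan's Theorem A and (P), as in the proof of Theorem \ref{thm--welldef-can-b-div}.

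\textbf{Step 3: restriction to $\tilde S$ and the hyperstandard refinement.} For arbitrary $\tilde S\subset S$, combine Theorem \ref{thm--gen-can-bundle-formula-full} (ii) with Lemma \ref{lem--restriction-can-bundle-formula} (c). These give $\tilde{\boldsymbol{\rm N}}=\boldsymbol{\rm N}|_{\tilde Z}$ and $\tilde G=G|_{\tilde Z}$, so both properties restrict: b-Cartierness descends to the restricted model and linear equivalence restricts. The statement for $\Lambda=\Phi(\mathfrak R)$ follows by tracking that the coefficient set in Step 1 is hyperstandard, exactly as in \cite{chhx-compl}.
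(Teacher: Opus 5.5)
Your overall plan is the paper's: invoke Theorem \ref{thm--gen-can-bundle-formula-full}, then transplant \cite{filipazzi-moraga} and \cite{chhx-compl}. Your Step 3 on restriction to $\tilde{S}$ is fine. However, Step 2, which you rightly call the heart, rests on a false mechanism, and it skips the one reduction the paper actually carries out.

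\textbf{The bounded-denominator claim is false.} The coefficients of $B$ and the thresholds $t_{Q}$ do not have bounded denominators, and $qG$ is not integral in general. Consequently, once $q\boldsymbol{\rm N}$ is b-Cartier, $qD$ cannot be Cartier either. For example, let $\mu_{l}$ act on $\mathbb{P}^{1}$ by rotation and on a curve $C$ with a fixed point $c$. Take $X=(\mathbb{P}^{1}\times C)/\mu_{l}\to Z=C/\mu_{l}$, with $B$ the reduced image of $\{0,\infty\}\times C$. This is an lc-trivial fibration of Fano type with $d=2$, $\Lambda=\{1\}$ and $\boldsymbol{\rm M}=0$. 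The fibre over the image $Q$ of $c$ is $l$ times a reduced curve, so $t_{Q}=1/l$ and ${\rm coeff}_{Q}G=1-1/l$ for arbitrary $l$. This is exactly why the statement only asks $G\in\Omega$.

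What has bounded denominators is a combination of terms. On a smooth model $\tau\colon Z'\to Z$ where $\boldsymbol{\rm N}$ descends, ${\rm coeff}_{Q}\boldsymbol{\rm N}_{Z'}={\rm coeff}_{Q}(\tau^{*}D)+t_{Q}-1-{\rm coeff}_{Q}K_{Z'}$. So one must show $q\bigl({\rm coeff}_{Q}(\tau^{*}D)+t_{Q}\bigr)\in\mathbb{Z}$. One obtains this from a bounded complement of the fibration over $Z'$ that is non-klt over the generic point of $Q$, so that it equals $B+t_{Q}f^{*}Q$ there. It does not come from bounding each term separately.

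\textbf{The missing reduction.} That argument has to be run on a fibration over such a smooth $Z'$ that is still generalized lc, of Fano type over $Z'$, has boundary in $\Lambda\cup\{1\}$, and has the same moduli b-divisor $\boldsymbol{\rm N}$. A naive base change to a resolution of $Z$ introduces negative and non-$\Lambda$ coefficients and need not preserve Fano type. Your proposal never constructs such a model, and this construction is what the paper's proof actually consists of:
\begin{enumerate}
\item It disposes of $\dim Z=0$ by \cite[Theorem 1.2]{filipazzi-moraga}.
\item It uses the equidimensional reduction of \cite{eh-semistablereduction}, taking $B'$ to be the strict transform of $B$ plus the reduced exceptional divisor.
\item It runs the MMP over $Z'$ as in Step 1 of the proof of Proposition \ref{prop--gen-can-bundle-formula-klt-Q-div}, so that the defect $E'$ becomes $f''^{*}L'$ by the negativity lemma for very exceptional divisors.
\item It checks, by the perturbation argument of Lemma \ref{lem--Fano-type-bimero-cont}, that $X''$ is of Fano type over $Z'$.
\end{enumerate}
Only then is \cite[Proof of Lemma 5.4]{filipazzi-moraga} applied. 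So the real analytic input is this reduction, not producing global sections via Cartan's Theorem A.

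\textbf{Two smaller points.} ACC for generalized lc thresholds only puts the $t_{Q}$ in an ACC set. It says nothing about $\Omega$ having rational accumulation points. Also, $B^{+}_{F}=B_{F}$ needs no ACC argument, since $B^{+}_{F}-B_{F}$ is effective and numerically trivial on the projective fibre $F$.
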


\begin{proof}
We may assume ${\rm dim}\,Z>0$ because the case of ${\rm dim}\,Z=0$ immediately follows from \cite[Theorem 1.2]{filipazzi-moraga} (see also \cite[Theorem 1.10]{birkar-compl}). 
In this situation, we may apply \cite[Proof of Proposition 6.3]{birkar-compl} or \cite[Section 5]{filipazzi-moraga}.  
Here, we only outline how to reduce the theorem to the case where $Z$ is smooth and $\boldsymbol{\rm N}$ descends to $Z$. 
We may assume $1 \in \Lambda$. 

By the equi-dimensional reduction (\cite{eh-semistablereduction}), we obtain the diagram
 $$
\xymatrix{
X \ar@{->}[d]_{f}& \ar@{->}[l]_{\phi}X' \ar@{->}[d]^{f'}\\
Z &Z', \ar[l]^{\varphi}
}
$$
where $X'$ is a normal analytic variety that is $\mathbb{Q}$-factorial over $W$, such that
\begin{itemize}
\item
$\boldsymbol{\rm M}$ descends to $X'$,
\item
$Z'$ is smooth and $\boldsymbol{\rm N}$ descends to $Z'$, 
\item
 the pair $(X',0)$ is klt and there exists a reduced divisor $\Sigma_{X'}$ on $X'$ such that  $(X',\Sigma_{X'})$ is an lc pair and $\Sigma_{X'}\supset {\rm Supp}\,\phi^{-1}_{*}B\cup {\rm Ex}(\phi)$, and
\item
all fibers of $f'$ have the same dimension. 
\end{itemize}
We may write
$$K_{X'}+B'+\boldsymbol{\rm M}_{X'}=\phi^{*}(K_{X}+B+M)+E',$$
where $B'$ is the sum of $\phi^{-1}_{*}B$ and the reduced $\phi$-exceptional divisor on $X'$. 
Note that $B' \in \Lambda$ since $B \in \Lambda$. 
By the argument in Step \ref{step1--prop--gen-can-bundle-formula-klt-Q-div} in the proof of Proposition \ref{prop--gen-can-bundle-formula-klt-Q-div}, after shrinking $S$ around $W$, we obtain a bimeromorphic contraction 
$$\psi \colon X' \dashrightarrow X''$$
over $Z'$ and the induced morphism $f'' \colon X'' \to Z'$ such that if we put $B'':=\psi_{*}B'$ and $M'':=\psi_{*}\boldsymbol{\rm M}_{X'}$, then 
$\psi_{*}E'=f''^{*}L'$
for some $\mathbb{Q}$-divisor $L'$ on $Z'$. 
Picking a $\mathbb{Q}$-Cartier divisor $D$ on $Z$ such that $K_{X}+B+M \sim_{\mathbb{Q}} f^{*}D$, we have 
\begin{equation*}
\begin{split}
K_{X''}+B''+M''=&\psi_{*}\phi^{*}(K_{X}+B+M)+\psi_{*}E'\sim_{\mathbb{Q}}f''^{*}(\varphi^{*}D+L').
\end{split}
\end{equation*}
By construction, the moduli $\mathbb{Q}$-b-divisor associated to $f'' \colon (X'',B''+M'') \to Z'$ coincides with $\boldsymbol{\rm N}$. 

We show that $X''$ is of Fano type over $Z'$ after shrinking $S$ around $W$. 
We pick a klt pair $(X,\Delta)$ such that $-(K_{X}+\Delta)$ is ample over $Z$. 
By the same argument as in the proof of Lemma \ref{lem--Fano-type-bimero-cont}, after shrinking $S$ around $W$, we obtain a sub-klt pair $(X'',\Delta'')$ such that $A'':=-(K_{X''}+\Delta'')$ is ample over $Z$ and for any prime divisor $P''$ on $X''$ that is not exceptional over $X$, we have ${\rm coeff}_{P''}(\Delta'') \geq 0$.  
By construction of $B''$, for any prime divisor $Q''$ on $X''$ that is exceptional over $X$, we have ${\rm coeff}_{Q''}(B'')=1$. 
Thus, we can find $t \in (0,1]$ such that $\bigl(X'',(t\Delta''+(1-t)B'')+(1-t)M''\bigr) \to S$ is a generalized klt pair with the nef part $(1-t)\boldsymbol{\rm M}$. 
By the perturbation of coefficients using the ampleness of $\frac{t}{2}A''$ over $Z$, we obtain a klt pair $(X'',\Gamma'')$ such that
$$K_{X''}+\Gamma'' \sim_{\mathbb{R},\,Z}K_{X''}+(t\Delta''+(1-t)B'')+(1-t)M''+\frac{t}{2}A''.$$
Then
\begin{equation*}
\begin{split}
K_{X''}+\Gamma'' \sim_{\mathbb{R},\,Z}&K_{X''}+(t\Delta''+(1-t)B'')+(1-t)M''+\frac{t}{2}A''\\
=&t(K_{X''}+\Delta''+\frac{1}{2}A'')+(1-t)(K_{X''}+B''+M'')\\
\sim_{\mathbb{R},\,Z}&-\frac{t}{2}A''+(1-t)f''^{*}(\varphi^{*}D+L')\sim_{\mathbb{R},\,Z'}-\frac{t}{2}A''.
\end{split}
\end{equation*}
Therefore, $-(K_{X''}+\Gamma'')$ is ample over $Z'$. 
Hence, $X''$ is of Fano type over $Z'$. 

Replacing $f \colon (X,B+M) \to Z$ by $f'' \colon (X'',B''+M'') \to Z'$, we may assume that $Z$ is smooth and $\boldsymbol{\rm N}$ descends to $Z$. 
Then we can apply \cite[Proof of Lemma 5.4]{filipazzi-moraga}, and we see that Theorem \ref{thm--can-bundle-formula-coeff-inherited} holds. 
\end{proof}

\begin{prop}[cf.~{\cite[Proposition 3.7]{chhx-compl}}]\label{prop--compl-extend-fib}
Let $p$ and $n$ be positive integers such that $p$ divides $n$. 
Let $\pi \colon (X,B+M) \to S$ be a generalized lc pair with the nef part $\boldsymbol{\rm M}$ such that $\pi \colon X \to S$ is a contraction, and let $f\colon (X,B+M) \to Z$ be a generalized lc-trivial fibration over $S$. 
Let $(Z, G +N) \to S$ be a generalized lc pair with the nef part $\boldsymbol{\rm N}$ that is defined by using the generalized canonical bundle formula. 
We fix a point $s \in S$. 
Suppose that 
\begin{itemize}
\item
$B$ is a $\mathbb{Q}$-divisor on $X$ and both $p\boldsymbol{\rm M}$ and $p\boldsymbol{\rm N}$ are b-Cartier, 
\item
$p(K_{X}+B+M) \sim pf^{*}(K_{Z}+G+N)$, and 
\item
there exists a diagram 
$$
\xymatrix{
X' \ar@{->}[d]_{f'} \ar[r]^{\tau_{X}}&X\ar@{->}[d]^{f}\\
Z'\ar[r]_{\tau}&Z,
}
$$
where $\tau$ and $\tau_{X}$ are bimeromorphic morphisms, such that 
\begin{itemize}
\item
if we define $G'$ by $K_{Z'}+G'+\boldsymbol{\rm N}_{Z'}=\tau^{*}(K_{Z}+G+N)$ then $G'$ is effective, and  
\item
$f'(\tau^{-1}_{X*}P)$ is a prime divisor on $Z'$ for any component $P$ of $B$ that is vertical over $Z$. 
\end{itemize}
\end{itemize}
If $(Z',G'+\boldsymbol{\rm N}_{Z'})$ has an $n$-complement over $s \in S$, then so does $(X,B+M) \to S$. 
\end{prop}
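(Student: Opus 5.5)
The plan is to follow \cite[Proof of Proposition 3.7]{chhx-compl} (cf.~also \cite[Proposition 6.3]{birkar-compl}). We pull back the complement of the base to $X$ and check the required properties, working over a small neighborhood of $s$.

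First, let $(Z',G'^{+}+\boldsymbol{\rm N}_{Z'}) \to S$ be an $n$-complement of $(Z',G'+\boldsymbol{\rm N}_{Z'})$ over $s$. After shrinking $S$ around $s$ we have $n(K_{Z'}+G'^{+}+\boldsymbol{\rm N}_{Z'})\sim 0$. Put $R':=G'^{+}-G'$. Since $G'$ is effective, $nG'^{+}\geq \lfloor (n+1)\{G'\}\rfloor+n\lfloor G'\rfloor$, and in particular $G'^{+}\geq 0$, although $R'$ itself need not be effective a priori. We then define $G^{+}:=\tau_{*}G'^{+}$ on $Z$. Because $K_{Z'}+G'+\boldsymbol{\rm N}_{Z'}=\tau^{*}(K_{Z}+G+N)$ and $n(K_{Z'}+G'^{+}+\boldsymbol{\rm N}_{Z'})\sim 0$, the negativity lemma over $Z$ gives
\[
K_{Z'}+G'^{+}+\boldsymbol{\rm N}_{Z'}=\tau^{*}(K_{Z}+G^{+}+N),\qquad n(K_{Z}+G^{+}+N)\sim 0 .
\]
Hence $(Z,G^{+}+N)$ is generalized lc over $s$. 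Next we set
\[
B^{+}:=B+f^{*}(G^{+}-G).
\]
Since $p\mid n$ and $p(K_{X}+B+M)\sim pf^{*}(K_{Z}+G+N)$, we get $n(K_{X}+B^{+}+M)\sim nf^{*}(K_{Z}+G^{+}+N)\sim 0$. Moreover, $n\boldsymbol{\rm M}$ is b-Cartier because $p\boldsymbol{\rm M}$ is.

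Second, we check that $(X,B^{+}+M)$ is generalized lc over $s$. The generalized lc-trivial fibration $(X,B^{+}+M)\to Z$ has discriminant $\boldsymbol{\rm G}+(\text{b-divisor of }G^{+}-G)$ and the same moduli part. This holds because adding a pullback shifts all lc thresholds accordingly; this is where Lemma~\ref{lem--restriction-can-bundle-formula}(a) and the canonical bundle formula of Theorem~\ref{thm--gen-can-bundle-formula-full} are used, working on $X'\to Z'$. Since $(Z,G^{+}+N)$ is generalized lc, every $t_{Q}\geq 0$ over every divisor over $Z$. A standard argument then shows $(X,B^{+}+M)$ is generalized sub-lc: if a divisor $E$ over $X$ had negative log discrepancy, pick a model $Z''$ on which $E$ maps to a divisor $Q$; the corresponding threshold would then be negative, which is a contradiction. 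Effectivity of $B^{+}$ needs care, since $G^{+}-G$ may be negative on some component.

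Third, the main obstacle is the coefficient inequality $nB^{+}\geq n\lfloor B\rfloor+\lfloor (n+1)\{B\}\rfloor$. For horizontal components of $B$ this is trivial because $B^{+}=B$ there. For a vertical component $P$, the hypothesis says $Q:=f'(\tau^{-1}_{X*}P)$ is a prime divisor on $Z'$. Write $b:={\rm coeff}_{P}B$, let $m$ be the multiplicity of $f'^{*}Q$ along $\tau^{-1}_{X*}P$, and let $g:={\rm coeff}_{Q}G'$, $g^{+}:={\rm coeff}_{Q}G'^{+}$. Computing $B^{+}$ on $X'$, we get ${\rm coeff}_{P}B^{+}=b+m(g^{+}-g)$. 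The definition of $t_{Q}$ gives $b+m(1-g)\leq 1$, i.e. $b\leq 1-m(1-g)$. We then argue as in \cite[Lemma 3.3 / Prop.~6.3]{birkar-compl}: from $ng^{+}\geq \lfloor (n+1)g\rfloor$ (with $g\in[0,1]$), deduce
\[
n\bigl(b+m(g^{+}-g)\bigr)\geq \lfloor (n+1)b\rfloor
\]
when $b<1$; the case $b=1$ forces $g=1=g^{+}$. This is an elementary inequality: write $1-b\geq m(1-g)$ and use that $x\mapsto \lfloor (n+1)x\rfloor/n$ dominates $x$ from below up to the error $\{(n+1)x\}/n$. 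I expect this to need the case analysis of \cite{chhx-compl}. I would first try bounding $\lfloor(n+1)b\rfloor\leq \lfloor (n+1)(1-m(1-g))\rfloor$ and comparing with $n(1-m(1-g^{+}))$-type expressions. Finally, effectivity of $B^{+}$ follows from this inequality, and lc-ness follows from the second step.
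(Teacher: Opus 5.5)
Your construction ($G^{+}:=\tau_{*}G'^{+}$, $B^{+}:=B+f^{*}(G^{+}-G)$, $n(K_X+B^{+}+M)\sim 0$, and generalized log canonicity via the shifted discriminant) is the standard one behind \cite[Proposition 3.7]{chhx-compl}, to which the paper defers. However, the step you single out as the main obstacle does not work as you set it up.

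The inequality $n\bigl(b+m(g^{+}-g)\bigr)\geq\lfloor (n+1)b\rfloor$ does not follow from $ng^{+}\geq\lfloor (n+1)g\rfloor$, $0\leq g\leq 1$ and $b\leq 1-m(1-g)$. Take $n=m=1$, $g=2/5$, $g^{+}=0$, $b=3/10$: all three conditions hold, but the left side is $-1/10<0=\lfloor 2b\rfloor$. So no case analysis will produce it. Likewise, the horizontal case is not trivial just because $B^{+}=B$ there: $n=1$, $b=3/5$ gives $nb<\lfloor (n+1)b\rfloor$.

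The missing input is integrality. Since $n(K_X+B^{+}+M)\sim 0$ and $nK_X$, $n\boldsymbol{\rm M}_X$ are integral, $nB^{+}$ is an integral divisor. This is exactly where $p\mid n$, $p\boldsymbol{\rm M}$ b-Cartier and $p(K_X+B+M)\sim pf^{*}(K_Z+G+N)$ enter. Consequently, for $b<1$ the required bound is equivalent to the strict inequality $nb^{+}>(n+1)b-1$. This gives the needed cases:
\begin{enumerate}
\item For horizontal components it is just $b<1$.
\item For vertical components with $g<1$, use $ng^{+}\geq\lfloor (n+1)g\rfloor>(n+1)g-1$. This gives $nm(g^{+}-g)>-m(1-g)\geq-(1-b)$, which is precisely $nb^{+}>(n+1)b-1$.
\item If $g=1$, then $g^{+}=1$ and $b^{+}=b$.
\end{enumerate}
In particular $B^{+}\geq 0$ along ${\rm Supp}\,B$.

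There is a second, smaller gap: your claim that effectivity of $B^{+}$ follows from this inequality only covers components of $B$. The divisor hypothesis on $f'(\tau^{-1}_{X*}P)$ is made only for those. For a vertical prime divisor $P\not\subset{\rm Supp}\,B$ whose image in $Z'$ is a divisor, the same computation with $b=0$ and $m(1-g)\leq 1$ gives $nb^{+}>-1$, hence $b^{+}\geq 0$. But if the image has codimension $\geq 2$, nothing controls the coefficient of $f^{*}(G^{+}-G)$ along $P$, and it can be negative.

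For example, let $S=Z$ be a germ of $\mathbb{C}^{2}$ at $0$, let $f\colon X={\rm Bl}_{0}Z\to Z$ with exceptional curve $E$, and let $B=\frac{1}{3}\sum_{i=1}^{3}\tilde{L}_{i}$ for three lines $L_{i}$ through $0$. Then $G=\frac{1}{3}\sum L_{i}$, $\boldsymbol{\rm N}=0$, $p=n=1$, $Z'=Z$, $X'=X$ satisfy all hypotheses, and $G'^{+}=0$ is a $1$-complement. Yet $B+f^{*}(G^{+}-G)=-E$. (A $1$-complement of $(X,B)$ still exists, e.g.\ $\tilde{L}$ for a line $L$ through $0$, but it does not come from this $G'^{+}$.) So for this construction you need the divisor condition for every vertical prime divisor of $X$, as when $f'$ is equidimensional, or some other way of choosing the complement on $Z'$.
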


\begin{proof}
We may assume ${\rm dim}\,Z>0$ because otherwise the statement is obvious. 
Then the argument in \cite[Proof of Proposition 3.7]{chhx-compl} works with no changes. 
\end{proof}

Now we can prove Theorem \ref{thm--generalized-compl-main-2}. 

\begin{proof}[Proof of Theorem \ref{thm--generalized-compl-main-2}]
We can apply \cite[Proof of Theorem 8.1]{chhx-compl} to our situation. 
Note that we may use \cite[Theorem 5.1, Theorem 6.1, Proposition 6.6]{chhx-compl} because these results only deal with the projective case, i.e., the case where ${\rm dim}\,Z=0$. 
We may also use the relative MMP and the relative base point free theorem for fibrations of Fano type (\cite{fujino-analytic-bchm}) throughout the proof. 
By using Corollary \ref{cor--compl-extend-1}, we can prove the complex analytic analog of \cite[Theorem 7.1]{chhx-compl}. 
For the proof of \cite[Theorem 8.1]{chhx-compl} in the complex analytic setting, we may use Theorem \ref{thm--can-bundle-formula-coeff-inherited}, Proposition \ref{prop--compl-extend-fib}, and the complex analytic analogs of \cite[Lemma 2.16, Lemma 2.18, Lemma 3.3]{chhx-compl}. 
In this way, we see that Theorem \ref{thm--generalized-compl-main-2} follows. 
\end{proof}

Finally, we prove Corollary \ref{cor--generalized-compl-main-1-intro} and Corollary \ref{cor--generalized-compl-main-2-intro} and introduce a related theorem.  

\begin{proof}[Proof of Corollary \ref{cor--generalized-compl-main-1-intro}]
This corollary is a special case of Theorem \ref{thm--generalized-compl-main-1} where $\boldsymbol{\rm M}=0$ and $X=Z$. 
We note that if $x\in X$ is a non-klt center of $(X,B)$, any $n$-complement $(X,B^{+})$ of $(X,B)$ with $B^{+}\geq B$ satisfies $B^{+}=B$ on a neighborhood of $x$.  
\end{proof}

\begin{proof}[Proof of Corollary \ref{cor--generalized-compl-main-2-intro}]
This corollary is a special case of Theorem \ref{thm--generalized-compl-main-2} where $\boldsymbol{\rm M}=0$ and $X=Z$. 
\end{proof}

\begin{cor}[cf.~{\cite[Conjecture 0.2]{fujino-indices}}]\label{cor--compl-fujinoconj}
For each positive integer $d$, there exists $n \in \mathbb{Z}_{>0}$, depending only on $d$, satisfying the following. 
Let $(X,B)$ be a $d$-dimensional lc pair such that the coefficients of $B$ belong to the hyperstandard set $\Phi(\{0,1\})$ and $(X,\Delta)$ is klt for some $\mathbb{R}$-divisor $\Delta$ on $X$. 
Then, for any non-klt center $x \in X$ of $(X,B)$, the Cartier index of $K_{X}+B$ around $x$ divides $n$. 
\end{cor}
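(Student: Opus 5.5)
The plan is to deduce the statement from Corollary \ref{cor--compl-log-pair} (equivalently Theorem \ref{thm--generalized-compl-main-1} with $\boldsymbol{\rm M}=0$ and $X=Z$), applied to the finite set $\mathfrak{R}=\{0,1\}$. The coefficient condition is already in the right form: $B\in\Phi(\{0,1\})$, so we fix the integer $n=n(d,\{0,1\})$ given by Corollary \ref{cor--compl-log-pair}. We then take $\pi$ to be the identity $X\to X$. Since $(X,\Delta)$ is klt and $-(K_{X}+\Delta)$ is trivially ample over $X$, $X$ is of Fano type over itself. Also $-(K_{X}+B)$ is nef over $X$, because every curve contracted by the identity is trivial. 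Thus all hypotheses of Corollary \ref{cor--compl-log-pair} hold with $Z=X$.

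For a non-klt center $x\in X$ of $(X,B)$, the corollary gives, after shrinking $X$ around $x$, an $n$-complement $(X,B^{+})$ with $B^{+}\geq B$, $(X,B^{+})$ lc, and $n(K_{X}+B^{+})\sim 0$ near $x$. The key step is the observation already used in the proof of Corollary \ref{cor--generalized-compl-main-1-intro}: $B^{+}=B$ on a neighborhood of $x$. To see this, write $B^{+}=B+R$ with $R\geq 0$ and choose a divisor $E$ over $X$ with center $x$ and $a(E,X,B)=0$. Then $a(E,X,B^{+})=a(E,X,B)-{\rm mult}_{E}R=-{\rm mult}_{E}R$. Since $(X,B^{+})$ is lc, this is $\geq 0$, so ${\rm mult}_{E}R=0$. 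But $R$ is effective and $\mathbb{R}$-Cartier near $x$ (as $K_{X}+B$ and $K_{X}+B^{+}$ are both $\mathbb{R}$-Cartier), so any component of $R$ through $x$ would give ${\rm mult}_{E}R>0$. Hence ${\rm Supp}\,R\not\ni x$, and after shrinking, $B^{+}=B$.

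It follows that $n(K_{X}+B)\sim 0$ near $x$, so $n(K_{X}+B)$ is Cartier near $x$. The Cartier index of $K_{X}+B$ at $x$ is then the minimal positive integer $r$ with $r(K_{X}+B)$ Cartier near $x$. The set of such integers forms a subgroup of $\mathbb{Z}$, so $r$ divides $n$, which proves the statement.

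The only real subtlety is interpreting ``non-klt center $x\in X$'' when $x$ is a point lying on a positive-dimensional non-klt center. The argument above still works, provided we pick $E$ to be any non-klt place whose center contains $x$. Beyond that, everything is a formal consequence of Corollary \ref{cor--compl-log-pair}, with no further obstacle expected.
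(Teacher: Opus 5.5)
Your argument is correct and is essentially the paper's proof. The paper obtains the statement as the special case $\boldsymbol{\rm M}=0$, $X=Z$, $\Lambda=\Phi(\{0,1\})$ of Theorem~\ref{thm--generalized-compl-main-1}, via Corollary~\ref{cor--generalized-compl-main-1-intro}. It uses the same observation you do: an $n$-complement $B^{+}\geq B$ coincides with $B$ near a non-klt center $x$. Going through Corollary~\ref{cor--compl-log-pair} with $\mathfrak{R}=\{0,1\}$ is the same route. You also supply the log discrepancy computation and the divisibility of the index, which the paper leaves implicit.

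You should, however, delete your last paragraph, because it is false. Suppose the non-klt place $E$ only has center $V\ni x$ with $\dim V>0$. Then lc-ness of $(X,B^{+})$ gives ${\rm mult}_{E}R=0$, i.e.\ $V\not\subset{\rm Supp}\,R$, but not $x\notin{\rm Supp}\,R$.

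In fact the statement itself fails under that reading. Let $X$ be the germ at the singular point $x$ of the cyclic quotient singularity $\frac{1}{m}(1,1)$, and let $B=L$ be the image of a line through the origin. Then $(X,0)$ is klt, $(X,L)$ is plt, $1\in\Phi(\{0,1\})$, and $x$ lies on the lc center $L$. Yet $K_{X}+L$ has Cartier index $m$ at $x$. Here $L+L'$, with $L'$ the image of a second line, is a $1$-complement that differs from $B$ at $x$.

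So the statement must be read with $\{x\}$ itself a non-klt center. That is exactly the case your main argument handles, since there you take $E$ centered at $x$.
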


\begin{proof}
This corollary is a special case of Corollary \ref{cor--generalized-compl-main-1-intro} where $\Lambda=\Phi(\{0,1\})$. 
\end{proof}



\begin{thebibliography}{BCHM10}



\bibitem[Am99]{ambro-phdthesis} F.~Ambro, The adjunction conjecture and its applications, PhD thesis, The Johns Hopkins University, 1999.



\bibitem[Am04]{ambro1}F.~Ambro, Shokurov's boundary property, J. Differential Geom. {\textbf{67}} (2004), no.~2, 229--255. 

\bibitem[Am05]{ambro2}F.~Ambro, The moduli $b$-divisor of an lc trivial fibration, Compos. Math. {\textbf{141}} (2005), no.~2, 385--403. 




\bibitem[ABB+23]{abb+23} K.~Ascher, D.~Bejleri, H.~Blum, K.~DeVleming, G.~Inchiostro, Y.~Liu, X.~Wang, Moduli of boundary polarized Calabi-Yau pairs, preprint (2023), arXiv:2307.06522v1.






\bibitem[Bir19]{birkar-compl}
C.~Birkar, 
Anti-pluricanonical systems on Fano varieties, 
Ann. of Math. {\textbf{190}} (2019), no.~2, 345--463. 

\bibitem[Bir21]{birkar-bab}
C.~Birkar, Singularities of linear systems and boundedness of Fano varieties, Ann. of Math. {\textbf{193}} (2021), no.~2, 347--405. 

\bibitem[Bir24]{birkar-connect}
C.~Birkar, 
On connectedness of non-klt loci of singularities of pairs, 
J. Differential Geom. {\textbf{126}} (2024), no.~2, 431--474.






\bibitem[BFMT25]{bfmt} B.~Bakker, S.~Filipazzi, M.~Mauri, J.~Tsimerman, Baily--Borel compactifications of period images and the b-semiampleness conjecture, preprint (2025), arXiv:2508.19215v2. 




\bibitem[BiMi97]{resolution-1} E.~Bierstone, P.~D.~Milman, Canonical desingularization in characteristic zero by blowing up the maximum strata of a local invariant, Invent. Math. {\textbf{128}} (1997), no.~2, 207--302. 



\bibitem[BiZh16]{bz} C.~Birkar, D.~Q.~Zhang, Effectivity of Iitaka fibrations and pluricanonical systems of polarized pairs, Publ. Math. Inst. Hautes \'Etudes Sci. {\textbf{123}} (2016), 283--331. 






\bibitem[GHHX25]{chhx-compl} G.~Chen, J.~Han, Y.~He, L.~Xie, Boundedness of complements for generalized pairs, Proc. Lond. Math. Soc. (3) {\textbf{130}} (2025), no.~5, Paper No.~e70049, 48 pp. 


\bibitem[CHLX23]{chlx} G.~Chen, J.~Han, J.~Liu, L.~Xie,  Minimal model program for algebraically integrable foliations and generalized pairs, preprint (2023), arXiv:2309.15823v3. 


\bibitem[CHX24]{chx-compl} G.~Chen, J.~Han, Q.~Xue, Boundedness of complements for log Calabi-Yau threefolds, Peking Math. J. {\textbf{7}} (2024), no.~1, 1--33.








\bibitem[EH24]{eh-analytic-mmp} M.~Enokizono, K.~Hashizume, Minimal model program for log canonical pairs on complex analytic spaces, preprint (2024), arXiv:2404.05126v2. 

\bibitem[EH25]{eh-semistablereduction} M.~Enokizono, K.~Hashizume, Semistable reduction for complex analytic spaces, Trans. Amer. Math. Soc. {\textbf{378}} (2025), no.~11, 7667--7688.

\bibitem[EH26]{eh-analytic-mmp-2} M.~Enokizono, K.~Hashizume, On termination of minimal model program for log canonical pairs on complex analytic spaces, J. Lond. Math. Soc. (2) {\textbf{113}} (2026), no.~1, Paper No. e70409, 25 pp. 

\bibitem[Fi20]{filipazzi-gen-can-bundle-formula} S.~Filipazzi, On a generalized canonical bundle 
formula and generalized adjunction, Ann. Sc. Norm. Super. Pisa Cl. Sci. (5) \textbf{21} (2020), 1187--1221. 

\bibitem[FiMo20]{filipazzi-moraga} S.~Filipazzi, J.~Moraga, Strong $(\delta,n)$-complements for semi-stable morphisms, Doc. Math. {\textbf{25}} (2020), 1953--1996.

\bibitem[FiSv23]{filipazzi-svaldi} S.~Filipazzi, R.~Svaldi, On the connectedness principle and dual complexes for generalized pairs, Forum Math. Sigma {\textbf{11}} (2023), e33.  




\bibitem[Fu01]{fujino-indices} O.~Fujino, The indices of log canonical singularities, Amer. J. Math. 123 (2001), no.~2, 229--253.
























\bibitem[Fu22]{fujino-analytic-bchm}
O.~Fujino, Minimal model program for projective morphisms between complex analytic spaces, preprint (2022), arXiv:2201.11315v1. 


\bibitem[Fu24a]{fujino-analytic-conethm}
O.~Fujino, Cone and contraction theorem for projective morphisms between complex analytic spaces, MSJ Mem. \textbf{42}, Mathematical Society of Japan, Tokyo, 2024. 

\bibitem[Fu24b]{fujino-analytic-inv-adjunction}
O. Fujino, Log canonical inversion of adjunction, Proc. Japan Acad. Ser. A Math. Sci. {\textbf{100}} (2024), no.~2, 7--11. 

\bibitem[Fu24c]{fujino-analytic-lcabundance}
O.~Fujino, On finiteness of relative log pluricanonical representations, to appear in Algebraic Geom.








\bibitem[FG14]{fg-lctrivial}O.~Fujino, Y.~Gongyo, On the moduli b-divisors of lc-trivial fibrations, Ann. Inst. Fourier {\textbf{64}} (2014), no.~4, 1721--1735. 





\bibitem[FuHa23]{fujino-hashizume-adjunction}O.~Fujino, K.~Hashizume, Adjunction and inversion of adjunction, Nagoya Math. J. {\textbf{249}} (2023), 119--147. 









\bibitem[Go11]{gongyo1}Y.~Gongyo, On the minimal model theory for dlt pairs of numerical log Kodaira dimension zero, Math. Res. Lett. {\textbf{18}} (2011), no.~5, 991--1000.













\bibitem[HaPa24]{haconpaun}C.~D.~Hacon, M.~P\u{a}un, On the canonical bundle formula and adjunction for generalized K\"{a}hler pairs, preprint (2024), arXiv:2404.12007v1. 

\bibitem[HaXi24]{hacon-xie-acc-lct} C.~D.~Hacon, L.~Xie, ACC for generalized log canonical thresholds for complex analytic spaces, Sci. China Math. {\textbf{69}} (2026), no.~7, 1759--1766.








\bibitem[HLL25]{hanliuluo-acc-mld-3fold} J.~Han, J.~Liu, Y.~Luo, ACC for minimal log discrepancies of terminal threefolds, Adv. Math. {\textbf{480}} (2025), part A, Paper No.~110457, 66 pp. 

\bibitem[HLS24]{hanliushokurov-acc-mld} J.~Han, J.~Liu, V.~V.~Shokurov, ACC for Minimal Log Discrepancies of Exceptional Singularities, Peking Math. J. {\textbf{9}} (2026), no.~2, 225--257.






\bibitem[Has20]{has-class} K.~Hashizume, A class of singularity of arbitrary pairs and log canonicalizations, Asian J. Math. {\textbf{24}} (2020), no. 2, 207--238. 




\bibitem[Has22]{has-iitakafibration}
K.~Hashizume, 
Iitaka fibrations for dlt pairs polarized by a nef and log big divisor, Forum Math. Sigma {\textbf{10}} (2022), e85.  

\bibitem[Has24]{has-finite} K.~Hashizume, Finiteness of log Abundant log Canonical Pairs in log Minimal Model Program with Scaling, Michigan Math. J. {\textbf{74}} (2024), no.~5, 1053--1108. 

\bibitem[Hir75]{hironaka-flattening} H.~Hironaka, Flattening theorem in complex-analytic geometry, Amer. J. Math. {\textbf{97}} (1975), 503--547.













\bibitem[Kar99]{karu-phdthesis} K.~Karu, Semistable reduction in characteristic zero, PhD thesis, Boston University, 1999. 


\bibitem[Kaw98]{kawamata-subadjunction-II}Y.~Kawamata, Subadjunction of log canonical divisors II, Amer. J. Math. {\textbf{120}} (1998), no.~5, 893--899.




















\bibitem[KoMo98]{kollar-mori} J.~Koll\'ar, S.~Mori, {\em{Birational geometry of algebraic varieties}}. With the collaboration of C.~H.~Clemens and A.~Corti. Translated from the 1998 Japanese original. Cambridge Tracts in Mathematics {\textbf{134}}. Cambridge University Press, Cambridge, 1998.





\bibitem[LXZ22]{liuxuzhuang-Kstability} Y.~Liu, C.~Xu, Z.~Zhuang, Finite generation for valuations computing stability thresholds and applications to K-stability, Ann. of Math. (2) {\textbf{196}} (2022), no.~2, 507--566. 













\bibitem[N04]{nakayama}N.~Nakayama, {\em Zariski-decomposition and abundance}, MSJ Mem. {\textbf{14}}, Mathematical Society of Japan, Tokyo, 2004. 







\bibitem[S93]{shokurov-flip}V.~V.~Shokurov, 3-fold log flips, Appendix by Yujiro Kawamata: The minimal discrepancy coefficients of terminal singularities in dimension three, Russ. Acad. Sci., Izv., Math. 40 (1993), no.~1, 95--202.

\bibitem[S20]{shokurov-compl}V.~V.~Shokurov, Existence and boundedness of $n$-complements, preprint (2020), arXiv:2012.06495v1.





\end{thebibliography}
\end{document}